\providecommand{\tabularnewline}{\\}
\numberwithin{equation}{section}
\numberwithin{figure}{section}
\theoremstyle{plain}
\newtheorem{thm}{\protect\theoremname}[section]
\theoremstyle{plain}
\newtheorem{prop}[thm]{\protect\propositionname}
\theoremstyle{remark}
\newtheorem{notation}[thm]{\protect\notationname}
\theoremstyle{remark}
\newtheorem{rem}[thm]{\protect\remarkname}
\theoremstyle{plain}
\newtheorem{cor}[thm]{\protect\corollaryname}
\theoremstyle{definition}
\newtheorem{defn}[thm]{\protect\definitionname}
\theoremstyle{definition}
\newtheorem*{defn*}{\protect\definitionname}
\theoremstyle{plain}
\newtheorem{lem}[thm]{\protect\lemmaname}
\theoremstyle{remark}
\newtheorem*{rem*}{\protect\remarkname}
\theoremstyle{plain}
\newtheorem{lyxalgorithm}[thm]{\protect\algorithmname}
\theoremstyle{remark}
\newtheorem*{acknowledgement*}{\protect\acknowledgementname}
\setlist[enumerate,1]{label={(\arabic*)}}
\providecommand{\acknowledgementname}{Acknowledgement}
\providecommand{\algorithmname}{Algorithm}
\providecommand{\corollaryname}{Corollary}
\providecommand{\definitionname}{Definition}
\providecommand{\lemmaname}{Lemma}
\providecommand{\notationname}{Notation}
\providecommand{\propositionname}{Proposition}
\providecommand{\remarkname}{Remark}
\providecommand{\theoremname}{Theorem}
\begin{document}
\title{Energy Landscape and Metastability of Curie--Weiss--Potts Model}
\author{Jungkyoung Lee}
\address{Department of Mathematical Sciences, Seoul National University, Seoul,
Republic of Korea}
\email{ljk9316@snu.ac.kr}
\begin{abstract}
In this paper, we thoroughly analyze the energy landscape of the Curie--Weiss--Potts
model, which is a ferromagnetic spin system consisting of $q\ge3$
spins defined on complete graphs. In particular, for the Curie--Weiss--Potts
model with $q\ge3$ spins and zero external field, we completely characterize
all critical temperatures and phase transitions in view of the global
structure of the energy landscape. We observe that there are three
critical temperatures and four different regimes for $q<5$, whereas
there are four critical temperatures and five different regimes for
$q\ge5$. Our analysis extends the investigations performed in {[}M.
Costeniuc, R. S. Ellis, H. Touchette: J. Math. Phys (2005){]}; they
provide the precise characterization of the second critical temperatures
for all $q\ge3$ and in {[}Landim and Seo: J. Stat. Phys. (2016){]},
which provides a complete analysis of the energy landscape for $q=3$.
Based on our precise analysis of the energy landscape, we also perform
a quantitative investigation of the metastable behavior of the heat-bath
Glauber dynamics associated with the Curie--Weiss--Potts model.
\end{abstract}

\maketitle

\section{Introduction}

The Potts model is a well-known mathematical model suitable for studying
ferromagnetic spin system consisting of $q\ge3$ spins. We refer to
\cite{Wu-The Potts model} a comprehensive review on the Potts model.
In the present work, we focus on the Potts model defined on large
complete graphs without an external field to understand the associated
energy landscape as well as the metastable behavior of the heat-bath
Glauber dynamics to the highly precise level. This special case of
the Potts model defined on complete graphs is called a \textit{Curie--Weiss--Potts
model }and investigated in various studies; e.g., \cite{Binachi Bovier,Costeniuc Ellis T,Cuff DLLPS,Eichelsbacher M,Ellis Wang,Kulske Meibner,Landim Seo-3spin,Wang,Bovier EGK}
and references therein. We note that the rigorous mathematical definition
of the Curie--Weiss--Potts model is presented in the next section.

\subsubsection*{The Curie--Weiss model}

The Ising case of the Curie--Weiss--Potts model, i.e., the corresponding
spin system consisting only of $q=2$ spins, is the famous Curie--Weiss
model. It is well-known that the Curie--Weiss model without an external
field exhibits a phase transition at the critical (inverse) temperature
$\beta_{c}>0$. It is mainly because the number of global minima of
the potential function associated with the empirical magnetization
is one for the high temperature regime $\beta\le\beta_{c}$ while
it becomes two for the low temperature regime $\beta>\beta_{c}$,
where $\beta>0$ represents the inverse temperature (cf. \cite[Chapter 9]{Rassoul Seppalainen}
for more detail). It is also well-known that such a phase transition
for the structure of the energy landscape is closely related to the
mixing property of the associated heat-bath Glauber dynamics. In \cite{Levin LP},
it has been shown that the Glauber dynamics exhibits the so-called
cut-off phenomenon which is a signature of the fast mixing for the
high-temperature regime (i.e., $\beta<\beta_{c}$) and the metastability
for the low-temperature regime (i.e., $\beta>\beta_{c}$). The metastability
for the low-temperature regime has been more deeply investigated in
\cite{Cassandro GOV}.

\subsubsection*{The Curie--Weiss--Potts model with $q=3$}

The picture for the Curie-Weiss model explained above has been fully
extended to the Curie--Weiss--Potts model consisting of $q=3$ spins.
The complete description of the energy landscape has been obtained
recently in \cite{Kulske Meibner,Landim Seo-3spin}, where three critical
temperatures
\[
0<\beta_{1}<\beta_{2}<\beta_{3}=3
\]
are characterized. More precisely, it has been shown that the potential
function associated with the empirical magnetization (which will be
explained in detail in section \ref{subsec: Magnetization}) has
\begin{itemize}
\item the unique global minimum for $\beta\in(0,\beta_{1})$,
\item one global minimum and three local minima for $\beta\in(\beta_{1},\beta_{2})$,
\item three global minima and one local minimum for $\beta\in(\beta_{2},\beta_{3})$,
and
\item three global minima for $\beta\in(\beta_{3},\infty)$.
\end{itemize}
The articles \cite{Kulske Meibner,Landim Seo-3spin} also analyzed
the associated saddle structure. Based on this analysis, \cite{Landim Seo-3spin}
discussed the quantitative feature of the metastable behavior of the
heat-bath Glauber dynamics in view of the Eyring--Kramers formula
and Markov chain model reduction (cf. \cite{Beltran Landim,Beltran Landim2,Landim MMC})
for all the low-temperature regime $\beta>\beta_{1}$. Because of
the abrupt change in the structure of the potential function at $\beta=\beta_{2}$
and $\beta=\beta_{3}$, the metastable behaviors of the Glauber dynamics
in three low-temperature regimes $(\beta_{1},\beta_{2})$, $(\beta_{2},\beta_{3})$,
and $(\beta_{3},\infty)$ turned out to be both quantitatively and
qualitatively different. For the high-temperature regime $(0,\,\beta_{1})$,
the cut-off phenomenon has been verified in \cite{Cuff DLLPS} for
all $q\ge3$. Adjoining all these works completes the picture for
the Curie--Weiss--Potts model with $q=3$ spins. 

\subsubsection*{The Curie--Weiss--Potts model with $q\ge4$}

Compared to the Curie--Weiss--Potts model with $q=2$ or $3$ spins,
the analysis of the case with $q\ge4$ spins is not completed so far.
In many literature, two critical temperatures $\beta_{1}(q)<\beta_{2}(q)$
for the Curie--Weiss--Potts model with $q\ge4$ spins are observed
and the phase transitions near these critical temperatures have been
analyzed. For instance, in \cite{Cuff DLLPS}, the phase transition
from the fast mixing (the cut-off phenomenon) to the slow mixing (due
to the appearance of new local minima) at $\beta=\beta_{1}(q)$ has
been confirmed. In \cite{Ellis Wang}, it has been observed that the
limiting distributions of the empirical magnetization exhibits the
abrupt change at $\beta=\beta_{2}(q)$. In \cite{Costeniuc Ellis T},
the phase transition around $\beta_{2}(q)$ also has been studied
in view of the equivalence and non-equivalence of ensembles.

These studies focus on the phase transitions involved with the local
and the global minima of the potential function. However, in order
to investigate the metastable behavior whose main objective is to
analyze the transitions between neighborhoods of local minima (i.e.,
the metastable states), the precise understanding of the \textit{saddle
structure} is also required. To the best of our knowledge, the analysis
of the saddle structure as well as the metastable behavior of the
heat-bath Glauber dynamics for $q\ge4$ has not been analyzed yet.

\subsubsection*{Main contribution of the article}

The main result of the present work is to provide the complete description
of the energy landscape including the saddle structure and to analyze
dynamical features of the Glauber dynamics based on it for the Curie--Weiss--Potts
models with $q\ge4$ spins.

First, we observe that for $q=4$, as in the case of $q=3$, the potential
function has three critical temperatures
\[
0<\beta_{1}(4)<\beta_{2}(4)<\beta_{3}(4)=4\ ,
\]
and moreover the associated metastable behavior is quite similar to
that of the case $q=3$. On the other hand, for $q\ge5$, we will
deduce that there are four critical temperatures
\[
0<\beta_{1}(q)<\beta_{2}(q)<\beta_{3}(q)<\beta_{4}(q)=q\ ,
\]
where two critical temperatures $\beta_{1}(q)$ and $\beta_{2}(q)$
play essentially the same role with $\beta_{1}(3)$ and $\beta_{2}(3)$
(and hence $\beta_{1}(4)$ and $\beta_{2}(4)$), respectively. Surprisingly,
our work reveals that the role of the third critical temperature $\beta_{3}(q)$
for $q\le4$ is divided into the third and fourth critical temperatures
$\beta_{3}(q)$ and $\beta_{4}(q)$ for $q\ge5$. More precisely,
for $q\le4$, the change in the saddle gates between global minima
and the disappearance of the local minimum representing the chaotic
configuration happen simultaneously at $\beta=\beta_{3}(q)=q$; however,
for $q\ge5$, the change of saddle gates happens at $\beta=\beta_{3}(q)<q$
and the disappearance of the chaotic local minimum occurs at $\beta=\beta_{4}(q)=q$.
Hence, for $q\ge5$, we observe another type of metastable behavior
at $\beta\in[\beta_{3}(q),\beta_{4}(q))$ compared to the case $q\le4$. 

\subsubsection*{Other studies on the Potts model}

Although the present work focuses on the Potts model on complete graphs,
we also note that the Ising and Potts models on the lattice are widely
studied as well. For instance, we refer to \cite{Rassoul Seppalainen}
and the references therein for the phase transition, to \cite{Lubetzky Sly,Lubetzky Sly2,Lubetzky Sly3}
for the cut-off phenomenon in the high-temperature regime, and to
\cite{Alonso Cerf,BenArous Cerf,Bovier H Metastability,Bovier H Nardi,Bovier H Spitoni,Bovier Manzo,Kim Seo,Nardi Zocca,Neves,Neves Schonmann,Olivieri Vares}
for the metastability in the low-temperature regime. In addition,
we refer to \cite{Griiths Pearce,Kesten Schonmann} for the Potts
model in many spins or large dimensions and to \cite{Bovier MP-RDCW,Hollander J-ERgraph}
for the study of metastability of the Ising model on random graphs.

\section{\label{sec: Model}Model}

In this section, we introduce the formal definition of the Curie--Weiss--Potts
model, which will be analyzed in the present work. Fix an integer
$q\ge3$ and let $S=\{1,\,\dots,\,q\}$ be the set of spins.

\subsection{Curie--Weiss--Potts Model}

For a positive integer $N$, let us denote by\footnote{We write $K_{N}$ to emphasize that our model is on the complete graph}
$K_{N}=\{1,\,\dots,\,N\}$ the set of sites. Let $\Omega_{N}=S^{K_{N}}$
be the configuration space of spins on $K_{N}$. Each configuration
is represented as $\sigma=(\sigma_{1},\,\dots,\,\sigma_{N})\in\Omega_{N}$
where $\sigma_{v}\in S$ denotes a spin at site $v\in K_{N}$. Let
$\bm{h}=(h_{1},\,\dots,\,h_{q})\in\mathbb{R}^{q}$ be the external
magnetic field. The Hamiltonian associated to the Curie--Weiss--Potts
model with the external field $\bm{h}$ is given by
\[
\mathbb{H}_{N}(\sigma)\,=\,-\frac{1}{2N}\sum_{1\le u,v\le N}\bm{1}(\sigma_{u}=\sigma_{v})\,-\,\sum_{v=1}^{N}\sum_{j=1}^{q}h_{j}\bm{1}(\sigma_{v}=j)\ \ ;\ \sigma\in\Omega_{N}\ ,
\]
where $\bm{1}$ denotes the usual indicator function. Then, the Gibbs
measure associated to the Hamiltonian at the (inverse) temperature
$\beta>0$ is given by
\[
\mu_{N}^{\beta}(\sigma)\,=\,\frac{1}{Z_{N}(\beta)}e^{-\beta\mathbb{H}_{N}(\sigma)}\ \ ;\ \sigma\in\Omega_{N}\ ,
\]
where $Z_{N}(\beta)=\sum_{\sigma\in\Omega_{N}}e^{-\beta\mathbb{H}_{N}(\sigma)}$
is the partition function. The measure $\mu_{N}^{\beta}(\cdot)$ denotes
the Curie--Weiss--Potts measure on $\Omega_{N}$ at the inverse
temperature $\beta$. 

\subsection{Heat-bath Glauber Dynamics}

Now, we define a heat-bath Glauber dynamics associated with the Curie--Weiss--Potts
measure $\mu_{N}^{\beta}(\cdot)$. For $\sigma\in\Omega_{N}$, $v\in K_{N}$,
and $k\in S$, denote by $\sigma^{v,\,k}$ the configuration whose
spin $\sigma_{v}$ at site $v$ is flipped to $k$, i.e.,
\[
(\sigma^{v,\,k})_{u}\,=\,\begin{cases}
\sigma_{u} & u\ne v\ ,\\
k & u=v\ .
\end{cases}
\]
Then, we will consider a heat-bath Glauber dynamics associated with
generator $\mathcal{L}_{N}$ which acts on $f:\Omega_{N}\to\mathbb{R}$
as
\[
(\mathcal{L}_{N}f)(\sigma)\,=\,\frac{1}{N}\,\sum_{v=1}^{N}\sum_{k=1}^{q}c_{v,\,k}(\sigma)[f(\sigma^{v,\,k})-f(\sigma)]\ ,
\]
where
\[
c_{v,\,k}(\sigma)\,=\,\exp\left\{ -\frac{\beta}{2}[\mathbb{H}_{N}(\sigma^{v,\,k})-\mathbb{H}_{N}(\sigma)]\right\} \ .
\]
It can be observed that this dynamics is reversible with respect to
the Curie--Weiss--Potts measure $\mu_{N}^{\beta}(\cdot)$. Henceforth,
denote by $\sigma(t)=\sigma^{K_{N}}(t)=(\sigma_{1}(t),\,\dots,\,\sigma_{N}(t))$
the continuous time Markov process associated with the generator $\mathcal{L}_{N}$.

\subsection{\label{subsec: Magnetization}Empirical Magnetization}

For each spin $k\in S$, denote by $r_{N}^{k}(\sigma)$ the proportion
of spin $k$ of configuration $\sigma\in\Omega_{N}$, i.e., 
\[
r_{N}^{k}(\sigma)\,:=\,\frac{1}{N}\sum_{v=1}^{N}\bm{1}(\sigma_{v}=k)\ ,
\]
and define the proportional vector $\bm{r}_{N}(\sigma)$ as 
\[
\bm{r}_{N}(\sigma)\,:=\,(r_{N}^{1}(\sigma),\,\dots,\,r_{N}^{q-1}(\sigma))\ ,
\]
which represents the \textit{empirical magnetization} of the configuration
$\sigma$ containing the macroscopic information of $\sigma$. 

Define $\Xi$ as
\begin{equation}
\Xi\,=\,\{\bm{x}=(x_{1},\,\dots,\,x_{q-1})\in(\mathbb{R}_{\ge0})^{q-1}:\,x_{1}+\cdots+x_{q-1}\le1\}\ ,\label{e: symplex}
\end{equation}
and then define a discretization of $\Xi$ as 
\[
\Xi_{N}=\Xi\cap(\mathbb{Z}/N)^{q-1}\ .
\]
With this notation, we immediately have $\bm{r}_{N}(\sigma)\in\Xi_{N}$
for $\sigma\in\Omega_{N}$. 

For the Markov process $\big(\sigma(t)\big)_{t\ge0}$, we write $\boldsymbol{r}_{N}(\cdot)=\boldsymbol{r}_{N}(\sigma(\cdot))$
which is a stochastic process on $\Xi_{N}$ expressing the evolution
of the empirical magnetization. Since the model is defined on the
complete graph $K_{N}$, we obtain the following proposition.
\begin{prop}
\label{p: r_N MC}The process $\big(\bm{r}_{N}(t)\big)_{t\ge0}$ is
a continuous time Markov chain on $\Xi_{N}$ whose invariant measure
is given by 
\[
\nu_{N}^{\beta}(\bm{x}):=\mu_{N}^{\beta}(\bm{r}_{N}^{-1}(\bm{x}))\ \ ;\ \boldsymbol{x}\in\Xi_{N}
\]
where $\bm{r}_{N}^{-1}(\bm{x})$ denotes the set $\{\sigma\in\Omega_{N}\,:\,\bm{r}_{N}(\sigma)=\bm{x}\}$.
Furthermore, $\bm{r}_{N}(\cdot)$ is reversible with respect to $\nu_{N}^{\beta}$.
\end{prop}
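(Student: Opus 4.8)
The plan is to exploit the full symmetry of the complete graph $K_N$ under permutations of the sites. The key observation is that the Hamiltonian $\mathbb{H}_N$ with zero external field (or, more generally, with field constant across relevant spin classes) depends on $\sigma$ only through the occupation counts $(N r_N^1(\sigma),\dots,Nr_N^q(\sigma))$, since $\sum_{u,v}\bm 1(\sigma_u=\sigma_v)=\sum_{k=1}^q\big(\sum_v\bm 1(\sigma_v=k)\big)^2=N^2\sum_{k=1}^q r_N^k(\sigma)^2$. Consequently the rates $c_{v,k}(\sigma)$ also depend only on the current value of $\bm r_N(\sigma)$ together with the spin $\sigma_v$ being flipped and the target spin $k$. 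The first step is therefore to record this reduction explicitly and to conclude that the group of site-permutations $\mathfrak S_N$ acts on $\Omega_N$ commuting with the generator $\mathcal L_N$, with $\bm r_N$ being (up to the obvious identification) the quotient map.

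The second step is to invoke the standard lumping (Dynkin--Kemeny--Snell) criterion for Markov chains: if $\pi:\Omega_N\to\Xi_N$ is a surjection and the transition rate $Q(\sigma,\pi^{-1}(\bm y))$ from a state $\sigma$ into a fiber depends only on $\pi(\sigma)$ and $\bm y$, then the image process $\pi(\sigma(t))$ is itself a continuous-time Markov chain. I would verify the hypothesis directly: fix $\bm x\in\Xi_N$, a configuration $\sigma\in\bm r_N^{-1}(\bm x)$, and a neighboring value $\bm y$; the only moves changing $\bm r_N$ by a single step are flips $\sigma\mapsto\sigma^{v,k}$ with $\sigma_v=j\ne k$, which send $\bm x$ to $\bm x - \tfrac1N e_j + \tfrac1N e_k$ (with the convention that $e_q=0$ in $\mathbb R^{q-1}$). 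The aggregated rate is
\[
\sum_{v:\,\sigma_v=j}\tfrac1N\,c_{v,k}(\sigma)\;=\;r_N^j(\sigma)\cdot \widehat c_{\,j,k}(\bm x)\;=\;x_j\,\widehat c_{\,j,k}(\bm x),
\]
where $\widehat c_{\,j,k}(\bm x)$ is the common value of $c_{v,k}$ over all such $v$ — common precisely because $\mathbb{H}_N(\sigma^{v,k})-\mathbb{H}_N(\sigma)$ is a function of $\bm x$, $j$, and $k$ only. Since this depends on $\sigma$ solely through $\bm x=\bm r_N(\sigma)$, the lumping criterion applies and $\bm r_N(\cdot)$ is a continuous-time Markov chain on $\Xi_N$.

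For the invariant measure, the third step is a soft projection argument: if $\mu$ is invariant (indeed reversible) for $\sigma(\cdot)$ and $\pi$ is a lumping map, then the pushforward $\pi_*\mu$ is invariant (resp. reversible) for the lumped chain. Concretely, $\nu_N^\beta(\bm x)=\mu_N^\beta(\bm r_N^{-1}(\bm x))$, and reversibility of $\bm r_N(\cdot)$ follows by summing the detailed-balance relations $\mu_N^\beta(\sigma)c_{v,k}(\sigma)/N=\mu_N^\beta(\sigma^{v,k})c_{v,j}(\sigma^{v,k})/N$ over all $\sigma\in\bm r_N^{-1}(\bm x)$ and all sites $v$ with $\sigma_v=j$ that are flipped to $k$, giving
\[
\nu_N^\beta(\bm x)\,\widehat r_N\big(\bm x,\bm y\big)\;=\;\nu_N^\beta(\bm y)\,\widehat r_N\big(\bm y,\bm x\big)
\]
for the aggregated rates $\widehat r_N$ between $\bm x$ and $\bm y=\bm x-\tfrac1N e_j+\tfrac1N e_k$. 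I expect the only mildly delicate point — the "main obstacle," though it is routine — to be the bookkeeping that guarantees the rate out of a fiber is genuinely constant on the fiber: one must check that flipping any site carrying spin $j$ to spin $k$ produces the same energy difference, which is immediate from the occupation-count formula above but deserves to be stated carefully since it is the crux of why lumpability holds. Everything else is standard Markov-chain lumping and pushforward of reversible measures.
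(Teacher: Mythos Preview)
Your proposal is correct and follows essentially the same route as the paper: both arguments reduce to the observation that $\mathbb{H}_N(\sigma^{v,k})-\mathbb{H}_N(\sigma)$ depends only on $\bm r_N(\sigma)$, $\sigma_v$, and $k$, so the aggregated rate out of a fiber is constant on the fiber, and then verify detailed balance for the lumped rates. The only difference is cosmetic---you invoke the Dynkin--Kemeny--Snell lumping criterion and the $\mathfrak S_N$-symmetry explicitly, whereas the paper proceeds by direct computation of the jump rate $R_N(\bm x,\bm x+\bm e_j^N-\bm e_i^N)=x_i\exp\{-\tfrac{N\beta}{2}[H(\bm x+\bm e_j^N-\bm e_i^N)-H(\bm x)]\}$ and checks the detailed-balance identity by hand.
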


The proof of this proposition including jump rates is given in Section
\ref{subsec: Dyna of magnet}. Let $\mathbb{P}_{\bm{x}}^{N,\,\beta}$
be the law of the Markov chain $\bm{r}_{N}(\cdot)$ starting at $\bm{x}\in\Xi_{N}$
and let $\mathbb{E}_{\bm{x}}^{N,\,\beta}$ be the corresponding expectation. 

\subsubsection*{More on the measure $\nu_{N}^{\beta}(\cdot)$}

For $\bm{y}\in\Xi$, let $\widehat{\bm{y}}=(y,\,\dots,\,y_{q-1},\,y_{q})\in\mathbb{R}^{q}$
where $y_{q}=1-(y_{1}+\cdots+y_{q-1})$. Then, the Hamiltonian $\mathbb{H}_{N}$
can be written as
\[
\mathbb{H}_{N}(\sigma)\,=\,NH(\bm{r}_{N}(\sigma))\ \;;\;\sigma\in\Omega_{N}
\]
where 
\begin{equation}
H(\bm{x})\,=\,-\frac{1}{2}|\widehat{\bm{x}}|^{2}-\bm{h}\cdot\widehat{\bm{x}}\ \;;\;\boldsymbol{x}\in\Xi\;.\label{e: Hamil prop vec}
\end{equation}
Therefore, by Proposition \ref{p: r_N MC}, the invariant measure
$\nu_{N}^{\beta}(\cdot)$ of the process $\bm{r}_{N}(t)$ on $\Xi_{N}$
can be written as
\begin{align}
\nu_{N}^{\beta}(\bm{x}) & \,=\,\sum_{\sigma:\bm{r}_{N}(\sigma)=\bm{x}}\frac{1}{Z_{N}(\beta)}\exp\{-\beta\mathbb{H}_{N}(\sigma)\}\nonumber \\
 & =\,{N \choose (Nx_{1})\cdots(Nx_{q})}\frac{1}{Z_{N}(\beta)}\exp\{-\beta NH(\bm{x})\}\nonumber \\
 & \eqqcolon\,\frac{1}{(2\pi N)^{(q-1)/2}Z_{N}(\beta)}\exp\{-\beta NF_{\beta,\,N}(\bm{x})\}\ ,\label{e: def of F_beta,N}
\end{align}
where, by Stirling's formula, we can write
\[
F_{\beta,\,N}(\bm{x})\,=\,F_{\beta}(\bm{x})+\frac{1}{N}G_{\beta,\,N}(\bm{x})\ ,
\]
where
\begin{equation}
F_{\beta}(\bm{x})\,=\,H(\bm{x})+\frac{1}{\beta}S(\bm{x})\;\ \text{ and}\ \ G_{\beta,\,N}(\bm{x})\,=\,\frac{\log(x_{1}\cdots x_{q})}{2\beta}+O(N^{-1})\ .\label{e: def of F_beta}
\end{equation}
In this equation, $H(\cdot)$ is the energy functional defined in
\eqref{e: Hamil prop vec} and $S(\cdot)$ is the entropy functional
defined by
\[
S(\bm{x})=\sum_{i=1}^{q}x_{i}\log(x_{i})\ ,
\]
and $G_{\beta,\,N}(\bm{x})$ converges to $\log(x_{1}\cdots x_{q})/(2\beta)$
uniformly on every compact subsets of $\text{int}\,\Xi$.

\subsubsection*{Main objectives of the article}

Now, we can express the main purpose of the current article in a more
concrete manner. In this article, we consider the Curie--Weiss--Potts
model when there is no external magnetic field; i.e., $\bm{h}=\bm{0}$.
Therefore, from now on, we assume $\bm{h}=\bm{0}$. Under this assumption,
the first objective is to analyze the function $F_{\beta}(\cdot)$
expressing the energy landscape of the empirical magnetization of
the Curie--Weiss--Potts model. This result will be explained in
Section \ref{sec: Land Result}. The second concern is to investigate
the metastable behavior of the process $\bm{r}_{N}(\cdot)$ in the
low-temperature regime. This will be explained in Section \ref{sec: Meta Result}.
Latter part of the article is devoted to proofs of these results.

\section{\label{sec: Land Result}Main Result for Energy Landscape}

In view of Proposition \ref{p: r_N MC}, \eqref{e: def of F_beta,N},
and \eqref{e: def of F_beta}, the structure of the invariant measure
$\nu_{N}^{\beta}(\cdot)$ of the process $\bm{r}_{N}(\cdot)$ is essentially
captured by the potential function $F_{\beta}(\cdot)$; hence, the
investigation of $F_{\beta}(\cdot)$ is crucial in the analysis of
the energy landscape and the metastable behavior of $\bm{r}_{N}(\cdot)$.
In this section, we explain our detailed analysis of the function
$F_{\beta}(\cdot)$. 

Note that the function $F_{\beta}(\cdot)=H(\cdot)+\beta^{-1}S(\cdot)$
express the competition between the energy and the entropy represented
by $H(\cdot)$ and $S(\cdot)$, respectively. Since there is a $\beta^{-1}$
factor in front of the entropy functional, we can expect that the
entropy dominates the competition when $\beta$ is small (i.e., the
temperature is high). Since entropy is uniquely minimized at the equally
distributed configuration $(1/q,\,\dots,\,1/q)\in\Xi$, we can expect
that the potential $F_{\beta}(\cdot)$ also has the unique minimum
when $\beta$ is small. On the other hand, if $\beta$ is large enough
(i.e., the temperature is low), the energy $H(\cdot)$ with $q$ minima
dominates the system, and therefore, we can expect that the potential
$F_{\beta}$ also has $q$ global minima. In this section we provide
the complete characterization of the complicated pattern of transition
from this high-temperature regime to low-temperature regime in a precise
level. 

In Section \ref{subsec: Main cri}, we define several points that
will be shown to be critical points. In Section \ref{subsec: Main critemp},
we introduce several critical values of (inverse) temperature $\beta$.
In Section \ref{subsec: EL}, we summarize the results on the energy
landscape $F_{\beta}(\cdot)$. In Section \ref{subsec: MF energy},
as a by-product of these results, we compute the mean-field free energy.

\subsection{\label{subsec: Main cri}Critical Points of $F_{\beta}(\cdot)$}

Let us first investigate critical points of $F_{\beta}(\cdot)$. We
recall that
\[
F_{\beta}(\bm{x})\,=\,-\frac{1}{2}\sum_{k=1}^{q}x_{k}^{2}\,+\,\frac{1}{\beta}\sum_{k=1}^{q}x_{k}\log x_{k}\;\ ;\;\boldsymbol{x}\in\Xi\ .
\]

\begin{notation}
\label{nota: symplex}We have following notations for convenience.
\begin{enumerate}
\item Since there is no risk of confusion, we will write the point $\bm{x}=(x_{1},\,\dots,\,x_{q-1})\in\Xi$
as $\bm{x}=(x_{1},\,\dots,\,x_{q-1},\,x_{q})\in\mathbb{R}^{q}$ where
$x_{q}=1-x_{1}-\cdots-x_{q-1}$.
\item Let $\{\bm{e}_{1},\,\dots,\,\bm{e}_{q-1}\}$ be the orthonormal basis
of $\mathbb{R}^{q-1}$ and $\bm{e}_{q}=\bm{0}\in\mathbb{R}^{q-1}$.
\end{enumerate}
\end{notation}

\begin{figure}
\includegraphics{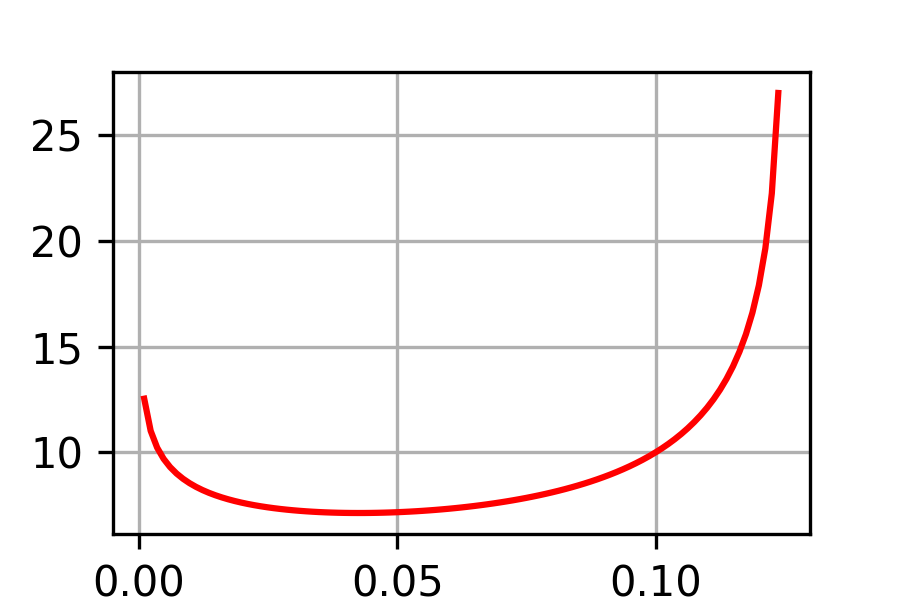}\caption{\label{fig: g2}Graph of $g_{2}(t)$ for $q=10$.}
\end{figure}

Now, we explain the candidates for the critical points of $F_{\beta}(\cdot)$
playing important role in the analysis of the energy landscape. The
first candidate is
\[
{\bf p}\coloneqq(1/q,\,\dots,\,1/q)\in\Xi\ ,
\]
which represents the state where the spins are equally distributed.

In order to introduce the other candidates, we fix $i\in\mathbb{N}\cap[1,q/2]$
and let $j=q-i$. Define $g_{i}:(0,1/j)\to\mathbb{R}$ as
\begin{equation}
g_{i}(t)\,\coloneqq\,\frac{i}{1-qt}\log\Big(\frac{1-jt}{it}\Big)\ ,\label{e: Def of g_i}
\end{equation}
where we set $g_{i}(1/q)=q$ so that $g_{i}$ becomes a continuous
function on $(0,1/j)$. We refer to Figure \ref{fig: g2} for an illustration
of graph of $g_{i}$. Then, it will be verified by Lemma \ref{l: g_i}
in Section \ref{subsec: class cri} (and we can expect from the graph
illustrated in Figure \ref{fig: g2}) that $g_{i}(t)=\beta$ has at
most two solutions. We denote by $u_{i}(\beta)\le v_{i}(\beta)$ these
solutions, provided that they exist. If there is only one solution,
we let $u_{i}(\beta)=v_{i}(\beta)$ be this solution.

For $k\in S$, let 
\begin{align}
{\bf u}_{1}^{k}=\mathbf{u}_{1}^{k}(\beta) & \coloneqq\,\Big(u_{1}(\beta),\,\dots,\,1-(q-1)u_{1}(\beta),\,\dots,\,u_{1}(\beta)\Big)\in\Xi\ ,\label{e: Def of U_1^k}\\
{\bf v}_{1}^{k}=\mathbf{v}_{1}^{k}(\beta) & \coloneqq\,\Big(v_{1}(\beta),\,\dots,\,1-(q-1)v_{1}(\beta),\,\dots,\,v_{1}(\beta)\Big)\in\Xi\ ,\label{e: Def of V_1^k}
\end{align}
where $1-(q-1)u_{1}(\beta)$ and $1-(q-1)v_{1}(\beta)$ are located
at the $k$-th component of ${\bf u}_{1}^{k}$ and ${\bf v}_{1}^{k}$,
respectively. For\footnote{Henceforth, $a,\,b\in S$ implies that $a\in S$, $b\in S$, and $a\ne b$.}
$k,\,l\in S$, let
\begin{align}
{\bf u}_{2}^{k,\,l}=\mathbf{u}_{2}^{k,\,l}(\beta) & \coloneqq\,\Big(u_{2}(\beta),\,\dots,\,\frac{1-(q-2)u_{2}(\beta)}{2},\,\dots,\label{e: Def of U_2^kl}\\
 & \ \ \ \ \ \ \ \ \ \ \ \dots,\,\frac{1-(q-2)u_{2}(\beta)}{2},\,\dots,\,u_{2}(\beta)\Big)\in\Xi\ ,\nonumber 
\end{align}
where $\frac{1-(q-2)u_{2}(\beta)}{2}$ is located at the $k$-th and
$l$-th components. Of course, each of these points is well defined
only when $u_{1}(\beta)$,$v_{1}(\beta)$, or $u_{2}(\beta)$ exists,
respectively. Then, let
\[
\mathcal{U}_{1}:=\{{\bf u}_{1}^{k}:k\in S\}\,,\;\mathcal{U}_{2}\coloneqq\,\{{\bf u}_{2}^{k,\,l}:k,\,l\in S\}\,,\;\text{and\;}\mathcal{V}_{1}\coloneqq\,\{{\bf v}_{1}^{k}:k\in S\}\ .
\]
We remark that these sets depend on $\beta$ although we omit $\beta$
in the expressions for the simplicity of the notation. 

Since we assumed that $\bm{h}=\bm{0}$, by symmetry, we can expect
that the elements in $\mathcal{U}_{1}$ have the same properties;
for instance, for all $k,l\in S$, we have $F_{\beta}({\bf u}_{1}^{k})=F_{\beta}({\bf u}_{1}^{l})$,
and ${\bf u}_{1}^{k}$ is a critical point of $F_{\beta}(\cdot)$
if and only if ${\bf u}_{1}^{l}$ is. Of course the elements in $\mathcal{U}_{2}$
or $\mathcal{V}_{1}$ respectively have the same properties. Thus,
it suffices to analyze their representatives, and hence select these
representatives as 
\begin{equation}
{\bf u}_{1}={\bf u}_{1}^{q}\,,\ {\bf u}_{2}={\bf u}_{2}^{q-1,\,q}\,,\ \text{and}\ {\bf v}_{1}={\bf v}_{1}^{q}\ .\label{eq: simnot}
\end{equation}

Now, we have the following preliminary classification of critical
points. We remark that a saddle point is a critical point at which
the Hessian has only one negative eigenvalue.
\begin{prop}
\label{p: class cri}The following hold.
\begin{enumerate}
\item If ${\bf c}\in\Xi$ is a local minimum of $F_{\beta}$, then ${\bf c}\in\{{\bf p}\}\cup\mathcal{U}_{1}$. 
\item If ${\bf s}\in\Xi$ is a saddle point of $F_{\beta}$, then ${\bf s}\in\mathcal{V}_{1}\cup\mathcal{U}_{2}$
for $q\ge4$ and ${\bf s}\in\mathcal{V}_{1}$ for $q=3$.
\end{enumerate}
\end{prop}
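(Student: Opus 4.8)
The plan is to locate \emph{every} critical point of $F_{\beta}$ explicitly and then to determine, for each, the number of negative eigenvalues of its Hessian (its \emph{index}); the statement follows by inspection. First, the term $\beta^{-1}\sum_{k}x_{k}\log x_{k}$ of $F_{\beta}$ has one-sided derivative tending to $-\infty$ as any coordinate decreases to $0$, so moving from a point of $\partial\Xi$ into $\mathrm{int}\,\Xi$ strictly decreases $F_{\beta}$; hence every local minimum, and \emph{a fortiori} every critical point (in particular every saddle), lies in $\mathrm{int}\,\Xi$, where $F_{\beta}$ is smooth. In the chart $\bm{x}=(x_{1},\dots,x_{q-1})$, $x_{q}=1-\sum_{i<q}x_{i}$, one computes $\partial_{i}F_{\beta}(\bm{x})=\varphi(x_{i})-\varphi(x_{q})$ with $\varphi(u):=\beta^{-1}\log u-u$. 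As $\varphi'(u)=(1-\beta u)/(\beta u)$ is positive on $(0,1/\beta)$ and negative on $(1/\beta,1)$, the equation $\varphi(u)=c$ has at most two solutions in $(0,1)$, lying on opposite sides of $1/\beta$ when there are two; thus the coordinates of a critical point take at most two distinct values. If all coincide the point is $\mathbf{p}$. If there are exactly two, $a<b$, then $0<a<1/\beta<b<1$; letting $i\in\{1,\dots,\lfloor q/2\rfloor\}$ be the smaller of the two multiplicities and $t$ the value carried by the $q-i$ coordinates, the equation $\varphi(t)=\varphi\big((1-(q-i)t)/i\big)$ simplifies exactly to $\beta=g_{i}(t)$, cf.\ \eqref{e: Def of g_i}; hence $t\in\{u_{i}(\beta),v_{i}(\beta)\}$, and the critical point lies in $\mathcal{U}_{i}\cup\mathcal{V}_{i}$, where $\mathcal{V}_{i}$ is the evident analogue of $\mathcal{V}_{1}$. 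In particular only $g_{1}$ and $g_{2}$ arise, and $g_{2}$ only when $q\ge4$; moreover $\mathcal{U}_{i}=\mathcal{V}_{i}$ when $u_{i}=v_{i}$, and for $q=4$ one has $u_{2}+v_{2}=1/2$, so every $2$--$2$ critical point lies in $\mathcal{U}_{2}$.

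Next, the Hessian of $F_{\beta}$ in this chart is $\mathcal{H}=\mathrm{diag}(d_{1},\dots,d_{q-1})+d_{q}\bm{1}\bm{1}^{\mathsf T}$ with $d_{k}:=\beta^{-1}x_{k}^{-1}-1$, which is the Gram matrix of $\mathrm{diag}(d_{1},\dots,d_{q})$ restricted to $\{\bm{v}\in\mathbb{R}^{q}:\sum_{k}v_{k}=0\}$. At a two-value critical point exactly the coordinates equal to $b$ give $d_{k}<0$, so $\mathrm{diag}(d_{k})$ has exactly $\ell:=\#\{k:x_{k}=b\}$ negative entries, and by Cauchy interlacing the index is $\ell$ or $\ell-1$. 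The matrix-determinant lemma gives $\det\mathcal{H}=\big(\prod_{k}d_{k}\big)\,\Sigma$ with $\Sigma:=\sum_{k}\beta x_{k}/(1-\beta x_{k})$; since $\operatorname{sgn}\det\mathcal{H}=(-1)^{\mathrm{index}}$ and $\operatorname{sgn}\prod_{k}d_{k}=(-1)^{\ell}$, the index is $\ell$ if $\Sigma>0$ and $\ell-1$ if $\Sigma<0$. Now, for a two-value critical point with values $a<b$ a short computation gives $\Sigma=\beta(1-q\beta ab)/\big((1-\beta a)(1-\beta b)\big)$, whence $\operatorname{sgn}\Sigma=\operatorname{sgn}(q\beta ab-1)$ since $(1-\beta a)(1-\beta b)<0$; at a point with $g_{i}(t)=\beta$ one finds $g_{i}'(t)=(q\beta ab-1)/\big(ab(1-qt)\big)$; and the constraint $(q-i)t+is=1$ ($s$ the other value) forces $\operatorname{sgn}(1-qt)=+1$ if $t<1/\beta$ and $-1$ if $t>1/\beta$. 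Combining these with $g_{i}'\le0$ at $u_{i}$ and $g_{i}'\ge0$ at $v_{i}$ (Lemma \ref{l: g_i}) yields the index of a two-value critical point: it equals $i-1$, $i$, $q-i$, or $q-i-1$ according as $(t<1/\beta,\ t=u_{i})$, $(t<1/\beta,\ t=v_{i})$, $(t>1/\beta,\ t=u_{i})$, or $(t>1/\beta,\ t=v_{i})$, the degenerate case $u_{i}=v_{i}$ (equivalently $\Sigma=0$) being handled by a parallel interlacing argument and again placing the point in $\mathcal{U}_{i}$ consistently with the following.

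From this table the conclusion follows. A local minimum has index $0$; since $i\le q/2$ and $q\ge3$, the only one of the four values above that can vanish is $i-1$ with $i=1$, so the point is some $\mathbf{u}_{1}^{k}\in\mathcal{U}_{1}$, which with $\mathbf{p}$ gives (1). A saddle has index $1$; the values that can equal $1$ are: $i-1$ with $i=2$ (requiring $q\ge4$, a point of $\mathcal{U}_{2}$); $i$ with $i=1$ (then $t=v_{1}$, a point of $\mathcal{V}_{1}$); and $q-i-1$ with $i=q-2$ (requiring $q\le4$ --- for $q=3$ this is $i=1$, a point of $\mathcal{V}_{1}$, and for $q=4$ it is $i=2$, a $2$--$2$ critical point, hence in $\mathcal{U}_{2}$); whereas $q-i\ge\lceil q/2\rceil\ge2$ never equals $1$, and $\mathbf{p}$ has index $0$ or $q-1\ge2$. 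Hence every saddle lies in $\mathcal{V}_{1}$ if $q=3$ and in $\mathcal{V}_{1}\cup\mathcal{U}_{2}$ if $q\ge4$, which is (2). The main obstacle is the bookkeeping of the index step --- deriving the sign identity linking $\Sigma$, $g_{i}'(t)$ and $1-qt$ and turning it into the four-case table --- together with correctly matching the two-value critical points to the sets $\mathcal{U}_{1},\mathcal{U}_{2},\mathcal{V}_{1}$ (notably the coincidences at $q=4$ and at $u_{i}=v_{i}$); the underlying differentiations are routine.
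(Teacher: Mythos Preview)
Your argument is correct and takes a somewhat different route from the paper. The paper computes the characteristic polynomial of $\nabla^{2}F_{\beta}$ at each two-value critical point explicitly (eigenvalues $a$, $b$ with multiplicities $j-1$, $i-2$, together with the roots of a quadratic in $a,b$) and then determines the index by a case analysis of the signs of $a$, $b$, and $ia+jb$ across the ranges $t\in(0,m_i)$, $(m_i,1/q)$, $(1/q,1/j)$. You instead use Cauchy interlacing (viewing $\mathcal{H}$ as the quadratic form $\sum_k d_k v_k^2$ restricted to a hyperplane) to pin the index to $\{\ell-1,\ell\}$, and then decide between the two by the sign of $\det\mathcal{H}$, which you link to $g_i'(t)$ via the matrix--determinant lemma and the identity $\Sigma=\beta(1-q\beta ab)/[(1-\beta a)(1-\beta b)]$. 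This is more streamlined, and it handles the degenerate cases $\beta=\beta_{s,i}$ for $i\ge3$ more cleanly than the paper: interlacing alone gives index $\ge \ell-1=i-1\ge2$, whereas the paper treats $i\ge4$ by a separate multiplicity count and leaves $i=3$ to a later height comparison.

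Two small points of presentation. The clause ``In particular only $g_{1}$ and $g_{2}$ arise'' is false as written --- critical points lie in $\mathcal{U}_i\cup\mathcal{V}_i$ for every $i\le q/2$ --- and your own final paragraph correctly ranges over all such $i$; just delete the clause. Also, your third case $(t>1/\beta,\ t=u_i)$ is vacuous, since $u_i\le m_i\le 1/q$ and (as you essentially argue) $t<1/q\Leftrightarrow t<1/\beta$ at a two-value critical point; the index value $q-i$ therefore never occurs. The equivalence $t<1/q\Leftrightarrow t<1/\beta$ is perhaps best justified by noting that \emph{both} $1/q$ and $1/\beta$ separate the two coordinate values $t,s$: the former because $jt+is=1$ with $t\ne s$, the latter because $\varphi(t)=\varphi(s)$ with $t\ne s$.
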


\begin{rem}
\label{remU2}The set $\mathcal{U}_{2}$ is not defined for $q=3$
since the set $\mathcal{U}_{i}$ is defined only when $i\le q/2$.
This will be explained in Section \ref{subsec: class cri}.
\end{rem}

The proof of this proposition is an immediate consequence of Proposition
\ref{p: cri} in Section \ref{subsec: class cri}. The above proposition
permits us to focus only on $\{\mathbf{p}\}\cup\mathcal{U}_{1}\cup\mathcal{U}_{2}\cup\mathcal{V}_{1}$
when we analyze the energy landscape in view of the metastable behavior,
since the critical points of index greater than 1 cannot play any
role, as the metastable transition always happens at the neighborhood
of a saddle point (a critical point of index $1$). 

\subsection{\label{subsec: Main critemp}Critical Temperatures}

In this subsection, we introduce critical temperatures 
\[
0<\beta_{1}(q)<\beta_{2}(q)<\beta_{3}(q)\le q\ ,
\]
at which the phase transitions in the energy landscape occur. The
precise definition of these critical temperatures are given in \eqref{e: Def of cri temp}
of Section \ref{subsec: pre beta}. Henceforth, we write $\beta_{i}=\beta_{i}(q)$,
$1\le i\le3$, since there is no risk of confusion.

\begin{figure}
\subfloat[$q=4$]{\includegraphics[scale=0.27]{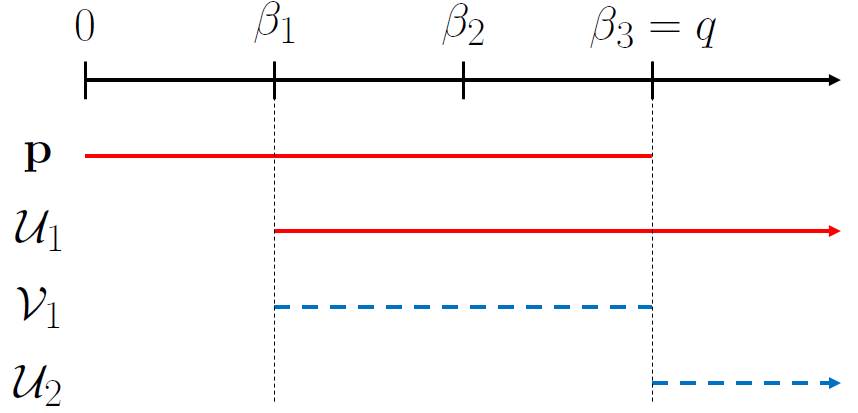}}\ ~\subfloat[$q\ge5$]{\includegraphics[scale=0.27]{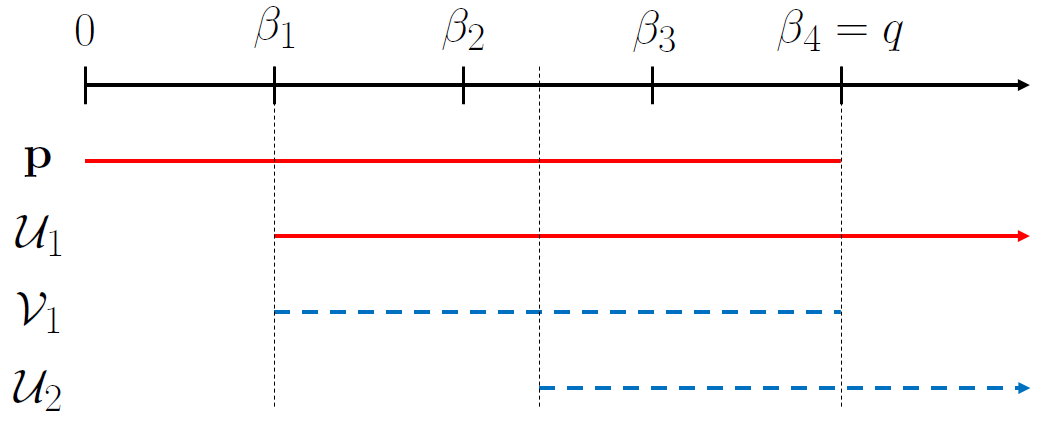}

}

\caption{\label{fig: cri pts}Role of each critical point according to temperature.
Solid lines imply local minima and dashed lines imply saddle points.}

\end{figure}

To describe the role of these critical temperatures, we regard $\beta$
as increasing from $0$ to $\infty$. Figure \ref{fig: cri pts} shows
the role of ${\bf p},\,\mathcal{U}_{1},\,\mathcal{V}_{1}$, and $\mathcal{U}_{2}$
according to inverse temperature. Section \ref{sec: Pre cri} will
prove this figure.

At $\beta=\beta_{1}$, the dynamics exhibits phase transition from
fast mixing to slow mixing, and this is proven in \cite{Cuff DLLPS}.
Furthermore, the behavior of the dynamics changes from cutoff phenomenon
to metastability. This phase transition is due to the appearance of
new local minima $\mathcal{U}_{1}$ of $F_{\beta}(\cdot)$ other than
${\bf p}$ at $\beta=\beta_{1}$. At $\beta=\beta_{2}$, the ground
states of dynamics change from ${\bf p}$ to elements of $\mathcal{U}_{1}$,
as observed in \cite[Theorem 3.1(b)]{Costeniuc Ellis T}. To explain
the role of critical temperatures $\beta_{3}$ and $q$, we have to
divide the explanation into several cases. Let us first assume that
$q\ge5$ so that $\beta_{3}<q$. At $\beta=\beta_{3}$, the saddle
gates among the ground states in $\mathcal{U}_{1}$ is changed from
$\mathcal{V}_{1}$ to $\mathcal{U}_{2}$ (since the heights $F_{\beta}({\bf v}_{1})$
and $F_{\beta}({\bf u}_{2})$ are reversed at this point) and at $\beta=q$,
the local minimum $\mathbf{p}$ becomes a local maximum. On the other
hand, for $q\le4$, we have $\beta_{3}=q$. At $\beta=\beta_{3}$,
the change of the saddle gates and the disappearance of the local
minimum $\mathbf{p}$ occur simultaneously. We refer to \cite{Landim Seo-3spin}
for the detailed description when $q=3$. 

\subsection{\label{subsec: EL}Stable and Metastable Sets}

We define some metastable sets based on the results explained earlier.
If $q\ge4$, define $H_{\beta}$ as (cf. \eqref{eq: simnot})
\begin{equation}
H_{\beta}=\begin{cases}
F_{\beta}({\bf v}_{1})\ , & \beta\in(\beta_{1},\beta_{3})\ ,\\
F_{\beta}({\bf u}_{2})\ , & \beta\in[\beta_{3},\infty)\ .
\end{cases}\label{e: height of saddles}
\end{equation}
When $q=3,$ we set $H_{\beta}=F_{\beta}({\bf v}_{1})$ for all $\beta>\beta_{1}$
(cf. Remark \ref{remU2}). It will be verified in Lemma \ref{l: beta_m}
and \eqref{e: Def of cri temp} that $H_{\beta}$ is the height of
the lowest saddle points.

Let $\widehat{S}:=S\cup\{\frak{o}\}$ and ${\bf u}_{1}^{\frak{o}}:={\bf p}$.
Let\footnote{We define the set $\mathcal{W}_{k}$, $k\in S$, and $\mathcal{W}_{\frak{o}}$
as the empty set if the set $\{F_{\beta}<H_{\beta}\}$ does not contain
$\mathbf{u}_{1}^{k}$ and $\{F_{\beta}<F_{\beta}({\bf v}_{1})\}$
does not contain ${\bf u}_{1}^{\frak{o}}$ respectively.} $\mathcal{W}_{k}=\mathcal{W}_{k}(\beta)$, $k\in S$, be the connected
component of $\{F_{\beta}<H_{\beta}\}$ containing ${\bf u}_{1}^{k}$
and let $\mathcal{W}_{\frak{o}}=\mathcal{W}_{\frak{o}}(\beta)$ be
the connected component of $\{F_{\beta}<F_{\beta}({\bf v}_{1})\}$
containing ${\bf u}_{1}^{\frak{o}}$. For $k,\,l\in\widehat{S}$,
let $\Sigma_{k,\,l}=\Sigma_{k,\,l}(\beta):=\overline{\mathcal{W}_{k}}\cap\overline{\mathcal{W}_{l}}$
be a set of saddle gates of height $H_{\beta}$ between $\mathbf{u}_{1}^{k}$
and $\mathbf{u}_{1}^{l}$.

\begin{figure}
\subfloat[{$\beta\in(0,\beta_{1}]$}]{\includegraphics[width=2in,height=1.2in]{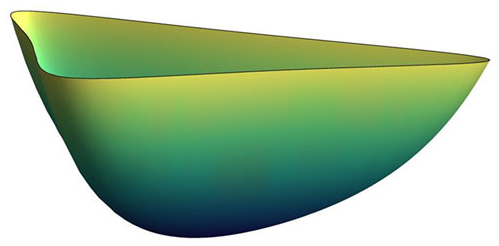}

}\subfloat[$\beta\in(\beta_{1},\beta_{2})$]{\includegraphics[width=2in,height=1.2in]{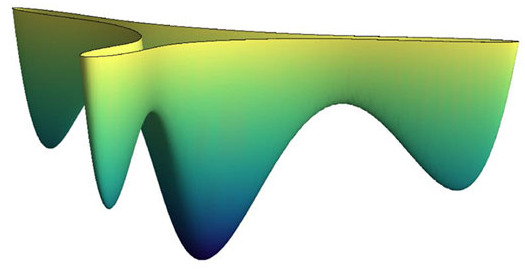}}\subfloat[$\beta=\beta_{2}$]{\includegraphics[width=2in,height=1.2in]{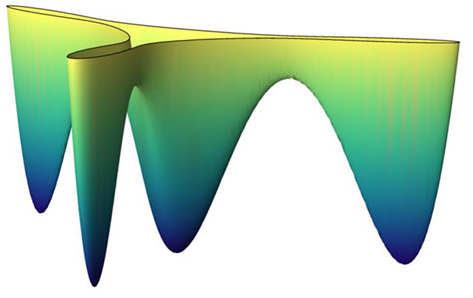}}

\subfloat[$\beta\in(\beta_{2},\beta_{3})$]{\includegraphics[width=2in,height=1.2in]{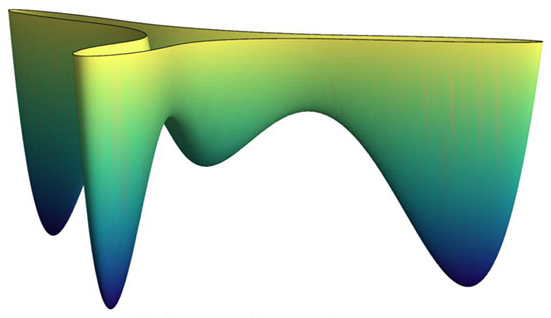}}\subfloat[$\beta=\beta_{3}$]{\includegraphics[width=2in,height=1.2in]{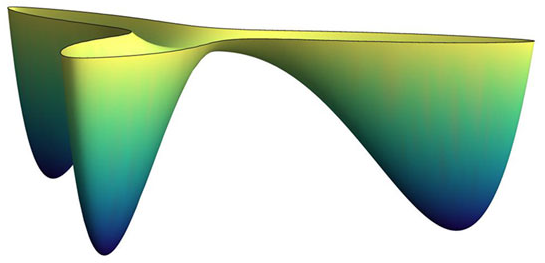}}\subfloat[$\beta\in(\beta_{3},\infty)$]{\includegraphics[width=2in,height=1.2in]{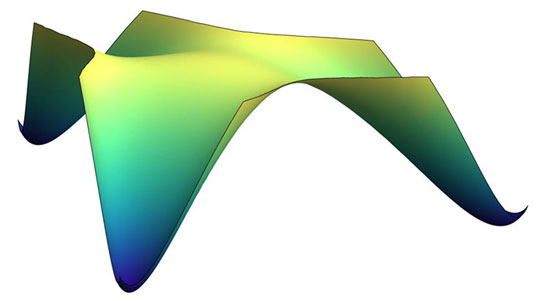}}

\caption{\label{fig: metasets q3}Energy landscape of $F_{\beta}$ when $q=3$.}
\end{figure}

Now, we can state the main result on energy landscape and the proofs
of theorems in this section will presented in Section \ref{sec: Pf metastates}.
The first result holds for all $q\ge3$.
\begin{thm}
\label{t: metasets all q}For $q\ge3$, the following hold.
\begin{enumerate}
\item If $\beta\le\beta_{1}$, there is no critical point other than ${\bf p}$,
which is the global minimum.
\item For $\beta\in(\beta_{1},q)$, we have $\mathcal{W}_{\frak{o}}\ne\emptyset$
and for $\beta\in[q,\infty)$, we have $\mathcal{W}_{\frak{o}}=\emptyset$.
\item Let $\mathcal{M}_{\beta}$ be a set of local minima of $F_{\beta}$.
Then, we have
\[
\mathcal{M}_{\beta}=\begin{cases}
\{{\bf p}\} & \beta\in(0,\beta_{1}]\ ,\\
\{{\bf p}\}\cup\mathcal{U}_{1} & \beta\in(\beta_{1},q)\ ,\\
\mathcal{U}_{1} & \beta\in[q,\infty)\ .
\end{cases}
\]
\item Let $\mathcal{M}_{\beta}^{\star}$ be a set of global minima of $F_{\beta}$.
Then, we have
\[
\mathcal{M}_{\beta}^{\star}=\begin{cases}
\{{\bf p}\} & \beta\in(0,\beta_{2})\ ,\\
\{{\bf p}\}\cup\mathcal{U}_{1} & \beta=\beta_{2}\ ,\\
\mathcal{U}_{1} & \beta\in(\beta_{2},\infty)\ .
\end{cases}
\]
\end{enumerate}
\end{thm}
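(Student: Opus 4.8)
The plan is to reduce everything to (a) the classification of critical points (Proposition \ref{p: class cri} and its refinement, Proposition \ref{p: cri}) together with the qualitative shape of the scalar functions $g_i$ (Lemma \ref{l: g_i}), and (b) the monotonicity of the two scalar functions $\phi(\beta):=F_\beta(\mathbf{u}_1)-F_\beta(\mathbf{p})$ and $\psi(\beta):=F_\beta(\mathbf{v}_1)-F_\beta(\mathbf{p})$, which are defined for $\beta\ge\beta_1$ (since $g_1\to+\infty$ at both endpoints of $(0,1/(q-1))$, the roots $u_1(\beta)\le v_1(\beta)$ of $g_1(t)=\beta$ exist exactly for $\beta\ge\beta_1$). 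Two elementary facts will be used repeatedly. First, $F_\beta$ has no local minimum on $\partial\Xi$: on a face where some coordinate vanishes, the directional derivative of $F_\beta$ pointing into $\text{int}\,\Xi$ tends to $-\infty$ (the $\beta^{-1}x_k\log x_k$ term), so $F_\beta$ can always be strictly decreased by moving inward; hence $\min_\Xi F_\beta$ is attained at an interior critical point, in particular at a point of the set $\mathcal{M}_\beta$ of local minima, and $\mathcal{M}_\beta\subseteq\{\mathbf{p}\}\cup\mathcal{U}_1$ by Proposition \ref{p: class cri}. Second, the Hellmann--Feynman (envelope) identity: if $\beta\mapsto\mathbf{c}(\beta)$ is a branch of critical points of $F_\beta$, then $\frac{d}{d\beta}F_\beta(\mathbf{c}(\beta))=\partial_\beta F_\beta(\mathbf{c}(\beta))=-\beta^{-2}S(\mathbf{c}(\beta))$, since $\nabla F_\beta(\mathbf{c}(\beta))=0$. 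Because $S$ attains its minimum on $\Xi$ uniquely at $\mathbf{p}$, this gives $\phi'<0$ and $\psi'<0$ wherever the relevant critical point differs from $\mathbf{p}$; since $\beta\mapsto F_\beta(\mathbf{u}_1(\beta))$ and $\beta\mapsto F_\beta(\mathbf{v}_1(\beta))$ are continuous (also across the degenerate parameters, where the identity extends by a limiting argument using that the bifurcating branch leaves along the null direction of the degenerate Hessian), $\phi$ and $\psi$ are continuous and strictly decreasing on $[\beta_1,\infty)$.

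I prove (1) and (3) together. By Proposition \ref{p: cri} every critical point of $F_\beta$ other than $\mathbf{p}$ has coordinates taking exactly two distinct values, hence is parametrized by a root of some $g_i(t)=\beta$ with $1\le i\le q/2$; by Lemma \ref{l: g_i} and the definition of $\beta_1$ in \eqref{e: Def of cri temp}, none of these equations has a relevant root when $\beta\le\beta_1$, which together with the boundary fact gives (1) (the minimum is attained at the unique local minimum $\mathbf{p}$). For (3): the Hessian of $F_\beta$ at $\mathbf{p}$ equals $(q\beta^{-1}-1)$ times a fixed positive-definite matrix in the coordinates of Notation \ref{nota: symplex}, so $\mathbf{p}$ is a strict local minimum for $\beta<q$ and a strict local maximum for $\beta>q$, while at $\beta=q$ the Hessian vanishes and the cubic part of the Taylor expansion of $F_q$ at $\mathbf{p}$ is a nonzero cubic form on the tangent hyperplane (proportional to $\sum_k y_k^3$ on $\{\sum_k y_k=0\}$, which changes sign for every $q\ge3$); hence $\mathbf{p}\in\mathcal{M}_\beta$ iff $\beta<q$. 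For $\mathcal{U}_1$, restricting $F_\beta$ to the symmetric line $\{(t,\dots,1-(q-1)t,\dots,t)\}$ yields a scalar function $h_\beta$ with $h_\beta'(t)=(q-1)(1-qt)\bigl(1-g_1(t)/\beta\bigr)$, whose interior critical points are exactly $1/q$, $u_1(\beta)$, $v_1(\beta)$; from the shape of $g_1$ one reads off that for $\beta>\beta_1$ the root $u_1(\beta)$ is a local minimum of $h_\beta$, while at $\beta=\beta_1$ the double root $t^\ast$ is a non-extremal (horizontal-inflection) critical point of $h_{\beta_1}$, so $\mathbf{u}_1^k$ is not a local minimum of $F_{\beta_1}$. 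Combining this with the transverse Hessian information of Section \ref{subsec: class cri} shows $\mathbf{u}_1^k$ is a local minimum of $F_\beta$ exactly for $\beta>\beta_1$ and $\mathbf{v}_1^k$ is a saddle; since Proposition \ref{p: class cri} excludes all remaining critical points from being local minima, $\mathcal{M}_\beta=\{\mathbf{p}\}$ on $(0,\beta_1]$, $\{\mathbf{p}\}\cup\mathcal{U}_1$ on $(\beta_1,q)$, and $\mathcal{U}_1$ on $[q,\infty)$, which is (3).

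For (4): since $\mathcal{M}_\beta^\star\subseteq\mathcal{M}_\beta\subseteq\{\mathbf{p}\}\cup\mathcal{U}_1$ and $F_\beta$ is constant on $\mathcal{U}_1$, the global-minimum structure is governed by the sign of $\phi$ (and is trivial for $\beta<\beta_1$). We have $\phi(\beta_1)>0$ (by (3), $\mathbf{p}$ is the unique global minimum at $\beta_1$ and $\mathbf{u}_1\ne\mathbf{p}$), $\phi$ is strictly decreasing on $[\beta_1,\infty)$, and $\phi(\beta)\to-\tfrac12+\tfrac1{2q}<0$ as $\beta\to\infty$ (since $u_1(\beta)\to0$ forces $F_\beta(\mathbf{u}_1)\to H(\mathbf{e}_q)=-\tfrac12$ while $F_\beta(\mathbf{p})\to H(\mathbf{p})=-\tfrac1{2q}$). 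Hence $\phi$ has a unique zero $\beta_2\in(\beta_1,\infty)$, and $\beta_2<q$ because $\mathbf{p}\notin\mathcal{M}_q$ forces $\min_\Xi F_q$ to be attained in $\mathcal{U}_1$, i.e.\ $\phi(q)<0$. Reading off the sign of $\phi$ (and using $\mathbf{p}\in\mathcal{M}_\beta$ for $\beta<q$) gives $\mathcal{M}_\beta^\star=\{\mathbf{p}\}$ for $\beta<\beta_2$, $\mathcal{M}_\beta^\star=\{\mathbf{p}\}\cup\mathcal{U}_1$ at $\beta=\beta_2$ (all these points are local minima of equal $F_{\beta_2}$-value), and $\mathcal{M}_\beta^\star=\mathcal{U}_1$ for $\beta>\beta_2$. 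Finally (2): by the footnote convention, $\mathcal{W}_{\frak{o}}\ne\emptyset$ iff $\mathbf{p}\in\{F_\beta<F_\beta(\mathbf{v}_1)\}$, i.e.\ iff $\psi(\beta)>0$. Since $g_1(1/q)=q$ and, by Lemma \ref{l: g_i}, $1/q$ is the larger of the two roots of $g_1(t)=q$, we get $v_1(q)=1/q$, hence $\mathbf{v}_1(q)=\mathbf{p}$ and $\psi(q)=0$; as $\psi$ is strictly decreasing on $[\beta_1,\infty)$, this yields $\psi>0$ on $[\beta_1,q)$ and $\psi<0$ on $(q,\infty)$, which is exactly $\mathcal{W}_{\frak{o}}\ne\emptyset$ for $\beta\in(\beta_1,q)$ and $\mathcal{W}_{\frak{o}}=\emptyset$ for $\beta\in[q,\infty)$.

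The main obstacle is the borderline temperature $\beta=q$ (and, to a lesser extent, $\beta=\beta_1$): there $\mathbf{p}$ is a degenerate critical point, the Hessian test is inconclusive, and one must descend to the cubic Taylor term to conclude $\mathbf{p}\notin\mathcal{M}_q$ — which is precisely what pins $\beta_2$ strictly below $q$ and places the sign change of $\psi$ exactly at $q$ (via $v_1(q)=1/q$). One must also verify that the envelope-monotonicity argument survives passage through these degenerate parameters, which it does because the bifurcating branch leaves along the kernel direction of the vanishing Hessian, killing the would-be singular contribution. The remaining ingredients — the two-value structure and transverse Hessians of the critical points, and the unimodality-type behaviour and mutual comparison of the $g_i$ underlying $\beta_1$ — are supplied by Section \ref{subsec: class cri} and Lemma \ref{l: g_i}.
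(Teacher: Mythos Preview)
Your proof is correct and largely parallels the paper's own argument: assertions (1) and (3) are reduced to Propositions \ref{p: class cri}/\ref{p: cri} and the one-dimensional analysis of $g_1$, and assertion (2) is handled by exactly the envelope (Hellmann--Feynman) identity the paper uses in its proof (there written as $\frac{d}{d\beta}[F_\beta(\mathbf v_1)-F_\beta(\mathbf p)]=-\beta^{-2}[k_1(v_1)+\log q]$, which is your $-\beta^{-2}[S(\mathbf v_1)-S(\mathbf p)]$). There are two genuine, if minor, departures. First, for assertion (4) the paper simply invokes Lemma \ref{l: beta_c} (i.e.\ \cite{Costeniuc Ellis T}), whereas you reprove that lemma from scratch via the monotonicity of $\phi$ and its end-point signs; this is more self-contained but, as written, characterises $\beta_2$ only as the unique zero of $\phi$ and does not recover the explicit formula \eqref{e: Def of beta_c}---if you want the theorem to refer to the paper's $\beta_2=\beta_c$ you should add the one-line check that $\phi(\beta_c)=0$. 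Second, for the degenerate point $\mathbf p$ at $\beta=q$ you argue via the sign-changing cubic Taylor term, while the paper (Lemma \ref{l: not minima}) restricts $F_q$ to the line $\bm\ell_1$ and shows $F_q\circ\bm\ell_1$ is strictly increasing through $1/q$; both arguments are valid and equally short. Your handling of the borderline $\beta=\beta_1$ (horizontal inflection along $\bm\ell_1$) is exactly the paper's Lemma \ref{l: not minima}, and your remark that the envelope identity extends continuously across the square-root bifurcations is correct (the derivative $-\beta^{-2}S(\mathbf c(\beta))$ stays bounded, so $\phi,\psi\in C^1$ up to the endpoints).
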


Since there is only one minimum if $\beta\le\beta_{1}$, we now consider
$\beta>\beta_{1}$. Before we write the main result on metastable
sets, we would like to emphasize that \cite[Proposition 4.4]{Landim Seo-3spin}
proved the case when $q=3$, while the proof for the case $q\ge4$
is the main novel contents of the current article. We first consider
the case $q\le4$. See Figure \ref{fig: metasets q3}\footnote{This figures are excerpt from \cite[Fig 4]{Landim Seo-3spin}}
for the visualization of the following and above theorem.
\begin{thm}
\label{t: metasets q4}For $q\le4$, the following hold.
\begin{enumerate}
\item $\beta_{3}=q$.
\item For $\beta\in(\beta_{1},q)$, the sets $\mathcal{W}_{k}$, $k\in\widehat{S}$,
are nonempty and disjoint. For $k,\,l\in S$, $\Sigma_{k,\,l}=\emptyset$
and for $k\in S$, $\Sigma_{\frak{o},\,k}=\{{\bf v}_{1}^{k}\}$.
\item For $\beta=q$, we have $\mathcal{W}_{\frak{o}}=\emptyset$. The sets
$\mathcal{W}_{k}$, $k\in S$, are nonempty and disjoint. For $k,\,l\in S$,
$\Sigma_{k,\,l}=\{{\bf p}\}$.
\item For $\beta\in(q,\infty)$, we have $\mathcal{W}_{\frak{o}}=\emptyset$.
The sets $\mathcal{W}_{k}$, $k\in S$, are nonempty and disjoint.
For $k,\,l\in S$, 
\[
\Sigma_{k,\,l}=\begin{cases}
\{{\bf v}_{1}^{m}\}\ ,\ \text{where }m\in S\setminus\{k,l\}\ , & \text{if }q=3\ ,\\
\{{\bf u}_{2}^{k,\,l}\}\ , & \text{if }q=4\ .
\end{cases}
\]
\end{enumerate}
\end{thm}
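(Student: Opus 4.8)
\emph{Strategy.} The plan is to treat the three regimes $\beta\in(\beta_1,q)$, $\beta=q$, $\beta\in(q,\infty)$ separately, and in each to: (i) pin down the least height $H_\beta$ of a saddle point; (ii) describe the connected components of $\{F_\beta<H_\beta\}$ (resp.\ of $\{F_\beta<F_\beta(\mathbf v_1)\}$, for $\mathcal W_{\mathfrak o}$); (iii) read off the $\Sigma_{k,l}$ from the way the closures of these components meet. I rely on Theorem~\ref{t: metasets all q} (for $\mathcal M_\beta$ and the fact $\mathcal W_{\mathfrak o}\neq\emptyset$ exactly when $\beta<q$), on Proposition~\ref{p: class cri} (local minima lie in $\{\mathbf p\}\cup\mathcal U_1$, index-$1$ critical points in $\mathcal V_1\cup\mathcal U_2$), and on the analysis of Section~\ref{sec: Pre cri}, which for each $\beta$ supplies the existence, indices and heights $F_\beta(\mathbf v_1),F_\beta(\mathbf u_2)$ of these points along with the elementary properties of $g_1,g_2$. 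Two structural facts make the bookkeeping run. First, since $\partial_{x_i}F_\beta=(x_q-x_i)+\beta^{-1}\log(x_i/x_q)$ blows up at the faces of $\Xi$, the field $-\nabla F_\beta$ points strictly into $\mathrm{int}\,\Xi$ along $\partial\Xi$, so every downward trajectory stays in $\mathrm{int}\,\Xi$ and converges to an interior critical point; in particular each connected component of a strict sublevel set contains a local minimum. Second, as the level rises, two distinct components of a strict sublevel set can merge only through an index-$1$ critical point at that level; equivalently, the closures of two components meet only along index-$1$ critical points of that height.

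\emph{Part (1).} For $q=3$ one has $\mathcal U_2=\emptyset$ (Remark~\ref{remU2}) and $\beta_3=q$ directly from \eqref{e: Def of cri temp}. For $q=4$ the relevant input from Section~\ref{sec: Pre cri} is that $g_2$ attains its minimum value, equal to $q$, at $t=1/q$ — equivalently $g_2'(1/q)=0$ when $q=4$, while Lemma~\ref{l: g_i} gives $g_2$ a unique critical point. Hence $\mathbf u_2$ does not exist for $\beta<q$, first appears at $\beta=q$ with $\mathbf u_2=\mathbf p$, and the saddle gates inside $\mathcal U_1$ cannot be furnished by $\mathcal U_2$ while $\beta<q$. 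On the other hand the Hessian of $F_\beta$ at $\mathbf p$ equals $(q\beta^{-1}-1)(I+J)$ with $J$ the all-ones matrix, which is positive definite for $\beta<q$ and vanishes at $\beta=q$, so $\mathbf p$ stops being a local minimum precisely at $\beta=q$. Therefore the change of saddle gates and the disappearance of the $\mathbf p$-well both occur at $\beta=q$: $\beta_3=q$.

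\emph{Parts (2) and (4).} Fix $\beta\in(\beta_1,q)\cup(q,\infty)$; $\mathcal M_\beta$ is given by Theorem~\ref{t: metasets all q}. If $\beta<q$ then (for $q\le4$) $\mathbf u_2$ is absent, the only saddles are the $\mathbf v_1^m$, and $H_\beta=F_\beta(\mathbf v_1)$; if $\beta>q$ then $H_\beta=F_\beta(\mathbf v_1)$ for $q=3$, while for $q=4$ Part~(1) gives $F_\beta(\mathbf u_2)<F_\beta(\mathbf v_1)$, so $H_\beta=F_\beta(\mathbf u_2)$ and the $\mathbf v_1^m$ now have index $\ge2$ (the symmetric $2$-plane among their $q-1$ enhanced coordinates carries a negative block of the Hessian). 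In each case the communication height between any two distinct points of $\mathcal M_\beta$ equals $H_\beta$: the bound $\le H_\beta$ comes from an explicit path — through $\mathbf p$ over two $\mathbf v_1$-saddles when $\beta<q$, along the relevant lowest saddle when $\beta>q$ — and $\ge H_\beta$ follows from the second structural fact above together with Proposition~\ref{p: class cri}, since a path staying strictly below $H_\beta$ would force a merge at an index-$1$ critical point of height $<H_\beta$, impossible as these all have height $\ge H_\beta$. Hence the points of $\mathcal M_\beta$ lie in distinct components of $\{F_\beta<H_\beta\}$, and conversely every component contains one, so the components are exactly the $\mathcal W_k$, $k$ running over $\widehat S$ (if $\beta<q$) or $S$ (if $\beta>q$), nonempty and disjoint. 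The two ends of the $1$-dimensional unstable manifold of the lowest saddle then fix the gates: for $\beta<q$, $v_1(\beta)<1/q$, so $\mathbf v_1^m$ is ``$m$-enhanced'' and joins $\mathcal W_{\mathfrak o}$ to $\mathcal W_m$, giving $\Sigma_{\mathfrak o,k}=\{\mathbf v_1^k\}$ and $\Sigma_{k,l}=\emptyset$ for $k,l\in S$; for $\beta>q$ and $q=3$, $v_1(\beta)>1/q$, so $\mathbf v_1^m$ is ``$m$-suppressed'' and joins $\mathcal W_k$ to $\mathcal W_l$ with $\{k,l,m\}=S$; for $\beta>q$ and $q=4$, $u_2(\beta)<1/q$, so $\mathbf u_2^{k,l}$ has its $k,l$-coordinates largest and joins $\mathcal W_k$ to $\mathcal W_l$, giving $\Sigma_{k,l}=\{\mathbf u_2^{k,l}\}$. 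That $\mathcal W_{\mathfrak o}=\emptyset$ for $\beta\ge q$ is part of Theorem~\ref{t: metasets all q}.

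\emph{Part (3) and the main obstacle.} At $\beta=q$ one has $v_1(q)=1/q$, and for $q=4$ also $u_2(q)=1/q$, so $\mathbf v_1=\mathbf u_2=\mathbf p$ and the only critical point outside $\mathcal U_1$ is the degenerate point $\mathbf p$; thus $H_q=F_q(\mathbf p)$. Since the Hessian of $F_q$ at $\mathbf p$ vanishes, one expands further: writing $x_k=1/q+\epsilon_k$ with $\sum_k\epsilon_k=0$, one finds $F_q(\mathbf p+\bm{\epsilon})-F_q(\mathbf p)=-\tfrac q6\sum_k\epsilon_k^3+\tfrac{q^2}{12}\sum_k\epsilon_k^4+O(|\bm{\epsilon}|^5)$. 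The crucial point is that $\{\sum_k\epsilon_k=0,\ \sum_k\epsilon_k^3>0\}$ has, near $0$, exactly $q$ connected components for $q\in\{3,4\}$ — one around each ray from $\mathbf p$ toward an $\mathbf u_1^k$ — with the ``walls'' where the cubic vanishes kept strictly above level $F_q(\mathbf p)$ by the positive quartic term; for $q=3$ this is the identity $\sum_k\epsilon_k^3=3\epsilon_1\epsilon_2\epsilon_3$, and for $q=4$ it follows from $\sum_k\epsilon_k^3=3e_3(\bm{\epsilon})$ ($e_3$ the third elementary symmetric polynomial) by a direct sign analysis on $\{\sum_k\epsilon_k=0\}$. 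Combined with the component argument above, $\{F_q<F_q(\mathbf p)\}$ has exactly the $q$ components $\mathcal W_k$, $k\in S$, nonempty, disjoint, each with $\mathbf p$ on its boundary, so $\Sigma_{k,l}=\{\mathbf p\}$; and $\mathcal W_{\mathfrak o}=\emptyset$ since $\mathbf p$ is not a local minimum. The real work lies precisely in this degenerate local analysis at $\beta=q$ — controlling $\{F_q<F_q(\mathbf p)\}$ past a vanishing Hessian and counting the $q$ descending sectors, where for $q=4$ the clean $q=3$ factorization is unavailable — together with the calculus fact of Part~(1) that $\min g_2=q$ exactly when $q=4$; granting these, the remaining steps are the routine gradient-flow/mountain-pass bookkeeping, as in \cite{Landim Seo-3spin}.
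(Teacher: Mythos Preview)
Your overall strategy (communication heights via mountain pass, plus tracking the unstable manifolds of the lowest saddles) is sound and is \emph{different} from the paper's route: for $q=4$ the paper instead introduces the barrier hypersurfaces $\mathcal K_{i,j}=\{x_i=x_j=\max_m x_m\}$, shows $F_\beta$ restricted to $\mathcal K_{i,j}$ has $\mathbf u_2^{i,j}$ (resp.\ $\mathbf p$, at $\beta=q$) as its unique minimum, and deduces $\Sigma_{k,l}\subset\mathcal K_{k,l}$ directly.

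There is, however, a genuine gap in your Part~(4) for $q=4$. You assert that ``$\mathbf u_2^{k,l}$ has its $k,l$-coordinates largest and joins $\mathcal W_k$ to $\mathcal W_l$'', but ``largest coordinates'' does not by itself determine the endpoints of the unstable manifold. The paper explicitly notes (opening of Section~\ref{subsec: Pf p:metasets2}) that for $q=4$ the symmetry argument used for $q\ge5$ does \emph{not} rule out $\Sigma_{1,2}=\{\mathbf u_2^{3,4}\}$; this is precisely the step that requires extra work. Your approach can be repaired: by Lemma~\ref{l:ev} the eigenvector $(1,-1,0)$ of $\nabla^2F_\beta(\mathbf u_2^{3,4})$ has eigenvalue $a>0$, so the unique negative eigenvalue lives in the subspace $\{\delta x_1=\delta x_2\}$; hence the unstable manifold of $\mathbf u_2^{3,4}$ lies in the invariant plane $\{x_1=x_2\}$, on which the only critical points of $F_\beta$ below $H_\beta$ are $\mathbf u_1^3,\mathbf u_1^4$, and the residual $(34)$ symmetry forces one branch to each. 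But this computation is absent from your write-up, and without it the identification $\Sigma_{k,l}=\{\mathbf u_2^{k,l}\}$ is unproven.

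Your Part~(3) also overshoots. The ``exactly $q$ descending sectors'' claim for $q=4$ (that $\{\sum\epsilon_k=0,\ \sum\epsilon_k^3>0\}$ has four components) is nontrivial---the cubic cone is singular at the six $\mathbf u_2$-directions---and you do not actually carry out the promised ``direct sign analysis''. Fortunately this claim is not needed: disjointness of the $\mathcal W_k$ follows already from the mountain-pass argument together with the fact that at $\beta=q$ the only critical values are $F_q(\mathbf u_1)$ and $F_q(\mathbf p)$ (Proposition~\ref{p: cri}), while $\mathbf p\in\overline{\mathcal W_k}$ follows from the cubic term being negative along the $S_{q-1}$-symmetric ray $\mathbf p\to\mathbf u_1^k$. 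The paper bypasses the degenerate local picture entirely via the same $\mathcal K_{i,j}$ barrier argument.
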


\begin{figure}
\subfloat[$(\beta_{1},\beta_{2})$]{\includegraphics{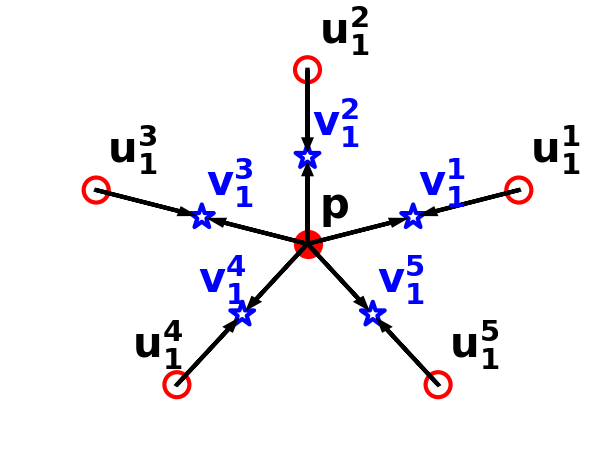}}\subfloat[$\beta=\beta_{2}$]{\includegraphics{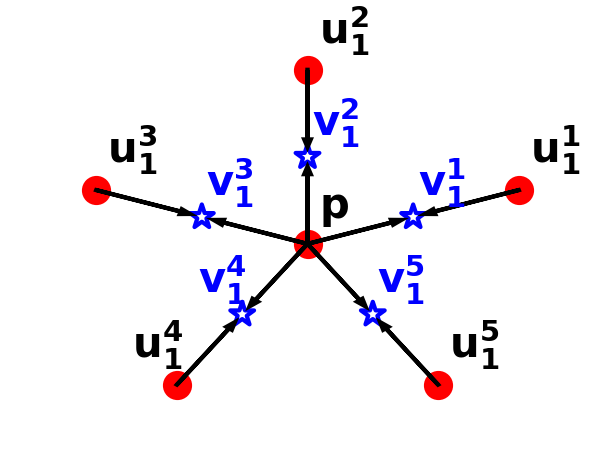}}\subfloat[$(\beta_{2},\beta_{3})$]{\includegraphics{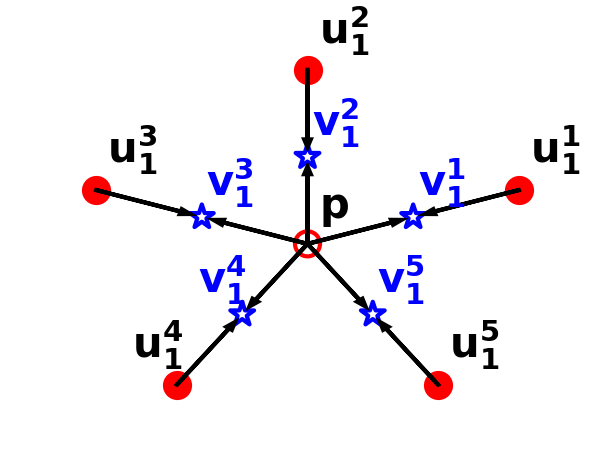}}

\subfloat[$\beta=\beta_{3}$]{\includegraphics{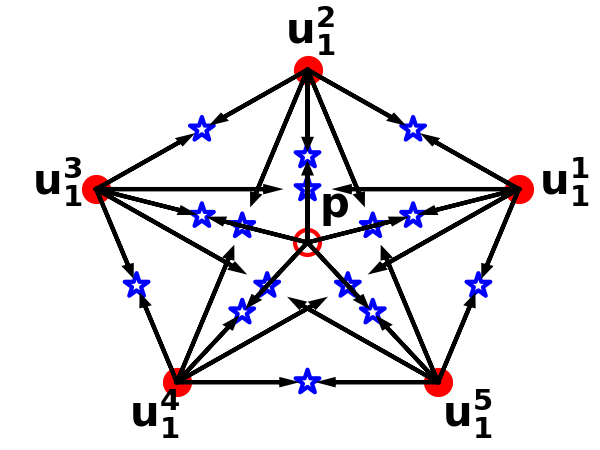}}\subfloat[$(\beta_{3},\infty)$]{\includegraphics{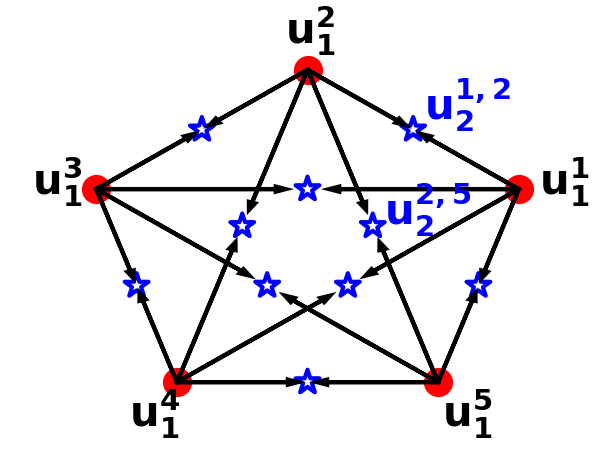}}\subfloat[$(\beta_{3},q)$]{\includegraphics{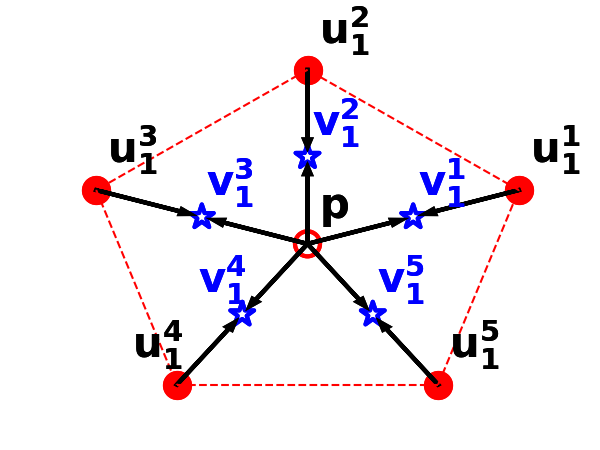}}

\caption{\label{fig: metasets q5}Illustration of energy landscape of $F_{\beta}$
when $q=5$. The first five figures are $\{F_{\beta}\le H_{\beta}\}$
and the last figure is $\{F_{\beta}\le F_{\beta}({\bf v}_{1})\}$.
The star-shaped vertices and circles represent saddle points and local
minima, respectively. The empty circles are shallower minima.}
\end{figure}

Next, we consider the case $q\ge5$. Note that the crucial difference
compared to the previous theorem lies in the third and fifth statements.
See Figure \ref{fig: metasets q5} for the visualization of the following
theorem and Theorem \ref{t: metasets all q}.
\begin{thm}
\label{t: metasets}For $q\ge5$, the following hold.
\begin{enumerate}
\item $\beta_{3}<q$.
\item For $\beta\in(\beta_{1},\beta_{3})$, the sets $\mathcal{W}_{k}$,
$k\in\widehat{S}$, are nonempty and disjoint. For $k,\,l\in S$,
$\Sigma_{k,\,l}=\emptyset$ and for $k\in S$, $\Sigma_{\frak{o},\,k}=\{{\bf v}_{1}^{k}\}$
\item For $\beta=\beta_{3}$, the sets $\mathcal{W}_{k}$, $k\in\widehat{S}$,
are nonempty and disjoint. For $k,\,l\in S$, $\Sigma_{k,\,l}=\{{\bf u}_{2}^{k,\,l}\}$
and for $k\in S$, $\Sigma_{\frak{o},\,k}=\{{\bf v}_{1}^{k}\}$.
\item For $\beta\in(\beta_{3},\infty)$, the sets $\mathcal{W}_{k}$, $k\in S$,
are nonempty and disjoint. For $k,\,l\in S$, $\Sigma_{k,\,l}=\{{\bf u}_{2}^{k,\,l}\}$
and for $k\in S$, $\Sigma_{\frak{o},\,k}=\emptyset$.
\item For $\beta\in(\beta_{3},q)$, we have $F_{\beta}({\bf v}_{1})>H_{\beta}$.
Furthermore, the set $\{F_{\beta}<F_{\beta}({\bf v}_{1})\}$ has only
two connected components, the well $\mathcal{W}_{\frak{o}}$ and the
other containing $\mathcal{U}_{1}$. The saddle points between them
are $\mathcal{V}_{1}$.
\end{enumerate}
\end{thm}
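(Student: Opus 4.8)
The proof combines three ingredients: the classification of critical points in Proposition \ref{p: class cri}; the quantitative analysis of their Morse indices and of the ordering of the heights $F_\beta(\mathbf p),F_\beta(\mathbf u_1),F_\beta(\mathbf u_2),F_\beta(\mathbf v_1)$ carried out in Sections \ref{subsec: class cri}--\ref{sec: Pre cri} (in particular Lemma \ref{l: beta_m}, the definition \eqref{e: Def of cri temp} of $\beta_3$, and the content of Figure \ref{fig: cri pts}); and a Morse-theoretic description of the sublevel sets of $F_\beta$. The plan is to run the same machinery in each of the four regimes $\beta\in(\beta_1,\beta_3)$, $\beta=\beta_3$, $\beta\in(\beta_3,q)$, $\beta\in[q,\infty)$.

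\textbf{Step 1 (heights and $\beta_3<q$).} First I record, regime by regime, which points among $\{\mathbf p\}\cup\mathcal U_1\cup\mathcal U_2\cup\mathcal V_1$ are local minima and which are saddles, together with the strict ordering of their $F_\beta$-values; all of this is supplied by Sections \ref{subsec: class cri}--\ref{sec: Pre cri}. The only new input specific to $q\ge5$ is statement (1), $\beta_3<q$: by the definition \eqref{e: Def of cri temp} this amounts to checking that at $\beta=q$ the reversal of $F_\beta(\mathbf v_1)$ and $F_\beta(\mathbf u_2)$ has already occurred, i.e.\ that $\mathbf u_2(q)$ is still a genuine saddle distinct from $\mathbf p$ and strictly below it, $F_q(\mathbf u_2)<F_q(\mathbf p)=F_q(\mathbf v_1)$ --- this is the sign computation on the shape of $g_2$ near $t=1/q$ done in Section \ref{sec: Pre cri}, and it is exactly here that the hypothesis $q\ge5$ is used. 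In particular $H_\beta=F_\beta(\mathbf v_1)<F_\beta(\mathbf u_2)$ on $(\beta_1,\beta_3)$, $H_\beta=F_\beta(\mathbf v_1)=F_\beta(\mathbf u_2)$ at $\beta_3$, $H_\beta=F_\beta(\mathbf u_2)<F_\beta(\mathbf v_1)$ on $(\beta_3,\infty)$, while $F_\beta(\mathbf u_1)<H_\beta$ throughout $(\beta_1,\infty)$ and $F_\beta(\mathbf p)<F_\beta(\mathbf v_1)$ for $\beta<q$.

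\textbf{Step 2 (mountain-pass setup).} I observe that $-\nabla F_\beta$ points strictly into $\Xi$ along $\partial\Xi$: writing $x_q=1-x_1-\cdots-x_{q-1}$, one has $\partial_{x_j}F_\beta=(x_q-x_j)+\beta^{-1}\log(x_j/x_q)$, which tends to $-\infty$ (resp.\ $+\infty$) as $x_j\downarrow0$ (resp.\ $x_q\downarrow0$). Hence the negative gradient flow preserves $\Xi$, and the deformation lemma applies on the compact set $\Xi$; consequently, for $\mathbf a,\mathbf b$ lying in distinct components of a sublevel set $\{F_\beta<c\}$, the communication height $\Phi(\mathbf a,\mathbf b):=\inf_{\gamma}\max_{\gamma}F_\beta$ (infimum over paths in $\Xi$ from $\mathbf a$ to $\mathbf b$) is a critical value of $F_\beta$ attained at an index-$1$ critical point, hence by Proposition \ref{p: class cri} at an element of $\mathcal V_1\cup\mathcal U_2$. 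Testing the explicit paths $\mathbf u_1^k\to\mathbf v_1^k\to\mathbf p$ and $\mathbf u_1^k\to\mathbf u_2^{k,l}\to\mathbf u_1^l$ (each contained in the fixed subspace of the relevant symmetry, see Step 3) gives $\Phi(\mathbf u_1^k,\mathbf p)\le F_\beta(\mathbf v_1)$ and $\Phi(\mathbf u_1^k,\mathbf u_1^l)\le H_\beta$; combined with the previous sentence and the orderings of Step 1 this forces $\Phi(\mathbf u_1^k,\mathbf p)=F_\beta(\mathbf v_1)$ for $\beta<q$ and $\Phi(\mathbf u_1^k,\mathbf u_1^l)=H_\beta$ for all $\beta>\beta_1$.

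\textbf{Step 3 (components, gates, and the main obstacle).} Because $\{F_\beta<H_\beta\}$ and $\{F_\beta<F_\beta(\mathbf v_1)\}$ are open, with $-\nabla F_\beta$ inward on $\partial\Xi$, each connected component contains exactly one local minimum, i.e.\ exactly one of $\mathbf p,\mathbf u_1^1,\dots,\mathbf u_1^q$ when $\beta<q$ (and only $\mathbf u_1$'s when $\beta\ge q$). Together with the two identities of Step 2, this yields the ``nonempty, pairwise disjoint'' assertions and the ``exactly two components'' assertion of (2)--(5): for $\beta\in(\beta_1,\beta_3]$ the sets $\mathcal W_k$ ($k\in\widehat S$) are the distinct components of $\{F_\beta<H_\beta\}$; for $\beta\in(\beta_3,q)$ the set $\{F_\beta<F_\beta(\mathbf v_1)\}$ has exactly the components $\mathcal W_{\mathfrak o}$ and one containing all of $\mathcal U_1$ (since $\Phi(\mathbf u_1^k,\mathbf u_1^l)=H_\beta<F_\beta(\mathbf v_1)$ but $\Phi(\mathbf u_1^k,\mathbf p)=F_\beta(\mathbf v_1)$), while the $\mathcal W_k$ ($k\in S$) are the components of the smaller set $\{F_\beta<H_\beta\}$; for $\beta\ge q$, $\mathcal W_{\mathfrak o}=\emptyset$. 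For the gate sets $\Sigma_{a,b}=\overline{\mathcal W_a}\cap\overline{\mathcal W_b}$ I use that two distinct components of $\{F_\beta<L\}$ can only touch at index-$1$ critical points at height $L$, whose two descending trajectories determine which pair of basins they separate; and I locate those trajectories by the principle of symmetric criticality --- $\mathbf u_2^{k,l}$ sits on the $\nabla F_\beta$-invariant two-dimensional slice $\mathcal F_{k,l}=\{x_i=x_{i'}\text{ for all }i,i'\notin\{k,l\}\}$ and, by the Hessian computation of Section \ref{subsec: class cri}, its unique negative eigendirection is tangent to $\mathcal F_{k,l}$ and transverse to the reflection axis $\{x_k=x_l\}$, so its unstable manifold connects $\mathbf u_1^k$ with $\mathbf u_1^l$; symmetrically $\mathbf v_1^k$ lies on the segment $\mathcal L_k=\{x_i=x_{i'}\text{ for all }i,i'\ne k\}$ with unstable manifold joining $\mathbf u_1^k$ and $\mathbf p$. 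This gives $\Sigma_{k,l}=\{\mathbf u_2^{k,l}\}$ precisely when $F_\beta(\mathbf u_2)=H_\beta$ (i.e.\ $\beta\ge\beta_3$) and $\Sigma_{k,l}=\emptyset$ when $F_\beta(\mathbf u_2)>H_\beta$ (i.e.\ $\beta<\beta_3$), and $\Sigma_{\mathfrak o,k}=\{\mathbf v_1^k\}$ whenever $F_\beta(\mathbf v_1)=H_\beta$, proving (2), (3), and the $\Sigma_{k,l}$ parts of (4)--(5). Finally, for $\beta>\beta_3$ the two thresholds differ, $\overline{\mathcal W_k}\subseteq\{F_\beta\le H_\beta\}$ while $\mathcal W_{\mathfrak o}$ is relatively closed in $\{F_\beta<F_\beta(\mathbf v_1)\}$ and $H_\beta<F_\beta(\mathbf v_1)$; a short point-set argument then shows $\overline{\mathcal W_{\mathfrak o}}\cap\overline{\mathcal W_k}=\emptyset$ (else $\mathcal W_k\subseteq\mathcal W_{\mathfrak o}$, contradicting $\Phi(\mathbf u_1^k,\mathbf p)=F_\beta(\mathbf v_1)$), giving $\Sigma_{\mathfrak o,k}=\emptyset$ in (4); and the description of the saddles between $\mathcal W_{\mathfrak o}$ and the large well in (5) is the same computation at the threshold $F_\beta(\mathbf v_1)$. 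I expect the delicate point to be precisely this Step 3 --- certifying that the communication heights are exactly the claimed critical values (so that no two basins merge below $H_\beta$, even via routes approaching $\partial\Xi$) and that the common boundaries consist exactly of the expected saddles; everything here rests on Proposition \ref{p: class cri} (no spurious index-$1$ critical points) and on the strict height inequalities of Sections \ref{subsec: class cri}--\ref{sec: Pre cri}, the whole $q\ge5$ phenomenon hinging on the inequality $F_q(\mathbf u_2)<F_q(\mathbf p)$, i.e.\ $\beta_3<q$.
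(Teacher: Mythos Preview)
Your overall architecture is reasonable and genuinely different from the paper's: you work directly with Morse theory and symmetric slices to pin down the saddle connections, whereas the paper first settles the case $\beta>q$ by a combinatorial symmetry-counting argument (its Lemma~\ref{l: metasets2}: if all $\Sigma_{i,j}$ were empty the $q(q-1)/2$ saddles in $\mathcal U_2$ would distribute $q-1$ over $2$ to each well, impossible) and only then identifies $\Sigma_{i,j}=\{\mathbf u_2^{i,j}\}$ by ruling out the alternatives with symmetry; finally it propagates the conclusion to $\beta\in[\beta_3,q]$ by continuity of the communication height $\mathfrak H(\mathbf u_1^i,\mathbf u_1^j)$ in $\beta$. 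Your route is more conceptual and, if it worked uniformly in $\beta$, would avoid the continuity step altogether.

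However, Step~3 has a real gap precisely where you flagged the argument as delicate. From ``the negative eigendirection at $\mathbf u_2^{k,l}$ is tangent to $\mathcal F_{k,l}$ and transverse to the axis $\{x_k=x_l\}$'' you can conclude that the one-dimensional unstable manifold lies in $\mathcal F_{k,l}$ and that its two branches start on opposite sides of the axis and are exchanged by the reflection; you \emph{cannot} conclude that they land at $\mathbf u_1^k$ and $\mathbf u_1^l$. For $\beta\in[\beta_3,q)$ the point $\mathbf p$ is still a local minimum on $\mathcal F_{k,l}$, it is fixed by the reflection, and nothing in your argument prevents both branches from flowing to $\mathbf p$ (each branch stays in its open half-plane, but the basin of $\mathbf p$ can protrude into both halves). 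Exactly the same gap appears upstream in Step~2: your upper bound $\Phi(\mathbf u_1^k,\mathbf u_1^l)\le H_\beta$ for $\beta\ge\beta_3$ presupposes a path through $\mathbf u_2^{k,l}$ whose maximum is $F_\beta(\mathbf u_2)$, i.e.\ presupposes the very heteroclinic connection you have not yet established; the only path you can write down for free is the one through $\mathbf v_1^k,\mathbf p,\mathbf v_1^l$, which gives the weaker bound $F_\beta(\mathbf v_1)\ge H_\beta$. The paper's fix is instructive: for $\beta>q$ the competitor $\mathbf p$ is a local maximum, so your unstable-manifold argument \emph{does} work there; one then uses that $\mathfrak H(\mathbf u_1^i,\mathbf u_1^j)$ is continuous in $\beta$ and that there is no saddle strictly between $F_\beta(\mathbf u_2)$ and $F_\beta(\mathbf v_1)$ to carry $\Sigma_{k,l}=\{\mathbf u_2^{k,l}\}$ down to $\beta\ge\beta_3$. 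Either insert that continuity step, or supply a direct two-dimensional phase-portrait argument on $\mathcal F_{k,l}$ locating the stable manifold of $\mathbf v_1^k$ relative to $\mathbf u_2^{k,l}$; as written, the proof is incomplete on $[\beta_3,q)$.
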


\subsection{\label{subsec: MF energy}Mean-field Free Energy}

In this subsection, we compute the mean-field free energy of the Curie--Weiss--Potts
model defined by 
\begin{equation}
\psi(\beta)\coloneqq-\lim_{N\to\infty}\frac{1}{\beta N}\log Z_{N}(\beta)\ .\label{e: MF free energy}
\end{equation}
It is well known that the Curie--Weiss model with $q=2$ spins exhibits
the second-order phase transition at the unique critical temperature
$\beta=\beta_{c}$, while the Curie--Weiss--Potts model with $q\ge3$
spins exhibits the first-order phase transition at $\beta=\beta_{2}$
(cf. \cite{Costeniuc Ellis T,Ellis Wang,Ostilli Mukhamedov}). We
now reconfirm this folklore by computing the free energy explicitly.
This computation is based on the following observation (cf. \cite[display (2.4)]{Ellis Wang}):
\begin{equation}
\lim_{N\to\infty}\frac{1}{\beta N}\log Z_{N}(\beta)=\sup_{\bm{x}\in\Xi}\{-F_{\beta}(\bm{x})\}\ .\label{e_mfe}
\end{equation}
We give a rigorous proof in Appendix B. 

Now, let us assume that $q\ge3$ so that by \eqref{e: MF free energy},
\eqref{e_mfe}, and Theorems \ref{t: metasets all q}, we can deduce
that 
\begin{equation}
\psi(\beta)=\begin{cases}
\,F_{\beta}({\bf p}) & \text{if }\beta\le\beta_{2}\ ,\\
\,F_{\beta}({\bf u}_{1}) & \text{if }\beta>\beta_{2}\ .
\end{cases}\label{mfe1}
\end{equation}

\begin{cor}
\label{c: 2nd PT}We have that
\begin{equation}
\psi'(\beta)=\begin{cases}
\,-\frac{1}{\beta^{2}}S({\bf p}) & \text{if }\beta<\beta_{2}\ ,\\
\,-\frac{1}{\beta^{2}}S({\bf u}_{1}) & \text{if }\beta>\beta_{2}\ .
\end{cases}\label{mfe2}
\end{equation}
In particular, the Curie--Weiss--Potts model with $q\ge3$ exhibits
the first-order phase transition at $\beta=\beta_{2}$.
\end{cor}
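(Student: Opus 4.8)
The plan is to differentiate the explicit formula \eqref{mfe1} for the free energy $\psi(\beta)$ on each of the two open intervals $(0,\beta_2)$ and $(\beta_2,\infty)$, exploiting the fact that on each interval $\psi$ coincides with $F_\beta$ evaluated at a critical point of $F_\beta$. The key observation is that if $\bm{c}(\beta)$ is a critical point of $F_\beta(\cdot)$ depending smoothly on $\beta$, then by the envelope-type argument (the chain rule together with $\nabla F_\beta(\bm{c}(\beta))=0$) we have
\begin{equation}
\frac{d}{d\beta}F_\beta(\bm{c}(\beta))=\frac{\partial}{\partial\beta}F_\beta(\bm{x})\Big|_{\bm{x}=\bm{c}(\beta)}=-\frac{1}{\beta^2}S(\bm{c}(\beta))\ ,\notag
\end{equation}
since $F_\beta(\bm{x})=H(\bm{x})+\beta^{-1}S(\bm{x})$ and only the second term carries $\beta$-dependence. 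Applying this with $\bm{c}(\beta)=\bm{p}$ (which is always a critical point, independent of $\beta$, so the computation is trivial here) on $(0,\beta_2)$ and with $\bm{c}(\beta)=\bm{u}_1(\beta)$ on $(\beta_2,\infty)$ yields \eqref{mfe2}.

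First I would record that $\bm{p}$ is a critical point of $F_\beta$ for every $\beta$ — this is immediate from symmetry or from the first-order conditions — and that, by the definition of $u_1(\beta)$ via $g_1(t)=\beta$ together with Lemma~\ref{l: g_i} (and the analysis in Section~\ref{subsec: class cri}), the point $\bm{u}_1(\beta)=\bm{u}_1^q(\beta)$ is a genuine critical point of $F_\beta$ for $\beta>\beta_1$, depending smoothly (indeed real-analytically) on $\beta$ by the implicit function theorem applied to $g_1$, since on the relevant branch $g_1'(u_1(\beta))\ne 0$. Then, on $(0,\beta_2)$, since $\psi(\beta)=F_\beta(\bm{p})=H(\bm{p})+\beta^{-1}S(\bm{p})$, we simply differentiate to get $\psi'(\beta)=-\beta^{-2}S(\bm{p})$. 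On $(\beta_2,\infty)$, I would write $\psi(\beta)=F_\beta(\bm{u}_1(\beta))$ and apply the envelope identity above to conclude $\psi'(\beta)=-\beta^{-2}S(\bm{u}_1(\beta))$.

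For the first-order phase transition claim, the point is to show that $\psi'$ has a jump discontinuity at $\beta=\beta_2$ while $\psi$ itself is continuous there. Continuity of $\psi$ at $\beta_2$ follows from Theorem~\ref{t: metasets all q}(4): at $\beta=\beta_2$ both $\bm{p}$ and $\mathcal{U}_1$ are global minima, so $F_{\beta_2}(\bm{p})=F_{\beta_2}(\bm{u}_1(\beta_2))$ and the two one-sided formulas for $\psi$ agree. The jump in the derivative amounts to showing
\begin{equation}
S(\bm{p})\ne S(\bm{u}_1(\beta_2))\ ,\notag
\end{equation}
equivalently that $\bm{u}_1(\beta_2)\ne\bm{p}$, i.e.\ $u_1(\beta_2)\ne 1/q$. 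This should be a consequence of the structure established in Section~\ref{sec: Pre cri}: at $\beta_2$ the minimum $\bm{u}_1$ is already ``born'' and separated from $\bm{p}$ (it appeared at $\beta_1<\beta_2$ as a distinct local minimum), so $u_1(\beta_2)$ lies on the branch strictly away from $1/q$. One then observes that $S$ restricted to the segment from $\bm{p}$ to $\bm{u}_1$ is strictly convex with minimum at $\bm{p}$ (entropy is uniquely minimized at the uniform vector), hence $S(\bm{u}_1(\beta_2))>S(\bm{p})$, giving a strictly negative jump $\psi'(\beta_2^+)-\psi'(\beta_2^-)=-\beta_2^{-2}\big(S(\bm{u}_1(\beta_2))-S(\bm{p})\big)<0$; since $\psi$ is continuous but $\psi'$ is discontinuous at $\beta_2$, this is by definition a first-order phase transition.

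The main obstacle is the rigorous justification of the envelope identity on $(\beta_2,\infty)$, which requires knowing that $\bm{u}_1(\beta)$ is a $C^1$ (better, real-analytic) function of $\beta$ on this interval and that it is the relevant selector of the supremum in \eqref{e_mfe} throughout — i.e.\ that no branch-crossing or loss of smoothness of $u_1(\cdot)$ occurs for $\beta>\beta_2$. I would handle the smoothness by the implicit function theorem applied to the defining equation $g_1(t)=\beta$ on the appropriate monotone branch (using $g_1'\ne 0$ there, which follows from Lemma~\ref{l: g_i} and the shape of the graph in Figure~\ref{fig: g2}), and the ``relevant selector'' issue by Theorem~\ref{t: metasets all q}(4), which identifies $\mathcal{U}_1$ (hence $\bm{u}_1$) as the set of global minimizers of $F_\beta$, equivalently the maximizers in \eqref{e_mfe}, for all $\beta>\beta_2$. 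A minor secondary point is the non-degeneracy $S(\bm{u}_1(\beta_2))\ne S(\bm{p})$, which I expect to follow cleanly from the separation of $\bm{u}_1(\beta_2)$ from $\bm{p}$ together with strict convexity of the entropy along the connecting segment, as sketched above.
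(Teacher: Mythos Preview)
Your proposal is correct and follows essentially the same approach as the paper's own proof: the chain-rule/envelope identity $\frac{d}{d\beta}F_\beta(\bm{c}(\beta))=\nabla F_\beta(\bm{c}(\beta))\cdot\dot{\bm{c}}(\beta)-\beta^{-2}S(\bm{c}(\beta))$ with $\nabla F_\beta(\bm{c}(\beta))=0$, applied to $\bm{c}=\bm{p}$ and $\bm{c}=\bm{u}_1$, together with the observation that $S$ has its unique minimum at $\bm{p}\ne\bm{u}_1$. Your write-up is more detailed than the paper's (spelling out smoothness of $\beta\mapsto u_1(\beta)$ via the implicit function theorem and the strict convexity of $S$), but the argument is the same.
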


\begin{proof}
Let ${\bf c}(\beta)\in\Xi$ be a critical point of $F_{\beta}(\cdot)$.
Then, since $F_{\beta}=H+\beta^{-1}S$, we have 
\[
\frac{d}{d\beta}F_{\beta}({\bf c}(\beta))=\nabla F_{\beta}({\bf c}(\beta))\cdot\dot{{\bf c}}(\beta)-\frac{1}{\beta^{2}}S({\bf c}(\beta))\ .
\]
Since $\nabla F_{\beta}({\bf c}(\beta))=0$, we get \eqref{mfe2}
from \eqref{mfe1}. Since $S$ attains its unique local minimum at
$\mathbf{p}$ and $\mathbf{u}_{1}\neq\mathbf{p}$, $\psi'(\cdot)$
is discontinuous at $\beta=\beta_{2}$.
\end{proof}

\section{\label{sec: Meta Result}Main Result for Metastability}

In this section, we analyze the metastable behavior of $\bm{r}_{N}(\cdot)$
based on the analysis of the energy landscape carried out in the previous
section and the general results obtained by \cite{Landim Seo-nonrev RW}.
As inverse temperature $\beta$ varies, the behavior of this dynamics
changes both qualitatively and quantitatively thanks to the structural
phase transitions explained in the previous section. 

Since the invariant measure $\nu_{N}^{\beta}$ is exponentially concentrated
in neighborhoods of ground states, the corresponding Markov process
$\bm{r}_{N}(\cdot)$ stays most of the time at these neighborhoods.
The abrupt transitions between such stable states are the metastable
behavior of the process $\boldsymbol{r}_{N}(\cdot)$ and one of the
natural ways of describing these hopping dynamics among the neighborhoods
of the ground states is the\textit{ Markov chain model reduction}.
A comprehensive understanding of such approaches can be obtained from
\cite{Beltran Landim,Beltran Landim2,Landim MMC}.

When the dynamics starts from a local minimum which is not a global
minimum, we have to estimate the mean of the transition time to the
global minimum in order to quantitatively understand the metastable
behavior. This estimation is known as the \textit{Eyring--Kramers
formula}. In this section we provide the Markov chain model reduction
and Eyring--Kramers formula for the metastable process $\boldsymbol{r}_{N}(\cdot)$. 

Such a metastable behavior is observed only when there are multiple
local minima; and hence we cannot expect metastable behavior at the
high-temperature regime $\beta\le\beta_{1}$ for which ${\bf p}$
is the unique local (and global) minimum. Hence, we assume $\beta>\beta_{1}$
in this section.

\subsection{\label{subsec: Def metastabiltiy}Some preliminaries}

In this subsection, we introduce several notions crucial to the description
of the metastable behavior. 

\subsubsection*{Some constants }

Recall the definition of $\{\bm{e}_{1},\,\dots,\,\bm{e}_{q}\}$ from
Notation \ref{nota: symplex}. Define $(q-1)\times(q-1)$ matrices
$\mathbb{A}^{i,\,j}$, $i,j\in S$, and $\mathbb{A}(\bm{x})$ as
\[
\mathbb{A}^{i,\,j}\,=\,(\bm{e}_{j}-\bm{e}_{i})(\bm{e}_{j}-\bm{e}_{i})^{\dagger}\ \;\;\;\text{and \;\;\;\;}\mathbb{A}(\bm{x})=\sum_{1\le i<j\le q}w^{i,\,j}(\bm{x})\mathbb{A}^{i,\,j}\ ,
\]
where $w^{i,\,j}(\bm{x})\,:=\,\sqrt{x_{i}x_{j}}$ . The appearance
of the weight $w^{i,\,j}(\cdot)$ is explained in Section \ref{subsec: Special LS}.
Since $\mathbb{A}^{i,\,j}$, $i,j\in S$, are positive definite, $\mathbb{A}(\bm{x})$
satisfies \cite[display (A.1)]{Landim Seo-nonrev RW} and hence, by
\cite[Lemma A.1]{Landim Seo-nonrev RW}, for all $k,l\in S$, the
matrices $(\nabla^{2}F_{\beta})({\bf u}_{2}^{k,\,l})\mathbb{A}({\bf u}_{2}^{k,\,l})^{\dagger}$
and $(\nabla^{2}F_{\beta})({\bf v}_{1}^{k})\mathbb{A}({\bf v}_{1}^{k})^{\dagger}$
have the unique negative eigenvalue which will be denoted respectively
by $-\mu_{k,\,l}=-\mu_{k,\,l}(\beta)$ and $-\mu_{\frak{o},\,k}=-\mu_{\frak{o},\,k}(\beta)$. 

Let us define the so-called \textit{Eyring--Kramers constant}s corresponding
to our model as
\begin{align}
\omega_{k,\,l}=\omega_{k,\,l}(\beta) & :=\frac{\mu_{k,\,l}(\beta)}{\sqrt{-\det[(\nabla^{2}F_{\beta})({\bf u}_{2}^{k,\,l})]}}e^{-\beta G_{\beta}({\bf u}_{2}^{k,\,l})}\ ,\ \ \ k,\,l\in S\ ,\label{e: Def omega1}\\
\omega_{\frak{o},\,k}=\omega_{\frak{o},\,k}(\beta) & :=\frac{\mu_{\frak{o},\,k}(\beta)}{\sqrt{-\det[(\nabla^{2}F_{\beta})({\bf v}_{1}^{k})]}}e^{-\beta G_{\beta}({\bf v}_{1}^{k})}\ ,\ \ \ k\in S\ .\label{e: Def omega0}
\end{align}
By symmetry, we have $\omega_{k,\,l}=\omega_{k',\,l'}$ for all $k,\,l\in S$
and $k',\,l'\in S$ and $\omega_{o,\,k}=\omega_{\frak{o},\,k'}$ for
all $k,\,k'\in S$. Hence, let us write $\omega_{\frak{o}}=\omega_{\frak{o},\,1}$
and $\omega_{1}=\omega_{1,\,2}$. Next, define
\begin{align}
\nu_{k}=\nu_{k}(\beta)\, & :=\,\frac{\exp(-\beta G_{\beta}({\bf u}_{1}^{k}))}{\sqrt{\beta^{2}\det[(\nabla^{2}F_{\beta})({\bf u}_{1}^{k})]}}\ ,\ \ \ k\in S\ ,\label{e: Def nu1}\\
\nu_{\frak{o}}=\nu_{\frak{o}}(\beta)\, & :=\,\frac{\exp(-\beta G_{\beta}({\bf p}))}{\sqrt{\beta^{2}\det[(\nabla^{2}F_{\beta})({\bf p})]}}\ .\label{e: Def nu0}
\end{align}
By the symmetry, we also obtain $\nu_{1}=\cdots=\nu_{q}$.

\subsubsection*{Time scales}

The constant $H_{\beta}$ defined in \eqref{e: height of saddles}
denotes the height of the lowest saddle points. Let $\theta_{k}=\theta_{k}(\beta)$,
$k\in\widehat{S}$, be the depth of well $\mathcal{W}_{k}(\beta)$,
i.e., 
\[
\begin{cases}
\,\theta_{1}=\cdots=\theta_{q}=\beta[H_{\beta}-F_{\beta}({\bf u}_{1})]\ ,\\
\,\theta_{\frak{o}}=\beta[F_{\beta}({\bf v}_{1})-F_{\beta}({\bf p})]\ .
\end{cases}
\]
Then, $e^{N\theta_{1}}$ and $e^{N\theta_{\frak{o}}}$ represent the
time scales on which $\bm{r}_{N}(\cdot)$ exhibits metastability.
For $\beta\ge q$, the constant $\theta_{\frak{o}}$ is meaningless
since $\mathcal{W}_{\frak{o}}=\emptyset$.

\subsubsection*{Order process and Markov chain model reduction}

Let $\delta=\delta(\beta)>0$ be a small enough number such that $\delta<\min\{\theta_{\frak{o}},\theta_{1}\}$.
If $\beta\ge q$, since $\theta_{\frak{o}}$ is not defined, let $\delta=(1/2)\theta_{1}$.
For $k\in S$, define
\begin{align*}
\mathcal{W}_{k}^{\delta} & =\mathcal{W}_{k}\cap\{\bm{x}\in\Xi\,:\,F_{\beta}(\bm{x})<H_{\beta}-\delta\}\ ,\\
\mathcal{W}_{\frak{o}}^{\delta} & =\mathcal{W}_{\frak{o}}\cap\{\bm{x}\in\Xi\,:\,F_{\beta}(\bm{x})<F_{\beta}({\bf v}_{1})-\delta\}\ .
\end{align*}
For $k\in\widehat{S}$, define $\mathcal{E}_{N}^{k}=\mathcal{E}_{N}^{k}(\beta)$
as
\[
\mathcal{E}_{N}^{k}=\mathcal{W}_{k}^{\delta}\cap\Xi_{N}\ .
\]
This set $\mathcal{E}_{N}^{k}$ is called the metastable set, provided
that it is not an empty set. For $A\subset\widehat{S}$, we write
\[
\mathcal{E}_{N}^{A}=\bigcup_{k\in A}\mathcal{E}_{N}^{k}\ .
\]
Let $T$ be $S,\,\widehat{S},$ or $\{\frak{o},\,S\}$. Denote by
$\Psi_{N}=\Psi_{N}^{\beta}:\Xi_{N}\to T\cup\{N\}$ the projection
map given by
\[
\Psi_{N}(\bm{x})\,=\,\sum_{k\in T}k\bm{1}\{\bm{x}\in\mathcal{E}_{N}^{k}\}+N\bm{1}\{\bm{x}\in\Xi_{N}\setminus\mathcal{E}_{N}^{T}\}\ .
\]
Let us define the so-called \textit{order process} by ${\bf X}_{N}(t)=\Psi_{N}(\bm{r}_{N}(t))$
which represents the index of metastable set at which the process
$\boldsymbol{r}_{N}(\cdot)$ is staying.
\begin{defn}[Markov chain model reduction]
\label{def: Lim Chain}Let ${\bf X}(\cdot)$ be a continuous time
Markov chain on $T$. We say that the \textit{metastable behavior
of the process $\bm{r}_{N}(\cdot)$ is described by a Markov Process
${\bf X}(\cdot)$ in the time scale $\theta_{N}$} if, for all $k\in T$
and for all sequence $(\bm{x}_{N})_{N\ge1}$ such that $\bm{x}_{N}\in\mathcal{E}_{N}^{k}$
for all $N\ge1$, the finite dimensional marginals of the process
${\bf X}_{N}(\theta_{N}\cdot)$ under $\mathbb{P}_{\bm{x}_{N}}^{N,\,\beta}$
converges to that of the Markov chain $\mathbf{X}(\cdot)$ as $N\to\infty$. 
\end{defn}

In the previous definition, it is clear that the Markov chain $\mathbf{X}(\cdot)$
describes the inter-valley dynamics of the process $\boldsymbol{r}_{N}(\cdot)$
accelerated by a factor of $\theta_{N}$. 

\subsection{\label{subsec: Metastability q4}Metastability Results for $q\le4$}

We can now state the main result for the metastable behavior. First,
we consider the case $q\le4$ whose result is essentially the same
as that in \cite[Section 4.3]{Landim Seo-3spin} where only the case
$q=3$ was considered. 

We define limiting Markov chains when $q\le4$.
\begin{defn*}
Let $q\le4$ and $i\in\{\,(1,2),\,(2),\,(2,3),\,(3,\infty),\,(4)\,\}$.
Let ${\bf Y}_{q}^{i}(\cdot)$ be a Markov chain on $T$ with jump
rate $r_{q}^{i}:T\times T\to\mathbb{R}$ given by
\begin{enumerate}
\item $r_{q}^{(1,2)}(k,l)=\bm{1}\{l=\frak{o}\}\frac{\omega_{\frak{o}}}{\nu_{1}}\,,\,T=\widehat{S}$.
\item $r_{q}^{(2)}(k,l)=\bm{1}\{l=\frak{o}\}\frac{\omega_{\frak{o}}}{\nu_{1}}+\bm{1}\{k=\frak{o}\}\frac{\omega_{\frak{o}}}{\nu_{\frak{o}}}\,,\,T=\widehat{S}$.
\item $r_{q}^{(2,3)}(k,l)=\frac{\omega_{\frak{o}}}{q\nu_{1}}\,,\,T=S$.
\item $r_{q}^{(3,\infty)}(k,l)=\begin{cases}
\,\frac{\omega_{0}}{\nu_{1}}\ , & q=3\\
\,\frac{\omega_{1}}{\nu_{1}}\ , & q=4
\end{cases}\,,\,T=S$.
\item $r_{q}^{(4)}(k,l)=\bm{1}\{k=\frak{o}\}\frac{\omega_{\frak{o}}}{\nu_{\frak{o}}}\,,\,T=\widehat{S}$.
\end{enumerate}
\end{defn*}
The following theorem is the metastability result for $q\le4$. We
remark that the case when $q=3$ is proven in \cite[Section 4.3]{Landim Seo-3spin}.
\begin{thm}
\label{t: metaresult q4}Let $q\le4$. Then, the metastable behavior
of the process $\bm{r}_{N}(\cdot)$ is described by (cf. Definition
\ref{def: Lim Chain})
\begin{enumerate}
\item $\beta\in(\beta_{1},\beta_{2})$: the process ${\bf Y}_{q}^{(1,2)}(\cdot)$
in the time scale $2\pi Ne^{\theta_{1}}$.
\item $\beta=\beta_{2}$: the process ${\bf Y}_{q}^{(2)}(\cdot)$ in the
time scale $2\pi Ne^{\theta_{1}}$.
\item $\beta\in(\beta_{2},q)$: the process ${\bf Y}_{q}^{(2,3)}(\cdot)$
in the time scale $2\pi Ne^{\theta_{1}}$ and by the process ${\bf Y}_{q}^{(4)}(\cdot)$
in the time scale $2\pi Ne^{\theta_{\frak{o}}}$.
\item $\beta\in(q,\infty)$: the process ${\bf Y}_{q}^{(3,\infty)}(\cdot)$
in the time scale $2\pi Ne^{\theta_{1}}$.
\end{enumerate}
\end{thm}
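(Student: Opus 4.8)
The plan is to obtain Theorem \ref{t: metaresult q4} by invoking the general metastability machinery of \cite{Landim Seo-nonrev RW} for reversible diffusion-type Markov chains, with the energy landscape data supplied by Theorems \ref{t: metasets all q} and \ref{t: metasets q4}. The first step is to verify that the process $\bm{r}_N(\cdot)$ fits the abstract framework of \cite{Landim Seo-nonrev RW}: it is a reversible continuous-time Markov chain on the lattice $\Xi_N = \Xi \cap (\mathbb{Z}/N)^{q-1}$ with invariant measure $\nu_N^\beta(\bm{x}) \propto (2\pi N)^{-(q-1)/2} \exp\{-\beta N F_{\beta,N}(\bm{x})\}$, where $F_{\beta,N} = F_\beta + N^{-1} G_{\beta,N}$ converges to the smooth function $F_\beta$. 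One must identify the local limiting structure of the jump rates — this is where the weights $w^{i,j}(\bm{x}) = \sqrt{x_i x_j}$ and the matrix $\mathbb{A}(\bm{x})$ come from (their derivation is deferred to Section \ref{subsec: Special LS} per the text) — and check that the hypotheses of \cite{Landim Seo-nonrev RW} (namely the condition \cite[(A.1)]{Landim Seo-nonrev RW} and the nondegeneracy of the relevant Hessians, already noted after the definition of $\mathbb{A}(\bm{x})$) hold at every critical point in $\{\mathbf{p}\} \cup \mathcal{U}_1 \cup \mathcal{U}_2 \cup \mathcal{V}_1$.

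The second step is to feed in the precise landscape description. For each regime we read off from Theorem \ref{t: metasets q4} which wells $\mathcal{W}_k$, $k \in \widehat S$, are present, their depths $\theta_k = \beta[H_\beta - F_\beta(\mathbf{u}_1)]$ and $\theta_{\frak{o}} = \beta[F_\beta(\mathbf{v}_1) - F_\beta(\mathbf{p})]$, and the saddle gates $\Sigma_{k,l}$: for $\beta \in (\beta_1,\beta_2)$ the gate between any $\mathcal{W}_k$ and $\mathcal{W}_{\frak{o}}$ is the single point $\mathbf{v}_1^k$ while the direct gates $\Sigma_{k,l}$ ($k,l \in S$) are empty (so transitions among the deep wells must pass through $\mathcal{W}_{\frak{o}}$); at $\beta = \beta_2$ the wells $\mathcal{W}_k$ and $\mathcal{W}_{\frak{o}}$ become equally deep; for $\beta \in (\beta_2,q)$ the well $\mathcal{W}_{\frak{o}}$ is strictly shallower, giving two separated time scales $2\pi N e^{\theta_1}$ and $2\pi N e^{\theta_{\frak{o}}}$; and for $\beta \in (q,\infty)$ the well $\mathcal{W}_{\frak{o}}$ has vanished and the gate between $\mathcal{W}_k$ and $\mathcal{W}_l$ is $\{\mathbf{v}_1^m\}$ (for $q=3$) or $\{\mathbf{u}_2^{k,l}\}$ (for $q=4$). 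The general theorem then yields that ${\bf X}_N(\theta_N \cdot)$ converges to the Markov chain on $T$ whose jump rate from $k$ to $l$ equals the appropriate Eyring--Kramers ratio, e.g. $\omega_{\frak{o}}/\nu_1$ from a deep well to $\frak{o}$ and $\omega_{\frak{o}}/\nu_{\frak{o}}$ from $\frak{o}$ to a deep well, exactly matching the rates $r_q^i$ in the definition preceding the theorem; the factor $q$ in $r_q^{(2,3)}$ reflects that on the fast time scale $\mathcal{W}_{\frak{o}}$ acts as an instantaneous relay distributing mass uniformly among the $q$ deep wells, and the constant $2\pi$ is the residue of the Gaussian normalization $(2\pi N)^{-(q-1)/2}$ combined with the Laplace asymptotics at the saddles and wells.

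The third step handles the two regimes with two time scales, $\beta \in (\beta_2,q)$, where one must separately run the argument on the scale $e^{\theta_1}$ (treating the shallow well $\mathcal{W}_{\frak{o}}$ as transient and absorbed, which produces the collapsed chain $\mathbf{Y}_q^{(2,3)}$ on $S$) and on the scale $e^{\theta_{\frak{o}}}$ (on which the deep wells are effectively frozen, so only escape from $\frak{o}$ is visible, giving $\mathbf{Y}_q^{(4)}$). This requires checking the standard conditions of \cite{Landim Seo-nonrev RW} for a two-scale landscape — essentially that the depth ordering $\theta_{\frak{o}} < \theta_1$ is strict in the open regime, which is part of Theorem \ref{t: metasets q4}(4) and will also need a short quantitative lemma (to be proven in Section \ref{sec: Pre cri}) ensuring $F_\beta(\mathbf{v}_1) - F_\beta(\mathbf{p})$ and $H_\beta - F_\beta(\mathbf{u}_1)$ stay bounded away from each other on compact $\beta$-intervals. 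The main obstacle is not the abstract convergence — that is essentially a citation once the hypotheses are checked — but rather the careful verification that the hypotheses of \cite{Landim Seo-nonrev RW} genuinely apply here: in particular confirming the exact form of the limiting generator's local structure (the weights $w^{i,j}$), the uniqueness and sign of the negative eigenvalues $-\mu_{k,l}$, $-\mu_{\frak{o},k}$ of $(\nabla^2 F_\beta)(\cdot)\,\mathbb{A}(\cdot)^\dagger$ at the saddles, and that the connected components of the sublevel sets behave as the theorem requires (no unexpected additional gates), all of which ultimately rest on the detailed critical-point analysis carried out in Sections \ref{subsec: class cri}--\ref{sec: Pre cri}. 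Since $q=3$ is already treated in \cite[Section 4.3]{Landim Seo-3spin}, the genuinely new work is the case $q=4$, where $\mathcal{U}_2$ enters as the saddle set between deep wells for $\beta > q$; checking that $\mathbf{u}_2^{k,l}$ is a nondegenerate saddle of index one and computing its contribution to $\omega_1$ is the one place a nontrivial computation is unavoidable.
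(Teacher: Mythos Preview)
Your proposal is correct and follows essentially the same approach as the paper: the paper's proof is the one-line statement that the result follows from Theorems \ref{t: metasets all q} and \ref{t: metasets q4}, Proposition \ref{p: EK constants}, and \cite[Theorem 2.2, Remarks 2.10, 2.11]{Landim Seo-nonrev RW}, and your three steps are precisely an unpacking of what that citation entails (verifying the hypotheses of \cite{Landim Seo-nonrev RW} via the generator approximation, feeding in the landscape data, and separating the two time scales). The only minor remark is that the strict depth ordering $\theta_{\frak{o}} < \theta_1$ for $\beta \in (\beta_2,q)$ does not require an additional quantitative lemma: since $H_\beta = F_\beta(\mathbf{v}_1)$ on this range (Theorem \ref{t: metasets q4}(1)) and $F_\beta(\mathbf{u}_1) < F_\beta(\mathbf{p})$ by Theorem \ref{t: metasets all q}(4), the inequality is immediate.
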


The proof follows from Theorems \ref{t: metasets all q} and \ref{t: metasets q4},
Proposition \ref{p: EK constants}, and \cite[Theorem 2.2, Remark 2.10, 2.11]{Landim Seo-nonrev RW}.
\begin{rem}
As mentioned in \cite{Landim Seo-3spin}, we cannot investigate the
case $\beta=\beta_{3}=q$ with the current method since ${\bf p}$
is a degenerate saddle point.
\end{rem}

\begin{rem}
\label{rem: q=00003D4,3}The qualitative feature of the metastable
behavior of $\bm{r}_{N}(\cdot)$ is essentially same for $q=3$ and
$q=4$. The only difference is that the saddle points between metastable
sets are defined in different ways; however, when $\beta>\beta_{3}$,
the points in $\mathcal{V}_{1}$ for $q=3$ and $\mathcal{U}_{2}$
for $q=4$ play the same role since all the points belonging to these
sets represent states in which most sites are aligned to two spins
equally.
\end{rem}

\subsection{\label{subsec: Metastability q5}Metastability Results for $q\ge5$}

As in the previous subsection, we start by defining limiting Markov
chains. Note that there are two different Markov chains.
\begin{defn*}
Let $q\ge5$ and $i\in\{\,(1,2),\,(2),\,(2,3),\,(3),\,(3,\infty),\,(4),\,(5)\,\}$.
Let ${\bf Y}_{q}^{i}(\cdot)$ be a Markov chain on $T$ with jump
rate $r_{q}^{i}:T\times T\to\mathbb{R}$ with jump rate $r_{q}^{i}:T\times T\to\mathbb{R}$
given by
\begin{enumerate}
\item $r_{q}^{(1,2)}(k,l)=\bm{1}\{l=\frak{o}\}\frac{\omega_{\frak{o}}}{\nu_{1}}\,,\,T=\widehat{S}$.
\item $r_{q}^{(2)}(k,l)=\bm{1}\{l=\frak{o}\}\frac{\omega_{\frak{o}}}{\nu_{1}}+\bm{1}\{k=\frak{o}\}\frac{\omega_{\frak{o}}}{\nu_{\frak{o}}}\,,\,T=\widehat{S}$.
\item $r_{q}^{(2,3)}(k,l)=\frac{\omega_{\frak{o}}}{q\nu_{1}}\,,\,T=S$.
\item $r_{q}^{(3)}(k,l)=\frac{1}{\nu_{1}}(\frac{\omega_{\frak{o}}}{q}+\omega_{1})\,,\,T=S$.
\item $r_{q}^{(3,\infty)}(k,l)=\frac{\omega_{1}}{\nu_{1}}\,,\,T=S$.
\item $r_{q}^{(4)}(k,l)=\bm{1}\{k=\frak{o}\}\frac{\omega_{\frak{o}}}{\nu_{\frak{o}}}\,,\,T=\widehat{S}$.
\item $r_{q}^{(5)}(k,l)=\bm{1}\{k=\frak{o}\}\frac{q\omega_{\frak{o}}}{\nu_{\frak{o}}}\,,\,T=\{\frak{o},\,S\}$.
\end{enumerate}
\end{defn*}
Now, we present the metastability result for $q\ge5$. The new metastable
behaviors are observed when $\beta=\beta_{3}$ and $\beta\in(\beta_{3},q)$.
\begin{thm}
\label{t: metaresult}Let $q\ge5$. Then, the metastable behavior
of the process $\bm{r}_{N}(\cdot)$ is described by
\begin{enumerate}
\item $\beta\in(\beta_{1},\beta_{2})$: the process ${\bf Y}_{q}^{(1,2)}(\cdot)$
in the time scale $2\pi Ne^{\theta_{1}}$.
\item $\beta=\beta_{2}$: the process ${\bf Y}_{q}^{(2)}(\cdot)$ in the
time scale $2\pi Ne^{\theta_{1}}$.
\item $\beta\in(\beta_{2},\beta_{3})$: the process ${\bf Y}_{q}^{(2,3)}(\cdot)$
in the time scale $2\pi Ne^{\theta_{1}}$ and by the process ${\bf Y}_{q}^{(4)}(\cdot)$
in the time scale $2\pi Ne^{\theta_{\frak{o}}}$.
\item $\beta=\beta_{3}$: the process ${\bf Y}_{q}^{(3)}(\cdot)$ in the
time scale $2\pi Ne^{\theta_{1}}$ and by the process ${\bf Y}_{q}^{(4)}(\cdot)$
in the time scale $2\pi Ne^{\theta_{\frak{o}}}$.
\item $\beta\in(\beta_{3},q)$: the process ${\bf Y}_{q}^{(3,\infty)}(\cdot)$
in the time scale $2\pi Ne^{\theta_{1}}$and by the process ${\bf Y}_{q}^{(5)}(\cdot)$
in the time scale $2\pi Ne^{\theta_{\frak{o}}}$.
\item $\beta\in[q,\infty)$: the process ${\bf Y}_{q}^{(3,\infty)}(\cdot)$
in the time scale $2\pi Ne^{\theta_{1}}$.
\end{enumerate}
\end{thm}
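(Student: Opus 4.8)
To prove Theorem~\ref{t: metaresult}, the plan is to run the same argument that yields Theorem~\ref{t: metaresult q4}: feed the energy-landscape description of Theorems~\ref{t: metasets all q} and~\ref{t: metasets}, together with the Eyring--Kramers constants of Proposition~\ref{p: EK constants}, into the general metastability theorem \cite[Theorem~2.2, Remarks~2.10, 2.11]{Landim Seo-nonrev RW}. That theorem applies to $\bm r_N(\cdot)$ because, by Proposition~\ref{p: r_N MC}, it is reversible with respect to $\nu_N^\beta$ and, as computed in Section~\ref{subsec: Dyna of magnet} and explained in Section~\ref{subsec: Special LS}, its jump rates on $\Xi_N$ are those of a discretized gradient flow with the position-dependent diffusion matrix $\mathbb A(\bm x)$, $w^{i,j}(\bm x)=\sqrt{x_ix_j}$; the non-degeneracy of the saddle gates required by that theorem is the fact recorded in Section~\ref{subsec: Def metastabiltiy}, namely that $(\nabla^2F_\beta)({\bf u}_2^{k,l})\mathbb A({\bf u}_2^{k,l})^\dagger$ and $(\nabla^2F_\beta)({\bf v}_1^k)\mathbb A({\bf v}_1^k)^\dagger$ each have a single negative eigenvalue. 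The $O(1/N)$ term $N^{-1}G_{\beta,N}$ in $F_{\beta,N}$ shifts critical points and critical values by $O(1/N)$ and, since $\mathbf p,\mathcal U_1,\mathcal V_1,\mathcal U_2$ all lie in $\text{int}\,\Xi$ where $G_{\beta,N}$ converges, does not change the connectivity of any sublevel set; this is exactly as in the $q=3$ treatment of \cite{Landim Seo-3spin}.

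With this input, the limiting order process in each regime is read off by the usual rule: on the time scale $2\pi Ne^{N\theta}$ one collapses every well of depth strictly less than $\theta$ into a transparent passage (a process entering it relaxes instantaneously and leaves through one of its gates with probability proportional to that gate's Eyring--Kramers weight), freezes every well of depth strictly greater than $\theta$, and assigns to an ordered pair of surviving wells $(\mathcal W_k,\mathcal W_l)$ the rate $\nu_k^{-1}$ times the sum of the weights of all height-$H_\beta$ gates joining them, directly or through a collapsed well. For $\beta>\beta_1$ there are exactly two kinds of wells: the $q$ symmetric deep wells $\mathcal W_k$ around ${\bf u}_1^k$, of depth $\theta_1=\beta[H_\beta-F_\beta({\bf u}_1)]$, carrying $\nu_1$, and, for $\beta<q$, the single well $\mathcal W_{\frak o}$ around $\mathbf p$, of depth $\theta_{\frak o}=\beta[F_\beta({\bf v}_1)-F_\beta(\mathbf p)]$, carrying $\nu_{\frak o}$; the gate weights are $\omega_1$ for the $\{{\bf u}_2^{k,l}\}$ and $\omega_{\frak o}$ for the $\{{\bf v}_1^k\}$. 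Since $F_\beta(\mathbf p)-F_\beta({\bf u}_1)$ vanishes and changes sign at $\beta_2$ (Theorem~\ref{t: metasets all q}(4)), we have $\theta_{\frak o}>\theta_1$ for $\beta\in(\beta_1,\beta_2)$, $\theta_{\frak o}=\theta_1$ at $\beta_2$, and $\theta_{\frak o}<\theta_1$ for $\beta\in(\beta_2,\beta_3]$; this is the source of the one-time-scale versus two-time-scale alternative in the statement.

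The regime-by-regime count is then routine. For $\beta\in(\beta_1,\beta_2)$, $\mathcal W_{\frak o}$ is deepest and on scale $2\pi Ne^{N\theta_1}$ each shallow $\mathcal W_k$ empties through its single gate $\{{\bf v}_1^k\}$ into the frozen $\mathcal W_{\frak o}$, giving $r_q^{(1,2)}$ (on the finer scale the process has already fallen into the unique global-minimum well and makes no further transitions). At $\beta=\beta_2$ the two families have equal depth and $\{{\bf v}_1^k\}$ is traversed both ways, giving $r_q^{(2)}$. For $\beta\in(\beta_2,\beta_3)$, $H_\beta=F_\beta({\bf v}_1)$ and $\mathcal W_{\frak o}$ is shallow: on scale $2\pi Ne^{N\theta_{\frak o}}$ it empties into the $q$ distinct frozen deep wells ($r_q^{(4)}$), while on scale $2\pi Ne^{N\theta_1}$ it is transparent and, as $\Sigma_{k,l}=\emptyset$, the deep wells communicate only via $\mathcal W_{\frak o}$ (exit through $\{{\bf v}_1^k\}$, then redistribute uniformly among the $q$ gates $\{{\bf v}_1^m\}$), giving rate $\omega_{\frak o}/(q\nu_1)$, i.e.\ $r_q^{(2,3)}$. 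At $\beta=\beta_3$ one additionally has $F_\beta({\bf u}_2)=F_\beta({\bf v}_1)=H_\beta$, so $\partial\mathcal W_k$ carries both $\{{\bf v}_1^k\}$ and the $\{{\bf u}_2^{k,l}\}$; the transparent-well contribution $\omega_{\frak o}/(q\nu_1)$ and the direct contribution $\omega_1/\nu_1$ add to give $r_q^{(3)}$, the finer scale being unchanged ($r_q^{(4)}$). The genuinely new regime is $\beta\in(\beta_3,q)$, where $H_\beta=F_\beta({\bf u}_2)<F_\beta({\bf v}_1)$: on scale $2\pi Ne^{N\theta_1}$ the gates $\{{\bf v}_1^k\}$ lie strictly above $H_\beta$, $\mathcal W_{\frak o}$ is invisible, and the deep wells communicate only through $\{{\bf u}_2^{k,l}\}$, giving $r_q^{(3,\infty)}$; on scale $2\pi Ne^{N\theta_{\frak o}}$ the deep wells are frozen, but by Theorem~\ref{t: metasets}(5) the gates $\{{\bf v}_1^k\}$ now open not onto the individual $\mathcal W_k$ but onto the \emph{single} connected component of $\{F_\beta<F_\beta({\bf v}_1)\}$ containing all of $\mathcal U_1$, so $\mathcal W_{\frak o}$ is absorbed into that common blob through any of the $q$ gates, giving rate $q\omega_{\frak o}/\nu_{\frak o}$ on the two-point space $\{\frak o,S\}$, i.e.\ $r_q^{(5)}$. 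For $\beta\in[q,\infty)$, $\mathcal W_{\frak o}=\emptyset$ by Theorem~\ref{t: metasets all q}(2) and only the deep wells remain, giving again $r_q^{(3,\infty)}$.

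The main obstacle is not the bookkeeping above but the verification that $\bm r_N(\cdot)$ genuinely meets the hypotheses of \cite{Landim Seo-nonrev RW}: one must check the precise form of the jump rates on $\Xi_N$ so that the effective diffusion matrix is $\mathbb A(\bm x)$ and the constants of Proposition~\ref{p: EK constants} are the correct ones; one must control $G_{\beta,N}$ uniformly near all critical points (harmless here, since those points are interior); and at $\beta=\beta_3$ one must make sure the theorem accommodates a well whose boundary contains several gates of the same height (the $q-1$ gates $\{{\bf u}_2^{k,l}\}$ together with $\{{\bf v}_1^k\}$). All of these are present already for $q=3$ in \cite{Landim Seo-3spin}; the only structurally new feature, and the reason Theorem~\ref{t: metaresult} is not a verbatim copy of Theorem~\ref{t: metaresult q4}, is the regime $(\beta_3,q)$, in which the sublevel sets $\{F_\beta<H_\beta\}$ and $\{F_\beta<F_\beta({\bf v}_1)\}$ have different numbers of components and thus force the merged target state $S$ appearing in $r_q^{(5)}$.
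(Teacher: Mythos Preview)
Your proposal is correct and takes essentially the same approach as the paper: the paper's own proof is the one-line statement that the result follows from Theorems~\ref{t: metasets all q} and~\ref{t: metasets}, Proposition~\ref{p: EK constants}, and \cite[Theorem~2.2, Remarks~2.10, 2.11]{Landim Seo-nonrev RW}. You have simply unpacked that citation, spelling out the regime-by-regime bookkeeping (correctly, including the comparison $\theta_{\frak o}\gtrless\theta_1$ via $F_\beta(\mathbf p)-F_\beta(\mathbf u_1)$ and the merged target in $r_q^{(5)}$ forced by Theorem~\ref{t: metasets}(5)) that the paper leaves implicit.
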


The proof follows from Theorems \ref{t: metasets all q} and \ref{t: metasets},
Proposition \ref{p: EK constants}, and \cite[Theorem 2.2, Remarks 2.10, 2.11]{Landim Seo-nonrev RW}.
\begin{rem}
Notably, in contrast to the case $q\le4$, we can describe the metastable
behavior for all $\beta\in(\beta_{1},\infty)$ since the saddle points
are nondegenerate when $\beta=\beta_{3}$.
\end{rem}

\begin{figure}
\subfloat[${\bf Y}_{q}^{(1,2)}$]{\includegraphics[scale=0.75]{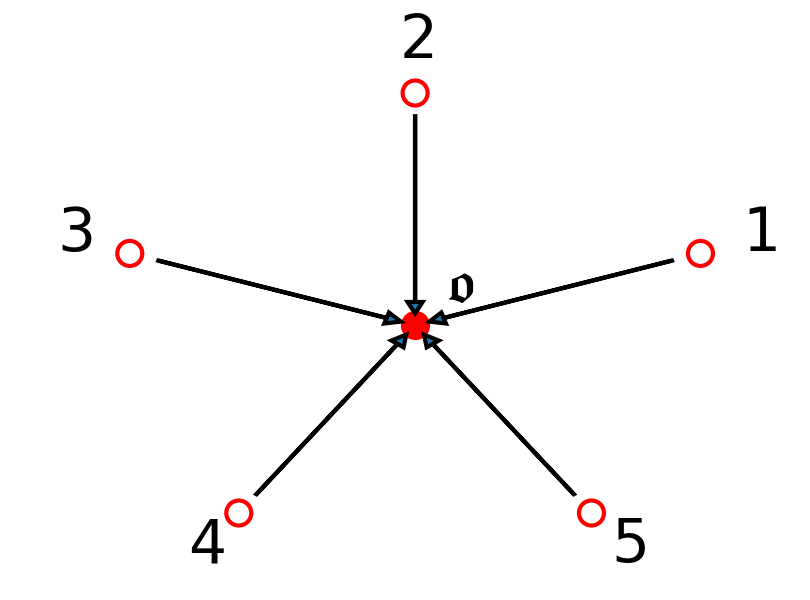}}\subfloat[${\bf Y}_{q}^{(2)}$]{\includegraphics[scale=0.75]{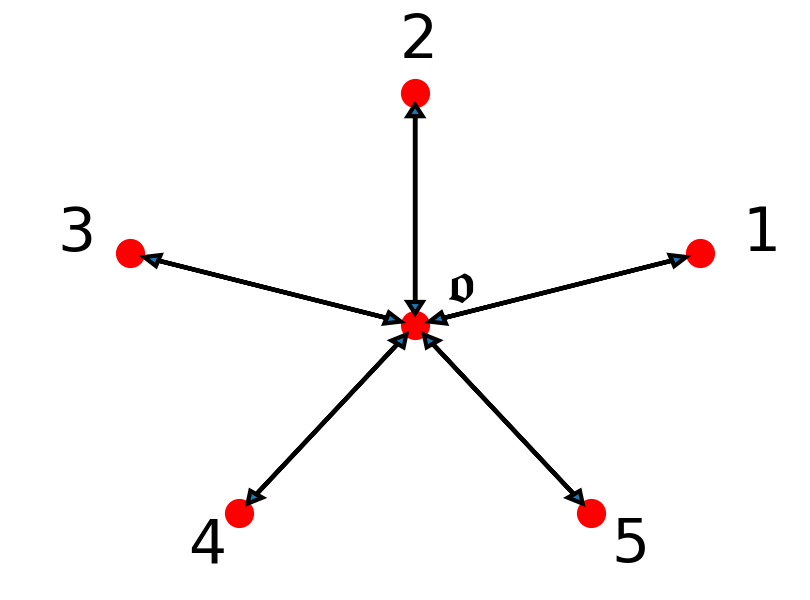}}\subfloat[${\bf Y}_{q}^{(2,3)}$]{\includegraphics[scale=0.75]{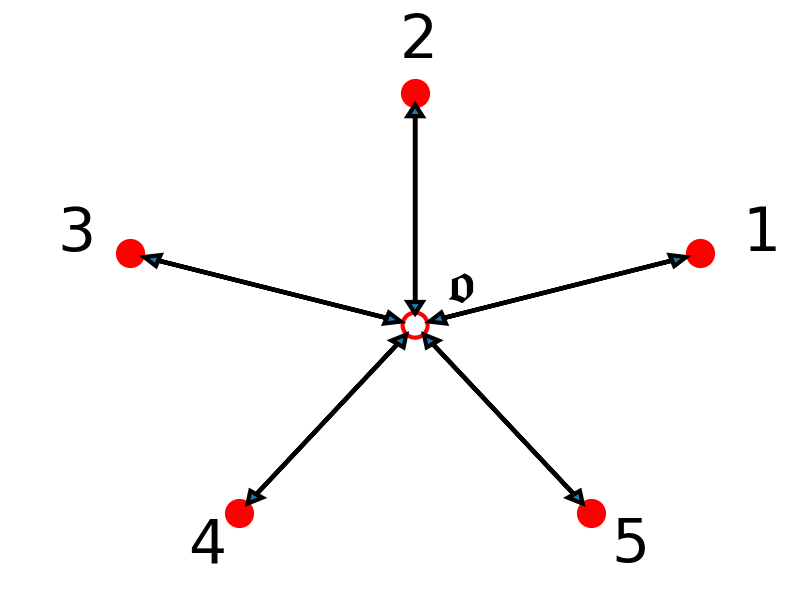}}

\subfloat[${\bf Y}_{q}^{(3)}$]{\includegraphics[scale=0.75]{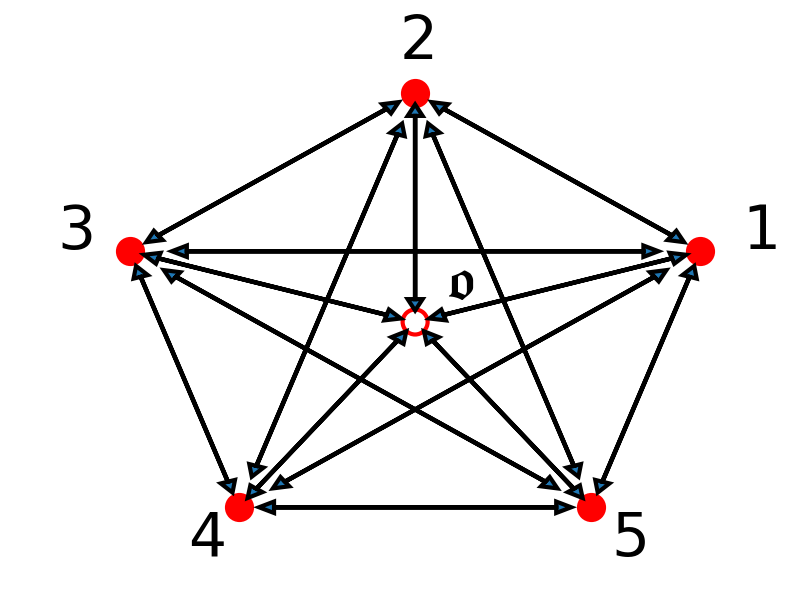}}\subfloat[${\bf Y}_{q}^{(3,\infty)}$]{\includegraphics[scale=0.75]{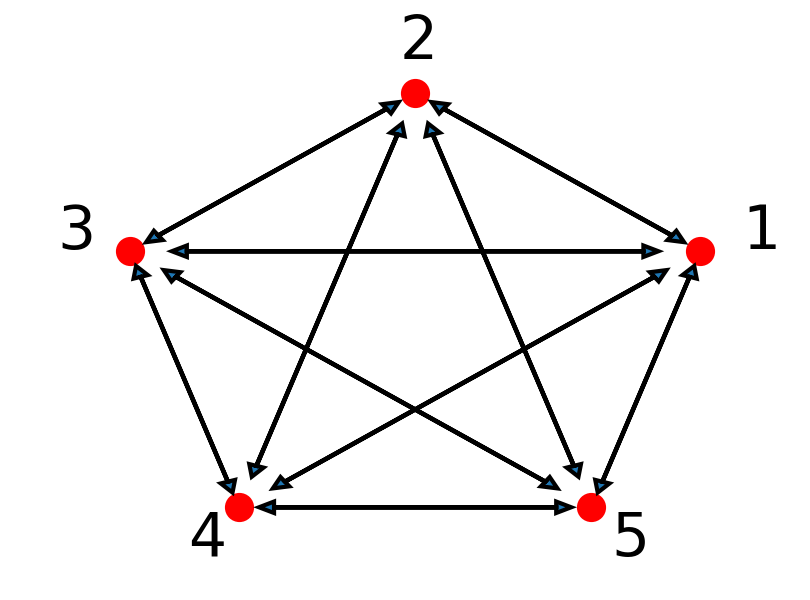}}\subfloat[${\bf Y}_{q}^{(4)}$]{\includegraphics[scale=0.75]{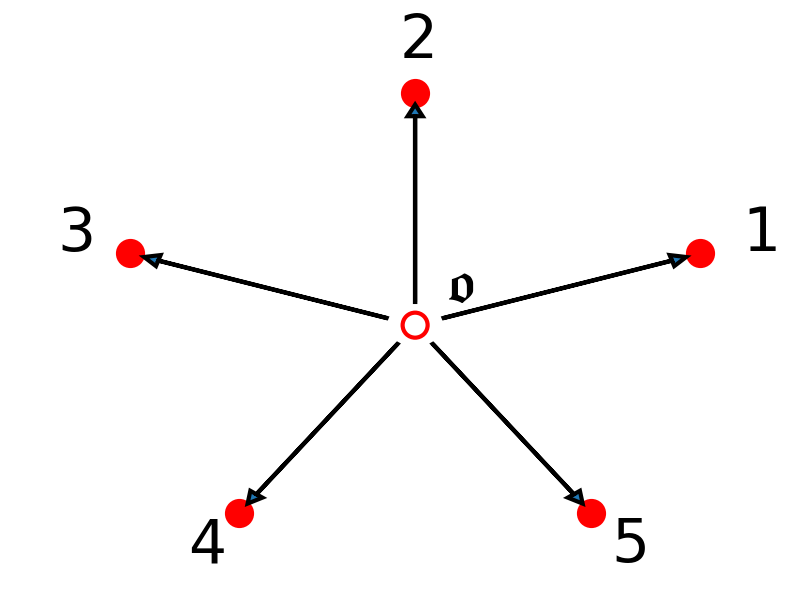}}

\caption{\label{fig: metastability graph}Figure about metastability when $q=5$}
\end{figure}

We can now provide a more intuitive explanation of Theorem \ref{t: metaresult}.
See Figure \ref{fig: metastability graph} also for the description
of metastable behavior. Note that if $\beta_{2}<\beta<q$, there are
two time scales.
\begin{itemize}
\item ${\bf Y}_{q}^{(1,2)}$: If $\beta_{1}<\beta<\beta_{2}$, in the time
scale $2\pi Ne^{\theta_{1}}$, $\bm{r}_{N}(\cdot)$ starting from
$\mathcal{E}_{N}^{S}$, reaches $\mathcal{E}_{N}^{\frak{o}}$ and
stays there forever. When it goes from $\mathcal{E}_{N}^{k}$, $k\in S$,
to $\mathcal{E}_{N}^{\frak{o}}$, it visits the neighborhood of ${\bf v}_{1}^{k}$.
\item ${\bf Y}_{q}^{(2)}$: If $\beta=\beta_{2}$, in the time scale $2\pi Ne^{\theta_{1}}$,
the process $\bm{r}_{N}(\cdot)$ goes around each well in $\mathcal{E}_{N}^{\widehat{S}}$.
However, when $\bm{r}_{N}(\cdot)$ starting from $\mathcal{E}_{N}^{k}$,
$k\in S$, goes to $\mathcal{E}_{N}^{l}$, $l\in S\setminus\{k\}$,
it must pass through $\mathcal{E}_{N}^{\frak{o}}$ and as in the case
$\beta\in(\beta_{1},\beta_{2})$, it visits the neighborhood of ${\bf v}_{1}^{k}$
and then the neighborhood of ${\bf v}_{1}^{l}$.
\item ${\bf Y}_{q}^{(2,3)}$: If $\beta_{2}<\beta<\beta_{3}$, in the time
scale $2\pi Ne^{\theta_{1}}$, the process $\bm{r}_{N}(\cdot)$ starting
from $\mathcal{E}_{N}^{S}$ travels $\mathcal{E}_{N}^{\widehat{S}}$,
however, it stays in $\mathcal{E}_{N}^{\frak{o}}$ in negligible time.
Furthermore, when $\bm{r}_{N}(\cdot)$ goes from $\mathcal{E}_{N}^{k}$,
$k\in S$, to $\mathcal{E}_{N}^{l}$, $l\in S\setminus\{k\}$, it
must visit $\mathcal{E}_{N}^{\frak{o}}$.
\item ${\bf Y}_{q}^{(3)}$: If $\beta=\beta_{3}$, in the time scale $2\pi Ne^{\theta_{1}}$,
the process $\bm{r}_{N}(\cdot)$ starting from $\mathcal{E}_{N}^{S}$
travels $\mathcal{E}_{N}^{\widehat{S}}$, however, it stays in $\mathcal{E}_{N}^{\frak{o}}$
in negligible time. Furthermore, there are two ways in which $\bm{r}_{N}(\cdot)$
goes from $\mathcal{E}_{N}^{k}$, $k\in S$, to $\mathcal{E}_{N}^{l}$,
$l\in S\setminus\{k\}$. First, it goes to $\mathcal{E}_{N}^{l}$
directly and must pass through the neighborhood of ${\bf u}_{2}^{k,\,l}$.
Second, it visits $\mathcal{E}_{N}^{\frak{o}}$ and stays there for
a negligible period of time.
\item ${\bf Y}_{q}^{(3,\infty)}$: If $\beta>\beta_{3}$, in the time scale
$2\pi Ne^{\theta_{1}},$ the process $\bm{r}_{N}(\cdot)$ starting
from $\mathcal{E}_{N}^{S}$ travels $\mathcal{E}_{N}^{S}$ without
visiting $\mathcal{E}_{N}^{\frak{o}}$. As in the case $\beta=\beta_{3}$,
it must pass through the neighborhood of ${\bf u}_{2}^{k\,l}$, $k,l\in S$,
when it goes from $\mathcal{E}_{N}^{k}$ to $\mathcal{E}_{N}^{l}$.
\item ${\bf Y}_{q}^{(4)}$: If $\beta_{2}<\beta\le\beta_{3}$, in the second
time scale $2\pi Ne^{\theta_{\frak{o}}}$, the process $\bm{r}_{N}(\cdot)$
starting from $\mathcal{E}_{N}^{\frak{o}}$, goes to $\mathcal{E}_{N}^{k}$,
$k\in S$, and stays there forever. As ${\bf Y}_{q}^{(1,2)}$, ${\bf Y}_{q}^{(2)}$,
and ${\bf Y}_{q}^{(2,3)}$, it passes through the neighborhood of
${\bf v}_{1}^{k}$ when it moves to $\mathcal{E}_{N}^{k}$ from $\mathcal{E}_{N}^{\frak{o}}$.
\item ${\bf Y}_{q}^{(5)}$: If $\beta_{3}<\beta<q$, in the second time
scale $2\pi Ne^{\theta_{\frak{o}}}$, the process $\bm{r}_{N}(\cdot)$
starting from $\mathcal{E}_{N}^{\frak{o}}$, goes to $\mathcal{E}_{N}^{S}$
and stays there forever. This dynamics is similar to ${\bf Y}_{q}^{(4)}$;
however, $\mathcal{E}_{N}^{k}$, $k\in S$, are not distinguishable.
\end{itemize}

\subsection{Eyring--Kramers Formulae}

In this subsection, we present the Eyring--Kramers formula with regard
to metastable behavior. Before we state the result, we introduce some
notations. Let $[\bm{x}]_{N}$ be the nearest point in $\Xi_{N}$
of $\bm{x}\in\Xi$. If there is more than one such point, one of them
is chosen arbitrarily. For $\mathcal{A}\subset\Xi$, define $[\mathcal{A}]_{N}$
as
\[
[\mathcal{A}]_{N}\,=\,\{[\bm{x}]_{N}\,:\,\bm{x}\in\mathcal{A}\}\ .
\]
Denote by $H_{\mathcal{A}}$ the hitting time of the set $[\mathcal{A}]_{N}$
by the process $\bm{r}_{N}(\cdot)$:
\[
H_{\mathcal{A}}\,=\,\inf\{t>0\,:\,\bm{r}_{N}(t)\in[\mathcal{A}]_{N}\}\ .
\]
If $\mathcal{A}=\{\bm{x}\}$, we simply write $H_{\mathcal{A}}=H_{\bm{x}}$.

We have the following theorem.
\begin{thm}
\label{t: EK For}Let $q\ge3$. We have the following.
\begin{enumerate}
\item For $\beta_{1}<\beta\le\beta_{2}$ and $k\in S$, we have
\[
\mathbb{E}_{{\bf u}_{1}^{k}}^{N,\,\beta}[H_{{\bf p}}]\,=\,[1+o_{N}(1)]\frac{\nu_{1}}{\omega_{\frak{o}}}2\pi N\exp(N\theta_{1})\ .
\]
\item For $\beta_{2}\le\beta<q$, we have
\[
\mathbb{E}_{{\bf p}}^{N,\,\beta}[H_{\mathcal{U}_{1}}]\,=\,[1+o_{N}(1)]\frac{\nu_{\frak{o}}}{q\omega_{\frak{o}}}2\pi N\exp(N\theta_{0})\ .
\]
\item For $\beta>\beta_{3}$ and $k\in S$, we have
\[
\mathbb{E}_{{\bf u}_{1}^{k}}^{N,\,\beta}[H_{\mathcal{U}_{1}\setminus\{{\bf u}_{1}^{k}\}}]\,=\,[1+o_{N}(1)]\frac{\nu_{1}}{(q-1)\omega_{1}}2\pi N\exp(N\theta_{1})\ .
\]
\end{enumerate}
\end{thm}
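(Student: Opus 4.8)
The plan is to derive all three Eyring--Kramers formulae from the abstract potential-theoretic estimates of \cite{Landim Seo-nonrev RW}, combined with the precise energy-landscape description in Theorems \ref{t: metasets all q}, \ref{t: metasets q4}, and \ref{t: metasets} together with Proposition \ref{p: EK constants}. The key structural input is that, in each of the three regimes considered, the starting local minimum and the target set are separated by a well-understood collection of nondegenerate saddle points whose Eyring--Kramers constants have been computed: for item (1) the minimum $\mathbf{u}_1^k$ communicates with $\mathbf{p}$ only through the single saddle $\mathbf{v}_1^k$, for item (2) the minimum $\mathbf{p}$ communicates with $\mathcal{U}_1$ through the $q$ saddles $\mathbf{v}_1^1,\dots,\mathbf{v}_1^q$, and for item (3) (valid since $\beta>\beta_3$, hence by Theorems \ref{t: metasets q4}(4) and \ref{t: metasets}(4) the gates $\Sigma_{k,l}$ are exactly $\{\mathbf{u}_2^{k,l}\}$) the minimum $\mathbf{u}_1^k$ communicates with each of the $q-1$ remaining minima $\mathbf{u}_1^l$ through a single saddle $\mathbf{u}_2^{k,l}$.

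First I would set up the mean-hitting-time formula from potential theory in the form $\mathbb{E}_{\bm{x}}[H_{\mathcal{A}}] = \mathrm{cap}(\bm{x},\mathcal{A})^{-1}\sum_{\bm{y}} \nu_N^\beta(\bm{y})\, h_{\bm{x},\mathcal{A}}(\bm{y})$, and observe that the numerator is dominated, to leading order, by the mass $\nu_N^\beta$ puts near the starting well $\mathcal{W}_k$ (resp.\ $\mathcal{W}_{\frak o}$), since the equilibrium potential is $\approx 1$ there and exponentially small elsewhere. Using the Stirling expansion \eqref{e: def of F_beta,N}--\eqref{e: def of F_beta} and a Laplace/Gaussian approximation around the relevant minimum, this mass is asymptotically $(2\pi N)^{(q-1)/2} Z_N(\beta)^{-1} \cdot (2\pi/(\beta N))^{(q-1)/2} e^{-\beta N F_\beta(\text{min})} e^{-\beta G_\beta(\text{min})} \det[\beta^2 \nabla^2 F_\beta(\text{min})]^{-1/2}$, which produces exactly the constants $\nu_1$ or $\nu_{\frak o}$ from \eqref{e: Def nu1}--\eqref{e: Def nu0}. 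The denominator, the capacity, is estimated by the Dirichlet principle (upper bound via a test function built from the harmonic approximation near each saddle) and the Thomson principle (lower bound via a test flow), yielding $\mathrm{cap} \approx (2\pi N)^{-(q-1)/2} Z_N(\beta)^{-1} \sum_{\text{saddles } \mathbf{s}} \omega_{\mathbf s}(\beta)\, e^{-\beta N F_\beta(\mathbf s)}$ with the $\omega$'s as in \eqref{e: Def omega1}--\eqref{e: Def omega0}; this is precisely the content of Proposition \ref{p: EK constants} and the abstract results of \cite{Landim Seo-nonrev RW}, which already handle the nonreversible/weighted structure encoded by $\mathbb{A}(\bm{x})$. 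Dividing, the $(2\pi N)^{(q-1)/2} Z_N(\beta)^{-1}$ factors cancel, the exponential rate is $e^{\beta N(F_\beta(\mathbf s)-F_\beta(\text{min}))}=e^{N\theta_\bullet}$, and counting the number of saddles gives the combinatorial prefactors: one saddle $\mathbf{v}_1^k$ in (1) hence $\nu_1/\omega_{\frak o}$; $q$ saddles in (2) hence $\nu_{\frak o}/(q\omega_{\frak o})$; and $q-1$ saddles $\mathbf{u}_2^{k,l}$ in (3) hence $\nu_1/((q-1)\omega_1)$.

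More precisely, for (1) I would apply the abstract theorem of \cite{Landim Seo-nonrev RW} with the single metastable well $\mathcal{W}_{\frak o}$ below height $F_\beta(\mathbf{v}_1)$ and starting point $\mathbf{u}_1^k$; since in the range $\beta_1<\beta\le\beta_2$ by Theorems \ref{t: metasets q4}(2) and \ref{t: metasets}(2) the only gate out of $\mathcal{W}_k$ toward $\mathbf{p}$ at the relevant height is $\{\mathbf{v}_1^k\}$, exactly one Eyring--Kramers term $\omega_{\frak o}$ appears; note the time scale here is $e^{N\theta_1}=e^{N\beta[F_\beta(\mathbf v_1)-F_\beta(\mathbf u_1)]}$ (not $\theta_{\frak o}$), matching the claim. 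For (2), the target is $\mathcal{U}_1$, the starting point $\mathbf{p}$, and the process must cross one of the $q$ symmetric saddles $\mathbf{v}_1^k$; by symmetry $\omega_{\frak o,k}\equiv\omega_{\frak o}$, so the total capacity carries a factor $q$ and the time scale is $e^{N\theta_{\frak o}}=e^{N\beta[F_\beta(\mathbf v_1)-F_\beta(\mathbf p)]}$. For (3), valid for all $\beta>\beta_3$ and all $q\ge3$ (with $\mathbf{u}_2^{k,l}$ replaced by $\mathbf{v}_1^m$, $m\notin\{k,l\}$, when $q=3$, but the constant $\omega_1$ is defined so as to absorb this), the target is $\mathcal{U}_1\setminus\{\mathbf{u}_1^k\}$ reached through $q-1$ symmetric saddles $\mathbf{u}_2^{k,l}$, $l\ne k$, giving the factor $(q-1)\omega_1$; one must also check that the neighbouring wells $\mathcal{W}_l$ act as absorbing targets and that the return excursions into $\mathcal{W}_k$ do not alter the leading order, which follows from the standard renewal argument since all competing saddles out of $\mathcal{W}_k$ toward other states sit at height $H_\beta$.

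The main obstacle I expect is not the hitting-time identity itself but the uniform control of the error terms: one must show that the Gaussian approximation for both the numerator (the $\nu_N^\beta$-mass near the minimum) and the denominator (the capacity) holds with $1+o_N(1)$ multiplicative precision, which requires (i) that $G_{\beta,N}\to G_\beta$ uniformly on compact subsets of $\mathrm{int}\,\Xi$ and that the relevant critical points lie in $\mathrm{int}\,\Xi$ (true since $\beta<\infty$ keeps $u_i(\beta), v_1(\beta)$ strictly inside), (ii) nondegeneracy of the Hessians at $\mathbf p$, $\mathbf u_1$, $\mathbf v_1$, $\mathbf u_2$ in the stated ranges — which is exactly where one needs $\beta\ne q$ in (1)--(2) (so that $\mathbf p$ is a nondegenerate minimum), $\beta\ne\beta_3$ in (3) for $q\le4$ but no such restriction for $q\ge5$, and the simple-negative-eigenvalue property of $\nabla^2F_\beta\,\mathbb{A}^{\dagger}$ at the saddles guaranteed by \cite[Lemma A.1]{Landim Seo-nonrev RW} — and (iii) that the discretization $\Xi_N$ does not distort capacities, i.e.\ the lattice-gas corrections are absorbed in $o_N(1)$. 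All of these are supplied by Proposition \ref{p: EK constants} and the hypotheses checked in the landscape section, so the proof reduces to invoking \cite[Theorem 2.2, Remarks 2.10, 2.11]{Landim Seo-nonrev RW} in each regime with the correct bookkeeping of saddle multiplicities.
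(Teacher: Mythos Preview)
Your proposal is correct and follows essentially the same approach as the paper: the paper's proof is the one-line invocation ``follows from Theorems \ref{t: metasets all q}--\ref{t: metasets}, Proposition \ref{p: EK constants}, and \cite[Theorem 2.5, Remarks 2.10, 2.11]{Landim Seo-nonrev RW}'', and what you have written is precisely an unpacking of that citation with the correct saddle bookkeeping in each regime. The only small correction is that the relevant abstract result in \cite{Landim Seo-nonrev RW} for mean hitting times is Theorem 2.5 there, not Theorem 2.2 (the latter is the Markov-chain-reduction statement used for Theorems \ref{t: metaresult q4} and \ref{t: metaresult}).
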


The proof follows from Theorems \ref{t: metasets all q}-\ref{t: metasets},
Proposition \ref{p: EK constants}, and \cite[Theorem 2.5, Remarks 2.10, 2.11]{Landim Seo-nonrev RW}.

\section{\label{sec: pre result}Preliminary Analysis on Potential and Generator}

In this section, we conduct several preliminary analyses. In Section
\ref{subsec: Dyna of magnet}, we prove Proposition \ref{p: r_N MC}.
In particular, we compute the jump rates of Markov chain $\bm{r}_{N}(\cdot)$.
In Section \ref{subsec: Cyclic}, we decompose the generator $\mathscr{L}_{N}$
into several simple generators $\mathscr{L}_{N,\,\bm{x}}^{i,\,j}$,
$\bm{x}\in\Xi_{N}$, $i,j\in S$. Via this decomposition of $\mathscr{L}_{N}$,
we can handle $\mathscr{L}_{N}$ using the method developed in \cite{Landim Seo-nonrev RW}
since our model is a special case of \cite[Remarks 2.10, 2.11]{Landim Seo-nonrev RW};
this correspondence will be explained in Section \ref{subsec: Special LS}.

\subsection{\label{subsec: Dyna of magnet}Dynamics of Proportion Vector.}

We prove Proposition \ref{p: r_N MC} in this section.
\begin{proof}[Proof of Proposition \ref{p: r_N MC}]
Let $\bm{e}_{j}^{N}:=\frac{1}{N}\bm{e}_{j}$, $j\in S$ (cf. Notation
\ref{nota: symplex}). Fix configurations $\sigma,\tau\in\Omega_{N}$
such that $\bm{r}_{N}(\sigma)=\bm{r}_{N}(\tau)$ and let 
\[
\bm{x}=(x_{1},\,\dots,\,x_{q-1})=\bm{r}_{N}(\sigma)=\bm{r}_{N}(\tau)\in\Xi_{N}\ .
\]
For some sites $u_{1},u_{2},v_{1},v_{2}\in K_{N}$ such that $\sigma_{u_{1}}=\sigma_{v_{1}}=\tau_{u_{2}}=\tau_{v_{2}},$
let $i=\sigma_{v_{1}}$. Then, the Markovity of the process $\bm{r}_{N}(t)$
can be inferred from the identity
\begin{align*}
c_{u_{1},\,j}(\sigma)\,=\,c_{v_{1},\,j}(\sigma)\,=\,c_{u_{2},\,j}(\tau)\,=\,c_{v_{2},\,j}(\tau) & =\,\exp\left\{ -\frac{N\beta}{2}[H(\bm{r}_{N}(\sigma^{v_{1},\,j}))-H(\bm{r}_{N}(\sigma))]\right\} \\
 & =\,\exp\left\{ -\frac{N\beta}{2}[H(\bm{x}+\bm{e}_{j}^{N}-\bm{e}_{i}^{N})-H(\bm{x})]\right\} \ ,
\end{align*}
for $j\ne i$. Hence, $\bm{r}_{N}(\cdot)$ is a Markov chain.

Since there are $Nx_{i}$ sites whose spins are $i$, the jump rate
$R_{N}(\cdot,\cdot)$ of $\bm{r}_{N}(\cdot)$ can be written as
\begin{equation}
R_{N}(\bm{x},\bm{x}+\bm{e}_{j}^{N}-\bm{e}_{i}^{N})\,=\,\frac{Nx_{i}}{N}\exp\left\{ -\frac{N\beta}{2}[H(\bm{x}+\bm{e}_{j}^{N}-\bm{e}_{i}^{N})-H(\bm{x})]\right\} \ .\label{e: rate of r_N}
\end{equation}
Hence, the generator $\mathscr{L}_{N}$ of $\bm{r}_{N}(\cdot)$ is
given as
\[
\mathscr{L}_{N}f(\bm{x})\,=\,\sum_{i,\,j\in S}R_{N}(\bm{x},\bm{x}+\bm{e}_{j}^{N}-\bm{e}_{i}^{N})\,[f(\bm{x}+\bm{e}_{j}^{N}-\bm{e}_{i}^{N})-f(\bm{x})]\ ,
\]
for $f:\Xi_{N}\to\mathbb{R}$.

Finally, this dynamics is reversible with respect to $\nu_{\beta}^{N}$
since we have the following detailed balance condition
\[
\nu_{\beta}^{N}(\bm{x})\,R_{N}(\bm{x},\bm{x}+\bm{e}_{j}^{N}-\bm{e}_{i}^{N})\,=\,\nu_{\beta}^{N}(\bm{x}+\bm{e}_{j}^{N}-\bm{e}_{i}^{N})\,R_{N}(\bm{x}+\bm{e}_{j}^{N}-\bm{e}_{i}^{N},\bm{x})\ ,
\]
so that $\nu_{\beta}^{N}$ is the invariant measure.
\end{proof}

\subsection{\label{subsec: Cyclic}Cyclic Decomposition}

For $1\le i<j\le q$, let $\gamma_{N}^{i,\,j}$ be the cycle $(\bm{e}_{0}^{N},\bm{e}_{j}^{N}-\bm{e}_{i}^{N},\bm{e}_{0}^{N})$
of length 2 on $(\mathbb{Z}/N)^{q-1}$ and let $(\gamma_{N}^{i,\,j})_{\bm{x}}=\bm{x}+\gamma_{N}^{i,\,j}$.
Define $\widehat{\Xi}_{N}^{i,\,j}$ as
\[
\widehat{\Xi}_{N}^{i,\,j}\,=\,\{\bm{x}\in\Xi_{N}\,:\,(\gamma_{N}^{i,\,j})_{\bm{x}}\subset\Xi_{N}\}\,=\,\{\bm{x}\in\Xi_{N}\,:\,x_{j}\le1-N^{-1},\,x_{i}\ge N^{-1}\}\ .
\]
Define a jump rate $\widetilde{R}_{N}^{i,\,j}$ associated with this
cycle as
\begin{align*}
\widetilde{R}_{N,\,0}^{i,\,j}(\bm{x}) & \,=\,\exp\left\{ -N\beta[\overline{F}_{\beta,\,N}^{i,\,j}(\bm{x})-F_{\beta,\,N}(\bm{x})]\right\} \ ,\\
\widetilde{R}_{N,\,1}^{i,\,j}(\bm{x}) & \,=\,\exp\left\{ -N\beta[\overline{F}_{\beta,\,N}^{i,\,j}(\bm{x})-F_{\beta,\,N}(\bm{x}+\bm{e}_{j}^{N}-\bm{e}_{i}^{N})]\right\} \ ,
\end{align*}
where
\[
\overline{F}_{\beta,\,N}^{i,\,j}(\bm{x})\,=\,\frac{1}{2}[F_{\beta,\,N}(\bm{x})+F_{\beta,\,N}(\bm{x}+\bm{e}_{j}^{N}-\bm{e}_{i}^{N})]\ .
\]
Let  $\mathscr{L}_{N,\,\bm{x}}^{i,\,j},\,\bm{x}\in\widehat{\Xi}_{N}$,
be a generator acting on $f:\Xi_{N}\to\mathbb{R}$ as
\begin{equation}
(\mathscr{L}_{N,\,\bm{x}}^{i,\,j}f)(\bm{y})=\begin{cases}
\,\widetilde{R}_{N,\,0}^{i,\,j}(\bm{x})\,\Big[f(\bm{x}+\bm{e}_{j}^{N}-\bm{e}_{i}^{N})-f(\bm{x})\Big] & \bm{y}=\bm{x}\ ,\\
\,\widetilde{R}_{N,\,1}^{i,\,j}(\bm{x})\,\Big[f(\bm{x})-f(\bm{x}+\bm{e}_{j}^{N}-\bm{e}_{i}^{N})\Big] & \bm{y}=\bm{x}+\bm{e}_{j}^{N}-\bm{e}_{i}^{N}\ ,\\
\,0 & \text{otherwise}\ .
\end{cases}\label{e: Def of cycl gen}
\end{equation}
Here, $\mathscr{L}_{N,\,\bm{x}}^{i,\,j}$ can be regarded as a generator
of a Markov chain on the cycle $(\gamma_{N}^{i,\,j})_{\bm{x}}$.

Let
\[
w^{i,\,j}(\bm{x})\,:=\,\sqrt{x_{i}x_{j}}\ ,\ \text{and}\ \ w_{N}^{i,\,j}(\bm{x})\,:=\,\sqrt{x_{i}(x_{j}+\frac{1}{N})}\ .
\]
By \eqref{e: def of F_beta,N}, we can write
\[
\exp\{-\beta N[F_{\beta,\,N}(\bm{x})-H(\bm{x})]\}\,=\,(2\pi N)^{(q-1)/2}{N \choose (Nx_{1})\cdots(Nx_{q})}\ .
\]
By elementary computations, we obtain
\[
R_{N}(\bm{x},\bm{x}+\bm{e}_{j}^{N}-\bm{e}_{i}^{N})/\widetilde{R}_{N,\,0}^{i,\,j}(\bm{x})\,=\,R_{N}(\bm{x}+\bm{e}_{j}^{N}-\bm{e}_{i}^{N},\bm{x})/\widetilde{R}_{N,\,1}^{i,\,j}(\bm{x})\,=\,w_{N}^{i,\,j}(\bm{x})\ ,
\]
so that
\begin{align*}
R_{N}(\bm{x},\bm{x}+\bm{e}_{j}^{N}-\bm{e}_{i}^{N})[f(\bm{x}+\bm{e}_{j}^{N}-\bm{e}_{i}^{N})-f(\bm{x})] & =w_{N}^{i,\,j}(\bm{x})\,\mathscr{L}_{N,\,\bm{x}}^{i,\,j}f(\bm{x})\\
R_{N}(\bm{x},\bm{x}+\bm{e}_{i}^{N}-\bm{e}_{j}^{N})[f(\bm{x}+\bm{e}_{i}^{N}-\bm{e}_{j}^{N})-f(\bm{x})] & =w_{N}^{i,\,j}(\bm{x}+\bm{e}_{i}^{N}-\bm{e}_{j}^{N})\,\mathscr{L}_{N,\,\bm{x}+\bm{e}_{i}^{N}-\bm{e}_{j}^{N}}^{i,\,j}f(\bm{x})\ .
\end{align*}
Hence, by \eqref{e: Def of cycl gen}, we can write
\begin{align}
\mathscr{L}_{N}f(\bm{x}) & =\sum_{1\le i<j\le q}[w_{N}^{i,\,j}(\bm{x})\,\mathscr{L}_{N,\,\bm{x}}^{i,\,j}+w_{N}^{i,\,j}(\bm{x}+\bm{e}_{i}^{N}-\bm{e}_{j}^{N})\,\mathscr{L}_{N,\,\bm{x}+\bm{e}_{i}^{N}-\bm{e}_{j}^{N}}^{i,\,j}]f(\bm{x})\nonumber \\
 & =\sum_{1\le i<j\le q}\sum_{\bm{y}\in\widehat{\Xi}_{N}^{i,\,j}}w_{N}^{i,\,j}(\bm{y})\mathscr{L}_{N,\,\bm{y}}^{i,\,j}f(\bm{x})\ .\label{e: cyc rep of gen}
\end{align}
Since $w_{N}^{i,\,j}$ converges uniformly to $w^{i,\,j}$ and is
uniformly Lipschitz on every compact subset of $\text{int}\,\Xi$,
our model is a special case of \cite[Remarks 2.10, 2.11]{Landim Seo-nonrev RW}
provided that several technical requirements are verified. These requirements
will be verified in the next subsection.
\begin{rem}
\cite[Section 2]{Landim Seo-nonrev RW} assumes that for $\gamma_{N}^{i,\,j}=(\bm{z}_{0},\bm{z}_{1})$,
$\bm{z}_{1}-\bm{z}_{0}$ generates $\mathbb{Z}^{q-1}$; however, this
requirement is needed to make $\bm{r}_{N}(\cdot)$ be irreducible.
Since $(\gamma_{N}^{i,\,j})_{1\le i<j\le q}$ generate $\mathbb{Z}^{q-1}$,
we do not need this assumption.
\end{rem}

\subsection{\label{subsec: Special LS}Requirements for $F_{\beta,\,N}$ and
$\mathscr{L}_{N}$}

In this subsection, we verify that our model is a special case of
\cite[Remarks 2.10, 2.11]{Landim Seo-nonrev RW}.

First, we give some properties of $F_{\beta}(\cdot)$ and $G_{\beta,\,N}(\cdot)$.
By the following proposition, $F_{\beta}(\cdot)$ and $G_{\beta,\,N}(\cdot)$
fulfill the requirements in the first paragraph of \cite[Section 2]{Landim Seo-nonrev RW}.
\begin{prop}
\label{p: F,G}The functions $F_{\beta}(\cdot)$ and $G_{\beta,\,N}(\cdot)$
satisfy the following properties. 
\begin{enumerate}
\item $F_{\beta}$ is twice-differentiable and there is no critical points
at $\partial\,\Xi$. For all $\bm{x}\in\partial\Xi$, $\nabla F_{\beta}(\bm{x})\cdot\bm{n}(\bm{x})>0$.
\item The second partial derivatives of $F_{\beta}(\cdot)$ are Lipschitz-continuous
on every compact subset of $\text{int }\Xi$.
\item On each compact subset of $\text{int }\Xi$, $G_{\beta,\,N}(\cdot)$
is uniformly Lipschitz and converges uniformly to $G_{\beta}(\bm{x}):=(1/2\beta)\log(x_{1}\cdots x_{q})$
as $N\to\infty$.
\item There are finitely many critical points of $F_{\beta}(\cdot)$.
\end{enumerate}
\end{prop}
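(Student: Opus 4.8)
The plan is to verify the four items essentially by direct computation, exploiting the explicit formula $F_\beta(\bm{x}) = -\frac12\sum_{k=1}^q x_k^2 + \frac1\beta\sum_{k=1}^q x_k\log x_k$, where throughout $x_q = 1 - x_1 - \cdots - x_{q-1}$ is viewed as a function of the free coordinates $(x_1,\dots,x_{q-1})$. The key observation for differentiability is that $t\mapsto t\log t$ extends smoothly enough to $(0,\infty)$, so $F_\beta$ is twice continuously differentiable (indeed $C^\infty$) on $\mathrm{int}\,\Xi$, and one needs to check behaviour only near $\partial\Xi$.

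For item (1), first compute $\partial_i F_\beta(\bm{x}) = -(x_i - x_q) + \frac1\beta\log(x_i/x_q)$ for $1\le i\le q-1$. The boundary $\partial\Xi$ consists of faces where some coordinate $x_k$ (among $x_1,\dots,x_q$) vanishes. On such a face one of the terms $\log(x_i/x_q)$ blows up to $\pm\infty$ while the polynomial part stays bounded, so $\nabla F_\beta$ is unbounded near $\partial\Xi$ and in particular never zero there; more precisely I would argue that if $x_k\to 0$ with the other coordinates bounded away from $0$, then the component of $\nabla F_\beta$ corresponding to the direction of decreasing $x_k$ tends to $+\infty$. This gives both the absence of critical points on $\partial\Xi$ and, after identifying the inward normal $\bm{n}(\bm{x})$ at a boundary face $\{x_k = 0\}$ (which has $k$-th component negative in the appropriate sense), the strict inequality $\nabla F_\beta(\bm{x})\cdot\bm{n}(\bm{x}) > 0$. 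The bookkeeping distinguishing the faces $\{x_k=0\}$ for $k\le q-1$ from the face $\{x_q = 0\} = \{x_1+\cdots+x_{q-1}=1\}$ is the one mildly fiddly point, but it is routine once the gradient is written out.

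Item (2) is immediate: on a compact $K\subset\mathrm{int}\,\Xi$ all coordinates $x_k$ are bounded below by some $\epsilon>0$, and the second derivatives of $F_\beta$ are rational functions of the $x_k$ with only $x_k$ in the denominators (explicitly $\partial_i\partial_j F_\beta$ involves $-1 - \bm 1\{i=j\}$ from the quadratic part and $\frac1\beta(\bm1\{i=j\}/x_i + 1/x_q)$ from the entropy part), hence smooth with bounded derivatives on $K$, so Lipschitz there. Item (3) follows from the formula $G_{\beta,N}(\bm{x}) = \frac{1}{2\beta}\log(x_1\cdots x_q) + O(N^{-1})$ already recorded in \eqref{e: def of F_beta}: on a compact subset of $\mathrm{int}\,\Xi$ the map $\bm{x}\mapsto\log(x_1\cdots x_q)$ is smooth with bounded derivatives hence Lipschitz, and one must also check that the $O(N^{-1})$ error term coming from the Stirling expansion is uniformly Lipschitz with a constant that stays bounded (in fact goes to $0$) as $N\to\infty$, which is a matter of inspecting the next-order term in Stirling's formula applied to the multinomial coefficient.

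Item (4) is where the real content lies, and I expect it to be the main obstacle. Here one must show the critical point equation $\nabla F_\beta(\bm{x}) = 0$, i.e. the system $x_i - x_q = \frac1\beta\log(x_i/x_q)$ for all $i\le q-1$, has only finitely many solutions in $\mathrm{int}\,\Xi$. The strategy is: each equation says $\phi(x_i) = \phi(x_q)$ where $\phi(t) := t - \frac1\beta\log t$; since $\phi$ is strictly convex on $(0,\infty)$ with a unique minimum, the level set $\{\phi = c\}$ has at most two points, so each $x_i$ takes one of at most two values, call them $a\le b$ depending on $\beta$. Thus any critical point has coordinates among $\{a, b\}$ together with the constraint that the chosen values sum to $1$; there are finitely many such combinatorial patterns (indexed by how many coordinates equal $a$ versus $b$), and for each pattern the constraint plus the relation $\phi(a)=\phi(b)$ pins down $a,b$ up to finitely many choices. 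Hence finitely many critical points. This argument is essentially the content of the classification developed in Section \ref{subsec: class cri} (Proposition \ref{p: class cri} and Lemma \ref{l: g_i}), so in the write-up I would either give this short convexity argument directly or simply cite the forthcoming detailed analysis; the cleanest exposition is probably the self-contained convexity remark, deferring the precise count of which patterns actually yield critical points to the later sections.
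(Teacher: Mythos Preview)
Your proposal is correct and follows essentially the same approach as the paper. The paper's own proof is even terser than yours: it dismisses (1)--(3) as ``straightforward'' and for (4) simply cites Lemma~\ref{l: cand cri}, whose proof in Section~\ref{subsec: class cri} is exactly the convexity argument you outline (all $x_k-\tfrac1\beta\log x_k$ equal, at most two values per level set, then the one-parameter reduction $g_i(t)=\beta$ with at most two roots).
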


\begin{proof}
(1)-(3) are straightforward. By Lemma \ref{l: cand cri} in Section
\ref{subsec: class cri}, there are finitely many critical points
of $F_{\beta}(\cdot)$. 
\end{proof}
Next, fix one of saddle points ${\bf s}$. Note that $\nabla^{2}F_{\beta}({\bf s})$
has a unique negative eigenvalue. As in \cite[Section 4.3]{Landim Seo-nonrev RW},
define $(\mathscr{L}_{N}^{i,\,j})^{{\bf s}}$ as
\[
(\mathscr{L}_{N}^{i,\,j})^{{\bf s}}f(\bm{x})\,=\,\frac{1}{N^{2}}(\bm{e}_{j}-\bm{e}_{i})^{\dagger}\nabla^{2}f(\bm{x})(\bm{e}_{j}-\bm{e}_{i})-\frac{1}{N}\mathbb{A}^{i,\,j}\nabla^{2}F({\bf s})(\bm{x}-{\bf s})\cdot\nabla f(\bm{x})\ .
\]
Denote by $-\lambda_{1}^{{\bf s}},\lambda_{2}^{{\bf s}},\dots,\lambda_{q-1}^{{\bf s}}$
($\lambda_{1}^{{\bf s}}\sim\lambda_{q}^{{\bf s}}>0$) the eigenvalues
of $\nabla^{2}F_{\beta}({\bf s})$, and by $\bm{a}_{1}^{{\bf s}},\bm{a}_{2}^{{\bf s}},\dots,\bm{a}_{q-1}^{{\bf s}}$
the corresponding eigenvectors. Let $\epsilon_{N}:=N^{-2/5}\ll N^{-1}$
so that $\epsilon_{N}$ satisfies \cite[displays (4.7), (4.8)]{Landim Seo-nonrev RW}.
Define a neighborhood of ${\bf s}$ as
\[
\mathcal{C}_{N}^{{\bf s}}\,\coloneqq\,\bigg\{{\bf s}+\sum_{k=1}^{q-1}x_{k}{\bf a}_{k}^{{\bf s}}:|x_{1}|\le\epsilon_{N},\,|x_{k}|\le\sqrt{\frac{2\lambda_{1}^{{\bf s}}}{\lambda_{k}}}\epsilon_{N},\,2\le k\le q-1\bigg\}\cap\Xi_{N}\ .
\]
Then, by the next proposition, definitions \eqref{e: Def omega1}-\eqref{e: Def nu0}
are consistent with \cite[Remarks 2.10, 2.11]{Landim Seo-nonrev RW}.
\begin{prop}
\label{p: EK constants}For a smooth function $f:\Xi\to\mathbb{R}$,
we have uniformly on $\mathcal{C}_{N}^{{\bf s}}$,
\[
\mathscr{L}_{N}f\,=\,[1+O(\epsilon_{N})]\sum_{1\le i<j\le q}w^{i,\,j}({\bf s})(\mathscr{L}_{N}^{i,\,j})^{{\bf s}}f\ .
\]
\end{prop}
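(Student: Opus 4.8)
The plan is to expand the generator $\mathscr{L}_N$, as written in the cyclic representation \eqref{e: cyc rep of gen}, around the saddle point ${\bf s}$ and match it term by term with $\sum_{1\le i<j\le q} w^{i,j}({\bf s})(\mathscr{L}_N^{i,j})^{{\bf s}}$. Fix a pair $i<j$ and a smooth $f$. The first step is to perform a Taylor expansion in the small parameter $1/N$ of the single-cycle action $w_N^{i,j}(\bm y)\,\mathscr{L}_{N,\,\bm y}^{i,j}f(\bm x)$ appearing in \eqref{e: cyc rep of gen}. Recalling \eqref{e: Def of cycl gen}, the two nonzero contributions at the cycle through $\bm x$ are
\[
w_N^{i,j}(\bm x)\,\widetilde R_{N,0}^{i,j}(\bm x)\big[f(\bm x+\bm e_j^N-\bm e_i^N)-f(\bm x)\big]
\]
and the analogous term coming from $\bm x+\bm e_i^N-\bm e_j^N$. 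Using $F_{\beta,N}=F_\beta+N^{-1}G_{\beta,N}$ together with $\overline F_{\beta,N}^{i,j}(\bm x)=\tfrac12[F_{\beta,N}(\bm x)+F_{\beta,N}(\bm x+\bm e_j^N-\bm e_i^N)]$, a second-order Taylor expansion of $F_\beta$ shows $N\beta[\overline F_{\beta,N}^{i,j}(\bm x)-F_{\beta,N}(\bm x)]=\tfrac{\beta}{2}(\bm e_j-\bm e_i)^\dagger\nabla F_\beta(\bm x)+O(N^{-1})$ times the appropriate increment, so that $\widetilde R_{N,0}^{i,j}(\bm x)=1-\tfrac{\beta}{2N}(\bm e_j-\bm e_i)\cdot\nabla F_\beta(\bm x)+O(N^{-2})$ on a compact neighborhood of ${\bf s}$ inside $\mathrm{int}\,\Xi$ (the boundary is irrelevant since, by Proposition \ref{p: F,G}(1), saddle points lie in the interior). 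Likewise $f(\bm x+\bm e_j^N-\bm e_i^N)-f(\bm x)=\tfrac1N(\bm e_j-\bm e_i)\cdot\nabla f(\bm x)+\tfrac{1}{2N^2}(\bm e_j-\bm e_i)^\dagger\nabla^2 f(\bm x)(\bm e_j-\bm e_i)+O(N^{-3})$.

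The second step is to combine these expansions with the one through $\bm x+\bm e_i^N-\bm e_j^N$; the first-order-in-$1/N$ drift pieces from the two cycles carry opposite increments and, when multiplied by the symmetric weight, one finds that the contribution of the pair $(i,j)$ to $\mathscr{L}_N f(\bm x)$ equals
\[
w_N^{i,j}(\bm x)\Big[\tfrac{1}{N^2}(\bm e_j-\bm e_i)^\dagger\nabla^2 f(\bm x)(\bm e_j-\bm e_i)-\tfrac{\beta}{N}\big((\bm e_j-\bm e_i)\cdot\nabla F_\beta(\bm x)\big)\big((\bm e_j-\bm e_i)\cdot\nabla f(\bm x)\big)\Big]+O(N^{-3})\ ,
\]
up to the discrepancy between $w_N^{i,j}$ evaluated at $\bm x$ and at $\bm x+\bm e_i^N-\bm e_j^N$, which is $O(N^{-1})$. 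Now recall $(\mathscr{L}_N^{i,j})^{{\bf s}}f(\bm x)=\tfrac{1}{N^2}(\bm e_j-\bm e_i)^\dagger\nabla^2 f(\bm x)(\bm e_j-\bm e_i)-\tfrac1N\mathbb{A}^{i,j}\nabla^2 F_\beta({\bf s})(\bm x-{\bf s})\cdot\nabla f(\bm x)$, and use $\mathbb{A}^{i,j}=(\bm e_j-\bm e_i)(\bm e_j-\bm e_i)^\dagger$, so $\mathbb{A}^{i,j}\,\bm v\cdot\nabla f(\bm x)=\big((\bm e_j-\bm e_i)\cdot\bm v\big)\big((\bm e_j-\bm e_i)\cdot\nabla f(\bm x)\big)$. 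Thus it remains to see that, on $\mathcal{C}_N^{{\bf s}}$, one may replace $w_N^{i,j}(\bm x)$ by $w^{i,j}({\bf s})$ and $\beta\nabla F_\beta(\bm x)$ by $\nabla^2 F_\beta({\bf s})(\bm x-{\bf s})$, each at the cost of a multiplicative $1+O(\epsilon_N)$ factor.

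The replacement of $w_N^{i,j}(\bm x)$ by $w^{i,j}({\bf s})$ uses the uniform convergence and Lipschitz bound of $w_N^{i,j}$ from Section \ref{subsec: Cyclic} together with $\mathrm{diam}(\mathcal{C}_N^{{\bf s}})=O(\epsilon_N)$; since $w^{i,j}({\bf s})=\sqrt{s_i s_j}>0$ is bounded away from zero on a neighborhood of ${\bf s}$ inside $\mathrm{int}\,\Xi$, the relative error is $O(\epsilon_N)$. The replacement of $\beta\nabla F_\beta(\bm x)$ by $\nabla^2 F_\beta({\bf s})(\bm x-{\bf s})$ is the real point, and it is exactly the reason the anisotropic box $\mathcal{C}_N^{{\bf s}}$ is defined the way it is: since $\nabla F_\beta({\bf s})=0$, Taylor's theorem gives $\nabla F_\beta(\bm x)=\nabla^2 F_\beta({\bf s})(\bm x-{\bf s})+O(|\bm x-{\bf s}|^2)$, using the Lipschitz continuity of the second derivatives from Proposition \ref{p: F,G}(2); the leading term, written in the eigenbasis $\{\bm a_k^{{\bf s}}\}$ of $\nabla^2 F_\beta({\bf s})$, has $k$-th component $\pm\lambda_k^{{\bf s}}x_k$, which on $\mathcal{C}_N^{{\bf s}}$ has size comparable to $\sqrt{\lambda_1^{{\bf s}}\lambda_k^{{\bf s}}}\,\epsilon_N$, while the $O(|\bm x-{\bf s}|^2)$ error is $O(\epsilon_N^2)$ times a constant depending on the $\lambda_k^{{\bf s}}$; hence the relative error is again $O(\epsilon_N)$, uniformly on $\mathcal{C}_N^{{\bf s}}$ — this is precisely where the scaling $|x_k|\le\sqrt{2\lambda_1^{{\bf s}}/\lambda_k^{{\bf s}}}\,\epsilon_N$ is needed so that no direction's linear term is drowned out. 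I expect this last uniformity argument — controlling the ratio of the quadratic remainder to the linear term simultaneously in all $q-1$ eigen-directions on the flattened box, and checking it dovetails with the conditions \cite[displays (4.7), (4.8)]{Landim Seo-nonrev RW} that $\epsilon_N=N^{-2/5}$ is chosen to satisfy — to be the main technical obstacle; the rest is bookkeeping of the Taylor expansions above. Summing the resulting identity over $1\le i<j\le q$ and absorbing all the $O(\epsilon_N)$ relative errors into a single common factor $1+O(\epsilon_N)$ (finitely many pairs) yields the claim.
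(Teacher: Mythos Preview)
Your approach is essentially the same as the paper's: Taylor-expand the cycle generators from \eqref{e: cyc rep of gen} around ${\bf s}$, then replace $w_N^{i,j}(\bm x)$ by $w^{i,j}({\bf s})$ and $\nabla F_\beta(\bm x)$ by its linearization $\nabla^2 F_\beta({\bf s})(\bm x-{\bf s})$, each at cost $1+O(\epsilon_N)$. The paper compresses this into two lines, while you spell out the intermediate expansions; the structure is identical.

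One slip to fix: your intermediate expansion $\widetilde R_{N,0}^{i,j}(\bm x)=1-\tfrac{\beta}{2N}(\bm e_j-\bm e_i)\cdot\nabla F_\beta(\bm x)+O(N^{-2})$ has a stray $1/N$. From $N\beta[\overline F_{\beta,N}^{i,j}(\bm x)-F_{\beta,N}(\bm x)]=\tfrac{\beta}{2}(\bm e_j-\bm e_i)\cdot\nabla F_\beta(\bm x)+O(N^{-1})$ one gets $\widetilde R_{N,0}^{i,j}(\bm x)=1-\tfrac{\beta}{2}(\bm e_j-\bm e_i)\cdot\nabla F_\beta(\bm x)+O(\epsilon_N^2+N^{-1})$ on $\mathcal{C}_N^{\bf s}$, using $\nabla F_\beta(\bm x)=O(\epsilon_N)$ there. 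Your displayed formula after combining the two cycle contributions is nevertheless correct, so this is a bookkeeping typo rather than a gap. Also, your discussion of the anisotropic shape of $\mathcal{C}_N^{\bf s}$ and the direction-by-direction comparison is more than the proposition itself needs: here one only uses $|\bm x-{\bf s}|=O(\epsilon_N)$, so the additive error in the drift is $O(N^{-1}\epsilon_N^2)$, which is $O(\epsilon_N)$ relative to the leading $(\mathscr{L}_N^{i,j})^{\bf s}f$; the finer eigen-scaling matters for the capacity estimates in \cite{Landim Seo-nonrev RW}, not for this identity.
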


\begin{proof}
Since $|\bm{x}-{\bf s}|=O(\epsilon_{N})$, by \eqref{e: Def of cycl gen}
and the second order Taylor expansion on $\mathcal{C}_{N}^{{\bf s}}$,
we have
\[
\sum_{\bm{y}\in\widehat{\Xi}_{N}^{i,\,j}}\mathscr{L}_{N,\,\bm{y}}^{i,\,j}f(\bm{x})\,=\,[1+O(\epsilon_{N})](\mathscr{L}_{N}^{i,\,j})^{{\bf s}}f(\bm{x})\ .
\]
Hence, on $\mathcal{C}_{N}^{{\bf s}}$, since $w_{N}^{i,\,j}(\bm{x})=[1+O(N^{-1})]w^{i,\,j}(\bm{x})=[1+O(\epsilon_{N})]w^{i,\,j}({\bf s})$,
we have
\begin{align*}
\mathscr{L}_{N}f(\bm{x})\, & =\,\sum_{1\le i<j\le q}\sum_{\bm{y}\in\widehat{\Xi}_{N}^{i,\,j}}w_{N}^{i,\,j}(\bm{y})\mathscr{L}_{N,\,\bm{y}}^{i,\,j}f(\bm{x})\\
 & =\,[1+O(\epsilon_{N})]\sum_{1\le i<j\le q}w^{i,\,j}({\bf s})\sum_{\bm{y}\in\widehat{\Xi}_{N}^{i,\,j}}\mathscr{L}_{N,\,\bm{y}}^{i,\,j}f(\bm{x})\\
 & =\,[1+O(\epsilon_{N})]\sum_{1\le i<j\le q}w^{i,\,j}({\bf s})(\mathscr{L}_{N}^{i,\,j})^{{\bf s}}f(\bm{x})\ .
\end{align*}
\end{proof}

\section{\label{sec: Pre cri}Investigation of Critical Points and Temperatures}

This section is devoted to the investigation of critical points and
temperatures including their definitions. We will provide a preliminary
analysis of the critical points in Section \ref{subsec: class cri}
and of the critical temperatures in section \ref{subsec: pre beta}.

\subsection{\label{subsec: class cri}Classification of Critical Points}

We recall that
\[
F_{\beta}(\bm{x})\,=\,-\frac{1}{2}\sum_{k=1}^{q}x_{k}^{2}\,+\,\frac{1}{\beta}\sum_{k=1}^{q}x_{k}\log x_{k}\ ,
\]
and that $x_{q}=1-(x_{1}+\cdots+x_{q-1})$. For $1\le k\le q-1$,
\[
\frac{\partial}{\partial x_{k}}F_{\beta}(\bm{x})\,=\,-(x_{k}-x_{q})\,+\,\frac{1}{\beta}(\log x_{k}-\log x_{q})\ .
\]
If $\frac{\partial}{\partial x_{k}}F_{\beta}(\bm{x})=0$, we must
have $x_{k}-\frac{1}{\beta}\log x_{k}=x_{q}-\frac{1}{\beta}\log x_{q}$.
Hence, 
\begin{equation}
\nabla F_{\beta}(\bm{x})=0\ \text{if and only if}\ x_{k}-\frac{1}{\beta}\log x_{k},\ 1\le k\le q,\ \text{are the same}\ .\label{e: crit}
\end{equation}
By \eqref{e: crit}, ${\bf p}=(1/q,\,\dots,\,1/q)$ is a critical
point.

By elementary computation, we can check that the equation $t-\frac{1}{\beta}\log t=c$
has at most two positive real solutions for fixed $\beta,\,c>0$.
Hence, if $(x_{1},\dots,x_{q})$ is a critical point\footnote{Recall Notaion \ref{nota: symplex}.},
$x_{k}$'s can have at most 2 values by \eqref{e: crit}. Hereafter,
we assume ${\bf c}$ is a critical point and
\[
{\bf c}\,=\,(t,\,\dots,\,t,(1-jt)/i,\,\dots,\,(1-jt)/i)\ ,
\]
where $j$ is the number of $t$'s and $i=q-j$. Observe that by symmetry,
each permutation of coordinates of ${\bf c}$ has the same properties.
Without loss of generality, we may assume
\[
1\le i\le q/2\le j\le q-1\ \text{and}\ t\ne1/q\ .
\]
The point ${\bf p}$ will be analyzed separately.

\begin{figure}
\subfloat[$i=2$.]{\includegraphics[scale=0.7]{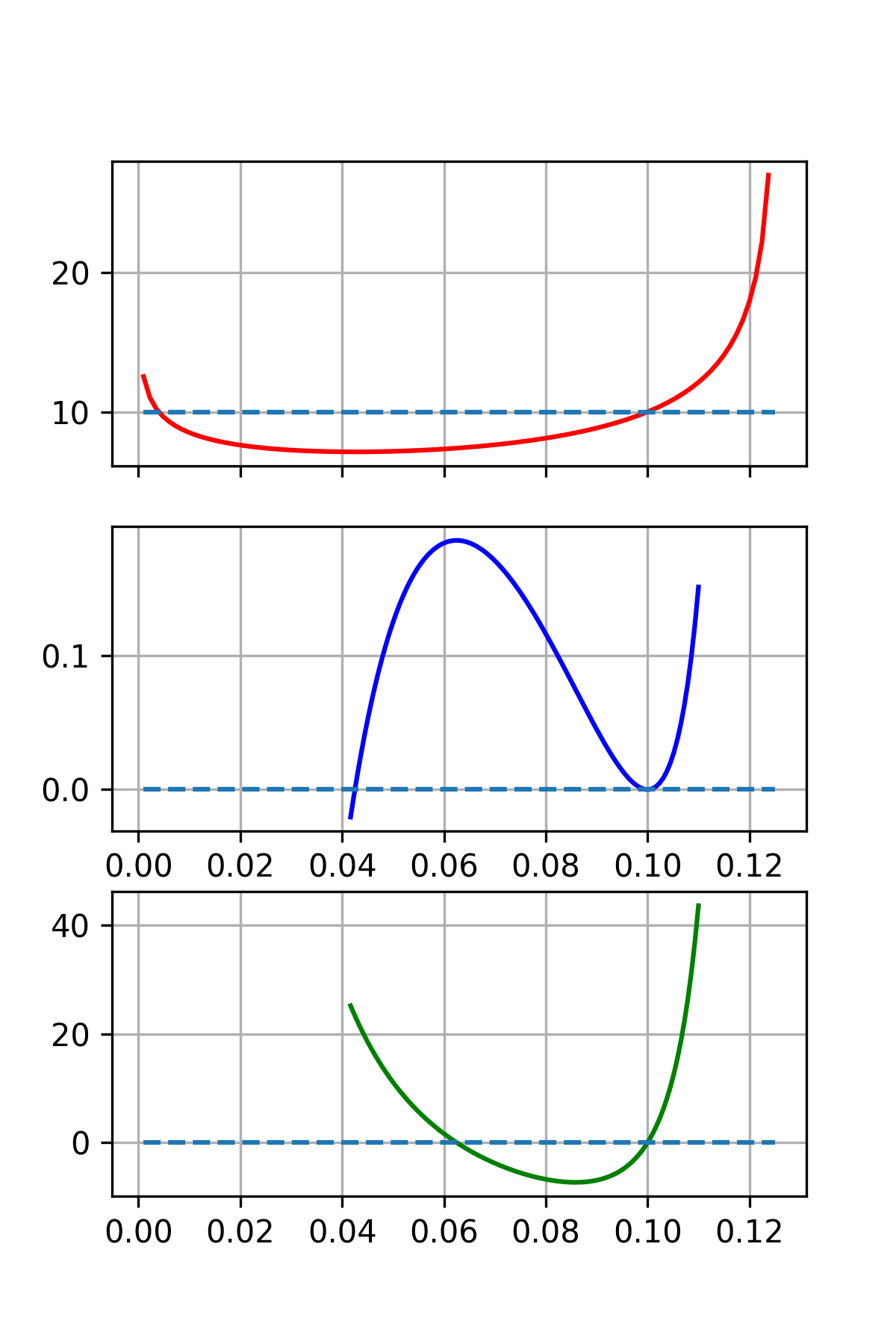}

}\subfloat[$i=q/2$.]{\includegraphics[scale=0.7]{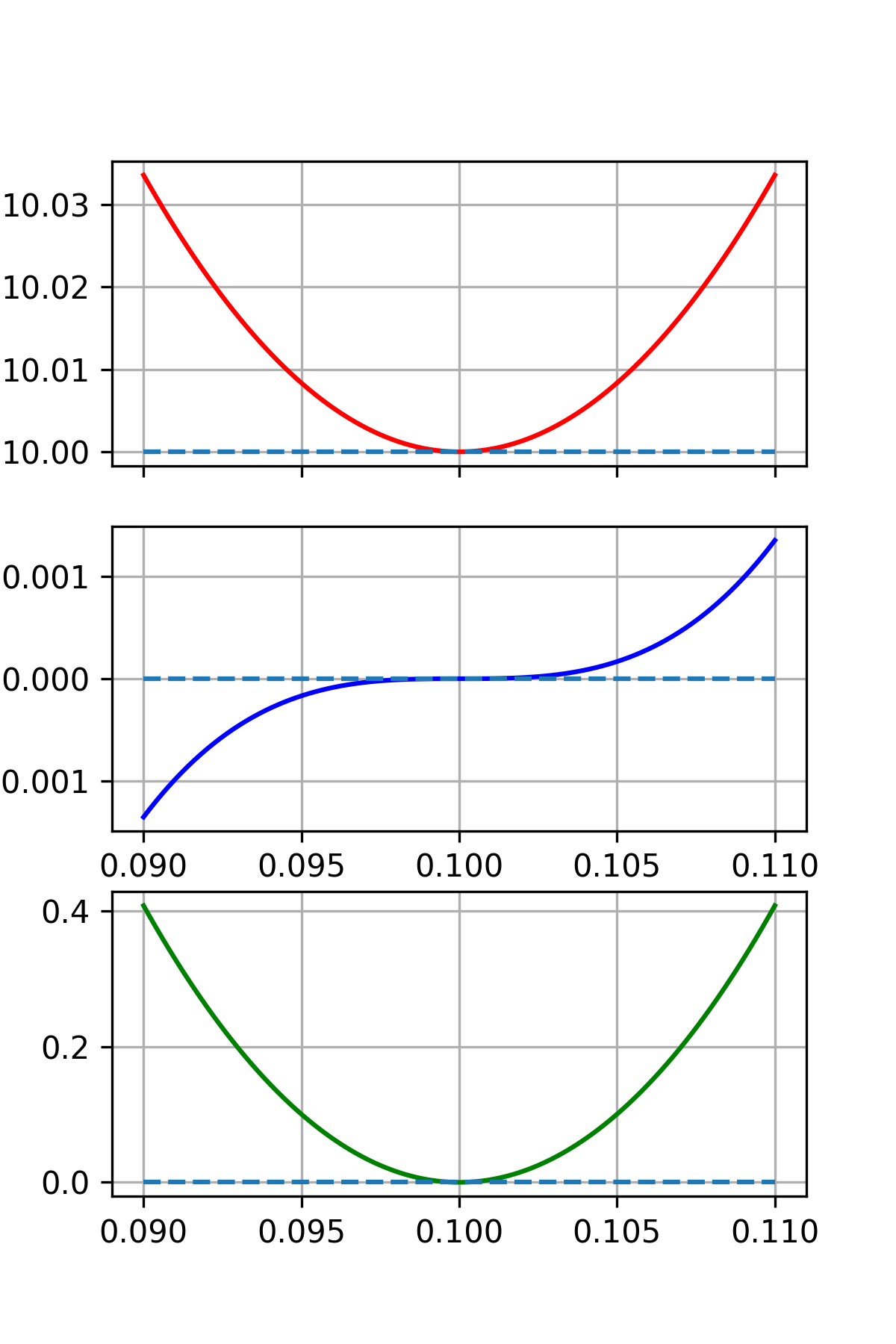}}\caption{\label{fig: g2h2h2'}Graphs of $g_{i}(t)$, $h_{i}(t)$, and $h_{i}'(t)$
when $q=10$.}
\end{figure}

By \eqref{e: crit}, we obtain
\[
t-\frac{1}{\beta}\log t\,=\,\frac{1-jt}{i}-\frac{1}{\beta}\log\Big(\frac{1-jt}{i}\Big)\ ,
\]
which implies 
\begin{equation}
\beta\,=\,\frac{i}{1-qt}\log\Big(\frac{1-jt}{it}\Big)\,=\,g_{i}(t)\ .\label{e: beta=00003Dg_i}
\end{equation}

\begin{lem}
\label{l: g_i}Fix $q\ge3$, $1\le i\le q/2$ and $j=q-i$. Then,
the function $g_{i}:(0,1/j)\to\mathbb{R}$ has the unique minimum,
say $m_{i}$. Furthermore, if $\beta>g_{i}(m_{i})$, $\beta=g_{i}(t)$
has two solutions.
\end{lem}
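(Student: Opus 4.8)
The plan is to establish the shape of $g_i$ directly from its derivative. I would first write $g_i(t) = \frac{i}{1-qt}\,\ell(t)$ where $\ell(t) := \log\!\big(\tfrac{1-jt}{it}\big) = \log(1-jt) - \log(it)$, and observe the boundary behaviour on $(0,1/j)$: as $t \downarrow 0$ we have $\ell(t) \to +\infty$ and $1-qt \to 1$, so $g_i(t) \to +\infty$; as $t \uparrow 1/j$ we have $\ell(t) \to -\infty$, while $1-qt \to 1-q/j = (j-q)/j = -i/j < 0$ (since $j = q-i > q/2 \geq i$ forces $1/j < 1/q$, so the sign of $1-qt$ near $t=1/j$ is indeed negative), hence $g_i(t) \to +\infty$ again. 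Also $g_i$ is continuous on $(0,1/j)$, including at $t = 1/q$ where the removable singularity has been filled in with value $q$ (one checks via L'Hôpital or a Taylor expansion that $\lim_{t\to 1/q} g_i(t) = q$). So $g_i$ is a continuous function on an open interval tending to $+\infty$ at both ends; it therefore attains a global minimum at some interior point $m_i$, and the last sentence of the lemma ($\beta = g_i(t)$ has two solutions when $\beta > g_i(m_i)$) will follow from the intermediate value theorem once I show that $g_i$ is \emph{strictly} unimodal — strictly decreasing on $(0, m_i)$ and strictly increasing on $(m_i, 1/j)$ — equivalently that $g_i'$ has exactly one zero.

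\textbf{The core computation.} I would compute $g_i'(t)$. Writing $g_i = \frac{i\,\ell}{1-qt}$, the quotient rule gives
\[
g_i'(t) \;=\; \frac{i\big[\ell'(t)(1-qt) + q\,\ell(t)\big]}{(1-qt)^2}\ ,
\]
with $\ell'(t) = \frac{-j}{1-jt} - \frac1t = \frac{-j t - (1-jt)}{t(1-jt)} = \frac{-1}{t(1-jt)}$. Thus the sign of $g_i'(t)$ is the sign of
\[
\phi(t) \;:=\; \ell'(t)(1-qt) + q\,\ell(t) \;=\; \frac{-(1-qt)}{t(1-jt)} + q\log\!\Big(\frac{1-jt}{it}\Big)\ .
\]
The strategy is to show $\phi$ changes sign exactly once, from $-$ to $+$, on $(0,1/j)$. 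I would check the endpoint signs of $\phi$: as $t \downarrow 0$, $\phi(t) \to -\infty$ (the $\log$ term $\to +\infty$ but the $-\frac{1}{t(1-jt)} \sim -\frac1t$ term dominates, being order $t^{-1}$ versus $\log(1/t)$); as $t \uparrow 1/j$, the first term $\to -\frac{-(1-q/j)}{(1/j)\cdot 0^+} = -\frac{i/j}{(1/j)\cdot 0^+} \to -\infty$ — wait, I should be careful with this sign, so let me instead note $1-qt \to -i/j < 0$ and $1-jt \to 0^+$, giving the first term $\sim \frac{i/j}{t \cdot 0^+} \to +\infty$, while $\log \to -\infty$; here the rational blow-up (order $(1-jt)^{-1}$) dominates the logarithmic term, so $\phi(t) \to +\infty$. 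Hence $\phi$ goes from $-\infty$ to $+\infty$, so it has at least one zero; uniqueness of the zero is what requires work.

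\textbf{The main obstacle} is precisely proving that $\phi$ has a \emph{unique} zero on $(0,1/j)$. Differentiating $\phi$ once more to show $\phi' > 0$ throughout (which would immediately give monotonicity of $\phi$, hence a single sign change) is the natural route, and I would carry out $\phi'(t)$ and try to show it is positive — this reduces, after clearing the positive denominator $t^2(1-jt)^2$, to checking positivity of a polynomial expression in $t$ (of modest degree, with parameters $q, i, j = q-i$), which can be done by factoring or by verifying nonnegativity of its coefficients after a substitution such as $s = 1 - jt \in (0,1)$. If $\phi'$ is not sign-definite, the fallback is to multiply $\phi(t)$ by the positive factor $t(1-jt)$, obtaining $\psi(t) := -(1-qt) + q\,t(1-jt)\log\!\big(\tfrac{1-jt}{it}\big)$ with the same sign as $\phi$, and then differentiate $\psi$: since $\psi$ has only one "hard" term (the product of a quadratic with a log), $\psi'$ and $\psi''$ are more tractable, and a convexity/concavity argument plus endpoint values should pin down that $\psi$ crosses zero exactly once. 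Either way, once the unique sign change of $g_i'$ (from $-$ to $+$) is established, unimodality follows, the unique minimizer $m_i$ is identified, and for any horizontal line $\beta > g_i(m_i)$ the strict decrease on $(0,m_i)$ and strict increase on $(m_i,1/j)$, combined with $g_i \to +\infty$ at both ends and the intermediate value theorem, yield exactly two solutions of $g_i(t) = \beta$, completing the proof.
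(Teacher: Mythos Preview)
Your approach coincides with the paper's: write $g_i'(t)=\frac{i\,\phi(t)}{(1-qt)^2}$ and study the sign of $\phi$. In the paper's notation $\phi=q\,h_i$, and the paper simply differentiates once more to get the clean factorisation
\[
h_i'(t)=\frac{(qt-1)(2jt-1)}{q\,t^2(1-jt)^2}\ ,
\]
from which the shape of $h_i$ (and hence of $g_i$) is read off, splitting into the cases $i<q/2$ and $i=q/2$.

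There is, however, a genuine gap in your plan: you aim to show that $\phi$ has a \emph{unique} zero, and this is false. Plugging $t=1/q$ into your formula gives $\phi(1/q)=0$ (since $\frac{1-j/q}{i/q}=1$), so $\phi$ has a second zero at the removable singularity of $g_i$. This forces your first route to fail: one checks $\phi'(t)=\frac{(qt-1)(2jt-1)}{t^2(1-jt)^2}$, which for $i<q/2$ is negative on $(1/(2j),1/q)$, so $\phi'>0$ cannot hold throughout. Your fallback $\psi(t)=t(1-jt)\phi(t)$ does have a clean derivative, $\psi'(t)=q(1-2jt)\log\frac{1-jt}{it}$, and the analysis goes through---but you will find $\psi$ (hence $\phi$) also \emph{touches} zero at $1/q$ as a local minimum, not a crossing. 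The point is that this zero of $\phi$ is second order (since $h_i'(1/q)=0$ too) and is exactly cancelled by the $(1-qt)^2$ in the denominator of $g_i'$, so $g_i'(1/q)>0$; equivalently, once you know $\phi\ge 0$ on $(m_i,1/j)$ with equality only at $1/q$, you get $g_i$ increasing on $(m_i,1/q)$ and on $(1/q,1/j)$ separately, and continuity of $g_i$ at $1/q$ glues these. That step needs to be made explicit. (Minor aside: your parenthetical ``$1/j<1/q$'' in the boundary discussion is inverted---one has $1/j>1/q$ since $j<q$---though your conclusion $1-q/j=-i/j<0$ is correct regardless.)
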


\begin{proof}
Define $h_{i}:(0,1/j)\to\mathbb{R}$ as\footnote{As $g_{i}(\cdot)$; the function $h_{i}(\cdot)$ can be continuously
extended to $(1,1/j)$.}
\begin{equation}
h_{i}(t)\,\coloneqq\,\log\frac{1-jt}{it}+\frac{qt-1}{qt(1-jt)}\ .\label{e: Def of h_i}
\end{equation}
By elementary computation, we obtain
\begin{equation}
g_{i}'(t)\,=\,\frac{qi}{(1-qt)^{2}}h_{i}(t)\ \ \text{and}\ \ h_{i}'(t)\,=\,\frac{(qt-1)(2jt-1)}{q(1-jt)^{2}t^{2}}\ .\label{e: deriv of g,h}
\end{equation}

There are two cases, where $i<q/2$ and $i=q/2$. By elementary computation,
we can show that the graphs of $g_{i},\,h_{i},\,h_{i}'$ are given
by Figure \ref{fig: g2h2h2'}, which completes the proof.
\end{proof}
For $1\le i\le q/2$, let
\begin{equation}
\beta_{s,\,i}=\beta_{s,\,i}(q)\,\coloneqq\,g_{i}(m_{i})\ ,\label{e: Def of beta_si}
\end{equation}
where $m_{i}$ is the unique minimum of $g_{i}(\cdot)$ given in the
above lemma.

If $\beta\ge\beta_{s,\,i}$, there are one or two solutions of $\beta=g_{i}(t)$
which will be denoted by $u_{i}=u_{i}(\beta),\,v_{i}=v_{i}(\beta)$
where $u_{i}\le v_{i}$. Let
\begin{align*}
\mathcal{U}_{i}=\mathcal{U}_{i}(\beta) & =\{\text{permutations of }(u_{i},\,\dots,\,u_{i},\,(1-ju_{i})/i,\,\dots,\,(1-ju_{i})/i)\}\ ,\\
\mathcal{V}_{i}=\mathcal{V}_{i}(\beta) & =\{\text{permutations of }(v_{i},\,\dots,\,v_{i},\,(1-jv_{i})/i,\,\dots,\,(1-jv_{i})/i)\}\ ,
\end{align*}
for $\beta\ge\beta_{s,\,i}$ . We have the following candidates of
the critical points of $F_{\beta}$.
\begin{lem}
\label{l: cand cri}A critical point of $F_{\beta}$ is exactly one
of the following cases.
\begin{enumerate}
\item ${\bf p}=(1/q,\dots,1/q)$.
\item For $1\le i\le q/2$ and $\beta\in(\beta_{s,\,i},\infty)$, elements
of \textup{$\mathcal{U}_{i}$}.
\item For $1\le i\le q/2$ and $\beta\in(\beta_{s,\,i},\infty)\setminus\{q\}$,
elements of \textup{$\mathcal{V}_{i}$}.
\item For $1\le i<q/2$ and $\beta=\beta_{s,\,i}$, elements of \textup{$\mathcal{U}_{i}=\mathcal{V}_{i}$.}
\end{enumerate}
\end{lem}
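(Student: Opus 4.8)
The plan is to leverage the already-established facts from \eqref{e: crit} and \eqref{e: beta=00003Dg_i} together with Lemma \ref{l: g_i} to enumerate the critical points exhaustively. By \eqref{e: crit}, a point $\mathbf{c}\in\Xi$ is a critical point of $F_\beta$ if and only if the quantities $x_k-\beta^{-1}\log x_k$ for $1\le k\le q$ all coincide; since the equation $t-\beta^{-1}\log t=c$ has at most two positive real solutions for any $c$, the coordinates of $\mathbf{c}$ take at most two distinct values. Hence every critical point is, up to permutation of coordinates, either the fully symmetric point $\mathbf{p}$ (all coordinates equal, necessarily to $1/q$), or of the form with $j$ coordinates equal to some $t\ne 1/q$ and the remaining $i=q-j$ coordinates equal to $(1-jt)/i$, where we may normalize $1\le i\le q/2\le j\le q-1$. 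This gives the skeleton: case (1) is $\mathbf{p}$; the remaining cases come from analyzing, for each $i$, the two-valued critical points.

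The next step is to fix $i$ with $1\le i\le q/2$, set $j=q-i$, and show that a two-valued critical point with this split exists precisely when $t\in(0,1/j)$ solves $g_i(t)=\beta$. The constraint that all $q-1$ free partial derivatives vanish reduces, by the computation preceding \eqref{e: beta=00003Dg_i}, to the single scalar equation $\beta=g_i(t)$; conversely any solution $t\in(0,1/j)$ of this equation yields a genuine critical point (one must check $t>0$, $(1-jt)/i>0$, and that the coordinates sum to $1$, all of which are immediate from $t\in(0,1/j)$). By Lemma \ref{l: g_i}, $g_i$ attains a unique minimum value $\beta_{s,i}=g_i(m_i)$ on $(0,1/j)$, and for $\beta>\beta_{s,i}$ the equation $g_i(t)=\beta$ has exactly two solutions $u_i(\beta)<v_i(\beta)$, for $\beta=\beta_{s,i}$ exactly one solution $u_i=v_i=m_i$, and for $\beta<\beta_{s,i}$ none. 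This yields: for $\beta\in(\beta_{s,i},\infty)$ the families $\mathcal{U}_i$ and $\mathcal{V}_i$ of critical points, for $\beta=\beta_{s,i}$ the single family $\mathcal{U}_i=\mathcal{V}_i$, matching cases (2), (3)(partial), and (4).

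Two points then need separate care. First, for case (3) we must excise $\beta=q$: when $\beta=q$, one of the two solutions of $g_i(t)=q$ equals $t=1/q$ (since $g_i(1/q)=q$ by the continuous extension of $g_i$), so the corresponding point is $\mathbf{p}$, not a genuine element of $\mathcal{V}_i$ distinct from $\mathbf{p}$; we should check that the other solution is precisely $u_i(q)$, so only $\mathcal{U}_i$ (and not a new $\mathcal{V}_i$) survives at $\beta=q$. This requires knowing on which side of $m_i$ the value $1/q$ lies, which follows from the sign analysis of $h_i$ in \eqref{e: deriv of g,h} (i.e.\ $g_i'(1/q)$ has a definite sign determined by $h_i(1/q)$), and this sign computation is the main technical obstacle of the proof — one must verify it uniformly in $i$ and $q$, distinguishing $i<q/2$ from $i=q/2$ as in Figure \ref{fig: g2h2h2'}. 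Second, one must confirm there is no overcounting or undercounting across different values of $i$: a given two-valued critical point determines $\{i,j\}$ uniquely (as the multiplicities of its two coordinate values, with the convention $i\le q/2$), so the families for distinct $i$ are disjoint, and together with $\mathbf{p}$ they exhaust all critical points by the dichotomy established in the first step. Assembling these observations gives exactly the four listed cases.
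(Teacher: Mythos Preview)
Your proposal is correct and follows essentially the same route as the paper: use \eqref{e: crit} to reduce to at-most-two coordinate values, parametrize the two-valued case by $g_i(t)=\beta$, and invoke Lemma~\ref{l: g_i} to count solutions. The paper's own proof is extremely terse (``follows from \eqref{e: crit} and Lemma~\ref{l: g_i}''), so your expanded treatment of the $\beta=q$ degeneracy and the disjointness across different $i$ is a useful elaboration rather than a different method. One small omission: the paper explicitly invokes Proposition~\ref{p: F,G}(1) to rule out boundary critical points before applying \eqref{e: crit}; you should add a sentence to that effect, though in practice it is already forced by the presence of $\log x_k$ in the gradient formula.
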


\begin{proof}
By part (1) of Proposition \ref{p: F,G}, points in $\partial\Xi$
cannot be critical points. Then, the proof follows from \eqref{e: crit}
and Lemma \ref{l: g_i}.
\end{proof}
Finally, we have the following results for critical points. The proof
for $q=3$ is given in \cite[Proposition 4.2]{Landim Seo-3spin}.
\begin{prop}
\label{p: cri}The minima and saddle points of $F_{\beta}$ for $q=3$,
$q=4$, and $q\ge5$ are given by Table 1, 2, and 3, respectively.

\begin{table}[H]
\begin{tabular}{|c|c|c|c|}
\hline 
 & ${\bf p}$ & $\mathcal{U}_{1}(\beta)$ & $\mathcal{V}_{1}(\beta)$\tabularnewline
\hline 
$\beta\in(0,\beta_{s,\,1})$ & the only minimum &  & \tabularnewline
\hline 
$\beta=\beta_{s,\,1}$ & the only minimum & degenerate & degenerate\tabularnewline
\hline 
$\beta\in(\beta_{s,\,1},q)$ & local minimum & local minima & saddle points\tabularnewline
\hline 
$\beta=q$ & degenerate & local minima & degenerate\tabularnewline
\hline 
$\beta\in(q,\infty)$ & local maximum & local minima & saddle points\tabularnewline
\hline 
\end{tabular}

\caption{Classification of critical points when $q=3$}
\end{table}

\begin{table}[H]
\begin{tabular}{|c|c|c|>{\centering}p{2.5cm}|c|}
\hline 
 & ${\bf p}$ & $\mathcal{U}_{1}(\beta)$ & $\mathcal{V}_{1}(\beta)$ & $\mathcal{U}_{2}(\beta)=\mathcal{V}_{2}(\beta)$\tabularnewline
\hline 
$\beta\in(0,\beta_{s,\,1})$ & the only minimum &  &  & \tabularnewline
\hline 
$\beta=\beta_{s,\,1}$ & the only minimum & degenerate & degenerate & \tabularnewline
\hline 
$\beta\in(\beta_{s,\,1},q)$ & local minimum & local minima & saddle points & \tabularnewline
\hline 
$\beta=q$ & degenerate & local minima & degenerate & degenerate\tabularnewline
\hline 
$\beta\in(q,\infty)$ & local maximum & local minima & index $\ge2$ & saddle points\tabularnewline
\hline 
\end{tabular}

\caption{Classification of critical points when $q=4$}
\end{table}

\begin{table}[H]
\begin{tabular}{|c|c|c|>{\centering}p{2.5cm}|c|}
\hline 
 & ${\bf p}$ & $\mathcal{U}_{1}(\beta)$ & $\mathcal{V}_{1}(\beta)$ & $\mathcal{U}_{2}(\beta)$\tabularnewline
\hline 
$\beta\in(0,\beta_{s,\,1})$ & the only minimum &  &  & \tabularnewline
\hline 
$\beta=\beta_{s,\,1}$ & the only minimum & degenerate & degenerate & \tabularnewline
\hline 
$\beta\in(\beta_{s,\,1},\beta_{s,\,2})$ & local minimum & local minima & saddle points & \tabularnewline
\hline 
$\beta=\beta_{s,\,2}$ & local minimum & local minima & saddle points & degenerate\tabularnewline
\hline 
$\beta\in(\beta_{s,\,2},q)$ & local minimum & local minima & saddle points & saddle points\tabularnewline
\hline 
$\beta=q$ & degenerate & local minima & degenerate & degenerate\tabularnewline
\hline 
$\beta\in(q,\infty)$ & local maximum & local minima & index $\ge2$ & saddle points\tabularnewline
\hline 
\end{tabular}

\caption{Classification of critical points when $q=5$}
\end{table}
\end{prop}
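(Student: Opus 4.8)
The plan is to compute the Hessian of $F_\beta$ at each critical point produced by Lemma \ref{l: cand cri} and to count its negative eigenvalues. The key simplification is that on the hyperplane $\{\sum_k x_k=1\}$ the function is the restriction of $\widetilde F_\beta(\bm x)=-\tfrac12\sum_{k=1}^q x_k^2+\tfrac1\beta\sum_{k=1}^q x_k\log x_k$, whose Euclidean Hessian is $\nabla^2\widetilde F_\beta(\bm x)=-\mathbb I_q+\tfrac1\beta\,\mathrm{diag}(x_1^{-1},\dots,x_q^{-1})$. Hence the Hessian of $F_\beta$ at a critical point $\bm c$, viewed as a quadratic form on the tangent space $T=\{\bm v\in\mathbb R^q:\sum_k v_k=0\}$, is the \emph{diagonal} form $Q_{\bm c}(\bm v)=\sum_{k=1}^q\big(\tfrac1{\beta c_k}-1\big)v_k^2$, and Sylvester's law of inertia reduces everything to counting sign patterns. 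For $\bm c=\bm p$ one gets $Q_{\bm p}(\bm v)=(q/\beta-1)\sum_k v_k^2$, so $\nabla^2 F_\beta(\bm p)$ is positive definite for $\beta<q$, vanishes for $\beta=q$, and is negative definite for $\beta>q$; a cubic-term check (the leading term of $F_\beta(\bm p+\bm\delta)-F_\beta(\bm p)$ at $\beta=q$ is $-\tfrac{q}{6}\sum_k\delta_k^3$, which changes sign on $T$) shows $\bm p$ is not a local extremum at $\beta=q$. This yields the $\bm p$-column of Tables 1--3, and together with Lemma \ref{l: cand cri} it settles the rows $\beta\le\beta_{s,1}$ completely.

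For the remaining critical points, recall that each has exactly two coordinate values: $j:=q-i$ coordinates equal to some $t$ solving $\beta=g_i(t)$ (with $t=u_i$ on $\mathcal U_i$ and $t=v_i$ on $\mathcal V_i$), and $i$ coordinates equal to $s:=(1-jt)/i$. Decomposing $T$ into (i) the $(j-1)$-dimensional space of zero-sum vectors supported on the $t$-block, (ii) its $(i-1)$-dimensional analogue on the $s$-block, and (iii) the line spanned by the vector that is $i$ on the $t$-block and $-j$ on the $s$-block, one finds $Q_{\bm c}$ is block-diagonalized with $j-1$ eigenvalues of sign $\mathrm{sgn}(d_t)$, $i-1$ of sign $\mathrm{sgn}(d_s)$, and one of sign $\mathrm{sgn}(id_t+jd_s)$, where $d_t:=\tfrac1{\beta t}-1$ and $d_s:=\tfrac1{\beta s}-1$. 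Consequently the index of $\bm c$ equals $(j-1)\mathbf 1\{d_t<0\}+(i-1)\mathbf 1\{d_s<0\}+\mathbf 1\{id_t+jd_s<0\}$, and $\bm c$ is nondegenerate iff none of $d_t$ (when $j\ge2$), $d_s$ (when $i\ge2$), $id_t+jd_s$ is zero.

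It then remains to track the signs of $d_t$, $d_s$, $id_t+jd_s$ along the curves $\beta=g_i(t)$. For the last one I would use $is+jt=1$ and the identity $h_i(t)=\tfrac{(1-qt)^2}{qi}g_i'(t)$ from \eqref{e: deriv of g,h} to obtain, at every critical point of type $i$,
\begin{equation*}
id_t+jd_s\;=\;\frac{i}{\beta\,t(1-jt)}-q\;=\;-\,\frac{(1-qt)\,g_i'(t)}{\beta}\,.
\end{equation*}
Combining this with Lemma \ref{l: g_i} ($g_i'\le0$ at the smaller root $u_i$, $g_i'\ge0$ at the larger root $v_i$) and the facts that $u_i<1/q$ always, while $v_i\lessgtr1/q$ according as $\beta\lessgtr q$ — with $v_i=1/q$ precisely at $\beta=q$, so that $\mathcal V_i\ni\bm p$ there (and, for $i=q/2$, $\beta_{s,i}=q$ with $u_i,v_i\to1/q$ as $\beta\downarrow q$) — pins down $\mathrm{sgn}(id_t+jd_s)$: it is $>0$ on every $\mathcal U_i$, and on $\mathcal V_i$ it is $<0$ for $\beta\in(\beta_{s,i},q)$, $=0$ at $\beta=q$, and $>0$ for $\beta>q$. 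For $\mathrm{sgn}(d_t)=\mathrm{sgn}(1-\beta t)$ and $\mathrm{sgn}(d_s)=\mathrm{sgn}(1-\beta s)$ one must decide whether $t$ and $s$ lie below or above $1/\beta=1/g_i(t)$, i.e.\ study the sign of the auxiliary functions $t\mapsto 1-t\,g_i(t)$ and $t\mapsto 1-\tfrac{1-jt}{i}\,g_i(t)$ on $(0,1/j)$; I would do this by differentiating and reading monotonicity off $g_i,h_i,h_i'$ exactly as in the proof of Lemma \ref{l: g_i} (Figures \ref{fig: g2} and \ref{fig: g2h2h2'}), obtaining in particular $d_t>0$ on all of $\mathcal U_1$ and on $\mathcal V_1$ for $\beta<q$, $d_t<0$ on $\mathcal V_1$ for $\beta>q$, and $d_t>0>d_s$ on $\mathcal U_2$.

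Finally I would assemble the tables by substituting these signs into the index formula regime by regime ($\beta<\beta_{s,1}$, $\beta=\beta_{s,1}$, $(\beta_{s,1},\beta_{s,2})$ for $q\ge5$, $\beta=\beta_{s,2}$, $(\beta_{s,2},q)$, $\beta=q$, $(q,\infty)$), checking separately in each degenerate case (the birth point $\beta_{s,i}$ of $\mathcal U_i=\mathcal V_i$, and $\beta=q$) that the point is not a local minimum via the local branch picture; and recording that points of $\mathcal U_i,\mathcal V_i$ with $i\ge3$, as well as $\mathcal V_2$ when $q\ge5$, always have index $\ge2$ and so are neither minima nor saddles, which is why they do not appear in the tables. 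The case $q\le4$ is simpler because then $i\le q/2\le2$ and $\mathcal U_2=\mathcal V_2$ with $\beta_{s,2}=q$, and $q=3$ is already \cite[Proposition 4.2]{Landim Seo-3spin}. I expect the main obstacle to be precisely the sign bookkeeping for $d_t$ and $d_s$ in the third paragraph, together with the delicate collision at $\beta=q$, where $\bm p$, all of $\mathcal V_i$ with $i<q/2$, and $\mathcal U_{q/2}=\mathcal V_{q/2}$ (for even $q$) simultaneously degenerate as several critical branches merge at $\bm p$.
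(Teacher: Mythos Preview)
Your plan is sound and tracks the paper's Section~7 closely: both arguments reduce the classification to the signs of the three quantities $d_t,d_s,id_t+jd_s$ (the paper's $a,b,ia+jb$ from \eqref{e: def ab}), then read off the index regime by regime. The genuine difference is in the linear algebra. The paper works in the $(x_1,\dots,x_{q-1})$--coordinates, writes $\nabla^2F_\beta(\bm c)=\mathbb D+b\mathbbm1\mathbbm1^\dagger$, and uses the matrix determinant lemma to factor the characteristic polynomial (Lemma~\ref{l:ev}); this leaves a quadratic factor whose roots must then be located sign by sign. Your intrinsic block decomposition of $T$ sidesteps that: it gives the signature directly as $(j-1)\operatorname{sgn}d_t$, $(i-1)\operatorname{sgn}d_s$, and one $\operatorname{sgn}(id_t+jd_s)$, which is cleaner and in fact yields a small bonus --- at the degenerate birth point $\beta=\beta_{s,i}$ with $i\ge3$ you get $i-1\ge2$ negative eigenvalues from $d_s<0$ immediately, so the height comparison of Lemma~\ref{l: degenerate saddle2} becomes unnecessary for the proposition as stated. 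Conversely, for the signs of $d_t$ and $d_s$ the paper's route is shorter than yours: substituting $x=(1-jt)/(it)$ turns the question into the single inequality $\log x<x-1$ (this is Lemma~\ref{l:sign ab}), which is quicker than differentiating $1-tg_i(t)$. For the degenerate cases your ``local branch picture'' is the same idea as the paper's line restriction $\bm\ell_i$ in Lemma~\ref{l: not minima}; you should make that step explicit, and likewise the symmetry $g_2(1/4-t)=g_2(1/4+t)$ that gives $\mathcal U_2=\mathcal V_2$ when $q=4$ (Lemma~\ref{l: V2 when q=00003D4}).
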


Section \ref{sec: Pf Equi points} proves the above proposition. Until
now, we classified all minima and saddle points for all $q\ge3$.

\subsection{\label{subsec: pre beta}Definition of Critical Temperatures}

In the previous subsection, we defined several temperatures $\beta_{s,\,i}$,
$1\le i\le q/2$. In this subsection, we prove several properties
of such temperatures and moreover introduce new temperatures. Then,
we select the critical temperatures at which phase transitions occur.

The first lemma is about the order of $\beta_{s,\,i}$.
\begin{lem}
\label{l: beta_s1<beta_s2} We have $\beta_{s,\,1}<\beta_{s,\,2}<\cdots<\beta_{s,\,\lfloor q/2\rfloor}$.
If $q$ is even, we have $\beta_{s,\,q/2}=q$ and otherwise, $\beta_{s,\,\lfloor q/2\rfloor}<q$.
\end{lem}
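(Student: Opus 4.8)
The plan is to treat $\beta_{s,\,i}=g_{i}(m_{i})$ as a function of a \emph{continuous} parameter $i\in[1,q/2]$, writing $j=q-i$ throughout. None of the computations behind Lemma \ref{l: g_i} -- in particular the formulas \eqref{e: deriv of g,h} for $g_{i}'$ and $h_{i}'$ -- used that $i$ is an integer, so for $i\in[1,q/2)$ the function $g_{i}$ still has a unique minimizer $m_{i}\in(0,1/(2j))\subseteq(0,1/q)$ (see Figure \ref{fig: g2h2h2'}), and on that interval $h_{i}'(m_{i})>0$ by \eqref{e: deriv of g,h}; hence the implicit relation $h_{i}(m_{i})=0$ (equivalently $g_{i}'(m_{i})=0$, since the prefactor in \eqref{e: deriv of g,h} is positive at $m_{i}\ne 1/q$) together with the implicit function theorem shows that $i\mapsto m_{i}$ is $C^{1}$ on $[1,q/2)$. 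Because $m_{i}$ minimizes $g_{i}$ we have $(\partial_{t}g_{i})(m_{i})=0$, so the chain rule gives the envelope identity
\[
\frac{d}{di}\,\beta_{s,\,i}\;=\;\frac{d}{di}\,g_{i}(m_{i})\;=\;(\partial_{i}g_{i})(m_{i})\;,
\]
and the ordering part of the lemma is reduced to proving that this quantity is positive.

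Differentiating \eqref{e: Def of g_i} in $i$ at fixed $t$ (using $\partial j/\partial i=-1$) gives
\[
(\partial_{i}g_{i})(t)\;=\;\frac{1}{1-qt}\Big[\log\frac{1-jt}{it}+\frac{it}{1-jt}-1\Big]\;.
\]
At $t=m_{i}$ the defining relation $h_{i}(m_{i})=0$ reads, by \eqref{e: Def of h_i}, $\log\frac{1-jm_{i}}{im_{i}}=\frac{1-qm_{i}}{qm_{i}(1-jm_{i})}$; substituting this and clearing the positive denominator $qm_{i}(1-jm_{i})$, the bracket collapses to $\frac{1-2qm_{i}+q^{2}m_{i}^{2}}{qm_{i}(1-jm_{i})}=\frac{(1-qm_{i})^{2}}{qm_{i}(1-jm_{i})}$, whence
\[
(\partial_{i}g_{i})(m_{i})\;=\;\frac{1-qm_{i}}{q\,m_{i}\,(1-jm_{i})}\;>\;0\;,
\]
because $m_{i}<1/(2j)<1/q$ forces $1-qm_{i}>0$ and $1-jm_{i}>1/2>0$. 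Integrating this strictly positive derivative shows $i\mapsto\beta_{s,\,i}$ is strictly increasing on $[1,q/2)$. The same bound $m_{i}<1/q$, via the monotonicity of $g_{i}$ on $(m_{i},1/j)\ni 1/q$ established in Lemma \ref{l: g_i}, gives $\beta_{s,\,i}=g_{i}(m_{i})<g_{i}(1/q)=q$ for every integer $i$ with $1\le i<q/2$.

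It remains to identify $\beta_{s,\,q/2}$ when $q$ is even. Then $i=q/2=j$, so $2jt-1=qt-1$ and \eqref{e: deriv of g,h} becomes $h_{q/2}'(t)=\frac{(qt-1)^{2}}{q(1-jt)^{2}t^{2}}\ge 0$, i.e.\ $h_{q/2}$ is strictly increasing on $(0,1/j)$; since a direct substitution gives $h_{q/2}(1/q)=\log 1+0=0$, the unique minimizer is $m_{q/2}=1/q$ and hence $\beta_{s,\,q/2}=g_{q/2}(1/q)=q$. Combining the three previous displays: for even $q$ we get $\beta_{s,\,1}<\cdots<\beta_{s,\,q/2-1}<q=\beta_{s,\,q/2}$, and for odd $q$ we get $\beta_{s,\,1}<\cdots<\beta_{s,\,\lfloor q/2\rfloor}$ with $\beta_{s,\,\lfloor q/2\rfloor}<q$, which is exactly the assertion.

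The step needing the most care is the algebraic simplification of $(\partial_{i}g_{i})(m_{i})$ into the perfect-square form, since that is what makes its sign manifest, and it rests on the identity $h_{i}(m_{i})=0$. One must also verify that the continuous-in-$i$ extension of $g_{i}$ and of $m_{i}$ is legitimate; this is routine, the only exceptional parameter value being $i=q/2$, where $h_{i}'(m_{i})=0$ and the implicit function theorem does not apply -- which is precisely why that value of $i$ is handled separately.
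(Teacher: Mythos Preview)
Your proof is correct, but it takes a somewhat different route from the paper's. Both you and the paper differentiate $g_{i}$ with respect to the continuous parameter $i$ and obtain the same formula $(\partial_{i}g_{i})(t)=\frac{1}{1-qt}\big[\log\frac{1-jt}{it}+\frac{it}{1-jt}-1\big]$. The paper then observes directly, via the inequality $y-1>\log y$ applied to $y=\frac{it}{1-jt}$, that this is positive for \emph{every} $t<1/q$; combining with the trivial bound $g_{i}(m_{i})\le g_{i}(m_{i+1})$ gives the discrete chain $\beta_{s,i}=g_{i}(m_{i})\le g_{i}(m_{i+1})<g_{i+1}(m_{i+1})=\beta_{s,i+1}$. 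You instead invoke the envelope identity $\frac{d}{di}\beta_{s,i}=(\partial_{i}g_{i})(m_{i})$ and then use the critical-point relation $h_{i}(m_{i})=0$ to collapse the bracket to the perfect square $(1-qm_{i})^{2}/[qm_{i}(1-jm_{i})]$, obtaining the closed form $(\partial_{i}g_{i})(m_{i})=\frac{1-qm_{i}}{qm_{i}(1-jm_{i})}$. The paper's argument is a bit lighter---it needs neither the implicit function theorem for the smoothness of $i\mapsto m_{i}$ nor the envelope theorem---while yours yields the explicit value of the derivative, which is pleasant but not needed here. Your treatment of the endpoint $i=q/2$ and of the inequality $\beta_{s,i}<q$ for $i<q/2$ matches the paper's.
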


\begin{proof}
In this proof, we regard $i$ as a real number and claim that $g_{i}(t)$
increases as $i\in[1,q]$ increases for fixed $t<1/q$. By elementary
computation, we obtain
\[
\frac{d}{di}g_{i}(t)\,=\,\frac{1}{1-qt}\Big(\log\frac{1-jt}{it}+\frac{it}{1-jt}-1\Big)\ .
\]
By the inequality $x-1\,>\,\log x$, we can conclude that $\frac{d}{di}g_{i}(t)>0$.
Hence, $g_{i}(t)<g_{i+1}(t)$ if $t<1/q$.

Hereafter, let $i\in\mathbb{Z}$. Suppose $i<q/2-1$. Since $m_{i},\,m_{i+1}<1/q$,
we obtain
\[
\beta_{s,\,i}=g_{i}(m_{i})\le g_{i}(m_{i+1})<g_{i+1}(m_{i+1})=\beta_{s,\,i+1}\ ,
\]
by the above claim. The first inequality holds since $m_{i}$ is a
minimum of $g_{i}$. If $i=q/2-1$, since $m_{i}<1/q=m_{i+1}$, we
obtain
\[
\beta_{s,\,i}=g_{i}(m_{i})<g_{i}(m_{i+1})=q=\beta_{s,\,q/2}\ .
\]

If $i<q/2$, we have $m_{i}<1/q$ so that $\beta_{s,\,i}<g_{i}(1/q)=q$.
This with the above argument prove the second assertion.
\end{proof}
\begin{rem*}
In particular, by the above lemma, we have $\beta_{s,\,1}<\beta_{s,\,2}=q$
for $q=4$ and $\beta_{s,\,1}<\beta_{s,\,2}<q$ for $q\ge5$.
\end{rem*}
The relative order of heights of critical points changes with changes
in $\beta$, and the phase transition is owing to this fact. We will
explain when and how this order is changed. Since the proofs are technical,
they are postponed to Section \ref{sec: Pf beta}.

\subsubsection*{Order of heights of ${\bf p}$ and $\mathcal{U}_{1}$}

Define $\beta_{c}$ as
\begin{equation}
\beta_{c}(q)\,:=\,\frac{2(q-1)}{q-2}\log(q-1)\ ,\label{e: Def of beta_c}
\end{equation}
which is introduced in \cite[display (3.3)]{Costeniuc Ellis T}. Then,
we obtain the following.
\begin{lem}
\label{l: beta_s1<beta_c}For $q\ge3$, we have $\beta_{s,\,1}<\beta_{c}$
and for $q\ge4$, we have $\beta_{s,\,1}<\beta_{c}<\beta_{s,\,2}$.
\end{lem}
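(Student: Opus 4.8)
The plan is to obtain both inequalities by working directly with the explicit formula $g_1(t) = \frac{1}{1-qt}\log\frac{1-(q-1)t}{t}$ and the characterization $\beta_{s,1} = g_1(m_1) = \min_{t\in(0,1/q)} g_1(t)$ from Lemma \ref{l: g_i}, together with the analogous facts for $g_2$ when $q\ge4$.

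\emph{First inequality $\beta_{s,1}<\beta_c$.} Since $\beta_{s,1}$ is the minimum value of $g_1$, it suffices to exhibit \emph{one} point $t_\star\in(0,1/q)$ at which $g_1(t_\star) < \beta_c$; then $\beta_{s,1}=g_1(m_1)\le g_1(t_\star)<\beta_c$. The natural candidate, suggested by the form of $\beta_c = \frac{2(q-1)}{q-2}\log(q-1)$, is a point where the argument of the logarithm equals $q-1$, i.e. $\frac{1-(q-1)t}{t}=q-1$, which gives $t_\star = \frac{1}{2(q-1)}$. At this point $\log\frac{1-(q-1)t_\star}{t_\star}=\log(q-1)$ and $1-qt_\star = 1-\frac{q}{2(q-1)}=\frac{q-2}{2(q-1)}$, so $g_1(t_\star)=\frac{2(q-1)}{q-2}\log(q-1)=\beta_c$. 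This only gives $\beta_{s,1}\le\beta_c$, so I would instead perturb: either check that $m_1\ne t_\star$ (equivalently $g_1'(t_\star)\ne 0$, i.e. $h_1(t_\star)\ne0$, using \eqref{e: deriv of g,h}) so that the minimum is strictly below, or pick a slightly different $t_\star$ where the strict inequality is manifest. A clean alternative is to show $h_1(t_\star)<0$: since $g_1'(t)=\frac{q}{(1-qt)^2}h_1(t)$ has the sign of $h_1(t)$, and $h_1$ decreases then increases (Figure \ref{fig: g2h2h2'}), $h_1(t_\star)<0$ forces $m_1>t_\star$, hence $g_1(m_1)<g_1(t_\star)=\beta_c$. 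Plugging $t_\star=\frac1{2(q-1)}$ into \eqref{e: Def of h_i} gives $h_1(t_\star)=\log(q-1)+\frac{qt_\star-1}{qt_\star(1-(q-1)t_\star)}$; since $1-(q-1)t_\star=t_\star$ and $qt_\star-1=-\frac{q-2}{2(q-1)}<0$, the second term is negative, but one must verify it dominates—this reduces to an explicit one-variable inequality in $q$, checkable by elementary estimates (e.g. bounding $\log(q-1)$ against $\frac{q-2}{2qt_\star^2(q-1)}=\frac{2(q-2)(q-1)}{q}$, which holds comfortably for all $q\ge3$).

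\emph{Second inequality $\beta_c<\beta_{s,2}$ for $q\ge4$.} Here $\beta_{s,2}=\min g_2$ is a minimum, so I cannot just evaluate at a point; I need a \emph{lower} bound on $g_2$ valid on all of $(0,1/(q-2))$. Equivalently, I must show $g_2(t)>\beta_c$ for every $t$, i.e. $\frac{2}{1-qt}\log\frac{1-(q-2)t}{2t} > \frac{2(q-1)}{q-2}\log(q-1)$ for all $t\in(0,1/(q-2))$. Since $\beta_{s,2}=g_2(m_2)$ with $m_2$ the unique critical point determined by $h_2(m_2)=0$, one approach is to locate $m_2$ explicitly enough: from \eqref{e: deriv of g,h}, $h_2'$ vanishes at $t=1/q$ and at $t=1/(2(q-2))$, so $m_2$ lies in a controlled region, and one can bound $g_2(m_2)$ below using monotonicity of $g_2$ on either side. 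Alternatively—and I expect this to be cleaner—I would use that $g_i(t)$ is \emph{increasing in $i$} for fixed $t<1/q$ (this is exactly the computation $\frac{d}{di}g_i(t)=\frac{1}{1-qt}(\log\frac{1-jt}{it}+\frac{it}{1-jt}-1)>0$ already carried out in the proof of Lemma \ref{l: beta_s1<beta_s2}), giving $g_2(t)>g_{3/2}(t)>g_1(t)$; but this pushes the bound the wrong way relative to $\beta_{s,1}$, so it alone is not enough. The decisive step is therefore a genuine analysis of $g_2$ near its minimum: reduce $g_2(m_2)>\beta_c$ to an inequality purely in $q$ by substituting the relation $h_2(m_2)=0$ (which expresses $\log\frac{1-(q-2)m_2}{2m_2}$ in terms of a rational function of $m_2$), then bound $m_2$ within an explicit interval and verify the resulting rational/logarithmic inequality in $q$ for all $q\ge4$.

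\emph{Main obstacle.} The first inequality is routine once the point $t_\star=\frac1{2(q-1)}$ is spotted. The real work is $\beta_c<\beta_{s,2}$: because $\beta_{s,2}$ is defined as a minimum, every estimate must be uniform in $t$, and the cleanest route requires pinning down the location of $m_2$ well enough to convert $g_2(m_2)>\beta_c$ into a self-contained inequality in the single parameter $q$, which then has to be proved for all integers $q\ge4$ by elementary (but slightly delicate) monotonicity-in-$q$ arguments. I expect the bulk of the section-\ref{sec: Pf beta} proof to be exactly this $q$-dependent estimate, presumably split into a base case or two for small $q$ and an asymptotic/monotonicity argument for large $q$.
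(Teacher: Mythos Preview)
Your first-inequality argument has a computational slip: at $t_\star=\frac{1}{2(q-1)}$ you write $1-(q-1)t_\star=t_\star$, but in fact $1-(q-1)t_\star=\tfrac12$ while $t_\star=\tfrac{1}{2(q-1)}$, and these differ for $q\ge3$. Redoing the arithmetic gives
\[
h_1(t_\star)=\log(q-1)-\frac{2(q-2)}{q},
\]
which is \emph{positive} for every $q\ge3$ (already at $q=3$ one has $\log 2-\tfrac23>0$). Thus $g_1'(t_\star)>0$, so $m_1<t_\star$ rather than $m_1>t_\star$; the conclusion $\beta_{s,1}=g_1(m_1)<g_1(t_\star)=\beta_c$ still follows, but your sign needs correcting. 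The paper sidesteps this computation by a neater observation you nearly found: $g_1$ takes the value $\beta_c$ at \emph{two} distinct points, namely $t=\frac{1}{q(q-1)}$ and $t=\frac{1}{2(q-1)}$. Since $g_1$ has a unique minimum, that minimum lies strictly below $\beta_c$, and no derivative test is needed.

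For $\beta_c<\beta_{s,2}$, your plan to lower-bound $g_2$ uniformly in $t$ is viable in principle but, as you anticipate, heavy. The paper takes a different and much shorter route that avoids $g_2$ altogether. Since $\beta_{s,2}>\beta_{s,1}$, the equation $g_1(t)=\beta_{s,2}$ has the solution $v_1=v_1(\beta_{s,2})\in(m_1,1/q)$, so one may write $\beta_{s,2}=g_1(v_1)$. Because $g_1$ is increasing on $(m_1,1/(q-1))$ and $g_1\bigl(\tfrac{1}{2(q-1)}\bigr)=\beta_c$, the desired inequality reduces to the \emph{pointwise} statement $v_1(\beta_{s,2})>\tfrac{1}{2(q-1)}$. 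This is Lemma~\ref{l: v1 > 1/2(q-1)}, proved by inserting an explicit intermediate point $v_1^\ast=\frac{1+qm_2}{2q}$: first $g_1(v_1^\ast)\le g_2(m_2)=\beta_{s,2}$ is checked by a one-line algebraic manipulation (the choice of $v_1^\ast$ makes $\frac{1}{1-qv_1^\ast}=\frac{2}{1-qm_2}$, so only the logarithms need comparing), giving $v_1^\ast\le v_1$; then $v_1^\ast>\tfrac{1}{2(q-1)}$ is equivalent to $m_2>\tfrac{1}{q(q-1)}$, which follows from $h_2\bigl(\tfrac{1}{q(q-1)}\bigr)<0$. The only case analysis is a trivial check of this last sign for $q=4,5$ before a one-line estimate handles $q\ge6$. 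So the ``main obstacle'' you identify dissolves once $\beta_{s,2}$ is expressed through $g_1$ rather than $g_2$.
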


The proof of the lemma is given in Section \ref{subsec: Pf  l:beta_s<beta_c2}.
The following lemma is an important property of $\beta_{c}$.
\begin{lem}
\label{l: beta_c}Let $q\ge3$. Then, we have
\begin{equation}
\begin{cases}
F_{\beta}({\bf p})<F_{\beta}({\bf u}_{1}) & \text{if }\beta\in(\beta_{s,\,1},\beta_{c})\ ,\\
F_{\beta}({\bf p})=F_{\beta}({\bf u}_{1}) & \text{if }\beta=\beta_{c}\ ,\\
F_{\beta}({\bf p})>F_{\beta}({\bf u}_{1}) & \text{if }\beta\in(\beta_{c},\infty)\ .
\end{cases}\label{e: Prop of beta_c}
\end{equation}
\end{lem}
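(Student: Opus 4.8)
The plan is to reduce the trichotomy \eqref{e: Prop of beta_c} to two facts about the single scalar function $\Delta(\beta):=F_{\beta}(\mathbf{u}_{1})-F_{\beta}(\mathbf{p})$, defined for $\beta\in(\beta_{s,\,1},\infty)$: that $\Delta$ is strictly decreasing there, and that $\Delta(\beta_{c})=0$. Since $\beta_{c}\in(\beta_{s,\,1},\infty)$ by Lemma~\ref{l: beta_s1<beta_c}, these two facts immediately give \eqref{e: Prop of beta_c}. First I would record that $\Delta$ is well defined and smooth on $(\beta_{s,\,1},\infty)$: for $\beta>\beta_{s,\,1}$ the root $u_{1}(\beta)$ lies in the strictly decreasing branch $(0,m_{1})$ of $g_{1}$ (with $m_{1}<1/q$ by Lemma~\ref{l: beta_s1<beta_s2}), so $g_{1}'(u_{1}(\beta))\neq 0$ and the implicit function theorem applied to $g_{1}(t)=\beta$ makes $\beta\mapsto u_{1}(\beta)$, hence $\beta\mapsto\mathbf{u}_{1}(\beta)$, a smooth curve whose coordinates $u_{1}(\beta)$ and $1-(q-1)u_{1}(\beta)$ are strictly positive, so it stays in $\text{int}\,\Xi$.

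\emph{Monotonicity.} Here I would reuse the computation from the proof of Corollary~\ref{c: 2nd PT}: for a smooth curve $\mathbf{c}(\beta)$ of critical points of $F_{\beta}$, from $F_{\beta}=H+\beta^{-1}S$ and $\nabla F_{\beta}(\mathbf{c}(\beta))=0$ one gets $\tfrac{d}{d\beta}F_{\beta}(\mathbf{c}(\beta))=-\beta^{-2}S(\mathbf{c}(\beta))$. Applying this to $\mathbf{c}(\beta)=\mathbf{u}_{1}(\beta)$ and to the constant curve $\mathbf{c}\equiv\mathbf{p}$,
\[
\Delta'(\beta)\ =\ -\frac{1}{\beta^{2}}\bigl[S(\mathbf{u}_{1}(\beta))-S(\mathbf{p})\bigr]\ .
\]
Since $S$ attains its unique minimum on $\Xi$ at $\mathbf{p}$ (as recalled in the proof of Corollary~\ref{c: 2nd PT}) and $\mathbf{u}_{1}(\beta)\neq\mathbf{p}$ because $u_{1}(\beta)<m_{1}<1/q$, we have $S(\mathbf{u}_{1}(\beta))>S(\mathbf{p})$, hence $\Delta'(\beta)<0$ on all of $(\beta_{s,\,1},\infty)$.

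\emph{The value at $\beta_{c}$.} For this I need the explicit critical point at $\beta=\beta_{c}$. Set $t_{\star}:=\frac{1}{q(q-1)}$ and $s_{\star}:=1-(q-1)t_{\star}=\frac{q-1}{q}$; then $1-qt_{\star}=\frac{q-2}{q-1}$ and $s_{\star}/t_{\star}=(q-1)^{2}$, so by \eqref{e: Def of g_i},
\[
g_{1}(t_{\star})\ =\ \frac{q-1}{q-2}\,\log\!\bigl((q-1)^{2}\bigr)\ =\ \frac{2(q-1)}{q-2}\log(q-1)\ =\ \beta_{c}\ .
\]
To see that $t_{\star}=u_{1}(\beta_{c})$ rather than the larger root $v_{1}(\beta_{c})$, note that by \eqref{e: deriv of g,h} the sign of $g_{1}'(t_{\star})$ is that of $h_{1}(t_{\star})=2\log(q-1)-\frac{q(q-2)}{q-1}$, and writing $m:=q-1\ge 2$ this equals $2\log m-\bigl(m-\tfrac1m\bigr)<0$, because $\psi(m):=m-\tfrac1m-2\log m$ satisfies $\psi(2)=\tfrac32-2\log 2>0$ and $\psi'(m)=\bigl(1-\tfrac1m\bigr)^{2}\ge 0$. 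Hence $g_{1}'(t_{\star})<0$, so $t_{\star}$ sits in the decreasing branch $(0,m_{1})$ and therefore $t_{\star}=u_{1}(\beta_{c})$ (and, as a byproduct, $\beta_{c}<q$). Finally I would substitute $\mathbf{u}_{1}(\beta_{c})=(t_{\star},\dots,t_{\star},s_{\star})$ and $\mathbf{p}$ into $F_{\beta_{c}}=H+\beta_{c}^{-1}S$. The entropy contribution to $\Delta(\beta_{c})$ works out to $\beta_{c}^{-1}$ times $\frac{q-2}{q}\log(q-1)$ — the $\log q$ terms cancelling identically — so by the explicit form \eqref{e: Def of beta_c} of $\beta_{c}$ it equals the rational number $\frac{(q-2)^{2}}{2q(q-1)}$, and
\[
\Delta(\beta_{c})\ =\ \frac{1}{2q}-\frac{1+(q-1)^{3}}{2q^{2}(q-1)}+\frac{(q-2)^{2}}{2q(q-1)}\ =\ \frac{-1-(q-1)^{3}+q(q-2)^{2}+q(q-1)}{2q^{2}(q-1)}\ =\ 0\ ,
\]
the numerator being identically zero.

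Putting the pieces together, $\Delta$ is smooth and strictly decreasing on $(\beta_{s,\,1},\infty)$ with $\Delta(\beta_{c})=0$, which is exactly the statement \eqref{e: Prop of beta_c}. The main obstacle is the last step: one must guess the critical point $\bigl(\tfrac1{q(q-1)},\dots,\tfrac{q-1}{q}\bigr)$, verify that it sits on the $u_{1}$-branch (not the $v_{1}$-branch), and then carry through the cancellation; the monotonicity half is soft, reusing only the derivative identity already exploited in Corollary~\ref{c: 2nd PT}.
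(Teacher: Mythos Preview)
Your proof is correct, and it takes a genuinely different route from the paper. The paper does not prove Lemma~\ref{l: beta_c} at all but defers to \cite[Appendices A, B]{Costeniuc Ellis T}, where the result is obtained via convex duality (large-deviation/Legendre-transform machinery for the empirical measure). Your argument, by contrast, is entirely self-contained and elementary: you reduce the trichotomy to the monotonicity of $\Delta(\beta)=F_{\beta}(\mathbf{u}_{1})-F_{\beta}(\mathbf{p})$ together with the single exact evaluation $\Delta(\beta_{c})=0$. The monotonicity step uses only the chain-rule identity $\tfrac{d}{d\beta}F_{\beta}(\mathbf{c}(\beta))=-\beta^{-2}S(\mathbf{c}(\beta))$ for curves of critical points, plus the strict convexity of $S$ with its unique minimum at $\mathbf{p}$; the evaluation step rests on the explicit root $u_{1}(\beta_{c})=1/(q(q-1))$ and a clean polynomial cancellation. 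What your approach buys is that the lemma becomes an internal consequence of the paper's own toolkit, with no appeal to an external reference; what the convex-duality approach buys is a more structural explanation of why $\beta_{c}$ has the closed form \eqref{e: Def of beta_c}. One cosmetic remark: the derivative identity you attribute to the proof of Corollary~\ref{c: 2nd PT} is logically downstream of Lemma~\ref{l: beta_c} (since Corollary~\ref{c: 2nd PT} invokes Theorem~\ref{t: metasets all q}, whose part (4) is the present lemma); to avoid any appearance of circularity you could instead cite the same identity as it appears in the proof of Lemma~\ref{l: U2-V1 decre}, display~\eqref{e: F_deriv(c(t))}, or simply rederive the two-line computation in place.
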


This result is the same as \cite[Theorem 3.1(b)]{Costeniuc Ellis T}.
The proof is provided in \cite[Appendices A, B]{Costeniuc Ellis T}
via convex-duality.

We may assume that $\beta$ increases from a very small positive number.
Observe that the elements of $\mathcal{U}_{1}$ and $\mathcal{V}_{1}$
simultaneously appear when $\beta=\beta_{s,\,1}$ and the elements
of $\mathcal{U}_{2}$ appear when $\beta=\beta_{s,\,2}$ . By the
above two lemmas, before the appearance of critical points in $\mathcal{U}_{2}$,
the heights of ${\bf p}$ and ${\bf u}_{1}$ are reversed.

\subsubsection*{Order of heights of $\mathcal{V}_{1}$ and $\mathcal{U}_{2}$}

We have the following lemma about the heights of ${\bf u}_{2}$ and
${\bf v}_{1}$. The critical temperature $\beta_{m}$ given in the
following lemma is the crucial development of this article.
\begin{lem}
\label{l: beta_m}Let $q\ge5$. We have a critical temperature $\beta_{m}\in(\beta_{s,\,2},q)$
such that
\begin{equation}
\begin{cases}
F_{\beta}({\bf v}_{1})<F_{\beta}({\bf u}_{2}) & \text{if }\beta_{s,\,2}\le\beta<\beta_{m}\ ,\\
F_{\beta}({\bf v}_{1})=F_{\beta}({\bf u}_{2}) & \text{if }\beta=\beta_{m}\ ,\\
F_{\beta}({\bf v}_{1})>F_{\beta}({\bf u}_{2}) & \text{if }\beta_{m}<\beta\le q\ .
\end{cases}\label{e: Prop of beta_m}
\end{equation}
\end{lem}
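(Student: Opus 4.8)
The plan is to reduce the statement to an explicit scalar comparison along the one-parameter families of critical points. Recall from Lemma~\ref{l: beta_s1<beta_s2} (and the following remark) that for $q\ge5$ we have $\beta_{s,\,2}<q$, so on the interval $[\beta_{s,\,2},q]$ both $\mathbf{u}_2=\mathbf{u}_2(\beta)$ and $\mathbf{v}_1=\mathbf{v}_1(\beta)$ are well-defined critical points (by Lemma~\ref{l: cand cri}), with $\mathbf{u}_2$ governed by the parameter $u_2(\beta)$ solving $g_2(t)=\beta$ (the smaller root) and $\mathbf{v}_1$ by $v_1(\beta)$ solving $g_1(t)=\beta$ (the larger root). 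First I would write out $F_\beta(\mathbf{u}_2)$ and $F_\beta(\mathbf{v}_1)$ as explicit functions of $u_2$ and $v_1$ respectively, using $F_\beta(\bm x)=-\tfrac12\sum x_k^2+\tfrac1\beta\sum x_k\log x_k$ and the relation $\beta=g_i(t)$ to eliminate the logarithmic terms: since at a critical point of the form $(t,\dots,t,(1-jt)/i,\dots)$ one has $t-\tfrac1\beta\log t=\tfrac{1-jt}{i}-\tfrac1\beta\log\tfrac{1-jt}{i}$, the entropy part collapses and $F_\beta$ becomes a rational-plus-log expression in the single variable $t$. This yields two functions $\phi_1(\beta):=F_\beta(\mathbf{v}_1(\beta))$ and $\phi_2(\beta):=F_\beta(\mathbf{u}_2(\beta))$ of $\beta$ on $[\beta_{s,\,2},q]$, and the claim is that $\Delta(\beta):=\phi_1(\beta)-\phi_2(\beta)$ is negative at $\beta_{s,\,2}$, vanishes at exactly one interior point $\beta_m$, and is positive on $(\beta_m,q]$.

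Next I would establish the three ingredients needed for such a ``crossing'' argument. (i) \emph{Endpoint signs.} At $\beta=\beta_{s,\,2}$ the point $\mathbf{u}_2$ has just appeared as a degenerate critical point (Table~3), whereas $\mathbf{v}_1$ is already a genuine saddle lying below; one should check directly that $F_{\beta_{s,\,2}}(\mathbf{v}_1)<F_{\beta_{s,\,2}}(\mathbf{u}_2)$, plausibly by a convexity/position argument (the entropy $S$ is convex and $\mathbf{v}_1$ is further from $\mathbf p$ along a coordinate direction than $\mathbf{u}_2$ is at this temperature, so it sits lower on the energy-dominated side). At $\beta=q$ both $u_2(q)$ and $v_1(q)$ can be computed explicitly (recall $g_i(1/q)=q$, so $t=1/q$ is always a root; the other roots are the relevant ones), and one evaluates $\Delta(q)>0$ by an elementary inequality in $q$. (ii) \emph{Monotonicity of $\Delta$.} Using $\tfrac{d}{d\beta}F_\beta(\mathbf c(\beta))=-\tfrac1{\beta^2}S(\mathbf c(\beta))$ from the proof of Corollary~\ref{c: 2nd PT} (valid since $\mathbf c(\beta)$ is critical), we get $\Delta'(\beta)=\tfrac1{\beta^2}\big(S(\mathbf u_2(\beta))-S(\mathbf v_1(\beta))\big)$. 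So it suffices to show $S(\mathbf u_2)>S(\mathbf v_1)$ throughout $[\beta_{s,\,2},q]$, i.e.\ that $\mathbf u_2$ is entropically ``more spread out'' than $\mathbf v_1$; this is a clean inequality between the values of $x\mapsto\sum x_k\log x_k$ at the two configuration types, and it should follow from comparing how far each point sits from $\mathbf p$ — the larger root $v_1$ pushes one coordinate much closer to $1$ (low entropy), while $u_2$ splits the bulk between two coordinates. (iii) Combining (i) and (ii): $\Delta$ is strictly monotone (increasing), negative at the left endpoint and positive at the right, hence has a unique zero $\beta_m\in(\beta_{s,\,2},q)$, and the sign pattern \eqref{e: Prop of beta_m} follows immediately.

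The main obstacle I anticipate is step (ii), the sign of $S(\mathbf u_2(\beta))-S(\mathbf v_1(\beta))$ uniformly on the whole interval. The parameters $u_2$ and $v_1$ are defined implicitly through the transcendental equations $g_2(t)=\beta$ and $g_1(t)=\beta$, so a direct substitution is messy; one really wants a monotonicity or domination statement of the form ``$v_1(\beta)$ is smaller than $u_2(\beta)$'' (or the reverse, whichever makes the entropy comparison go through) that holds for all $\beta$ in range, obtained by analyzing the graphs of $g_1$ and $g_2$ as in Lemma~\ref{l: g_i} and Figure~\ref{fig: g2h2h2'}. If a clean uniform ordering of the parameters is not available, the fallback is to show $\Delta'$ itself changes sign at most once — but given the structure of the problem I expect the entropy comparison to be monotone and the argument above to close cleanly. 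A secondary technical point is making sure $\beta_m<q$ strictly rather than merely $\le q$, which is handled by the strict inequality $\Delta(q)>0$ from (i).
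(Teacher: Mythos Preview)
Your overall architecture---check the sign of $\Delta(\beta)=F_\beta(\mathbf v_1)-F_\beta(\mathbf u_2)$ at the two endpoints and control $\Delta'$ via $\Delta'(\beta)=\beta^{-2}\big(S(\mathbf u_2)-S(\mathbf v_1)\big)$---is exactly the paper's. But two of your three ingredients are real gaps.

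\emph{Step (ii), the entropy comparison.} Your heuristic that ``$v_1$ pushes one coordinate much closer to $1$'' describes $\mathbf u_1$, not $\mathbf v_1$. On $[\beta_{s,2},q]$ the larger root satisfies $v_1(\beta)\in(m_1,1/q]$ with $v_1(q)=1/q$, so the distinguished coordinate $1-(q-1)v_1$ lies in $(1/q,1/2)$ and $\mathbf v_1$ is actually \emph{close to} $\mathbf p$ (it collapses onto $\mathbf p$ at $\beta=q$). So the picture behind your uniform inequality $S(\mathbf u_2)>S(\mathbf v_1)$ is wrong, and the verbal gloss ``$\mathbf u_2$ is more spread out'' points the wrong way regardless. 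The paper does not attempt this stronger claim; it proves instead that $\beta^2\Delta'(\beta)=k_2(u_2(\beta))-k_1(v_1(\beta))$ is \emph{monotone increasing} (your fallback), which is immediate once you note $k_i'(t)=-j\log\frac{1-jt}{it}<0$ on $(0,1/q)$ while $u_2$ decreases and $v_1$ increases with $\beta$. That monotonicity, together with the endpoint signs, forces $\Delta$ to be first decreasing then increasing (or just increasing), hence a unique zero.

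\emph{Step (i), the endpoint inequalities.} This is where you most underestimate the difficulty. The inequality $F_{\beta_{s,2}}(\mathbf v_1)<F_{\beta_{s,2}}(\mathbf u_2)$ is not a ``convexity/position'' one-liner: in the paper it is Lemma~\ref{l: M-U2-V1}, whose proof occupies Section~\ref{subsec: Pf  l: M-U2-V1} plus all of Section~\ref{sec: Pf of Last} and Appendix~\ref{sec: Numerical}. It is handled by numerical verification for $5\le q\le 6500$ and, for $q>6500$, by a delicate asymptotic estimate on $m_2(q)$ and $\beta_{s,2}(q)$ (Lemmas~\ref{l: Last claim1} and \ref{l: deriv q}). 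Likewise $\Delta(q)>0$ is not an ``elementary inequality in $q$'': since $\mathbf v_1(q)=\mathbf p$, one needs $F_q(\mathbf p)>F_q(\mathbf u_2)$, which the paper gets from a separate monotonicity argument (Lemma~\ref{l: Last claim2}) comparing $F_\beta(\mathbf u_2)$ to the explicit function $f_c(\beta)=-\tfrac1{2\beta}\log(qe\beta)$, again with numerical checks for small $q$. Your sketch gives no mechanism for either endpoint, and that is the genuine content of the lemma.
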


The proof of the lemma is given in Section \ref{subsec: Pf l: beta_m}.

Up to this point, we have obtained four critical values
\[
0<\beta_{s,\,1}<\beta_{c}<\beta_{s,\,2}<\beta_{m}<q\ ,
\]
when $q\ge5$. If $q=4$, we have $\beta_{s,\,2}=q$, else if $q=3$,
$\beta_{s,\,2}$ is not defined. Thus, if $q\le4$, define $\beta_{m}=q$
so that
\[
0<\beta_{s,\,1}<\beta_{c}<\beta_{m}=q\ .
\]

We conclude this section with the definition of the critical temperatures
at which the phase transitions occur. We can now define critical temperatures
$\beta_{1},\beta_{2},\beta_{3}$ appearing in Section \ref{subsec: Main critemp}.
The critical temperatures are given by
\begin{equation}
\beta_{1}(q)\coloneqq\beta_{s,\,1}(q),\ \ \beta_{2}(q)\coloneqq\beta_{c}(q),\ \ \beta_{3}(q)\coloneqq\beta_{m}(q)\ .\label{e: Def of cri temp}
\end{equation}

\section{\label{sec: Pf Equi points}Critical Points of $F_{\beta}$}

In this section, we prove Proposition \ref{p: cri} for $q\ge4$.
For the case $q=3$, we refer to \cite{Landim Seo-3spin} and we will
only highlight the difference.

\subsection{Eigenvalues of Hessian of $F_{\beta}$ at Critical Points}

First, we investigate ${\bf p}=(1/q,\,\dots,\,1/q)$, which is always
a critical point for all $\beta>0$. The following lemma proves the
property of ${\bf p}$.
\begin{lem}
The point ${\bf p}$ is a local minimum of $F_{\beta}$ if $\beta<q$,
a local maximum of $F_{\beta}$ if $\beta>q$, and a degenerate critical
point when $\beta=q$.
\end{lem}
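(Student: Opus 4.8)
The plan is to compute the Hessian of $F_{\beta}$ at ${\bf p}$ explicitly and read off its signature. Starting from the first-order partial derivatives recorded above, namely $\partial_{x_k} F_{\beta}(\bm{x}) = -(x_k - x_q) + \beta^{-1}(\log x_k - \log x_q)$ for $1 \le k \le q-1$, I would differentiate once more, keeping in mind that $x_q = 1 - x_1 - \cdots - x_{q-1}$ so that $\partial_{x_l} x_q = -1$. This yields, for $1 \le k, l \le q-1$,
\[
\frac{\partial^2 F_{\beta}}{\partial x_k \partial x_l}(\bm{x}) \,=\, -(\delta_{kl} + 1) + \frac{1}{\beta}\Big(\frac{\delta_{kl}}{x_k} + \frac{1}{x_q}\Big)\ .
\]

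Next I would evaluate at $\bm{x} = {\bf p}$, where every coordinate equals $1/q$, so that $\delta_{kl}/x_k + 1/x_q = q(\delta_{kl} + 1)$ and hence
\[
(\nabla^2 F_{\beta})({\bf p}) \,=\, \Big(\frac{q}{\beta} - 1\Big)\,(I + J)\ ,
\]
where $I$ denotes the $(q-1)\times(q-1)$ identity matrix and $J$ the $(q-1)\times(q-1)$ matrix all of whose entries equal $1$. The symmetric matrix $I + J$ is positive definite: its eigenvalues are $q$, with eigenvector $(1,\dots,1)$, and $1$, with multiplicity $q-2$. Therefore $(\nabla^2 F_{\beta})({\bf p})$ is positive definite exactly when $q/\beta - 1 > 0$, i.e. $\beta < q$; negative definite exactly when $\beta > q$; and the zero matrix when $\beta = q$. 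In the first two cases the second-derivative test shows that ${\bf p}$ is a strict local minimum, respectively a strict local maximum, of $F_{\beta}$; in the borderline case $\beta = q$ the Hessian is singular, so ${\bf p}$ is a degenerate critical point. Since ${\bf p}$ lies in the interior of $\Xi$ and is a critical point of $F_{\beta}$ for every $\beta > 0$ by \eqref{e: crit}, this establishes all three assertions.

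There is no real obstacle in this argument. The only steps demanding a little care are the correct handling of the affine constraint $x_q = 1 - \sum_{i \le q-1} x_i$ when taking the second derivative (which is what produces the extra ``$+1$'' in $\delta_{kl}+1$ and the common term $1/x_q$), and the observation that at the threshold $\beta = q$ the Hessian does not merely degenerate but vanishes identically; consequently the precise nature of ${\bf p}$ at $\beta = q$ cannot be decided by the Hessian alone and is instead addressed through the finer analysis of heights and indices of the critical points carried out in Proposition \ref{p: cri}.
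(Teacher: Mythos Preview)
Your proof is correct and essentially identical to the paper's own argument: both compute $(\nabla^{2}F_{\beta})({\bf p}) = \frac{q-\beta}{\beta}(I+J)$ (the paper writes $J$ as $\mathbbm{1}\mathbbm{1}^{\dagger}$), read off the eigenvalues $(q-\beta)/\beta$ and $q(q-\beta)/\beta$, and conclude from their common sign. Your additional remark that the degenerate case $\beta=q$ requires further analysis is apt and is indeed handled later in Lemma~\ref{l: not minima}.
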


\begin{proof}
Let $\mathbbm1=(1,\dots,1)^{\dagger}$ be a $(q-1)\times1$ matrix.
By elementary computation, we obtain
\[
\nabla^{2}F_{\beta}({\bf p})\,=\,\frac{q-\beta}{\beta}\Big(\text{diag}(1,\dots,1)+\mathbbm1\mathbbm1^{\dagger}\Big)
\]
whose eigenvalues are $(q-\beta)/\beta$ with multiplicity $q-2$
and $q(q-\beta)/\beta$ with 1. This completes the proof.
\end{proof}
Now, for $i\in[1,q/2]\cap\mathbb{N}$, $j=q-i$, and $\beta=g_{i}(t)$,
define $a\in\mathbb{R}$ and $b\in\mathbb{R}$ as
\begin{equation}
a=a(i,t)=-1+1/\beta t\ ,\ \ b=b(i,t)=-1+i/\beta(1-jt)\ .\label{e: def ab}
\end{equation}
We have the following lemma about eigenvalues of Hessian of $F_{\beta}$
at critical points.
\begin{lem}
\label{l:ev}Let $i\in[1,q/2]\cap\mathbb{N}$ and $j=q-i$. Moreover,
let $t\in(0,1/j)$ and $\beta=g_{i}(t)$. Then, ${\bf c}=(t,\dots,t,(1-jt)/i,\dots,(1-jt)/i)$
is a critical point of $F_{\beta}$ and eigenvalues of $\nabla^{2}F_{\beta}({\bf c})$
constitute one of the following cases.
\begin{enumerate}
\item If $i\ge2$, all eigenvalues of $\nabla^{2}F_{\beta}(\bm{c})$ are
$a$, $b$ with multiplicative $j-1$, $i-2$, respectively, and the
roots of $\lambda^{2}-(a+qb)\lambda+b(ia+jb)$.
\item If $i=1$, all eigenvalues of $\nabla^{2}F_{\beta}(\bm{c})$ are $a$
with multiplicative $j-1$ and $a+(q-1)b$ with multiplicative 1.
\end{enumerate}
\end{lem}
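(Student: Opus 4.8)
The statement to prove is Lemma \ref{l:ev}, the computation of the eigenvalues of $\nabla^{2}F_{\beta}(\mathbf{c})$ at a critical point $\mathbf{c}=(t,\dots,t,(1-jt)/i,\dots,(1-jt)/i)$ with $j$ copies of $t$ and $i$ copies of $(1-jt)/i$. The plan is to write down the Hessian explicitly and exploit the block structure forced by the permutation symmetry of $\mathbf{c}$.

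First I would compute $\nabla^{2}F_{\beta}$ in the coordinates $x_{1},\dots,x_{q-1}$ (with $x_{q}=1-x_{1}-\cdots-x_{q-1}$). From
\[
\frac{\partial}{\partial x_{k}}F_{\beta}(\mathbf{x})=-(x_{k}-x_{q})+\frac{1}{\beta}(\log x_{k}-\log x_{q}),
\]
a routine differentiation gives
\[
\frac{\partial^{2}}{\partial x_{k}\partial x_{l}}F_{\beta}(\mathbf{x})=\Big(-1+\frac{1}{\beta x_{q}}\Big)+\delta_{kl}\Big(-1+\frac{1}{\beta x_{k}}\Big)\Big/\text{(the }\delta_{kl}\text{ part)}\ ,
\]
i.e. $\nabla^{2}F_{\beta}(\mathbf{x})=\mathrm{diag}\big(-1+\tfrac{1}{\beta x_{1}},\dots,-1+\tfrac{1}{\beta x_{q-1}}\big)+\big(-1+\tfrac{1}{\beta x_{q}}\big)\mathbbm1\mathbbm1^{\dagger}$. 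Evaluating at $\mathbf{c}$: WLOG place the $i$ coordinates equal to $(1-jt)/i$ last, so coordinate $q$ is one of them. Then the first $j$ diagonal entries equal $a=-1+\tfrac{1}{\beta t}$, the next $i-1$ diagonal entries equal $b=-1+\tfrac{i}{\beta(1-jt)}$, and the rank-one term has coefficient $b$ as well (since $x_{q}=(1-jt)/i$). Thus $\nabla^{2}F_{\beta}(\mathbf{c})=D+b\,\mathbbm1\mathbbm1^{\dagger}$ where $D=\mathrm{diag}(\underbrace{a,\dots,a}_{j},\underbrace{b,\dots,b}_{i-1})$.

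Next I would diagonalize $D+b\mathbbm1\mathbbm1^{\dagger}$ by splitting $\mathbb{R}^{q-1}$ into the subspace of vectors supported on the first $j$ coordinates and summing to zero (eigenvalue $a$, multiplicity $j-1$), the subspace supported on the last $i-1$ coordinates and summing to zero (eigenvalue $b$, multiplicity $i-2$, present only when $i\ge2$), and the orthogonal complement, a $2$-dimensional space spanned by $\mathbbm1_{\{1,\dots,j\}}$ and $\mathbbm1_{\{j+1,\dots,q-1\}}$ on which the operator acts by the $2\times 2$ matrix
\[
M=\begin{pmatrix} a+jb & (i-1)b\\ jb & b+(i-1)b\end{pmatrix}=\begin{pmatrix} a+jb & (i-1)b\\ jb & ib\end{pmatrix}\ .
\]
Then $\mathrm{tr}\,M=a+jb+ib=a+qb$ and $\det M=(a+jb)(ib)-(i-1)jb^{2}=iab+ijb^{2}-ijb^{2}+jb^{2}=iab+jb^{2}=b(ia+jb)$, so the remaining two eigenvalues are the roots of $\lambda^{2}-(a+qb)\lambda+b(ia+jb)$, which is case (1). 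For $i=1$ the last $i-1=0$ coordinates are absent, $\mathbf{c}=\mathbf{u}_{1}$ or $\mathbf{v}_{1}$, $D=a\,\mathrm{Id}_{q-1}$, and $a\,\mathrm{Id}+b\mathbbm1\mathbbm1^{\dagger}$ has eigenvalue $a$ with multiplicity $q-2$ on $\mathbbm1^{\perp}$ and $a+(q-1)b$ on $\mathbbm1$; since $q-2=j-1$ when $i=1$, this is case (2). Finally I would note that $\mathbf{c}$ is a critical point: this is immediate from \eqref{e: crit} together with $\beta=g_{i}(t)$ (equation \eqref{e: beta=00003Dg_i}), which is exactly the condition that $t-\tfrac1\beta\log t$ and $\tfrac{1-jt}{i}-\tfrac1\beta\log\tfrac{1-jt}{i}$ coincide.

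There is no serious obstacle here; the argument is linear algebra plus bookkeeping. The only point requiring a little care is the multiplicity count and the consistent placement of the repeated coordinates so that the $(q-1)$-dimensional reduced Hessian—rather than a naive $q$-dimensional one—is what gets diagonalized; handling the $i=1$ versus $i\ge2$ cases separately (so that the $b$-eigenspace of multiplicity $i-2$ is nonempty exactly when $i\ge 2$) keeps the statement clean.
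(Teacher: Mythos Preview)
Your argument is correct and follows the same overall strategy as the paper: both identify $\nabla^{2}F_{\beta}(\mathbf{c})=D+b\,\mathbbm1\mathbbm1^{\dagger}$ with $D=\mathrm{diag}(a,\dots,a,b,\dots,b)$, and then extract the eigenvalues. The one difference is in that last step: the paper applies the matrix determinant lemma $\det(A+\bm v\bm w^{\dagger})=\det A\,(1+\bm w^{\dagger}A^{-1}\bm v)$ to compute the characteristic polynomial directly, whereas you decompose $\mathbb{R}^{q-1}$ into explicit invariant subspaces (the two zero-sum blocks and the span of $\mathbbm1_{\{1,\dots,j\}},\mathbbm1_{\{j+1,\dots,q-1\}}$) and read off the $2\times2$ block $M$. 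Your route has the small advantage of producing eigenvectors, not just eigenvalues; the paper's route avoids having to verify invariance of the subspaces. Both are equally short and standard.
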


\begin{proof}
By Lemma \ref{l: cand cri}, ${\bf c}$ is a critical point of $F_{\beta}$
since $\beta=g_{i}(t)$. By elementary computation, we have
\begin{align*}
\frac{\partial^{2}}{\partial x_{k}^{2}}F_{\beta}(\bm{x}) & \,=\,-1+\frac{1}{\beta x_{k}}+\Big(-1+\frac{1}{\beta x_{q}}\Big)\ ,\\
\frac{\partial^{2}}{\partial x_{k}\partial x_{l}}F_{\beta}(\bm{x}) & \,=\,-1+\frac{1}{\beta x_{q}}\ ,
\end{align*}
so that
\[
\frac{\partial^{2}}{\partial x_{k}\partial x_{l}}F_{\beta}({\bf c})=\begin{cases}
-1+\frac{1}{\beta t}+(-1+\frac{i}{\beta(1-jt)}) & \text{if }1\le k=l\le j\\
2(-1+\frac{i}{\beta(1-jt)}) & \text{if }j+1\le k=l\le q-1\\
-1+\frac{i}{\beta(1-jt)} & \text{if }k\ne l
\end{cases}
\]
Then, we can write $\nabla^{2}F_{\beta}({\bf c})$ as
\[
\nabla^{2}F_{\beta}({\bf c})\,=\,\mathbb{D}+b\mathbbm1\mathbbm1^{\dagger}\ ,
\]
where
\[
\mathbb{D}\,=\,\text{diag}(\underset{j}{\underbrace{a,\dots,a}},\underset{q-1-j}{\underbrace{b,\dots,b}})\ .
\]
Let $\mathbb{I}=\mathbb{I}_{q-1}$ be a $(q-1)$-identity matrix.
By the formula 
\[
\det(A+\bm{v}\bm{w}^{\dagger})=\det A(1+\bm{v}^{\dagger}A^{-1}\bm{w})\ ,
\]
we can write
\[
\det(\nabla^{2}F_{\beta}({\bf c})-\lambda\mathbb{I})\,=\,\det(\mathbb{D}-\lambda\mathbb{I}+b\mathbbm1\mathbbm1^{\dagger})\,=\,(a-\lambda)^{j}(b-\lambda)^{i-1}\Big[1+b\big(\frac{j}{a-\lambda}+\frac{i-1}{b-\lambda}\big)\Big]\ .
\]
Hence, we obtain
\[
\det(\nabla^{2}F_{\beta}({\bf c})-\lambda\mathbb{I})=\begin{cases}
(a-\lambda)^{j-1}(b-\lambda)^{i-2}(\lambda^{2}-(a+qb)\lambda+b(ia+jb)) & \text{if }i\ge2\ ,\\
(a-\lambda)^{j-1}(a+jb-\lambda)=(a-\lambda)^{q-2}(a+(q-1)b-\lambda) & \text{if }i=1\ .
\end{cases}
\]
The proof of the lemma arises directly from this explicit computation
of characteristic polynomial of Hessian of $F_{\beta}({\bf c})$.
\end{proof}
We have the following lemma about the sign of the eigenvalues of $\nabla^{2}F_{\beta}({\bf c})$.
Recall the definition of $m_{i}$ from Lemma \ref{l: g_i}.
\begin{lem}
\label{l:sign ab}Let $i\in[1,q/2]\cap\mathbb{N}$ and $j=q-i$. Moreover,
let $t\in(0,1/j)$ and $\beta=g_{i}(t)$. Then, we have the following
table regarding the sign of each value. If $i=q/2$, we ignore $t=m_{i}$
and $t\in(m_{i},1/q)$.

\begin{table}[H]
\begin{tabular}{|c|c|c|c|c|c|}
\hline 
 & $t\in(0,m_{i})$ & $t=m_{i}$ & $t\in(m_{i},1/q)$ & $t=1/q$ & $t\in(1/q,1/j)$\tabularnewline
\hline 
$a$ & $+$ & $+$ & $+$ & $0$ & $-$\tabularnewline
\hline 
$b$ & $-$ & $-$ & $-$ & $0$ & $+$\tabularnewline
\hline 
$ia+jb$ & $+$ & $0$ & $-$ & $0$ & $+$\tabularnewline
\hline 
$b(ia+jb)$ & $-$ & $0$ & $+$ & $0$ & $+$\tabularnewline
\hline 
\end{tabular}

\end{table}
\end{lem}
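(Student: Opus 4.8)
The plan is to handle the four rows of the table in turn, reducing everything to two elementary identities: a convexity argument that pins down $a$ and $b$, and an algebraic identity expressing $ia+jb$ through $g_i'$.

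\emph{Signs of $a$ and $b$.} First I would introduce the auxiliary function $\varphi(x):=x-\frac{1}{\beta}\log x$ on $(0,\infty)$, so that $\varphi'(x)=1-\frac{1}{\beta x}$ vanishes only at $x=1/\beta$; hence $\varphi$ is strictly decreasing on $(0,1/\beta)$ and strictly increasing on $(1/\beta,\infty)$. Writing $s:=(1-jt)/i$ for the repeated ``other'' coordinate of ${\bf c}$, one has $a=-\varphi'(t)$ and $b=-\varphi'(s)$, while the hypothesis $\beta=g_i(t)$ is, by \eqref{e: crit}--\eqref{e: beta=00003Dg_i}, precisely the statement $\varphi(t)=\varphi(s)$. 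Since $jt+is=1$ and $i+j=q$, one checks that $t<1/q\iff s>1/q$ and that $t=s\iff t=1/q$. Therefore, for $t\in(0,1/q)$ (so $t<s$), strict convexity of $\varphi$ together with $\varphi(t)=\varphi(s)$ forces $t<1/\beta<s$, giving $a=-\varphi'(t)>0$ and $b=-\varphi'(s)<0$; for $t\in(1/q,1/j)$ one gets symmetrically $s<1/\beta<t$, hence $a<0$ and $b>0$; and at $t=1/q$ one has $\beta=g_i(1/q)=q$ by definition, so $a=b=0$ by direct substitution. Since $m_i<1/q$ for $i<q/2$ (by Lemma \ref{l: beta_s1<beta_s2}) and $m_i=1/q$ for $i=q/2$ (so the columns $t=m_i$ and $t\in(m_i,1/q)$ are then vacuous), this settles the first two rows.

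\emph{Sign of $ia+jb$.} A direct computation from \eqref{e: def ab}, using $\frac1t+\frac{j}{1-jt}=\frac{1}{t(1-jt)}$, gives $ia+jb=-q+\frac{i}{\beta t(1-jt)}$. Substituting $\log\frac{1-jt}{it}=(1-qt)\beta/i$ into the definition \eqref{e: Def of h_i} of $h_i$ yields $h_i(t)=\frac{1-qt}{i}\bigl(\beta-\frac{i}{qt(1-jt)}\bigr)$, and combining this with $g_i'(t)=\frac{qi}{(1-qt)^2}h_i(t)$ from \eqref{e: deriv of g,h} produces the clean identity
\[
ia+jb\;=\;-\,\frac{1-qt}{\beta}\,g_i'(t)\,,
\]
valid on $(0,1/j)\setminus\{1/q\}$ and, by continuity (or by the direct computation $\beta=q$), also at $t=1/q$, where both sides vanish. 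By Lemma \ref{l: g_i}, $g_i$ is strictly decreasing on $(0,m_i)$ and strictly increasing on $(m_i,1/j)$, so $g_i'<0$ on $(0,m_i)$, $g_i'(m_i)=0$, and $g_i'>0$ on $(m_i,1/j)\setminus\{1/q\}$; since $1-qt>0$ on $(0,1/q)$ and $1-qt<0$ on $(1/q,1/j)$, the displayed identity yields $ia+jb>0$ on $(0,m_i)$, $=0$ at $m_i$, $<0$ on $(m_i,1/q)$, $=0$ at $1/q$, and $>0$ on $(1/q,1/j)$, which is exactly the third row. The last row is then the entrywise product of the $b$-row and the $(ia+jb)$-row and needs nothing further.

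\emph{Main obstacle.} I do not expect a real obstacle here: once the two identities are in place — $a,b$ expressed as $-\varphi'$ at the two conjugate values with $\varphi(t)=\varphi(s)$, and $ia+jb=-(1-qt)g_i'(t)/\beta$ — everything reduces to reading off signs. The only point demanding care is the bookkeeping around the degenerate column $t=1/q$, where $\beta=q$ and $1-qt=0$, together with the fact that $g_i$ has an interior smooth minimum at $m_i=1/q$ when $i=q/2$ but at $m_i<1/q$ when $i<q/2$; one must match the monotonicity intervals of $g_i$ coming from Lemma \ref{l: g_i} with the sign of $1-qt$ correctly in each of the two regimes.
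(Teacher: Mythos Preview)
Your argument is correct and follows the same overall structure as the paper: compute $ia+jb=-q+\tfrac{i}{\beta t(1-jt)}$ and reduce the sign to the shape of $g_i$ (equivalently $h_i$) established in Lemma~\ref{l: g_i}. Two presentational differences are worth noting. For the first two rows, the paper argues directly by rewriting $a>0$ as an inequality in $x=(1-jt)/(it)$ and invoking $x-1>\log x$, whereas your convexity argument via $\varphi(x)=x-\tfrac{1}{\beta}\log x$ and the level-set condition $\varphi(t)=\varphi(s)$ is slicker and makes the symmetry between $t$ and $s=(1-jt)/i$ transparent. For the third row, the paper reduces the sign of $ia+jb$ to the sign of $h_i$ (negative on $(0,m_i)$, positive on $(m_i,1/q)\cup(1/q,1/j)$) and then combines this with the sign of $1-qt$ case by case; your identity $ia+jb=-(1-qt)g_i'(t)/\beta$ is exactly the same computation pushed one step further via $g_i'=\tfrac{qi}{(1-qt)^{2}}h_i$, which has the advantage of invoking only the monotonicity statement of Lemma~\ref{l: g_i} rather than the finer shape of $h_i$. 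One small remark: the fact that $m_i<1/q$ for $i<q/2$ (and $m_{q/2}=1/q$) is really a consequence of the analysis of $h_i$ in the proof of Lemma~\ref{l: g_i} (via $h_i'$ and $h_i(1/q)=0$), not of Lemma~\ref{l: beta_s1<beta_s2}, so you may want to adjust that reference.
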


\begin{proof}
First, suppose that $t<1/q$. Then,
\begin{align*}
a>0\, & \Longleftrightarrow\,\frac{1}{t}>\beta=\frac{i}{1-qt}\log\Big(\frac{1-jt}{it}\Big)\Leftrightarrow\,\frac{1-qt}{it}>\log\Big(\frac{1-jt}{it}\Big)\ .
\end{align*}
By substituting $x=(1-jt)/(it)$, one can deduce that $a>0$ is equivalent
to $t\ne1/q$ which implies $a>0$. Moreover, by the same argument
above, we have $b<0$. In the same manner, if $t>1/q$, we obtain
$a<0$ and $b>0$.

Now, we investigate the sign of $ia+jb$. We write
\[
ia+jb\,=\,-i+\frac{i}{\beta t}-j+\frac{ij}{\beta(1-jt)}\,=\,-q+\frac{i}{\beta t(1-jt)}\ .
\]
By elementary computation, $ia+jb=0$ if $t=1/q$. Hence, $ia+jb>0$
if and only if 
\[
\frac{i}{qt(1-jt)}\,>\,\beta\,=\,\frac{i}{1-qt}\log\Big(\frac{1-jt}{it}\Big)\ .
\]
First, assume $t<1/q$. Then, $ia+jb>0$ if and only if
\[
h_{i}(t)\,=\,\log\Big(\frac{1-jt}{it}\Big)+\frac{qt-1}{qt(1-jt)}\,<\,0\ .
\]
By investigating the graph of $h_{i}$ (cf. Figure \ref{fig: g2h2h2'}),
the above inequality holds if and only if $t<m_{i}$. Second, assume
$t>1/q$. Then, $ia+jb>0$ if and only if $h_{i}(t)>0$ if and only
if $t>1/q$. Hence, $ia+jb>0$ if and only if $t<m_{i}$ or $t>1/q$. 

The case when $t=1/q$ can be proven by the argument in the first
paragraph of this proof. If $t=m_{i}$, then $ia+jb=0$ since $h_{i}(m_{i})=0$.
The above argument can prove the case when $i=q/2$ since $m_{q/2}=1/q$.
\end{proof}

\subsection{Relevant Critical Points of $F_{\beta}$}

In this and the next subsection, we classify nondegenerate critical
points. When we consider critical points in $\mathcal{U}_{i}$ or
$\mathcal{V}_{i}$ , we assume that $\beta>\beta_{s,\,i}$ since when
$\beta=\beta_{s,\,i}$, the elements of $\mathcal{U}_{i}=\mathcal{V}_{i}$
are degenerate. The case when $\beta=\beta_{s,\,i}$ is treated in
Section \ref{subsec: At cri temp}.

By the Morse theory, critical points with more than 2 negative eigenvalues
can be neither saddle points nor minima. Hence, the critical points
with only positive eigenvalues or only one negative eigenvalue and
$q-2$ positive eigenvalues are relevant to the landscape of $F_{\beta}$.
We select these critical points in this subsection. 

As in \eqref{e: def ab}, for $i\in[1,q/2]\cap\mathbb{N}$, $j=q-i$,
and $\beta>\beta_{s,\,i}$, when we consider ${\bf u}_{i}\in\mathcal{U}_{i}$,
let
\[
a=a({\bf u}_{i}):=-1+\frac{1}{\beta u_{i}}\ ,\ \ b=b({\bf u}_{i}):=-1+\frac{1}{\beta(1-ju_{i})}\ ,
\]
and when we consider ${\bf v}_{i}\in\mathcal{V}_{i}$, let
\[
a=a({\bf v}_{i}):=-1+\frac{1}{\beta v_{i}}\ ,\ \ b=b({\bf v}_{i}):=-1+\frac{1}{\beta(1-jv_{i})}\ .
\]

\begin{lem}
\label{l: U12V1} Let $q\ge4$. If $\beta>\beta_{s,\,1}$ , $\mathcal{U}_{1}$
is a set of local minima. If $\beta>\beta_{s,\,2}$ , $\mathcal{U}_{2}$
is a set of saddle points. If $\beta_{s,\,1}<\beta<q$, $\mathcal{V}_{1}$
is a set of saddle points else if $\beta>q$, each point in $\mathcal{V}_{1}$
has at least two negative eigenvalues.
\end{lem}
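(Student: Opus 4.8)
The plan is to read the eigenvalues of $\nabla^2 F_\beta$ at the relevant critical points off Lemma~\ref{l:ev} and then fix their signs with the table of Lemma~\ref{l:sign ab}; the only preliminary work is to locate the parameters $u_1(\beta)$, $v_1(\beta)$, $u_2(\beta)$ relative to the breakpoints $m_i$ and $1/q$ used in that table. Since $g_i$ decreases on $(0,m_i)$ and increases on $(m_i,1/j)$ (Lemma~\ref{l: g_i}) and $u_i\le v_i$ denote the two solutions of $g_i(t)=\beta$ for $\beta>\beta_{s,i}$, we have $u_i<m_i<v_i$; by Lemma~\ref{l: beta_s1<beta_s2} and its proof $m_i<1/q$ when $i<q/2$ and $m_{q/2}=1/q$, so in every case $u_i\in(0,m_i)$, hence $u_i<1/q$. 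For $i=1$ I would additionally use that $g_1$ increases from $\beta_{s,1}$ to $g_1(1/q)=q$ on $[m_1,1/q]$ and then on to $+\infty$ on $(1/q,1/(q-1))$: this gives $v_1\in(m_1,1/q)$ when $\beta\in(\beta_{s,1},q)$ and $v_1\in(1/q,1/(q-1))$ when $\beta>q$.

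With this localization the three assertions are bookkeeping. For $\mathbf u_1$ ($i=1$, $j=q-1$, $t=u_1\in(0,m_1)$), Lemma~\ref{l:ev}(2) gives eigenvalues $a$ with multiplicity $q-2$ and $ia+jb=a+(q-1)b$ with multiplicity $1$; by the table $a>0$ and $ia+jb>0$, so $\mathbf u_1$ is a nondegenerate local minimum, and by the symmetry noted in Section~\ref{subsec: Main cri} so is every point of $\mathcal U_1$. For $\mathbf v_1$ the same formula applies with $t=v_1$: if $\beta_{s,1}<\beta<q$ then $v_1\in(m_1,1/q)$, so $a>0$ while $ia+jb<0$, giving $q-2$ positive eigenvalues and exactly one negative one, i.e.\ a saddle point; if $\beta>q$ then $v_1\in(1/q,1/(q-1))$, so $a<0$ with multiplicity $q-2\ge2$, hence at least two negative eigenvalues. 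For $\mathbf u_2$ ($i=2$, $j=q-2$, $t=u_2\in(0,m_2)$; here $q\ge4$ so $\mathcal U_2$ is defined), Lemma~\ref{l:ev}(1) gives eigenvalues $a$ with multiplicity $j-1=q-3$, $b$ with the vanishing multiplicity $i-2=0$, and the two roots of $\lambda^2-(a+qb)\lambda+b(ia+jb)$; by the table $a>0$ and $b(ia+jb)<0$, so the two roots have opposite signs, whence $\nabla^2 F_\beta(\mathbf u_2)$ has exactly one negative eigenvalue and $\mathbf u_2$ (and every point of $\mathcal U_2$) is a saddle point.

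There is no genuine obstacle; the only steps requiring a moment's care are the dichotomy that $v_1<1/q$ precisely when $\beta<q$ in the $\mathcal V_1$ case, and the degenerate case $i=q/2$ (that is, $q=4$), where $m_2=1/q=\beta_{s,2}$ forces $\beta>q$ in the hypothesis on $\mathcal U_2$; in that case Lemma~\ref{l:sign ab} already tells us to disregard the columns $t=m_i$ and $t\in(m_i,1/q)$, so the argument above applies verbatim with $t=u_2\in(0,m_2)$.
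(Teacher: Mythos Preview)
Your proof is correct and follows essentially the same route as the paper: read off the eigenvalues from Lemma~\ref{l:ev}, locate $u_1,v_1,u_2$ relative to $m_i$ and $1/q$, and apply the sign table of Lemma~\ref{l:sign ab}. One small slip: in the last sentence you write ``$m_2=1/q=\beta_{s,2}$'', but of course $m_2=1/q$ while $\beta_{s,2}=g_2(1/q)=q$; this is just a typo and does not affect the argument.
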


\begin{proof}
Consider ${\bf u}_{1}\in\mathcal{U}_{1}$. Eigenvalues of $\nabla^{2}F_{\beta}({\bf u}_{1})$
are $a$ with multiplicative $q-2$ and $a+(q-1)b$ with multiplicative
1. By Lemma \ref{l:sign ab}, if $\beta>\beta_{s,\,1}$, then since
$u_{1}<m_{1}<1/q$, we obtain $a,\,a+(q-1)b>0$; hence, ${\bf u}_{1}$
is a local minimum.

Next, consider ${\bf v}_{1}\in\mathcal{V}_{1}$. Eigenvalues of $\nabla^{2}F_{\beta}({\bf v}_{1})$
are $a$ with multiplicative $q-2$ and $a+(q-1)b$ with multiplicative
1. By Lemma \ref{l:sign ab}, if $\beta_{s,\,1}<\beta<q$, then since
$m_{1}<v_{1}<1/q$, we obtain $a>0$ and $a+(q-1)b<0$; hence, it
is a saddle point. If $\beta>q$, then since $v_{1}>1/q$, we obtain
$a<0$ and $a+(q-1)b>0$ so that ${\bf v}_{1}$ has more than two
negative eigenvalues.

Finally, let $i\ge2$, $j=q-i$, and $\beta>\beta_{s,\,i}$. In this
case, ${\bf u}_{i}$ has eigenvalues $a,\,b$ with multiplicative
$j-1,\,i-2$ and the roots of $\lambda^{2}-(a+qb)\lambda+b(ia+jb)$.
Since $u_{i}<m_{i}\le1/q$ for all $i$ and $\beta>\beta_{s,\,i}$,
by Lemma \ref{l:sign ab}, $a>0$, $b<0$, and $b(ia+jb)<0$ so that
it has $j$ positive eigenvalues and $i-1$ negative eigenvalues.
Hence, ${\bf u}_{2}$ is a saddle point.
\end{proof}
\begin{rem}
For $q=3$, by the same argument, $\nabla^{2}F_{\beta}({\bf v}_{1})$
has only one negative eigenvalue and two positive eigenvalues for
$\beta\in(\beta_{s,\,1},\infty)\setminus\{q\}$.
\end{rem}

\subsection{Irrelevant Critical Points of $F_{\beta}$}

In this subsection, we eliminate unneeded critical points.
\begin{lem}
\label{l: U3V2}Let $q\ge5$. For $i\in[3,q/2]\cap\mathbb{N}$ and
$\beta>\beta_{s,\,i}$, each point in $\mathcal{U}_{i}$ has at least
two negative eigenvalues. And for $i\in[2,q/2]\cap\mathbb{N}$ and
$\beta\in(\beta_{s,\,i},\infty)\setminus\{q\}$, each point in $\mathcal{V}_{i}$
has at least two negative eigenvalues.
\end{lem}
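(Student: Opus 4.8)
The proof is a direct reading of the eigenvalue data: Lemma~\ref{l:ev} records the eigenvalues of $\nabla^{2}F_{\beta}$ at a critical point of the form $\mathbf{c}=(t,\dots,t,(1-jt)/i,\dots,(1-jt)/i)$ with $\beta=g_{i}(t)$, and Lemma~\ref{l:sign ab} records the signs of $a$, $b$ and $ia+jb$ once we know the location of $t$ relative to $m_{i}$ and $1/q$. By symmetry of $F_{\beta}$ under permutations of coordinates, every element of $\mathcal{U}_{i}(\beta)$, resp. $\mathcal{V}_{i}(\beta)$, has the same eigenvalue multiset as the representative obtained by taking $t=u_{i}(\beta)$, resp. $t=v_{i}(\beta)$, so it suffices to treat these. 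First I would locate $u_{i}$ and $v_{i}$: by Lemma~\ref{l: g_i} (and the shape of $g_{i}$ established in its proof), for $\beta>\beta_{s,\,i}$ the equation $\beta=g_{i}(t)$ has exactly two roots with $u_{i}<m_{i}<v_{i}$; since $g_{i}$ is decreasing on $(0,m_{i})$, increasing on $(m_{i},1/j)$, and $g_{i}(1/q)=q$ with $1/q\in(m_{i},1/j)$ whenever $i<q/2$, one gets $u_{i}\in(0,m_{i})$ always, $v_{i}\in(m_{i},1/q)$ when $\beta_{s,\,i}<\beta<q$, and $v_{i}\in(1/q,1/j)$ when $\beta>q$. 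Note also $j=q-i\ge\lceil q/2\rceil\ge3$ since $q\ge5$, so $j-1\ge2$.

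For $\mathcal{U}_{i}$ with $3\le i\le q/2$ and $\beta>\beta_{s,\,i}$: here $t=u_{i}\in(0,m_{i})$, so Lemma~\ref{l:sign ab} gives $a>0$, $b<0$, $ia+jb>0$, hence $b(ia+jb)<0$. By Lemma~\ref{l:ev}(1) the eigenvalues of $\nabla^{2}F_{\beta}(\mathbf{u}_{i})$ are $a$ with multiplicity $j-1$, $b$ with multiplicity $i-2$, and the two roots of $\lambda^{2}-(a+qb)\lambda+b(ia+jb)$, whose product $b(ia+jb)$ is negative; thus exactly one of those two roots is negative, and together with the $i-2\ge1$ eigenvalues equal to $b<0$ this produces at least $(i-2)+1\ge2$ negative eigenvalues.

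For $\mathcal{V}_{i}$ with $2\le i\le q/2$ and $\beta\in(\beta_{s,\,i},\infty)\setminus\{q\}$ I would split on $\beta$. If $\beta>q$, then $t=v_{i}\in(1/q,1/j)$, so $a<0$ by Lemma~\ref{l:sign ab}, and by Lemma~\ref{l:ev}(1) the Hessian has $j-1\ge2$ eigenvalues equal to $a<0$, which already gives the claim. If $\beta_{s,\,i}<\beta<q$ (which forces $i<q/2$ and $v_{i}\in(m_{i},1/q)$), Lemma~\ref{l:sign ab} gives $a>0$, $b<0$ and $ia+jb<0$, hence $b(ia+jb)>0$, so the two roots of $\lambda^{2}-(a+qb)\lambda+b(ia+jb)$ have the same sign, and the remaining point is to show this sign is negative, i.e. $a+qb<0$. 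Using $q=i+j$ and $jb<-ia$ one has $a+qb=(a+ib)+jb<(a+ib)-ia=a-i(a-b)$; since $a-b=\frac{1}{\beta}\cdot\frac{1-qv_{i}}{v_{i}(1-jv_{i})}>0$ for $v_{i}<1/q$ and $b<0<\frac{i-1}{i}\,a$ (so $b<\frac{i-1}{i}a$, equivalently $a-i(a-b)<0$), we conclude $a+qb<0$. Hence both roots are negative, giving at least two negative eigenvalues of $\nabla^{2}F_{\beta}(\mathbf{v}_{i})$, regardless of the sign pattern in the $b$-eigenspace.

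The only step that is not an immediate corollary of Lemmas~\ref{l:ev} and \ref{l:sign ab} is the sign determination $a+qb<0$ for $\mathbf{v}_{i}$ in the window $\beta_{s,\,i}<\beta<q$; this is where the (short) work lies. I would write that inequality chain out in full and dispatch the rest by quoting the two lemmas together with the localization of $u_{i},v_{i}$ from Lemma~\ref{l: g_i}.
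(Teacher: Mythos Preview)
Your proof is correct and follows essentially the same route as the paper: quote the eigenvalue list from Lemma~\ref{l:ev}, read off signs from Lemma~\ref{l:sign ab} after localizing $u_{i}\in(0,m_{i})$ and $v_{i}$ relative to $1/q$, and in the case $\mathbf{v}_{i}$ with $\beta_{s,i}<\beta<q$ show that the quadratic factor contributes two negative roots by checking $a+qb<0$. The paper's algebra for that last step is slightly more direct, writing $a+qb=(ia+jb)+(1-i)a+ib$ and noting $(1-i)a<0$, $ib<0$ for $i\ge2$, but your chain arrives at the same conclusion.
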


\begin{proof}
By the proof of Lemma \ref{l: U12V1}, ${\bf u}_{i}$ for $i\ge3$
has at least two negative eigenvalues. Now, let $i\ge2$, $j=q-i$,
and $\beta\in(\beta_{s,\,i},\infty)\setminus\{q\}$. In this case,
each points in $\mathcal{V}_{i}$ has eigenvalues $a,\,b$ with multiplicative
$j-1,\,i-2$, and the roots of $\lambda^{2}-(a+qb)\lambda+b(ia+jb)$.
If $\beta_{s,\,i}<\beta<q$, then $v_{i}<1/q$ so that $a>0$, $b<0$,
and $b(ia+jb)>0$. In this case,
\[
a+qb\,=\,ia+jb+(1-i)a+(q-j)b\,<\,ia+jb\,<\,0\ ,
\]
so that the two roots of $\lambda^{2}-(a+qb)\lambda+b(ia+jb)$ are
negative. Hence, it has $j-1$ positive eigenvalues and $i$ negative
eigenvalues. If $\beta>q$, then $v_{i}>1/q$ so that $a<0$, and
points in $\mathcal{V}_{i}$ have at least $j-1$ negative eigenvalues,
where $j-1\ge2$ since $q\ge5$.
\end{proof}
\begin{lem}
\label{l: V2 when q=00003D4}Let $q=4$ and $\beta\ge q$. Then, we
have $\mathcal{V}_{2}=\mathcal{U}_{2}$.
\end{lem}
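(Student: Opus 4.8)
The plan is to exploit the reflection symmetry of $g_2$ that is special to the case $i=j=q/2$. For $q=4$ we have $i=j=q/2=2$, so $g_2$ is defined on $(0,1/2)$, and by Lemma \ref{l: beta_s1<beta_s2} (and the remark following it) $\beta_{s,\,2}=q$; in particular the unique minimizer of $g_2$ is $m_2=1/q=1/4$. Hence, for every $\beta\ge q=\beta_{s,\,2}$, the solutions $u_2=u_2(\beta)\le v_2=v_2(\beta)$ of $g_2(t)=\beta$ in $(0,1/2)$ are well defined (Lemma \ref{l: g_i}), and these are precisely the values entering the definitions of $\mathcal{U}_2(\beta)$ and $\mathcal{V}_2(\beta)$.

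The key step is the identity $g_2\!\left(\tfrac12-t\right)=g_2(t)$ for all $t\in(0,1/2)$. This is a one-line substitution in \eqref{e: Def of g_i}: writing $t'=\tfrac12-t$, one gets $1-4t'=-(1-4t)$ and $\tfrac{1-2t'}{2t'}=\tfrac{2t}{1-2t}=\big(\tfrac{1-2t}{2t}\big)^{-1}$, so both the prefactor $\tfrac{2}{1-4t}$ and the logarithm $\log\tfrac{1-2t}{2t}$ in $g_2(t)=\tfrac{2}{1-4t}\log\tfrac{1-2t}{2t}$ change sign, leaving the product unchanged. Consequently $t\mapsto\tfrac12-t$ permutes the solution set of $g_2(\cdot)=\beta$; since $\tfrac12-u_2$ is then a solution and there are at most two of them, we obtain $v_2=\tfrac12-u_2$, equivalently $\tfrac{1-2u_2}{2}=v_2$ and $\tfrac{1-2v_2}{2}=u_2$.

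It remains to substitute this into the definitions. For $i=j=2$, a representative of $\mathcal{U}_2$ is $\big(u_2,u_2,\tfrac{1-2u_2}{2},\tfrac{1-2u_2}{2}\big)=(u_2,u_2,v_2,v_2)$ while a representative of $\mathcal{V}_2$ is $\big(v_2,v_2,\tfrac{1-2v_2}{2},\tfrac{1-2v_2}{2}\big)=(v_2,v_2,u_2,u_2)$, and these two points are permutations of one another. Since $\mathcal{U}_2$ and $\mathcal{V}_2$ are by definition the full coordinate‑permutation orbits of these representatives (cf. Notation \ref{nota: symplex}), we conclude $\mathcal{U}_2=\mathcal{V}_2$. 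When $\beta=q$ this degenerates to $u_2=v_2=1/4$ and both sets reduce to $\{{\bf p}\}$, consistent with the above.

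There is no real obstacle here: every computation is elementary, and the only thing to notice is the reflection symmetry of $g_2$ about its minimizer $m_2=1/4$, which is exactly what collapses $u_2$ and $v_2$ into a single permutation orbit once $i$ and $j$ coincide. If one prefers to bypass $g_2$, the same identification can be read directly from \eqref{e: crit}: since $\big(u_2,u_2,\tfrac{1-2u_2}{2},\tfrac{1-2u_2}{2}\big)\in\mathcal{U}_2$ is a critical point of $F_\beta$, the map $x\mapsto x-\beta^{-1}\log x$ takes equal values at $u_2$ and at $\tfrac{1-2u_2}{2}$, so the latter is the second root $v_2$, yielding the same conclusion.
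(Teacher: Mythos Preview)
Your proof is correct and follows essentially the same approach as the paper: both establish the reflection symmetry of $g_2$ about $1/4$ (your identity $g_2(\tfrac12-t)=g_2(t)$ is just a reparametrization of the paper's $g_2(\tfrac14-t)=g_2(\tfrac14+t)$), deduce $v_2=\tfrac12-u_2$, and conclude that the representatives of $\mathcal{U}_2$ and $\mathcal{V}_2$ are coordinate permutations of each other. Your additional remark via \eqref{e: crit} is a nice alternative but not needed.
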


\begin{proof}
Observe that $\beta_{s,\,2}=q$. If $\beta=q$, $\mathcal{V}_{2}=\mathcal{U}_{2}$
since there is only one solution $m_{2}$ to $q=g_{2}(t)$. Suppose
$\beta>q$. By elementary computation, we obtain
\[
g_{2}\Big(\frac{1}{4}-t\Big)=g_{2}\Big(\frac{1}{4}+t\Big)\ \ \ \text{for}\ t\in\Big[0,\frac{1}{4}\Big)\ ,
\]
so that $v_{2}=(1/2)-u_{2}$. Hence, ${\bf v}_{2}=(u_{2},u_{2},v_{2},v_{2})$
is a permutation of ${\bf u}_{2}$, that is, each element of $\mathcal{V}_{2}$
is one of the elements of $\mathcal{U}_{2}$ so that $\mathcal{V}_{2}=\mathcal{U}_{2}$.
\end{proof}
By lemmas in this subsection, $\mathcal{U}_{i}$, $i\ge3$, and $\mathcal{V}_{i}$,
$i\ge2$, are not of interest.

\subsection{\label{subsec: At cri temp}At Critical Temperature}

In this subsection, we investigate the critical points at the critical
temperatures, that is, at $\beta=\beta_{s,\,i}$ or $\beta=q$. The
point ${\bf u}_{i}={\bf v}_{i}$ is degenerate when $\beta=\beta_{s,\,i}$
and the point ${\bf p}={\bf v}_{i}$ is degenerate when $\beta=q$
by Lemma \ref{l:ev} and \ref{l:sign ab}.
\begin{lem}
\label{l: not minima}If $i\le q/2$ and $\beta=\beta_{s,\,i}$, the
point ${\bf u}_{i}={\bf v}_{i}$ is not a local minimum. If $\beta=q$,
the point ${\bf p}={\bf v}_{i}$ is not a local minimum.
\end{lem}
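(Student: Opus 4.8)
The plan is to avoid any local (Hessian or higher-order Taylor) analysis at these degenerate points and instead restrict $F_{\beta}$ to a carefully chosen one-parameter curve through the point. Since a local minimum of $F_{\beta}$ restricts to a local minimum along every $C^{1}$ curve through it, it suffices to produce a curve along which the restriction is \emph{strictly monotone} in a one-sided neighborhood of the point.

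Concretely, for $1\le i\le q/2$ and $j=q-i$ I would take the curve $\mathbf{c}_{i}\colon(0,1/j)\to\mathrm{int}\,\Xi$ whose point $\mathbf{c}_{i}(s)$ has $j$ coordinates equal to $s$ and $i$ coordinates equal to $(1-js)/i$; this is exactly the symmetric family passing through $\mathbf{p}=\mathbf{c}_{i}(1/q)$, $\mathbf{u}_{i}=\mathbf{c}_{i}(u_{i})$, and $\mathbf{v}_{i}=\mathbf{c}_{i}(v_{i})$. Setting $\phi_{i}(s):=F_{\beta}(\mathbf{c}_{i}(s))$, a direct differentiation together with the elementary identities $-s+(1-js)/i=(1-qs)/i$ and $\log\frac{1-js}{is}=\frac{1-qs}{i}\,g_{i}(s)$ (recall \eqref{e: Def of g_i}) yields the factored form
\[
\phi_{i}'(s)\,=\,\frac{j}{i}\,(1-qs)\Bigl(1-\frac{g_{i}(s)}{\beta}\Bigr)\,,
\]
so that $\phi_{i}'(s)=0$ precisely at $s=1/q$ (where $\mathbf{c}_{i}(s)=\mathbf{p}$) and at the solutions of $g_{i}(s)=\beta$, which matches the list of critical points in Lemma \ref{l: cand cri}.

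The two cases then follow by reading signs off the known shape of $g_{i}$. First, for $\beta=\beta_{s,\,i}$ with $i<q/2$ we have $\mathbf{u}_{i}=\mathbf{v}_{i}=\mathbf{c}_{i}(m_{i})$ and $m_{i}<1/q$ (Lemma \ref{l: beta_s1<beta_s2}), and by Lemma \ref{l: g_i} the value $\beta_{s,\,i}=g_{i}(m_{i})$ is the strict global minimum of $g_{i}$, so $g_{i}(s)>\beta$ for $s\ne m_{i}$; hence on $(m_{i},1/q)$ one has $1-qs>0$ and $1-g_{i}(s)/\beta<0$, so $\phi_{i}'<0$ there, $\phi_{i}$ is strictly decreasing, $F_{\beta}(\mathbf{c}_{i}(s))<F_{\beta}(\mathbf{u}_{i})$ for such $s$, and letting $s\downarrow m_{i}$ shows $\mathbf{u}_{i}=\mathbf{v}_{i}$ is not a local minimum. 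Second, for $\beta=q$ we have $\mathbf{v}_{i}=\mathbf{p}$ for every $i\le q/2$ (and $\mathbf{u}_{i}=\mathbf{p}$ as well when $i=q/2$), so it suffices to show $\mathbf{p}$ is not a local minimum; here I apply the construction with $i$ replaced by $1$, so $\mathbf{c}_{1}(1/q)=\mathbf{p}$ and $m_{1}<1/q$ (again $1<q/2$). Since $m_{1}$ is the minimizer of $g_{1}$ and $\beta_{s,\,1}<q=g_{1}(1/q)$ (Lemma \ref{l: beta_s1<beta_s2}), $g_{1}$ is strictly increasing on $(m_{1},1/q)$ with values in $(\beta_{s,\,1},q)$, so there $1-qs>0$ and $1-g_{1}(s)/q>0$, whence $\phi_{1}'>0$, $\phi_{1}$ is strictly increasing, $F_{q}(\mathbf{c}_{1}(s))<F_{q}(\mathbf{p})$ for such $s$, and letting $s\uparrow 1/q$ finishes.

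I expect the only step needing genuine care to be the factorization of $\phi_{i}'(s)$; once it is in hand the argument is purely a matter of signs. The subtle point to keep straight is that the monotonicity of $\phi_{i}$ on $(m_{i},1/q)$ runs in \emph{opposite} directions in the two cases — decreasing when $\beta=\beta_{s,\,i}$ is the minimum value of $g_{i}$ (so $g_{i}\ge\beta$ on that interval), increasing when $\beta=q$ strictly exceeds $g_{i}$ on it — and one must check in each case that the relevant parameter ($m_{i}$, respectively $1/q$) lies in the open interval $(0,1/j)$, so the curve stays in $\mathrm{int}\,\Xi$ and no boundary behavior intervenes.
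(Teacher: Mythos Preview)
Your proposal is correct and follows essentially the same approach as the paper: restrict $F_{\beta}$ to the one-parameter curve $\mathbf{c}_{i}(s)$ (the paper calls it $\bm{\ell}_{i}$), obtain the factored derivative $\phi_{i}'(s)=\frac{j}{\beta i}(1-qs)(\beta-g_{i}(s))$, and read off signs from the shape of $g_{i}$. The only cosmetic difference is that for the $\beta=q$ case the paper works with the curve $\bm{\ell}_{i}$ for the given $i$ and checks the sign of $\phi_{i}'$ on both sides of $1/q$, whereas you reduce to $i=1$ and work on one side; both are fine since the point in question is $\mathbf{p}$ regardless of $i$.
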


\begin{proof}
Fix $1\le i\le j\le q-1$ such that $i+j=q$ and define $\bm{\ell}_{i}:[0,1/j]\to\Xi$
as
\[
\bm{\ell}_{i}(s)\,=\,\Big(s,\dots,s,\frac{1-js}{i},\dots,\frac{1-js}{i}\Big)\ .
\]
We therefore obtain 
\begin{align*}
F_{\beta}(\bm{\ell}_{i}(s))\, & =\,-\frac{1}{2}\bigg[js^{2}+i\Big(\frac{1-js}{i}\Big)\bigg]+\frac{1}{\beta}\bigg[js\log s+(1-js)\log\Big(\frac{1-js}{i}\Big)\bigg]\\
 & =\,-\frac{1}{2i}(jqs^{2}-2js+1)+\frac{1}{\beta}\bigg[\frac{(1-js)(1-qs)}{i}g_{i}(s)+\log s\bigg]\ .
\end{align*}
By \eqref{e: Def of h_i} and \eqref{e: deriv of g,h}, we have
\[
\frac{d}{ds}F_{\beta}(\bm{\ell}_{i}(s))\,=\,\frac{j}{i}(1-qs)+\frac{j}{\beta i}(qs-1)g_{i}(s)\,=\,\frac{j}{\beta i}(1-qs)(\beta-g_{i}(s))\ .
\]
We claim that $F_{\beta_{s,\,i}}(\bm{\ell}_{i}(m_{i}))$ and $F_{q}(\bm{\ell}_{i}(1/q))$
are not the local minima of $F_{\beta_{s,\,i}}(\bm{\ell}_{i}(s))$
and $F_{q}(\bm{\ell}_{i}(s))$, respectively, and this completes the
proof.

For the first claim, assume $i<j$, and note that $m_{i}<1/q$. Then,
$1-qs>0$ and $\beta_{s,\,i}-g_{i}(s)<0$ if $s$ is in a neighborhood
of $m_{i}$ and $s\ne m_{i}$. In this case, $\frac{d}{ds}F_{\beta_{s,\,i}}(\bm{\ell}_{i}(s))<0$
near $m_{i}$ so that ${\bf u}_{i}={\bf v}_{i}$ is not a local minimum.
If $i=j$, $\beta_{s,\,i}=q$ so that it suffices to show the second
assertion.

Next, note that $v_{i}(q)=1/q$ so that we have $g_{i}(s)<\beta=q$,
$1-qs>0$ if $s<1/q$ and $g_{i}(s)>q$, $1-qs<0$ if $s>1/q$. Therefore,
$\frac{d}{ds}F_{q}(\bm{\ell}_{i}(s))>0$ near $1/q$ so that ${\bf p}={\bf v}_{i}$
is not a local minimum.
\end{proof}
Even though ${\bf u}_{i}$, $i\ge3$, is not a saddle point if $\beta>\beta_{s,\,i}$,
we cannot exclude the possibility that ${\bf u}_{i}$ is a saddle
point when $\beta=\beta_{s,\,i}$; however, by the next two lemmas,
$\mathcal{U}_{i}(\beta_{s,\,i})$, $i\ge3$, are irrelevant to the
landscape of $F_{\beta}$.
\begin{lem}
\label{l: degenerate saddle1}Let $q\ge8$ and $i\ge4$. Then, if
$\beta=\beta_{s,\,i}$, ${\bf u}_{i}={\bf v}_{i}$ is not a saddle
point.
\end{lem}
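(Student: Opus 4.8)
The plan is to read off the eigenvalues of the Hessian $\nabla^{2}F_{\beta_{s,\,i}}({\bf u}_{i})$ directly from Lemma \ref{l:ev} and the sign table of Lemma \ref{l:sign ab}, and to observe that one fixed \emph{negative} eigenvalue already occurs with multiplicity at least two once $i\ge4$; by the definition of a saddle point (a critical point whose Hessian has exactly one negative eigenvalue) this forces ${\bf u}_{i}$ not to be a saddle. Note that the hypotheses $i\ge4$ and $i\le q/2$ (recall $\mathcal{U}_{i}$ is defined only for $i\le q/2$) already force $q\ge2i\ge8$, so the stated bound $q\ge8$ is only a consistency requirement, and two regimes must be distinguished: $i<q/2$ and $i=q/2$.

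First I would treat $i<q/2$. Since $\beta=\beta_{s,\,i}=g_{i}(m_{i})$ is the minimum value of $g_{i}$, the equation $\beta=g_{i}(t)$ has the single solution $t=m_{i}$, so $u_{i}(\beta)=v_{i}(\beta)=m_{i}$ and ${\bf u}_{i}={\bf v}_{i}$ equals the critical point ${\bf c}=(m_{i},\dots,m_{i},(1-jm_{i})/i,\dots,(1-jm_{i})/i)$ with $j=q-i$ (Lemma \ref{l: cand cri}). As $i\ge2$, part (1) of Lemma \ref{l:ev} shows that $b=b(i,m_{i})=-1+i/(\beta(1-jm_{i}))$ is an eigenvalue of $\nabla^{2}F_{\beta}({\bf c})$ with multiplicity $i-2$. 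Since $i<q/2$ gives $m_{i}<1/q$, Lemma \ref{l:sign ab} yields $b<0$; and because $i\ge4$ the multiplicity $i-2\ge2$. Hence $\nabla^{2}F_{\beta_{s,\,i}}({\bf u}_{i})$ has at least two negative eigenvalues, so ${\bf u}_{i}$ is not a saddle point. (One may also note, using $ia+jb=0$ at $t=m_{i}$, that the quadratic factor in Lemma \ref{l:ev}(1) contributes the eigenvalues $0$ and $a+qb<0$, so in fact there are $i-1$ negative eigenvalues, but the bound $i-2\ge2$ already suffices.)

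For the boundary case $i=q/2$ (possible exactly when $q$ is even and $q\ge8$), the relevant eigenvalue $b$ degenerates to $0$, so the previous argument must be replaced. Here $\beta_{s,\,q/2}=q$ and $m_{q/2}=1/q$ (Lemma \ref{l: beta_s1<beta_s2}), whence ${\bf u}_{q/2}={\bf v}_{q/2}=(1/q,\dots,1/q)={\bf p}$. Since $\nabla^{2}F_{\beta}({\bf p})=\frac{q-\beta}{\beta}\big(\mathrm{diag}(1,\dots,1)+\mathbbm1\mathbbm1^{\dagger}\big)$ vanishes identically at $\beta=q$, the point ${\bf p}$ has no negative eigenvalue and is a fortiori not a saddle point.

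The argument is essentially bookkeeping on top of Lemmas \ref{l:ev} and \ref{l:sign ab}, so I do not anticipate a genuine obstacle; the only delicate point is the boundary case $i=q/2$, where the eigenvalue $b$ I relied on vanishes and one must instead invoke the identical vanishing of $\nabla^{2}F_{q}({\bf p})$.
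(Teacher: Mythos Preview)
Your proof is correct and follows essentially the same route as the paper: both arguments read off from Lemma~\ref{l:ev} that $b=-1+i/\bigl(\beta_{s,i}(1-jm_{i})\bigr)$ is an eigenvalue of $\nabla^{2}F_{\beta_{s,i}}({\bf u}_{i})$ of multiplicity $i-2\ge2$, and then invoke Lemma~\ref{l:sign ab} (column $t=m_{i}$) to get $b<0$, forcing at least two negative eigenvalues. Your treatment is in fact slightly more careful than the paper's, since you explicitly separate off the boundary case $i=q/2$ (where $m_{i}=1/q$, $b=0$, and the previous argument breaks down) and dispose of it via ${\bf u}_{q/2}={\bf p}$ and the vanishing of $\nabla^{2}F_{q}({\bf p})$; the paper's one-line proof tacitly ignores this degenerate boundary.
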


\begin{proof}
By Lemma \ref{l:ev}, $-1+1\,/\,[\beta_{s,\,i}\{-ju_{i}(\beta_{s,\,i})\}]$
is an eigenvalue of $\nabla F_{\beta_{s,\,i}}$ at ${\bf u}_{i}$
with a multiple of at least two. Hence, by Lemma \ref{l:sign ab},
it has at least two negative eigenvalues.
\end{proof}
\begin{lem}
\label{l: degenerate saddle2}Let $q\ge6$. We have $F_{\beta_{s,\,3}}({\bf u}_{3})>F_{\beta_{s,\,3}}({\bf u}_{2})$.
Furthermore, if $q\ge7$, we have $F_{\beta_{s,\,3}}({\bf u}_{3})>F_{\beta_{s,\,3}}({\bf v}_{1})$.
Hence, ${\bf u}_{3}$ cannot be a saddle point lower than ${\bf u}_{2}$
or ${\bf v}_{1}$. 
\end{lem}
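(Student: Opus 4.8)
The plan is to prove the two height inequalities by reducing $F_{\beta_{s,3}}$ at each of ${\bf u}_3$, ${\bf u}_2$, ${\bf v}_1$ to an explicit one-variable expression, using the symmetric slices $\bm{\ell}_i$ from the proof of Lemma~\ref{l: not minima}, and then comparing. Recall that for $1\le i\le q/2$, $j=q-i$ and $s\in(0,1/j)$, that proof gives
\[
F_{\beta}(\bm{\ell}_i(s))=-\frac{1}{2i}\big(jqs^{2}-2js+1\big)+\frac{1}{\beta}\Big[\frac{(1-js)(1-qs)}{i}\,g_i(s)+\log s\Big],
\]
so on the critical locus $\beta=g_i(s)$ this equals
\[
\Phi_i(s):=-\frac{1}{2i}\big(jqs^{2}-2js+1\big)+\frac{(1-js)(1-qs)}{i}+\frac{\log s}{g_i(s)}.
\]
Since ${\bf u}_3=\bm{\ell}_3(m_3)$, ${\bf u}_2=\bm{\ell}_2(u_2)$ and, when $q\ge7$ (so that $\beta_{s,3}<q$ and $v_1$ exists), ${\bf v}_1=\bm{\ell}_1(v_1)$, with $g_3(m_3)=g_2(u_2)=g_1(v_1)=\beta_{s,3}$, $u_2\in(0,m_2)$ and $v_1\in(m_1,1/q)$ (using $\beta_{s,2}<\beta_{s,3}$ from Lemma~\ref{l: beta_s1<beta_s2}), we get $F_{\beta_{s,3}}({\bf u}_3)=\Phi_3(m_3)$, $F_{\beta_{s,3}}({\bf u}_2)=\Phi_2(u_2)$ and $F_{\beta_{s,3}}({\bf v}_1)=\Phi_1(v_1)$. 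Finally, $m_3$ minimizes $g_3$, hence $h_3(m_3)=0$ by \eqref{e: deriv of g,h}; inserting the identity $\log\frac{1-(q-3)m_3}{3m_3}=\frac{1-qm_3}{qm_3(1-(q-3)m_3)}$ coming from \eqref{e: Def of h_i} into \eqref{e: Def of g_i} removes the logarithm and gives $\beta_{s,3}=\frac{3}{qm_3(1-(q-3)m_3)}$, which makes $\Phi_3(m_3)$ fully explicit in $m_3$ and $q$. (The borderline $q=6$, where $m_3=1/q$, ${\bf u}_3={\bf p}$ and $\beta_{s,3}=q$, is a special value of these formulas.)

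A cheap preliminary observation is $F_{\beta_{s,3}}({\bf u}_3)>F_{\beta_{s,3}}({\bf p})$: the derivative identity from the proof of Lemma~\ref{l: not minima},
\[
\frac{d}{ds}F_{\beta_{s,3}}(\bm{\ell}_3(s))=\frac{q-3}{3\beta_{s,3}}(1-qs)\big(\beta_{s,3}-g_3(s)\big),
\]
is strictly negative on $(0,1/q)\setminus\{m_3\}$ because $\beta_{s,3}=\min g_3$ and $1-qs>0$ there, so $s\mapsto F_{\beta_{s,3}}(\bm{\ell}_3(s))$ is strictly decreasing on $[m_3,1/q]$ and $\Phi_3(m_3)>F_{\beta_{s,3}}(\bm{\ell}_3(1/q))=F_{\beta_{s,3}}({\bf p})$. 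This does not by itself settle the lemma, since ${\bf u}_2$ and ${\bf v}_1$ are saddle points that may well lie \emph{above} ${\bf p}$; the substance is the genuine comparison of $\Phi_3(m_3)$ with $\Phi_2(u_2)$ and with $\Phi_1(v_1)$.

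For that comparison I would substitute the closed form for $\beta_{s,3}$ above, together with $g_2(u_2)=\beta_{s,3}$ (resp.\ $g_1(v_1)=\beta_{s,3}$) to handle the logarithmic terms of $\Phi_2(u_2)$ (resp.\ $\Phi_1(v_1)$), into the differences $\Phi_3(m_3)-\Phi_2(u_2)$ and $\Phi_3(m_3)-\Phi_1(v_1)$, and show that they are positive. A conceptual picture for the first inequality is that ${\bf u}_3$ should be the mountain pass, inside the two-dimensional permutation-symmetric family $\mathcal{F}=\{(t,\dots,t,\tau,\rho,\rho):(q-3)t+\tau+2\rho=1\}$ that contains ${\bf p}$, contains ${\bf u}_2$ on the slice $t=\tau$, and contains ${\bf u}_3$ on the slice $\tau=\rho$ (which is exactly the $\bm{\ell}_3$-line), separating ${\bf u}_2$ from that $\bm{\ell}_3$-line; one could try to make this precise by exhibiting an explicit path in $\mathcal{F}$ from ${\bf u}_2$ to ${\bf u}_3$ on which $F_{\beta_{s,3}}$ is monotone, controlling the function on the fibres of constant $t$ via the two-value relation $\tau-\beta^{-1}\log\tau=\rho-\beta^{-1}\log\rho$ of \eqref{e: crit}. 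Since the symmetric reduction in $\mathcal{F}$ is genuinely $2$-dimensional, however, I expect the direct substitution to be the cleaner route.

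The main obstacle is that $m_3$, $u_2$ and $v_1$ are defined only implicitly through transcendental equations, so the two differences cannot be eliminated and must be bounded; making these bounds uniform in $q$ is the delicate point. I expect to treat large $q$ by asymptotics -- there $m_i\to0$ and the $\beta_{s,i}$ have controlled growth, so $\Phi_3(m_3)-\Phi_2(u_2)$ and $\Phi_3(m_3)-\Phi_1(v_1)$ have unambiguous leading-order signs -- and to dispose of the remaining finitely many small $q$ by a direct estimate. Finally, note that the Hessian computations in Lemmas~\ref{l:ev}--\ref{l:sign ab} already show that at $\beta=\beta_{s,3}$ the point ${\bf u}_3$ has two negative eigenvalues and one zero eigenvalue, hence is not a saddle point in the nondegenerate sense; the present lemma supplies exactly the height bound needed to discard ${\bf u}_3$ from the lowest-saddle analysis.
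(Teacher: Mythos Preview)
Your plan correctly reduces to one-variable expressions along the slices $\bm{\ell}_i$, and the identity $\beta_{s,3}=\tfrac{3}{qm_3(1-(q-3)m_3)}$ you derive is exactly \eqref{e: m_i-beta_si}. But the proposal stops where the work begins: bounding $\Phi_3(m_3)-\Phi_2(u_2)$ and $\Phi_3(m_3)-\Phi_1(v_1)$ uniformly in $q$, with $m_3,u_2,v_1$ all defined only implicitly, is the hard part, and you only say you ``expect'' the asymptotics to come out right and will ``dispose of'' small $q$ by a direct estimate. As written this is a sketch, not a proof.

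The paper avoids your head-on comparison via a simplification you are one step away from. Pushing your identity a little further (cf.\ \eqref{e: F_beta_si (ui)1}--\eqref{e: F_beta_si (ui)2}) yields the closed form
\[
F_{\beta_{s,i}}({\bf u}_i)\;=\;-\tfrac{1}{2\beta_{s,i}}\log(qe\beta_{s,i})\;=:\;f_c(\beta_{s,i}),
\]
valid for \emph{every} $i$: all the values $F_{\beta_{s,i}}({\bf u}_i)$ lie on the single curve $\beta\mapsto f_c(\beta)$. In particular $f_c(\beta_{s,2})=F_{\beta_{s,2}}({\bf u}_2)$ and $F_{\beta_{s,3}}({\bf u}_3)=f_c(\beta_{s,3})$, so the first inequality reduces to the monotonicity statement $\tfrac{d}{d\beta}\bigl[f_c(\beta)-F_\beta({\bf u}_2)\bigr]>0$ on $(\beta_{s,2},\infty)$. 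That is Lemma~\ref{l: Last claim2}, proved analytically for $q\ge55$ and checked numerically for small $q$ via Proposition~\ref{p: numerical}. The second inequality is then handled by the same scheme as Lemma~\ref{l: M-U2-V1}. Your two-dimensional mountain-pass picture in the family $\mathcal{F}$ is an appealing heuristic, but making it rigorous would be substantially harder than this one-parameter monotonicity trick. (Your closing Hessian remark---two negative eigenvalues and one zero eigenvalue at $\beta_{s,3}$---is correct and in fact extends Lemma~\ref{l: degenerate saddle1} to $i=3$; as you note, degeneracy is exactly why a height bound is still required.)
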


The proof is presented in Section \ref{subsec: Pf  l: M-U2-V1}. We
remark that if $q=6$, we have $\beta_{s,\,3}=q$ so that ${\bf v}_{1}(\beta_{s,\,3})={\bf p}$
and the second assertion is not needed.

\section{\label{sec: Pf beta}Analysis of Energy Landscape}

In this section, we prove lemmas introduced in Section \ref{subsec: pre beta}
and Lemma \ref{l: degenerate saddle2}. To prove these lemmas, we
need numerical computation given in Appendix \ref{sec: Numerical}.

\subsection{\label{subsec: Pf  l:beta_s<beta_c2}Proof of Lemma \ref{l: beta_s1<beta_c}}
\begin{lem}
\label{l: v1 > 1/2(q-1)}If $q\ge4$, we have $v_{1}(\beta_{s,\,2})>\frac{1}{2(q-1)}$.
\end{lem}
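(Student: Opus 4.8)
The plan is to reduce the statement to the single scalar inequality $\beta_{s,2}>\beta_c$ (with $\beta_c$ as in \eqref{e: Def of beta_c}) and then to establish the latter.

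\emph{Reduction.} Substituting $t=\tfrac{1}{2(q-1)}$ into \eqref{e: Def of g_i} gives, after an elementary computation, $g_1\!\bigl(\tfrac{1}{2(q-1)}\bigr)=\tfrac{2(q-1)}{q-2}\log(q-1)=\beta_c$, and the same substitution into \eqref{e: Def of h_i} gives $h_1\!\bigl(\tfrac{1}{2(q-1)}\bigr)=\log(q-1)-\tfrac{2(q-2)}{q}$, which is strictly positive for every $q\ge3$. Since $h_1\to-\infty$ as $t\to0^+$, $h_1$ is increasing on $\bigl(0,\tfrac{1}{2(q-1)}\bigr)$, has a local maximum there and a local minimum at $1/q$ with $h_1(1/q)=0$ (by \eqref{e: deriv of g,h}), the positivity of $h_1$ at $\tfrac{1}{2(q-1)}$ forces the unique zero of $h_1$, namely $m_1$, to satisfy $m_1<\tfrac{1}{2(q-1)}<\tfrac1q$. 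Hence $\tfrac{1}{2(q-1)}$ lies on the increasing branch $\bigl(m_1,\tfrac{1}{q-1}\bigr)$ of $g_1$, on which $g_1$ is strictly increasing and $v_1(\cdot)$ is its inverse. As $\beta_{s,2}\le q$ by Lemma \ref{l: beta_s1<beta_s2}, the point $v_1(\beta_{s,2})$ lies in $\bigl(m_1,\tfrac1q\bigr]$, so $v_1(\beta_{s,2})>\tfrac{1}{2(q-1)}$ is equivalent to $\beta_{s,2}=g_1\!\bigl(v_1(\beta_{s,2})\bigr)>g_1\!\bigl(\tfrac{1}{2(q-1)}\bigr)=\beta_c$. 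For $q=4$ this is immediate, since $\beta_{s,2}=q=4>3\log3=\beta_c$.

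\emph{Proof of $\beta_{s,2}>\beta_c$ for $q\ge5$.} Writing $m_2$ for the minimiser of $g_2$ from Lemma \ref{l: g_i} and using $h_2(m_2)=0$ to replace $\log\tfrac{1-(q-2)m_2}{2m_2}$ in \eqref{e: Def of g_i}, one obtains the closed formula
\[
\beta_{s,2}\;=\;g_2(m_2)\;=\;\frac{2}{q\,m_2\,\bigl(1-(q-2)m_2\bigr)}\,,
\]
so that $\beta_{s,2}>\beta_c$ is equivalent to $q\,m_2\bigl(1-(q-2)m_2\bigr)<\tfrac{q-2}{(q-1)\log(q-1)}$. Since $h_2$ is increasing on $\bigl(0,\tfrac{1}{2(q-2)}\bigr)$ with $h_2\bigl(\tfrac{1}{2(q-2)}\bigr)>0$, its zero satisfies $m_2\in\bigl(0,\tfrac{1}{2(q-2)}\bigr)$, and any bound $m_2<t^{\dagger}$ follows by checking $h_2(t^{\dagger})>0$. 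With the crude choice $t^{\dagger}=\tfrac{1}{2(q-2)}$ and the monotonicity of $t\mapsto qt(1-(q-2)t)$ one gets $q\,m_2(1-(q-2)m_2)<\tfrac{q}{4(q-2)}$, which already settles the claim whenever $q(q-1)\log(q-1)\le 4(q-2)^2$; this covers all moderate $q$ and is the range treated numerically in Appendix \ref{sec: Numerical}. For the remaining large values of $q$ I would use the sharper bound $m_2<\tfrac{1}{q\log(q-1)}$, obtained from $h_2\bigl(\tfrac{1}{q\log(q-1)}\bigr)>0$; substituting it, together with $1-qm_2<1$ and $\log\tfrac{1-(q-2)m_2}{2m_2}\ge\log\!\Bigl(\tfrac{q\log(q-1)}{2}\bigl(1-\tfrac1{\log(q-1)}\bigr)\Bigr)$, into the equivalent form $(1-qm_2)(q-1)\log(q-1)<(q-2)\log\tfrac{1-(q-2)m_2}{2m_2}$ of the desired inequality reduces everything to the elementary estimate $\tfrac{q-1}{q-2}\log(q-1)<\log\!\Bigl(\tfrac{q\log(q-1)}{2}\bigl(1-\tfrac1{\log(q-1)}\bigr)\Bigr)$, whose left-hand side equals $\log(q-1)+O(1)$ while its right-hand side equals $\log(q-1)+\log\log(q-1)+O(1)$.

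\emph{Main obstacle.} The delicate point is precisely this last comparison: both $\beta_{s,2}$ and $\beta_c$ are asymptotic to $2\log q$, so $\beta_{s,2}>\beta_c$ becomes visible only at the next order, where $\beta_{s,2}$ exceeds $\beta_c$ by roughly $2\log\log q$. Making this rigorous requires a sufficiently sharp upper bound on the minimiser $m_2$ — obtained from the positivity of $h_2$ at a carefully chosen point of size $\asymp\tfrac{1}{q\log q}$ — together with careful but elementary control of the logarithmic terms; the finitely many values of $q$ not covered by the asymptotic estimates are dispatched by the numerical verification in Appendix \ref{sec: Numerical}.
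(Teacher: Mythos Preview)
Your reduction is correct: since $g_1\bigl(\tfrac{1}{2(q-1)}\bigr)=\beta_c$ and $\tfrac{1}{2(q-1)}$ lies on the increasing branch of $g_1$, the lemma is equivalent to $\beta_{s,2}>\beta_c$. But this reverses the paper's logic: the paper proves the present lemma directly and then uses it (in the very next proof) to deduce $\beta_c<\beta_{s,2}$. So your strategy amounts to giving an independent proof of the second half of Lemma~\ref{l: beta_s1<beta_c} and then reading the implication backwards.

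The paper's direct argument is shorter and self-contained. It introduces the auxiliary point $v_1^{*}=\tfrac{1}{2q}+\tfrac{m_2}{2}$, chosen so that $\tfrac{1}{1-qv_1^{*}}=\tfrac{2}{1-qm_2}$; a one-line computation then shows $g_1(v_1^{*})\le g_2(m_2)=\beta_{s,2}=g_1(v_1)$, hence $v_1\ge v_1^{*}$. The inequality $v_1^{*}>\tfrac{1}{2(q-1)}$ is equivalent to the \emph{lower} bound $m_2>\tfrac{1}{q(q-1)}$, which follows from $h_2\bigl(\tfrac{1}{q(q-1)}\bigr)<0$ and is checked by hand for all $q\ge4$ without any numerics. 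Note that the paper needs a lower bound on $m_2$, whereas your route requires an upper bound; the lower-bound route turns out to be much cleaner.

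Your direct proof of $\beta_{s,2}>\beta_c$ has genuine gaps. The crude estimate $q\,m_2(1-(q-2)m_2)\le\tfrac{q}{4(q-2)}$ yields the claim only when $q(q-1)\log(q-1)\le4(q-2)^2$, which fails already for $q\approx50$. Your appeal to Appendix~\ref{sec: Numerical} is mistaken: Proposition~\ref{p: numerical} verifies $F_{\beta_{s,2}}(\mathbf{u}_2)>F_{\beta_{s,2}}(\mathbf{v}_1)$ and related facts, not $\beta_{s,2}>\beta_c$. Finally, the large-$q$ step is only heuristic: you would still need to prove $h_2\bigl(\tfrac{1}{q\log(q-1)}\bigr)>0$ for a specific range of $q$ and then make the comparison $\tfrac{q-1}{q-2}\log(q-1)<\log\bigl(\tfrac{q}{2}(\log(q-1)-1)\bigr)$ rigorous with an explicit threshold, and bridge the gap between this threshold and the failure point of the crude bound. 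None of this is done, and you yourself flag it as the main obstacle.
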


\begin{proof}
Fix $\beta=\beta_{s,\,2}$ and write $v_{1}=v_{1}(\beta_{s,\,2})$
for convenience. Since $\beta_{s,\,2}=g_{2}(m_{2})=g_{1}(v_{1})$,
we have
\begin{equation}
\frac{2}{1-qm_{2}}\log\frac{1-(q-2)m_{2}}{2m_{2}}\,=\,\frac{1}{1-qv_{1}}\log\frac{1-(q-1)v_{1}}{v_{1}}\label{e: g2m2=00003Dg1v1}
\end{equation}
Let 
\begin{equation}
v_{1}^{*}\,=\,\frac{1}{2q}+\frac{m_{2}}{2}\ ,\ \text{so that}\ \frac{1}{1-qv_{1}^{*}}\,=\,\frac{2}{1-qm_{2}}\ .\label{e: def of v^star}
\end{equation}
We claim that $g_{1}(v_{1}^{*})\,\le\,g_{1}(v_{1})$, that is, by
\eqref{e: g2m2=00003Dg1v1},
\[
\frac{1}{1-qv_{1}^{*}}\log\frac{1-(q-1)v_{1}^{*}}{v_{1}^{*}}\,\le\,\frac{2}{1-qm_{2}}\log\frac{1-(q-2)m_{2}}{2m_{2}}\ .
\]
By \eqref{e: def of v^star}, the above inequality is equivalent to
\[
\frac{1-(q-1)v_{1}^{*}}{v_{1}^{*}}\le\frac{1-(q-2)m_{2}}{2m_{2}}\ .
\]
By plugging $v_{1}^{*}$ given in \eqref{e: def of v^star} into this
inequality, it becomes $q^{2}m_{2}-2qm_{2}+1\ge0$. Hence, since $g_{1}$
is increasing at $v_{1}$, we obtain $v_{1}^{*}\le v_{1}$.

Finally, we claim that 
\[
v_{1}^{*}=\frac{1+qm_{2}}{2q}>\frac{1}{2(q-1)}\ ,\ \text{i.e.,}\ m_{2}>\frac{1}{q(q-1)}\ .
\]
According to Figure \ref{fig: g2h2h2'}, we can show this by
\[
h_{2}\Big(\frac{1}{q(q-1)}\Big)=\log\frac{q^{2}-2q+2}{2}-\frac{q(q-1)(q-2)}{q^{2}-2q+2}<0\ .
\]
This holds if $q=4$ or $q=5$ by elementary computation. Now, assume
$q\ge6$. Therefore, we obtain
\[
\log\frac{q^{2}-2q+2}{2}<\log q^{2}=2\log q<q-2<\frac{q(q-1)(q-2)}{q^{2}-2q+2}\ ,
\]
which completes the proof.
\end{proof}
We can prove Lemma \ref{l: beta_s1<beta_c} by the aforementioned
lemma.
\begin{proof}[Proof of Lemma \ref{l: beta_s1<beta_c}]
Since $\beta_{c}=g_{1}(\frac{1}{q(q-1)})=g_{1}(\frac{1}{2(q-1)})$,
we have $\beta_{s,\,1}<\beta_{c}$. By Lemma \ref{l: v1 > 1/2(q-1)},
since $g_{1}(t)$ is increasing on $(m_{1},1/(q-1)\,)$ and $m_{1}<1/(2q-2)$,
we obtain
\[
\beta_{s,\,2}=g_{1}(v_{1})>g_{1}(\frac{1}{2(q-1)})=\beta_{c}\ .
\]
\end{proof}

\subsection{\label{subsec: Pf l: beta_m}Proof of Lemma \ref{l: beta_m}}

We first introduce two lemmas.
\begin{lem}
\label{l: M-U2-V1}Let $q\ge5$. When $\beta=\beta_{s,\,2}$, we have
$F_{\beta_{s,\,2}}({\bf v}_{1})<F_{\beta_{s,\,2}}({\bf u}_{2})$ and
when $\beta=q$, we have $F_{q}({\bf v}_{1})=F_{q}({\bf p})>F_{q}({\bf u}_{2})$.
\end{lem}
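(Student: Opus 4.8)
The plan is to evaluate all four quantities through the one-parameter family $\bm{\ell}_{i}(s)=(s,\dots,s,\tfrac{1-(q-i)s}{i},\dots,\tfrac{1-(q-i)s}{i})$ (with $q-i$ copies of $s$) from the proof of Lemma~\ref{l: not minima}. Specializing the expression for $F_{\beta}(\bm{\ell}_{i}(s))$ given there to a critical point, i.e.\ to $\beta=g_{i}(s)$, a routine simplification gives
\[
F_{g_{i}(s)}(\bm{\ell}_{i}(s))\;=\;\frac{(q-i)\,q\,s^{2}-2qs+1}{2i}+\frac{\log s}{g_{i}(s)}\;=:\;\Phi_{i}(s)\ ,
\]
so that $F_{\beta_{s,\,2}}({\bf u}_{2})=\Phi_{2}(m_{2})$, $F_{\beta_{s,\,2}}({\bf v}_{1})=\Phi_{1}(v_{1}(\beta_{s,\,2}))$, $F_{q}({\bf u}_{2})=\Phi_{2}(u_{2}(q))$, and $\Phi_{i}(1/q)=-\tfrac{1}{2q}-\tfrac{\log q}{q}$ for every $i$ (since $\bm{\ell}_{i}(1/q)={\bf p}$ and $g_{i}(1/q)=q$). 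Since $g_{1}$ has its unique minimum at $m_{1}<1/q$ and $g_{1}(1/q)=q>\beta_{s,\,1}$, the larger root of $g_{1}=q$ is $1/q$; hence $v_{1}(q)=1/q$, so ${\bf v}_{1}(q)={\bf p}$ and $F_{q}({\bf v}_{1})=F_{q}({\bf p})$, which settles the equality in the second assertion.

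For the inequality $F_{q}({\bf p})>F_{q}({\bf u}_{2})$ I would use monotonicity of $F_{q}$ along $\bm{\ell}_{2}$. As above, $g_{2}=q$ has the two roots $u_{2}(q)\in(0,m_{2})$ and $v_{2}(q)=1/q$, and on $(u_{2}(q),1/q)$ one has $g_{2}<q$ strictly (by Lemma~\ref{l: g_i}, $g_{2}$ decreases to $\beta_{s,\,2}$ at $m_{2}$ and then increases back to $q$). The identity $\tfrac{d}{ds}F_{q}(\bm{\ell}_{2}(s))=\tfrac{q-2}{2q}(1-qs)(q-g_{2}(s))$ from the proof of Lemma~\ref{l: not minima} is therefore strictly positive on $(u_{2}(q),1/q)$, so $F_{q}({\bf u}_{2})=F_{q}(\bm{\ell}_{2}(u_{2}(q)))<F_{q}(\bm{\ell}_{2}(1/q))=F_{q}({\bf p})$.

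The substantial step is the inequality at $\beta=\beta_{s,\,2}$, where the two points lie on different curves. Put $\alpha:=qm_{2}$ and $v:=v_{1}(\beta_{s,\,2})$. I would use the bounds $\tfrac{1}{q-1}<\alpha<1$ (the lower one being $m_{2}>\tfrac{1}{q(q-1)}$, proved inside Lemma~\ref{l: v1 > 1/2(q-1)}) and $v_{1}^{*}:=\tfrac{1+\alpha}{2q}\le v<\tfrac{1}{q}$ (the lower bound from Lemma~\ref{l: v1 > 1/2(q-1)}, the upper one from $\beta_{s,\,2}<q$), together with the identity $\beta_{s,\,2}=\tfrac{2}{\,qm_{2}(1-(q-2)m_{2})\,}$ coming from $h_{2}(m_{2})=0$ and $g_{2}(m_{2})=\beta_{s,\,2}$, which gives $\tfrac{1}{\beta_{s,\,2}}=\tfrac{\alpha\,(q-(q-2)\alpha)}{2q}<\tfrac{\alpha}{2}$. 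Substituting into $\Phi_{2}(m_{2})-\Phi_{1}(v)$ and using that $(q-1)qv^{2}-2qv+1$ is decreasing on $(0,\tfrac{1}{q-1})$, so that replacing $v$ by $v_{1}^{*}$ only decreases the difference, one arrives at
\[
F_{\beta_{s,\,2}}({\bf u}_{2})-F_{\beta_{s,\,2}}({\bf v}_{1})\;\ge\;\frac{(q-3)\alpha^{2}-2(q-1)\alpha+(q+1)}{8q}-\frac{1}{\beta_{s,\,2}}\log\frac{qv}{\alpha}\ .
\]
Here the first term equals $\tfrac{(q-3)(1-\alpha)\big(\frac{q+1}{q-3}-\alpha\big)}{8q}$, hence is positive and decreasing in $\alpha\in(0,1)$, while $0<\tfrac{1}{\beta_{s,\,2}}\log\tfrac{qv}{\alpha}<\tfrac{\alpha}{2}\log\tfrac{1}{\alpha}$ since $\alpha<qv<1$. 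Choosing a small $\alpha_{0}$ (e.g.\ $\alpha_{0}=\tfrac{1}{20}$, for which $\tfrac{(1-\alpha_{0})^{2}}{8}>\tfrac{\alpha_{0}}{2}\log\tfrac{1}{\alpha_{0}}$), it then suffices to know $\alpha=qm_{2}<\alpha_{0}$; as $h_{2}<0$ on $(0,m_{2})$ and $h_{2}(\alpha_{0}/q)\to+\infty$ as $q\to\infty$, this holds for all large $q$, and the first term, which tends to $\tfrac{(1-\alpha_{0})^{2}}{8}$, then dominates. The finitely many remaining $q\ge5$ are disposed of by the numerical computation in Appendix~\ref{sec: Numerical}.

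The main obstacle is exactly this uniform-in-$q$ estimate: $m_{2}$, $v_{1}$, $\beta_{s,\,2}$ admit no closed forms, so everything must be driven through the two-sided bounds above, and the large-$q$ argument must be made effective (valid for $q$ beyond an explicit constant) so that only finitely many $q$ remain for the numerical verification. What makes it close is the substitution $v\mapsto v_{1}^{*}=\tfrac{1+qm_{2}}{2q}$ of Lemma~\ref{l: v1 > 1/2(q-1)}, which collapses the polynomial part to a function of $q$ and $\alpha$ with the factorization above, together with the identity $\tfrac{1}{\beta_{s,\,2}}=\tfrac{\alpha\,(q-(q-2)\alpha)}{2q}$, which forces the logarithmic error to be of size $O(\alpha\log(1/\alpha))=o(1)$ rather than merely $O(1)$.
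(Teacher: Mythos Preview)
Your treatment of the case $\beta=q$ is correct and in fact cleaner than the paper's. The paper proves $F_q({\bf p})>F_q({\bf u}_2)$ via Lemma~\ref{l: Last claim2}, i.e.\ by showing that $\beta\mapsto f_c(\beta)-F_\beta({\bf u}_2)$ is increasing on $(\beta_{s,2},\infty)$ and vanishes at $\beta_{s,2}$; your direct monotonicity of $F_q(\bm{\ell}_2(\cdot))$ on $[u_2(q),1/q]$, using the identity $\tfrac{d}{ds}F_\beta(\bm{\ell}_i(s))=\tfrac{j}{\beta i}(1-qs)(\beta-g_i(s))$ from Lemma~\ref{l: not minima}, bypasses that machinery entirely. (The paper still needs Lemma~\ref{l: Last claim2} for Lemma~\ref{l: degenerate saddle2}, so it is not wasted, but for the present statement your route is shorter.)

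For $\beta=\beta_{s,2}$ your strategy is the same as the paper's: bound $v_1$ below by $v_1^*=\tfrac{1+qm_2}{2q}$ (Lemma~\ref{l: v1 > 1/2(q-1)}), reduce $F_{\beta_{s,2}}({\bf u}_2)-F_{\beta_{s,2}}({\bf v}_1)$ to an explicit function of $q$ and $\alpha=qm_2$, handle large $q$ analytically, and finish small $q$ numerically. Your algebraic reduction is correct (the polynomial part does factor as $\tfrac{(q-3)(1-\alpha)(\tfrac{q+1}{q-3}-\alpha)}{8q}\ge\tfrac{(1-\alpha)^2}{8}$, and $\tfrac{1}{\beta_{s,2}}\log\tfrac{qv}{\alpha}<\tfrac{\alpha}{2}\log\tfrac{1}{\alpha}$), but the resulting effective threshold is far worse than the paper's: with $\alpha_0=\tfrac{1}{20}$ one needs $h_2(\alpha_0/q)>0$, i.e.\ $\log\tfrac{19q+2}{2}>\tfrac{380q}{19q+2}$, which only kicks in around $q\sim 5\times 10^{7}$. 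Appendix~\ref{sec: Numerical} covers merely $5\le q\le 6500$, so as written there is a gap between your analytic range and the available numerics. The paper closes this gap by a much finer large-$q$ analysis (Section~\ref{sec: Pf of Last}): it treats $q$ as a real variable, computes $m_2',s_2',\beta_{s,2}'$ explicitly (Lemma~\ref{l: deriv q-1}), uses the two-sided bound $\tfrac{1}{2q\log q}<m_2<\tfrac{1}{q\log q}$ (Lemma~\ref{l: deriv q-2}), and shows $f_\star'(q)>0$ for $q>e^8$, which together with a single numerical check at $q=6500$ yields the claim for all $q>6500$. Your cruder bound $\tfrac{1}{\beta_{s,2}}<\tfrac{\alpha}{2}$ discards the factor $\tfrac{q-(q-2)\alpha}{q}\approx 1-\alpha$; retaining it, or using $qm_2<1/\log q$ rather than a fixed $\alpha_0$, is what buys the paper its much smaller cutoff.
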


The proof of the above lemma is given in Section \ref{subsec: Pf  l: M-U2-V1}.
\begin{lem}
\label{l: U2-V1 decre}Let $q\ge5$. $\beta^{2}\frac{d}{d\beta}[F_{\beta}({\bf u}_{2})-F_{\beta}({\bf v}_{1})]$
decreases as $\beta$ increases in $(\beta_{s,\,2},q)$.
\end{lem}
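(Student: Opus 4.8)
The plan is to reduce the claim to a sign statement about how the energy $H$ varies along the two relevant branches of critical points. First I would recall, exactly as in the proof of Corollary~\ref{c: 2nd PT}, that along any branch ${\bf c}={\bf c}(\beta)$ of critical points of $F_{\beta}$ one has $\frac{d}{d\beta}F_{\beta}({\bf c}(\beta))=-\frac{1}{\beta^{2}}S({\bf c}(\beta))$, since $\nabla F_{\beta}({\bf c}(\beta))=0$. Applying this to the branches $\beta\mapsto{\bf u}_{2}(\beta)$ and $\beta\mapsto{\bf v}_{1}(\beta)$, which by Lemma~\ref{l: U12V1} are genuine (saddle) critical points throughout $(\beta_{s,\,2},q)$, we obtain
\[
\beta^{2}\frac{d}{d\beta}\bigl[F_{\beta}({\bf u}_{2})-F_{\beta}({\bf v}_{1})\bigr]\,=\,S({\bf v}_{1}(\beta))-S({\bf u}_{2}(\beta))\,=:\,D(\beta)\ ,
\]
so the lemma is equivalent to showing $D'(\beta)<0$ on $(\beta_{s,\,2},q)$.

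To differentiate $D$, I would use that $F_{\beta}=H+\beta^{-1}S$ forces $\nabla S=-\beta\,\nabla H$ at every critical point, so that $\frac{d}{d\beta}S({\bf c}(\beta))=\nabla S({\bf c})\cdot\dot{{\bf c}}=-\beta\,\frac{d}{d\beta}H({\bf c}(\beta))$ along a branch of critical points; hence
\[
D'(\beta)\,=\,-\beta\Bigl[\tfrac{d}{d\beta}H({\bf v}_{1}(\beta))-\tfrac{d}{d\beta}H({\bf u}_{2}(\beta))\Bigr]\ ,
\]
and it remains to prove the bracket is positive. Using $H({\bf x})=-\frac{1}{2}\sum_{k}x_{k}^{2}$, the explicit coordinates of ${\bf v}_{1}$ and ${\bf u}_{2}$, the chain rule applied to the relations $\beta=g_{1}(v_{1}(\beta))$ and $\beta=g_{2}(u_{2}(\beta))$ (so $v_{1}'(\beta)=1/g_{1}'(v_{1})$, $u_{2}'(\beta)=1/g_{2}'(u_{2})$), and the formula $g_{i}'(t)=\frac{qi}{(1-qt)^{2}}h_{i}(t)$ from \eqref{e: deriv of g,h}, a short computation gives
\[
\tfrac{d}{d\beta}H({\bf v}_{1})\,=\,\frac{(q-1)(1-qv_{1})^{3}}{q\,h_{1}(v_{1})}\ ,\qquad\tfrac{d}{d\beta}H({\bf u}_{2})\,=\,\frac{(q-2)(1-qu_{2})^{3}}{4q\,h_{2}(u_{2})}\ .
\]

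Finally I would read off the signs on $(\beta_{s,\,2},q)$. There $v_{1}=v_{1}(\beta)$ lies on the increasing branch of $g_{1}$ with $m_{1}<v_{1}<1/q$ (since $v_{1}(q)=1/q$ and, as $\beta>\beta_{s,\,2}>\beta_{s,\,1}=g_{1}(m_{1})$, the larger root of $g_{1}(t)=\beta$ exceeds $m_{1}$), so $1-qv_{1}>0$ and $h_{1}(v_{1})>0$ because $g_{1}$ is increasing, hence $g_{1}'>0$, on $(m_{1},1/q)$ by Lemma~\ref{l: g_i}; thus $\frac{d}{d\beta}H({\bf v}_{1})>0$. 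On the other hand $u_{2}=u_{2}(\beta)$ is the smaller root of $g_{2}(t)=\beta$, so $u_{2}\in(0,m_{2})$ with $m_{2}<1/q$ (Lemma~\ref{l: beta_s1<beta_s2}), whence $1-qu_{2}>0$ while $h_{2}(u_{2})<0$ since $g_{2}$ is decreasing on $(0,m_{2})$; thus $\frac{d}{d\beta}H({\bf u}_{2})<0$. Therefore the bracket is strictly positive, so $D'(\beta)<0$, which is the assertion. The argument is essentially bookkeeping; the only steps needing a little care are the differentiability of the branches $v_{1}(\beta),u_{2}(\beta)$ and the strictness of the signs of $h_{1}(v_{1}),h_{2}(u_{2})$, both of which follow from $g_{i}'\neq 0$ away from the unique minimum $m_{i}$ established in Lemma~\ref{l: g_i}, so I do not anticipate a genuine obstacle here.
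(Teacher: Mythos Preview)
Your argument is correct and reaches the same conclusion as the paper, but the execution differs in a noteworthy way. Both proofs start from the identity $\frac{d}{d\beta}F_{\beta}({\bf c}(\beta))=-\beta^{-2}S({\bf c}(\beta))$ and reduce the lemma to showing that $D(\beta)=S({\bf v}_{1}(\beta))-S({\bf u}_{2}(\beta))$ is strictly decreasing. From there the paper parametrizes the entropy along the branch by the single coordinate $t$, setting $k_{i}(t):=S({\bf c}_{i}(t))$, computes $k_{i}'(t)=-j\log\frac{1-jt}{it}<0$ on $(0,1/q)$, and simply combines this monotonicity with the facts that $v_{1}(\beta)$ increases and $u_{2}(\beta)$ decreases in $\beta$. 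You instead use the critical-point identity $\nabla S=-\beta\,\nabla H$ to rewrite $D'(\beta)$ in terms of $\frac{d}{d\beta}H({\bf v}_{1})$ and $\frac{d}{d\beta}H({\bf u}_{2})$, and then read off the signs from $h_{1},h_{2}$ via $g_{i}'=\frac{qi}{(1-qt)^{2}}h_{i}$. Your route is a bit more conceptual (it explains \emph{why} the entropy derivative along a branch tracks the energy derivative), while the paper's route is shorter, needing only the one-line formula for $k_{i}'$; both ultimately rest on the same monotonicity of $u_{2},v_{1}$ in $\beta$ and on the sign structure of $g_{i}'$ established in Lemma~\ref{l: g_i}.
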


\begin{proof}
For $t=t(\beta)$, which satisfies $\beta=g_{i}(t)$, let 
\begin{equation}
{\bf c}_{i}\,=\,{\bf c}_{i}(\beta)\,=\,\Big(t,\dots,t,\frac{1-jt}{it},\dots,\frac{1-jt}{it}\Big)\ .\label{e: e.g. cri}
\end{equation}
Since ${\bf c}_{i}$ is a critical point, by the proof of Corollary
\ref{c: 2nd PT}, we have
\[
\frac{d}{d\beta}F_{\beta}({\bf c}_{i})=-\frac{1}{\beta^{2}}S({\bf c}_{i})\ .
\]
Define a function $k_{i}:(0,1)\to\mathbb{R}$ as 
\begin{equation}
k_{i}(t):=(1-jt)\log\frac{1-jt}{it}+\log t\ .\label{e: Def of k_i}
\end{equation}
By elementary computations, we obtain $S({\bf c}_{i})=k_{i}(t)$ so
that we have
\begin{equation}
\frac{d}{d\beta}F_{\beta}({\bf c}_{i})\,=\,-\frac{1}{\beta^{2}}k_{i}(t)\ .\label{e: F_deriv(c(t))}
\end{equation}

Now, by \eqref{e: F_deriv(c(t))}, we obtain
\begin{equation}
\beta^{2}\frac{d}{d\beta}\Big[F_{\beta}({\bf u}_{2})-F_{\beta}({\bf v}_{1})\Big]\,=\,k_{1}(v_{1}(\beta))-k_{2}(u_{2}(\beta))\ .\label{e: U2-V1 last}
\end{equation}
Observe that the value $u_{2}(\beta)$ decreases and the value $v_{1}(\beta)$
increases as $\beta$ increases. By elementary computation, for $t\in(0,1/q)$,
we obtain
\begin{multline}
k_{i}'(t)=-j\log\frac{1-jt}{it}+(1-jt)\Big(\frac{-j}{1-jt}-\frac{1}{t}\Big)+\frac{1}{t}=-j\log\frac{1-jt}{it}<0\ ,\label{e: k decrea}
\end{multline}
so that $k_{i}(t)$ decreasing on $(0,1/q)$. Hence, \eqref{e: U2-V1 last}
decreases as $\beta$ increases in $(\beta_{s,\,2},q)$.
\end{proof}
We can now prove Lemma \ref{l: beta_m}.
\begin{proof}[Proof of Lemma \ref{l: beta_m}]
By Lemma \ref{l: M-U2-V1}, there is $\beta_{0}\in(\beta_{s,\,2},q)$,
such that $\frac{d}{d\beta}[F_{\beta}({\bf u}_{2})-F_{\beta}({\bf v_{1}})]<0$.
Hence, by Lemma \ref{l: U2-V1 decre}, we can deduce that there is
only one critical value $\beta_{m}\in(\beta_{s,\,2},q)$, such that
\begin{equation}
F_{\beta_{m}}({\bf u}_{2})=F_{\beta_{m}}({\bf v}_{1})\ .\label{e: Def of beta_m}
\end{equation}
\end{proof}

\subsection{\label{subsec: Pf  l: M-U2-V1}Proofs of Lemmas \ref{l: degenerate saddle2}
and \ref{l: M-U2-V1}}

Before we go further, we conduct some computations. Recall the definition
of $m_{2}$ from Lemma \ref{l: g_i}. Since $\beta_{s,\,i}=g_{i}(m_{i})=\frac{i}{1-qm_{i}}\log\frac{1-jm_{i}}{im_{i}}$
and $m_{i}$ is the minimum of $g_{i}$, we have
\begin{align*}
0=h_{i}(m_{i})\, & =\,\log\frac{1-jm_{i}}{im_{i}}+\frac{qm_{i}-1}{qm_{i}(1-jm_{i})}\\
 & =\,\frac{1-qm_{i}}{i}\beta_{s,\,i}-\frac{1-qm_{i}}{qm_{i}(1-jm_{i})}\ ,
\end{align*}
so that
\begin{equation}
qjm_{i}^{2}-qm_{i}\,=qm_{i}(jm_{i}-1)\,=\,-\frac{i}{\beta_{s,\,i}}\ .\label{e: m_i-beta_si}
\end{equation}
For ${\bf c}_{i}$ defined in \eqref{e: e.g. cri}, since $S({\bf c}_{i})=k_{i}(t)$
and $\beta=g_{i}(t)$, we can write
\begin{equation}
F_{\beta}({\bf c}_{i})=\,\frac{1}{2i}\Big[qjt^{2}-2qt+1\Big]+\frac{1}{\beta}\log t\ .\label{e: F(c(t))}
\end{equation}
Hence, by \eqref{e: m_i-beta_si} and $\beta_{s,\,i}=g_{i}(m_{i})$,
we have
\begin{align}
F_{\beta_{s,\,i}}({\bf u}_{i}) & =\,\frac{1-qm_{i}}{2i}+\frac{1}{\beta_{s,\,i}}\Big(\log m_{i}-\frac{1}{2}\Big)\label{e: F_beta_si (ui)1}\\
 & =\,\frac{1}{2\beta_{s,\,i}}\log\frac{1-jm_{i}}{im_{i}}+\frac{1}{\beta_{s,\,i}}\log m_{i}-\frac{1}{2\beta_{s,\,i}}\ .\nonumber 
\end{align}
By \eqref{e: m_i-beta_si} again, we obtain
\begin{equation}
F_{\beta_{s,\,i}}({\bf u}_{i})=-\frac{1}{2\beta_{s,\,i}}\log(qe\beta_{s,\,i})\ .\label{e: F_beta_si (ui)2}
\end{equation}

Now, we introduce two technical lemmas required in the proof of Lemmas
\ref{l: degenerate saddle2} and \ref{l: M-U2-V1}. 
\begin{lem}
\label{l: Last claim1}For $q\ge6500$, we have
\[
\frac{1}{\beta_{s,\,2}}\Big(\log qm_{2}-\frac{1}{2}\Big)>\frac{(q-1)}{8q}(qm_{2})^{2}-\frac{1}{4}m_{2}+\frac{-q^{2}+4q+1}{8q(q-1)}\ .
\]
\end{lem}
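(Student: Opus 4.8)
The plan is to turn the inequality into a one-variable statement in $x:=qm_{2}$ and then to control $x$ through the equation defining $m_{2}$. Recall that $m_{2}$ is characterised by $h_{2}(m_{2})=0$, i.e.
\[
\log\frac{1-(q-2)m_{2}}{2m_{2}}\;=\;\frac{1-qm_{2}}{qm_{2}\bigl(1-(q-2)m_{2}\bigr)}\,,
\]
and, combined with $\beta_{s,2}=g_{2}(m_{2})$, this gives the identity \eqref{e: m_i-beta_si} with $i=2$, namely $\beta_{s,2}\,qm_{2}\bigl(1-(q-2)m_{2}\bigr)=2$. Writing $\delta:=1-(q-2)m_{2}=1-x+\tfrac{2x}{q}$, the latter reads $\tfrac1{\beta_{s,2}}=\tfrac{x\delta}{2}=\tfrac{x(1-x)}{2}+\tfrac{x^{2}}{q}$. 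Substituting this and $m_{2}=x/q$ into both sides, using $\tfrac{-q^{2}+4q+1}{8q(q-1)}=-\tfrac18+\tfrac{3}{8q}+\tfrac1{2q(q-1)}$ together with the factorisation $3-x^{2}-2x=(1-x)(x+3)$, I would rewrite the asserted inequality in the equivalent form
\[
(1-x)\,\phi(x)\;+\;\frac1q\,\psi(x)\;-\;\frac1{2q(q-1)}\;>\;0\,,\qquad x=qm_{2},
\]
where $\phi(x)=\tfrac{x}{2}\log x+\tfrac{1-x}{8}$ and $\psi(x)=x^{2}\bigl(\log x-\tfrac12\bigr)-\tfrac18(1-x)(x+3)$.

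Next I would record the elementary facts that on $(0,\tfrac12)$ one has $\psi(x)<0$ and $|\psi(x)|\le1$, so the last two summands are negative and of order at most $1/q$; and that $\phi$ is positive and strictly decreasing on $(0,x_{0})$, where $x_{0}\in(0.09,0.10)$ is its unique zero in $(0,1)$ (indeed $\phi'(x)=\tfrac12\log x+\tfrac38<0$ for $x<e^{-3/4}$, while $\phi(0^{+})=\tfrac18>0$). Hence it suffices to show that $x=qm_{2}$ lies below $x_{0}$ by a margin large enough that $(1-x)\phi(x)>\tfrac{|\psi(x)|}{q}+\tfrac1{2q(q-1)}$.

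The remaining ingredient is an upper bound on $qm_{2}$ sharp enough to land well inside $(0,x_{0})$. From the crude bound $m_{2}<\tfrac1{2(q-2)}$ (so $\tfrac12<\delta<1$), the characterising equation yields $\tfrac1{qm_{2}}>\log\tfrac{1-(q-2)m_{2}}{2m_{2}}>\log\tfrac{q-2}{2}$, hence $qm_{2}<1/\log\tfrac{q-2}{2}$; feeding this back into the equation (where $\delta$ is now close to $1$ and $\log\tfrac{\delta}{2m_{2}}=\log q+\log\delta-\log2-\log x$) and iterating gives an explicit bound of the shape $qm_{2}\le C/\log q$ with $C$ close to $1$, valid for $q$ beyond an effective threshold. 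For such $q$ the monotonicity of $\phi$ makes $(1-x)\phi(x)$ comfortably larger than $\tfrac{|\psi|}{q}$, and for the finitely many remaining values of $q$ (those between $6500$ and that threshold) one checks the reduced inequality directly from the numerical enclosures of $m_{2}$ in Appendix \ref{sec: Numerical}. The main obstacle is precisely this sharpness: near $q=6500$ the quantity $qm_{2}$ sits only just below $x_{0}$, so $(1-x)\phi(x)$ is small, the bootstrap must be pushed far enough to decide its sign against the $O(1/q)$ remainder, and one has to pin down exactly where the analytic estimate can be handed over to the numerical verification.
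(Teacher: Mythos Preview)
Your algebraic reduction is correct and elegant: using $\tfrac{1}{\beta_{s,2}}=\tfrac{x\delta}{2}$ (from \eqref{e: m_i-beta_si} with $i=2$) to rewrite the inequality as $(1-x)\phi(x)+\tfrac{1}{q}\psi(x)-\tfrac{1}{2q(q-1)}>0$ in the single variable $x=qm_2$ is a genuinely different route from the paper's. The paper does \emph{not} eliminate $\beta_{s,2}$; instead it regards $q$ as a real parameter, introduces $f_\star(q)$ (a slight simplification of the difference of the two sides), checks numerically that $f_\star(6500)>0$, and then proves $f_\star'(q)>0$ for all $q>e^{8}$ by differentiating the implicit relations for $m_2(q)$ and $\beta_{s,2}(q)$ (Lemmas \ref{l: deriv q-1}--\ref{l: deriv q-4}). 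So the paper trades your pointwise estimate for a monotonicity argument plus a single numerical anchor at $q=6500$.

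The gap in your proposal is exactly where you flag it, but it is more serious than you suggest. At $q=6500$ the actual value of $qm_2$ sits essentially \emph{on} your zero $x_0$: from $h_2(m_2)=0$ one finds $qm_2\approx 0.0955$--$0.096$, while $x_0\in(0.095,0.097)$. Thus $(1-x)\phi(x)$ is of order $10^{-3}$ or smaller, comparable to the $O(1/q)$ remainder you must beat; the crude bound $qm_2<1/\log q\approx 0.114$ lands well above $x_0$ and even a sharper bound $qm_2<C/\log q$ with $C$ close to $1$ does not separate them. Your plan to ``hand over'' to numerics therefore requires running the enclosures of Appendix~\ref{sec: Numerical} for a range of $q$ \emph{above} $6500$ up to an explicit threshold $Q$ at which the analytic inequality $(1-x)\phi(x)>\tfrac{|\psi(x)|}{q}+\tfrac{1}{2q(q-1)}$ can be established from $x\le 1/\log q$ alone; you have neither specified $Q$ nor shown that such a $Q$ exists with a clean bound. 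The paper's monotonicity strategy sidesteps this entirely: it never needs $qm_2$ to be below any fixed threshold, only that the whole expression increases in $q$, so a single check at $q=6500$ suffices.
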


The proof is given in Section \ref{sec: Pf of Last}.
\begin{lem}
\label{l: Last claim2}Let $q\ge5$. Define $f_{c}(\beta)=-\frac{1}{2\beta}\log(qe\beta)$
and 
\[
\Phi(\beta)=\frac{d}{d\beta}[f_{c}(\beta)-F_{\beta}({\bf u}_{2})]\ .
\]
Then, we have $\Phi(\beta)>0$ for $\beta>\beta_{s,\,2}$.
\end{lem}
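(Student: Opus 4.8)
The plan is to write $\beta^{2}\Phi(\beta)$ as a sum of two manifestly increasing pieces, thereby reducing positivity on $(\beta_{s,2},\infty)$ to nonnegativity at the left endpoint $\beta_{s,2}$, and then to reduce that endpoint inequality to the elementary fact $m_{2}<\tfrac{1}{2(q-2)}$.

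First I would differentiate the two terms explicitly. Since $\log(qe\beta)=1+\log(q\beta)$, a direct computation gives $f_{c}'(\beta)=\log(q\beta)/(2\beta^{2})$. For the other term, because $\mathbf{u}_{2}=\mathbf{u}_{2}(\beta)$ is a critical point of $F_{\beta}$, the argument in the proof of Corollary~\ref{c: 2nd PT} gives $\tfrac{d}{d\beta}F_{\beta}(\mathbf{u}_{2})=-\beta^{-2}S(\mathbf{u}_{2})=-\beta^{-2}k_{2}(u_{2}(\beta))$, using the identity $S(\mathbf{c}_{i}(t))=k_{i}(t)$ from the proof of Lemma~\ref{l: U2-V1 decre}. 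Hence $\beta^{2}\Phi(\beta)=\tfrac12\log(q\beta)+k_{2}(u_{2}(\beta))$. I would then note that this is strictly increasing on $(\beta_{s,2},\infty)$: the term $\tfrac12\log(q\beta)$ is increasing, and $u_{2}(\beta)$, being the inverse of the strictly decreasing branch $g_{2}|_{(0,m_{2})}$, is strictly decreasing with values in $(0,m_{2})\subset(0,1/q)$, while $k_{2}$ is strictly decreasing on $(0,1/q)$ by \eqref{e: k decrea}, so $\beta\mapsto k_{2}(u_{2}(\beta))$ is strictly increasing. Therefore it suffices to show that the limiting value as $\beta\downarrow\beta_{s,2}$, namely $\tfrac12\log(q\beta_{s,2})+k_{2}(m_{2})$ (by continuity and $u_{2}(\beta_{s,2})=m_{2}$), is nonnegative.

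To evaluate this boundary quantity I would use the two identities holding at $t=m_{2}$: from $h_{2}(m_{2})=0$ one gets $\log\frac{1-(q-2)m_{2}}{2m_{2}}=\frac{1-qm_{2}}{qm_{2}(1-(q-2)m_{2})}$, and substituting this into $\beta_{s,2}=g_{2}(m_{2})$ yields $\beta_{s,2}=\frac{2}{qm_{2}(1-(q-2)m_{2})}$. The first identity turns $k_{2}(m_{2})$ into $\frac{1}{qm_{2}}-1+\log m_{2}$, and plugging $q\beta_{s,2}=\frac{2}{m_{2}(1-(q-2)m_{2})}$ into $\tfrac12\log(q\beta_{s,2})+k_{2}(m_{2})$ and simplifying (again via the first identity) collapses it to $\frac{1-qm_{2}}{qm_{2}}\cdot\frac{1-2(q-2)m_{2}}{2(1-(q-2)m_{2})}$. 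Since $0<m_{2}<1/q$ all factors except $1-2(q-2)m_{2}$ are positive, so this is nonnegative precisely when $m_{2}\le\frac{1}{2(q-2)}$. Finally I would prove $m_{2}<\frac{1}{2(q-2)}$ from the shape of $h_{2}$: for $q\ge5$ we have $\frac{1}{2(q-2)}<\frac1q$, and by \eqref{e: deriv of g,h} $h_{2}'>0$ on $(0,\frac{1}{2(q-2)})$; since $h_{2}(0^{+})=-\infty$ while $h_{2}\big(\tfrac{1}{2(q-2)}\big)=\log\frac{q-2}{2}+\frac8q-2$ is nondecreasing in $q$ (its $q$-derivative equals $\frac{(q-4)^{2}}{q^{2}(q-2)}\ge0$) and already positive at $q=5$ (there it equals $\log\tfrac32-\tfrac25>0$, since $(\tfrac32)^{5}=\tfrac{243}{32}>e^{2}$), the unique zero $m_{2}$ of $h_{2}$, which is the minimizer of $g_{2}$, lies strictly below $\frac{1}{2(q-2)}$.

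The main obstacle I expect is the endpoint computation in the third paragraph: collapsing $\tfrac12\log(q\beta_{s,2})+k_{2}(m_{2})$ into the factored form requires carefully juggling the two defining relations of $m_{2}$ and $\beta_{s,2}$, and the resulting inequality is genuinely tight at $q=5$ (there $h_{2}(\tfrac{1}{2(q-2)})\approx5\times10^{-3}$), so the very last step rests on the sharp rational estimate $(\tfrac32)^{5}>e^{2}$ rather than a crude bound.
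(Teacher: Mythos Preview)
Your argument is correct and follows the paper's proof through the monotonicity reduction (both compute $\beta^{2}\Phi(\beta)=\tfrac12\log(q\beta)+k_{2}(u_{2}(\beta))$ and observe it is increasing), but from that point your route is genuinely different and cleaner. The paper handles the endpoint inequality $\tfrac12\log(q\beta_{s,2})+k_{2}(m_{2})>0$ by a crude lower bound $k_{2}(m_{2})\ge k_{2}\bigl(\tfrac{1}{2(q-2)}\bigr)$ that works only for $q\ge55$, and then falls back on numerical verification (Proposition~\ref{p: numerical}) for $5\le q\le54$. You instead substitute the exact relations $h_{2}(m_{2})=0$ and $\beta_{s,2}=g_{2}(m_{2})$ to collapse the endpoint value to the closed form
\[
\frac{1-qm_{2}}{qm_{2}}\cdot\frac{1-2(q-2)m_{2}}{2\bigl(1-(q-2)m_{2}\bigr)},
\]
which reduces the whole question to $m_{2}<\tfrac{1}{2(q-2)}$ and therefore eliminates the need for any numerical check. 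This is a real improvement.

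One small simplification: your final step (evaluating $h_{2}$ at $\tfrac{1}{2(q-2)}$, analyzing its $q$-derivative, and invoking $(3/2)^{5}>e^{2}$) is not needed. By \eqref{e: deriv of g,h} the function $h_{2}$ is strictly decreasing on $\bigl(\tfrac{1}{2(q-2)},\tfrac1q\bigr)$ and satisfies $h_{2}(1/q)=0$, so automatically $h_{2}\bigl(\tfrac{1}{2(q-2)}\bigr)>0$; since $h_{2}$ is increasing on $\bigl(0,\tfrac{1}{2(q-2)}\bigr)$ from $-\infty$, its unique zero $m_{2}$ in $(0,1/q)$ lies in $\bigl(0,\tfrac{1}{2(q-2)}\bigr)$. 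This yields $m_{2}<\tfrac{1}{2(q-2)}$ for all $q\ge5$ without the tight estimate at $q=5$.
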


\begin{proof}
We have
\begin{align*}
\frac{d}{d\beta}f_{c}(\beta) & =\,\frac{1}{2\beta^{2}}\log qe\beta-\frac{1}{2\beta}\frac{1}{\beta}\,=\,\frac{1}{2\beta^{2}}\log q\beta\ .
\end{align*}
By \eqref{e: F_deriv(c(t))}, we obtain
\[
\beta^{2}\frac{d}{d\beta}[f_{c}(\beta)-F_{\beta}({\bf u}_{2})]\,=\,\frac{1}{2}[\log q\beta+2k_{2}(u_{2})]\ .
\]
By \eqref{e: k decrea}, the above expression is increasing function
of $\beta$ since $u_{2}$ decreases as $\beta$ increases. Hence,
it is sufficient to show $\Phi(\beta_{s,\,2})>0$. First, let $q\ge55>e^{4}$.
By \eqref{e: k decrea}
\begin{align*}
\log q\beta+2k_{2}(u_{2})\, & >\,\log q\beta_{s,\,2}+2k_{2}(\frac{1}{2j})\\
 & =\,\log q\beta_{s,\,2}+\log\frac{2j-j}{i}+2\log\frac{1}{2j}\,=\,\log\frac{q\beta_{s,\,2}}{4ij}\ ,
\end{align*}
where we use $u_{2}<1/(2j)$ for the inequality. Since $\beta_{s,\,2}>\beta_{c}>2\log q$,
we obtain
\[
\frac{q\beta_{s,\,2}}{4ij}\,>\,\frac{2q\log q}{8(q-2)}\,>\,\frac{q}{q-2}\ .
\]
 Finally, for $5\le q\le54$, by Proposition \ref{p: numerical},
we have $\Phi(\beta_{s,\,2})>0$.
\end{proof}
By the above lemmas, Lemma \ref{l: M-U2-V1} can be proven.
\begin{proof}[Proof of Lemma \ref{l: M-U2-V1}]
By Proposition \ref{p: numerical} given in appendix, we can check
that $F_{\beta_{s,\,2}}({\bf u}_{2})>F_{\beta_{s,\,2}}({\bf v}_{1})$
holds for $5\le q\le6500$. Now, suppose that $q>6500$. By \eqref{e: F(c(t))}
and \eqref{e: F_beta_si (ui)1}, we can write
\begin{align*}
F_{\beta_{s,\,2}}({\bf u}_{2}) & =\,-\frac{1}{4}qm_{2}+\frac{1}{4}+\frac{1}{\beta_{s,\,2}}\Big(\log m_{2}-\frac{1}{2}\Big)\ ,\\
F_{\beta_{s,\,2}}({\bf v}_{1}) & =\,\frac{1}{2}\bigg[q(q-1)\Big(v_{1}-\frac{1}{q-1}\Big)^{2}-\frac{1}{q-1}\bigg]+\frac{1}{\beta_{s,\,2}}\log v_{1}\ .
\end{align*}
By the proof of Lemma \ref{l: v1 > 1/2(q-1)}, we have
\[
\frac{qm_{2}+1}{2q}=v_{1}^{*}\le v_{1}<\frac{1}{q}\ ,
\]
so that
\[
F_{\beta_{s,\,2}}({\bf v}_{1})<\frac{1}{2}\left[q(q-1)\Big(\frac{qm_{2}+1}{2q}-\frac{1}{q-1}\Big)^{2}-\frac{1}{q-1}\right]-\frac{1}{\beta_{s,\,2}}\log q\ .
\]
Hence, the lemma can be proven if we can prove
\begin{align*}
 & -\frac{1}{4}qm_{2}+\frac{1}{4}+\frac{1}{\beta_{s,\,2}}(\log m_{2}-\frac{1}{2})\\
 & >\,\frac{1}{2}\bigg[q(q-1)\Big(\frac{qm_{2}+1}{2q}-\frac{1}{q-1}\Big)^{2}-\frac{1}{q-1}\bigg]-\frac{1}{\beta_{s,\,2}}\log q\\
 & =\,\frac{1}{8}q(q-1)(m_{2})^{2}-\frac{1}{4}(q+1)m_{2}+\frac{(q+1)^{2}}{8q(q-1)}-\frac{1}{\beta_{s,\,2}}\log q\ .
\end{align*}
This is the content of Lemma \ref{l: Last claim1}. Finally, by Lemma
\ref{l: Last claim2}, we obtain $F_{q}({\bf p})-F_{q}({\bf u}_{2})=f_{c}(q)-F_{q}({\bf u}_{2})>0$
since $f_{c}(\beta_{s,\,2})=F_{\beta_{x,\,2}}({\bf u}_{2})$.
\end{proof}
Now, we prove Lemma \ref{l: degenerate saddle2}.
\begin{proof}[Proof of Lemma \ref{l: degenerate saddle2}]
Since the proof for $F_{\beta_{s,\,3}}({\bf u}_{3})>F_{\beta_{s,\,3}}({\bf v}_{1})$
is exactly the same as the proof of Lemma \ref{l: M-U2-V1} including
numerical verification, we omit it. By \eqref{e: F_beta_si (ui)2},
we can write
\[
F_{\beta_{s,\,3}}({\bf u}_{3})\,=\,f_{c}(\beta_{s,\,3})\ .
\]
 Hence, by Lemma \ref{l: Last claim2} and by Proposition \ref{p: numerical},
we have
\[
F_{\beta_{s,\,3}}({\bf u}_{3})\,=\,f_{c}(\beta_{s,\,3})\,>\,F_{\beta_{s,\,3}}({\bf u}_{2})\ .
\]
\end{proof}

\section{\label{sec: Pf metastates}Characterization of Metastable Sets}

In this section, we prove Theorems \ref{t: metasets all q}-\ref{t: metasets}.
First, we prove Theorem \ref{t: metasets all q}.
\begin{proof}[Proof of Theorem \ref{t: metasets all q}]
The first assertion is immediate from Lemmas \ref{l: cand cri} and
\ref{l: beta_s1<beta_s2}. The third assertion is proven by Proposition
\ref{p: cri} and Lemma \ref{l: not minima}. The fourth assertion
is Lemma \ref{l: beta_c}.

Now, it remains to show the second assertion. For $\beta\in(\beta_{1},\beta_{2}]$,
since ${\bf p}$ is the global minimum and ${\bf v}_{1}$ is a saddle
point, we have $F_{\beta}({\bf p})<F_{\beta}({\bf v}_{1})$ so that
$\mathcal{W}_{\frak{o}}\ne\emptyset$. By the same argument in the
proof of Lemma \ref{l: U2-V1 decre}, we have 
\[
\frac{d}{d\beta}[F_{\beta}({\bf v}_{1})-F_{\beta}({\bf p})]=-\frac{1}{\beta^{2}}[k_{1}(v_{1}(\beta))+\log q]\ .
\]
By \ref{e: k decrea}, $k_{1}(\cdot)$ is decreasing on $(0,\,1/q)$
and increasing on $(1/q,\,1/(q-1)\,)$. Since $k_{1}(1/q)=-\log q$,
we have $k_{1}(v_{1}(\beta))+\log q>0$ for $\beta\in(\beta_{1},q)$
so that
\[
\frac{d}{d\beta}[F_{\beta}({\bf v}_{1})-F_{\beta}({\bf p})]<0\ .
\]
Since ${\bf v}_{1}={\bf p}$ when $\beta=q$, we have $F_{\beta}({\bf v}_{1})>F_{\beta}({\bf p})$
for $\beta<q$ and $F_{\beta}({\bf v}_{1})<F_{\beta}({\bf p})$ for
$\beta>q$.
\end{proof}

\subsection{\label{subsec: Pf p:metasets1}Proof of Theorem \ref{t: metasets}}

Before we go further, we recall the height between two points. Let
$\bm{a},\,\bm{b}\in\text{int}\,\Xi$, and let $\Gamma_{\bm{a},\,\bm{b}}$
be a set of all $C^{1}$-path $\gamma:[0,1]\to\text{int}\,\Xi$, such
that $\gamma(0)=\bm{a}$ and $\gamma(1)=\bm{b}$. Then, we can define
the height $\frak{H}(\bm{a},\bm{b})$ between $\bm{a}$ and $\bm{b}$
as $\frak{H}(\bm{a},\bm{b})=\inf_{\gamma\in\Gamma_{\bm{a},\,\bm{b}}}\,\sup_{0\le t\le1}\,F_{\beta}(\gamma(t))$.
We prove Theorem \ref{t: metasets} in several steps.
\begin{lem}
\label{l: metasets1}Let $q\ge4$. If $\beta>\beta_{m}$, the sets
$\mathcal{W}_{i}(\beta)$, $i\in S$, are different. In particular,
they are nonempty.
\end{lem}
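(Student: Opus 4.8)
The plan is to read off the connected components of the open sublevel set $\{F_{\beta}<H_{\beta}\}$ directly from the critical‐point picture. Since $\beta>\beta_{m}=\beta_{3}$ we have $H_{\beta}=F_{\beta}({\bf u}_{2})$, and I will establish two facts: (a) no saddle point of $F_{\beta}$ lies strictly below the level $F_{\beta}({\bf u}_{2})$; and (b) each ${\bf u}_{1}^{k}$, $k\in S$, lies below that level. Granting (a), the mountain pass / deformation lemma for $F_{\beta}$ --- legitimate here because $F_{\beta}$ has finitely many critical points, none on $\partial\Xi$, and $-\nabla F_{\beta}$ points strictly inward along $\partial\Xi$ (Proposition~\ref{p: F,G}), so all deformations can be carried out inside $\Xi$ --- shows that as the level rises to $F_{\beta}({\bf u}_{2})$, components of the sublevel set are created only at local minima and can merge only at saddles; with no saddle below the level, no merging occurs, so every connected component of $\{F_{\beta}<F_{\beta}({\bf u}_{2})\}$ contains exactly one local minimum of $F_{\beta}$. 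Granting (b), the $q$ points ${\bf u}_{1}^{1},\dots,{\bf u}_{1}^{q}$ are $q$ \emph{distinct} local minima (distinct because $u_{1}(\beta)\neq 1/q$) lying in $\{F_{\beta}<F_{\beta}({\bf u}_{2})\}$, hence they sit in $q$ pairwise distinct components; these components are by definition $\mathcal{W}_{1},\dots,\mathcal{W}_{q}$, which are therefore pairwise distinct --- and, in particular, nonempty. This is the $q\ge 4$ analogue of the argument for \cite[Proposition~4.4]{Landim Seo-3spin}.

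For (a): by Proposition~\ref{p: class cri} every saddle point of $F_{\beta}$ belongs to $\mathcal{V}_{1}\cup\mathcal{U}_{2}$. Every element of $\mathcal{U}_{2}$ has $F_{\beta}$-value exactly $F_{\beta}({\bf u}_{2})$ by symmetry, so it is not strictly below the level. For $\beta_{m}<\beta\le q$, Lemma~\ref{l: beta_m} (together with Lemma~\ref{l: M-U2-V1} at $\beta=q$) gives $F_{\beta}({\bf v}_{1})>F_{\beta}({\bf u}_{2})$, so no element of $\mathcal{V}_{1}$ lies below the level; and for $\beta>q$, Lemma~\ref{l: U12V1} shows the elements of $\mathcal{V}_{1}$ have index $\ge 2$, hence are not saddles at all. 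Since $\beta_{m}=q$ when $q=4$ and $\beta_{m}<q$ when $q\ge 5$, this covers every $\beta>\beta_{m}$ in both cases.

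For (b) it suffices to show $F_{\beta}({\bf u}_{1})<F_{\beta}({\bf u}_{2})$. By Lemma~\ref{l: U12V1} the point ${\bf u}_{2}^{k,l}$ is a nondegenerate index-one saddle, so there is a point near it with strictly smaller $F_{\beta}$-value; the $-\nabla F_{\beta}$-trajectory from such a point stays in $\Xi$, strictly decreases $F_{\beta}$, and, by compactness of $\Xi$ and finiteness of the critical set, converges to a critical point; passing through any intermediate saddle one reaches a local minimum ${\bf m}$ with $F_{\beta}({\bf m})<F_{\beta}({\bf u}_{2})$. By Proposition~\ref{p: class cri}, ${\bf m}\in\{{\bf p}\}\cup\mathcal{U}_{1}$. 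If ${\bf m}\in\mathcal{U}_{1}$ then $F_{\beta}({\bf u}_{1})=F_{\beta}({\bf m})<F_{\beta}({\bf u}_{2})$. If ${\bf m}={\bf p}$ then ${\bf p}$ is a local minimum, so $\beta<q$, and since $\beta>\beta_{m}>\beta_{c}=\beta_{2}$, Lemma~\ref{l: beta_c} gives $F_{\beta}({\bf u}_{1})<F_{\beta}({\bf p})=F_{\beta}({\bf m})<F_{\beta}({\bf u}_{2})$. In every case $F_{\beta}({\bf u}_{1})<F_{\beta}({\bf u}_{2})=H_{\beta}$, so ${\bf u}_{1}^{k}\in\{F_{\beta}<H_{\beta}\}$ for every $k\in S$.

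The delicate point is the component count invoked in the first paragraph: one must make precise, via the deformation lemma, that between two consecutive critical values the sublevel set deformation-retracts, that crossing a local-minimum value adds exactly one component, that crossing an index-one saddle value either merges two components or only adds a $1$-cycle, and that higher-index critical values leave $\pi_{0}$ unchanged; combined with (a) this yields ``one local minimum per component''. One must also handle the finitely many $\beta\in(\beta_{m},\infty)$ at which $F_{\beta}$ is not Morse, namely $\beta=q$ and $\beta=\beta_{s,i}$: there the only degenerate critical points are ${\bf p}$ (a degenerate local \emph{maximum} at $\beta=q$) and the coalesced points ${\bf u}_{i}={\bf v}_{i}$ with $i\ge 3$, which by Lemmas~\ref{l: degenerate saddle1}--\ref{l: degenerate saddle2} are not saddles lying below $F_{\beta}({\bf u}_{2})$, so the count is unaffected; alternatively, these isolated values of $\beta$ can be absorbed by a continuity argument from nearby Morse values.
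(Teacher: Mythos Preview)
Your proof is correct and rests on the same two facts as the paper's: (a) no saddle of $F_{\beta}$ lies strictly below $H_{\beta}$, and (b) $F_{\beta}({\bf u}_{1})<H_{\beta}$; from these, both you and the paper conclude that the wells $\mathcal{W}_{k}$ are pairwise distinct via the mountain-pass principle. The paper's argument is more direct: for (b) it simply observes that for $\beta>\beta_{m}>\beta_{2}$ the points of $\mathcal{U}_{1}$ are the global minima (Lemma~\ref{l: beta_c}), whereas you reach the same inequality by a gradient-descent argument from ${\bf u}_{2}$; for distinctness the paper argues by contradiction that if $\mathcal{W}_{1}=\mathcal{W}_{2}$ then the height $\frak{H}({\bf u}_{1}^{1},{\bf u}_{1}^{2})<H_{\beta}$ would have to be realised at a saddle, contradicting (a), while you invoke the full Morse-theoretic component count. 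Your extra care with degenerate values of $\beta$ is unnecessary here, since the mountain-pass argument does not require $F_{\beta}$ to be Morse; also note that at $\beta=q$ the point ${\bf p}$ is merely degenerate (not a local maximum), but this is irrelevant since $F_{q}({\bf p})>H_{q}$ anyway.
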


\begin{proof}
Since the elements of $\mathcal{U}_{1}$ are the lowest minima, we
have $F_{\beta}({\bf u}_{1})<H_{\beta}$ so that $\mathcal{W}_{i}$'s
are nonempty. Without loss of generality, suppose $\mathcal{W}_{1}=\mathcal{W}_{2}$.
Since ${\bf u}_{1}^{1},\,{\bf u}_{1}^{2}\in\mathcal{W}_{1}$ and $\mathcal{W}_{1}$
is connected, there is a $C^{1}$-path $\gamma:[0,1]\to\mathcal{W}_{1}$,
such that $\gamma(0)={\bf u}_{1}^{1}$, $\gamma(1)={\bf u}_{1}^{2}$.
Therefore, we have $F_{\beta}(\gamma(t))<H_{\beta}$ for $0\le t\le1$,
so that
\[
F_{\beta}({\bf u}_{1}^{1})<\frak{H}({\bf u}_{1}^{1},{\bf u}_{1}^{2})<H_{\beta}\ .
\]
Then, there is a saddle point $\bm{\sigma}({\bf u}_{1}^{1},{\bf u}_{1}^{2})$,
such that $F_{\beta}(\bm{\sigma}({\bf u}_{1}^{1},{\bf u}_{1}^{2}))=\frak{H}({\bf u}_{1}^{1},{\bf u}_{1}^{2})$.
However, by Proposition \ref{p: cri}, the values of saddle points
are greater than or equal to $H_{\beta}$. This is contradiction.
Hence, $\mathcal{W}_{i}$'s are different.
\end{proof}
\begin{lem}
\label{l: metasets2}Let $q\ge4$. If $\beta>q$, the set $\Sigma_{i,\,j}$
is singleton for all $i,j\in S$.
\end{lem}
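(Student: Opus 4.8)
The plan is to prove the stronger statement that, for all $i,j\in S$,
$\Sigma_{i,j}=\overline{\mathcal W_i}\cap\overline{\mathcal W_j}=\{{\bf u}_2^{i,j}\}$.
Throughout fix $\beta>q$, so that $H_\beta=F_\beta({\bf u}_2)$; recall from Proposition \ref{p: cri} (and that $\beta>q>\beta_{s,2}>\beta_{s,1}$, so all critical points are nondegenerate) together with Theorem \ref{t: metasets all q}(4) that the only local minima of $F_\beta$ are the points of $\mathcal U_1$, which are its global minima and therefore lie at a level \emph{strictly} below $H_\beta$ (as ${\bf u}_2$ is a critical point that is not a global minimizer), and that the only index-$1$ critical points are the points of $\mathcal U_2$, all at level exactly $H_\beta$.

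First I would show $\Sigma_{i,j}\subseteq\mathcal U_2$. Let $\bm z\in\overline{\mathcal W_i}\cap\overline{\mathcal W_j}$. Since $\mathcal W_i$ and $\mathcal W_j$ are distinct open connected components of $\{F_\beta<H_\beta\}$ by Lemma \ref{l: metasets1}, $\bm z$ is not an interior point of $\{F_\beta<H_\beta\}$, so $F_\beta(\bm z)\ge H_\beta$; combined with $\bm z\in\overline{\{F_\beta<H_\beta\}}$ this forces $F_\beta(\bm z)=H_\beta$. Moreover every small ball $B$ about $\bm z$ meets both $\mathcal W_i$ and $\mathcal W_j$, so $\{F_\beta<H_\beta\}\cap B$ cannot be connected; by the Morse lemma this excludes $\bm z$ being a regular point or a critical point of index $\ge2$, while $F_\beta(\bm z)=H_\beta\notin\{$local minimum values$\}$ excludes index $0$. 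Hence $\bm z$ is a nondegenerate index-$1$ critical point at level $H_\beta$, i.e.\ $\bm z={\bf u}_2^{a,b}$ for some $a,b\in S$.

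Next I would determine, for a fixed pair $a,b$, which wells touch ${\bf u}_2^{a,b}$. Writing the Hessian of $F_\beta$ at a point $\bm x\in\mathrm{int}\,\Xi$ as the quadratic form $v\mapsto\sum_k(-1+1/(\beta x_k))v_k^2$ on $\{\sum v_k=0\}$, one sees that at ${\bf u}_2^{a,b}$ the unique negative direction is $\bm e_a-\bm e_b$; hence the one-dimensional unstable manifold of ${\bf u}_2^{a,b}$ is contained in the plane $V_{a,b}:=\{\bm x\in\Xi:\,x_m=x_{m'}\text{ for all }m,m'\notin\{a,b\}\}$, the fixed-point set of the permutations of the coordinates $\neq a,b$, which is invariant under the gradient flow of the symmetric function $F_\beta$. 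Each of the two branches of this unstable manifold flows strictly downhill, stays in $\mathrm{int}\,\Xi$ by Proposition \ref{p: F,G}(1), and therefore converges to a critical point of $F_\beta$ lying in $V_{a,b}$ of value $<H_\beta$. By \eqref{e: crit}, the critical points of $F_\beta$ in $V_{a,b}$ are exactly ${\bf p},{\bf u}_1^a,{\bf u}_1^b,{\bf v}_1^a,{\bf v}_1^b,{\bf u}_2^{a,b},{\bf v}_2^{a,b}$, and for $\beta>q$ all of these except ${\bf u}_1^a,{\bf u}_1^b$ lie at level $\ge H_\beta$: indeed $F_\beta({\bf p})>H_\beta$ (Lemma \ref{l: M-U2-V1} at $\beta=q$, a direct computation for $q=4$, and the monotonicity in $\beta$ of $F_\beta({\bf p})-F_\beta({\bf u}_2)$ obtained from \eqref{e: F_deriv(c(t))} and the sign of $k_2$), $F_\beta({\bf v}_1)>H_\beta$ (Lemma \ref{l: beta_m} for $q\ge5$; a one-variable estimate for $q=4$), and $F_\beta({\bf v}_2)\ge H_\beta$ (with equality only when $q=4$, where ${\bf v}_2^{a,b}$ is a permutation of ${\bf u}_2$ by Lemma \ref{l: V2 when q=00003D4}). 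Hence both branches terminate in $\{{\bf u}_1^a,{\bf u}_1^b\}$, and by the $a\leftrightarrow b$ symmetry of $V_{a,b}$ one ends at ${\bf u}_1^a$ and the other at ${\bf u}_1^b$; so the two local components of $\{F_\beta<H_\beta\}$ at ${\bf u}_2^{a,b}$ lie in $\mathcal W_a$ and $\mathcal W_b$, and therefore ${\bf u}_2^{a,b}\in\overline{\mathcal W_m}$ if and only if $m\in\{a,b\}$.

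Combining the two steps: ${\bf u}_2^{a,b}\in\Sigma_{i,j}$ forces $\{a,b\}=\{i,j\}$, while conversely the last sentence (applied to $a=i$, $b=j$) shows ${\bf u}_2^{i,j}\in\overline{\mathcal W_i}\cap\overline{\mathcal W_j}$; hence $\Sigma_{i,j}=\{{\bf u}_2^{i,j}\}$ is a nonempty singleton. The main obstacle is the third step, i.e.\ converting the heuristic ``an index-$1$ saddle joins exactly the two adjacent wells'' into a proof: this needs the observation that the unstable manifold is trapped in the symmetric plane $V_{a,b}$ (so no third spin can intervene), together with the energy comparisons $F_\beta({\bf p}),F_\beta({\bf v}_1),F_\beta({\bf v}_2)>H_\beta$ for $\beta>q$ which guarantee that the descending trajectories are not captured by the spurious critical points ${\bf p}$, $\mathcal V_1$, $\mathcal V_2$ — the last two being essentially the content of, or small extensions of, Lemmas \ref{l: beta_m} and \ref{l: V2 when q=00003D4}.
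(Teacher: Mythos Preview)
Your approach is genuinely different from the paper's and, if the energy comparisons you invoke were all available, would give the stronger statement $\Sigma_{i,j}=\{{\bf u}_2^{i,j}\}$ in one shot. The paper instead argues indirectly: it never restricts to $V_{a,b}$, but lets the gradient descent from ${\bf u}_2^{k,l}$ wander in the full simplex, perturbing at any non-minimal critical point it meets, until it reaches a local minimum; since for $\beta>q$ the only local minima are the ${\bf u}_1^m$, each branch lands in some $\mathcal W_m$. If all $\Sigma_{i,j}$ were empty, both branches of every ${\bf u}_2^{k,l}$ would land in the \emph{same} well, and a symmetry/counting argument ($\binom{q}{2}$ saddles distributed over $q$ wells) yields a contradiction. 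Only afterwards does the paper pin down $\Sigma_{i,j}=\{{\bf u}_2^{i,j}\}$, again by symmetry. The virtue of the paper's route is that it needs nothing about the levels of ${\bf p},{\bf v}_1,{\bf v}_2$ beyond the classification of local minima.

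Your route has a real gap precisely at those level comparisons. Lemma~\ref{l: beta_m} gives $F_\beta({\bf v}_1)>F_\beta({\bf u}_2)$ only for $\beta_m<\beta\le q$; it says nothing for $\beta>q$, which is the regime of the lemma. More seriously, for $q\ge5$ you assert $F_\beta({\bf v}_2)\ge H_\beta$ with no justification at all, and this inequality is not proved anywhere in the paper. It is not a harmless omission: a short Hessian computation shows that for $q\ge5$ and $\beta>q$ the point ${\bf v}_2^{a,b}$ is a \emph{local minimum} of $F_\beta|_{V_{a,b}}$ (both eigenvalues $b$ and $(2a+(q-2)b)/(q-2)$ are positive there by Lemma~\ref{l:sign ab}), so it is a genuine attractor for the restricted gradient flow. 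Your unstable branch from ${\bf u}_2^{a,b}$ stays in the open half $\{x_a>x_b\}$ for all finite times, but nothing prevents it from converging to ${\bf v}_2^{a,b}$ on the symmetry line as $t\to\infty$ unless you know $F_\beta({\bf v}_2^{a,b})\ge H_\beta$. If that happened, both branches (by the $a\leftrightarrow b$ swap) would land at ${\bf v}_2^{a,b}$, and ${\bf u}_2^{a,b}$ would touch only one well rather than two --- exactly the scenario the paper's counting argument is designed to exclude. So either you must supply independent proofs of $F_\beta({\bf v}_1),F_\beta({\bf v}_2)\ge H_\beta$ for $\beta>q$ (plausible but nontrivial extensions of Lemmas~\ref{l: beta_m} and~\ref{l: U2-V1 decre}), or abandon the restriction to $V_{a,b}$ and fall back on the paper's perturb-and-continue plus counting argument.
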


\begin{proof}
First, we claim that $\Sigma_{i,\,j}$'s are not empty. Suppose one
of $\Sigma_{i,\,j}$'s is empty. Then, by symmetry, all of them are
empty. Let us fix $1\le k<l\le q$. Since ${\bf u}_{2}^{k,\,l}$ is
a saddle point, there is a unit eigenvector $\bm{w}$ that corresponds
to the unique negative eigenvalue of $\nabla^{2}F_{\beta}({\bf u}_{2}^{k,\,l})$.
There exists $\eta>0$, such that $F_{\beta}({\bf u}_{2}^{k,\,l}+t\bm{w})<H_{\beta}$
for all $0<|t|<\eta$. Now, consider the path $\bm{y}(t)$ described
by the ordinary differential equation
\begin{equation}
\dot{\bm{y}}(t)\,=\,-\nabla F_{\beta}(\bm{y}(t)),\ \ \ \bm{y}(0)\,=\,{\bf u}_{2}^{k,\,l}+\eta\bm{w}\ .\label{e: decre path}
\end{equation}
Then, $\bm{y}(t)$ converges to a critical point whose height is less
than $H_{\beta}$ as $t\to\infty$. If this convergent point is not
a local minimum, we can find an eigenvector $\bm{w}_{1}$ corresponding
to a negative eigenvalue of the Hessian of $F_{\beta}$ at that point.
Then, by the same argument defining the path \eqref{e: decre path},
the next path converges to another critical point whose height is
lower than that of the previous critical point. Finally, this path
converges to a local minimum. Since there is no local minimum other
than $\mathcal{U}_{1}$, $\bm{y}(t)$ converges to some elements of
$\mathcal{U}_{1}$, say ${\bf u}_{1}^{1}$ without loss of generality.
Since $\mathcal{W}_{i}$'s are different, $\bm{y}(\cdot)$ converges
to only one minimum.

By the same argument, the similar path starting at ${\bf u}_{2}^{k,\,l}-\epsilon\bm{w}$
converges to some ${\bf u}_{1}$, say ${\bf u}_{1}^{m}$. If $m\ne1$,
${\bf u}_{2}^{k,\,l}\in\Sigma_{1,\,m}$ so that $\Sigma_{1,\,m}$
is not empty. So, we obtain $m=1$. In this case, we obtain ${\bf u}_{2}^{k,\,l}\in\overline{\mathcal{W}_{1}}$
and ${\bf u}_{2}^{k,\,l}\notin\overline{\mathcal{W}}_{m}$ for $m\ne1$.
By symmetry, since $\mathcal{U}_{2}$ has $q(q-1)/2$ elements and
the number of $\mathcal{W}_{i}$ is $q$, there are $(q-1)/2$ elements
in $\mathcal{U}_{2}$ corresponding to each $\mathcal{W}_{i}$, that
is, $|\overline{\mathcal{W}_{1}}\cap\mathcal{U}_{2}|=(q-1)/2$, where
$|A|$ is the number of elements of set $A$. If ${\bf u}_{2}^{1,\,a}\in\overline{\mathcal{W}_{1}}$,
for some $2\le a\le q$, we obtain ${\bf u}_{2}^{1,\,a}\in\overline{\mathcal{W}_{a}}$
by symmetry, and therefore $\Sigma_{1,\,a}=\overline{\mathcal{W}_{1}}\cap\overline{\mathcal{W}_{a}}\neq\emptyset$.
Hence, we have ${\bf u}_{2}^{1,\,a}\notin\overline{\mathcal{W}_{1}}$.
If ${\bf u}_{2}^{a,\,b}\in\overline{\mathcal{W}_{1}}$ for some $1<a,\,b$,
since $q\ge4$ and by symmetry, ${\bf u}_{2}^{a,\,b}\in\overline{\mathcal{W}_{m}}$
for some $m\ne2,\,a,\,b$, and this contradicts the assumption that
$\Sigma_{1,\,m}=\overline{\mathcal{W}_{1}}\cap\overline{\mathcal{W}_{m}}=\emptyset$.
Hence, $\Sigma_{i,\,j}$'s are nonempty. 

Observe that the elements of $\Sigma_{i,\,j}$ are saddle points and
$F_{\beta}(\bm{x})=H_{\beta}$ for all $\bm{x}\in\Sigma_{i,\,j}$.
Hence, by Proposition \ref{p: cri}, $\Sigma_{i,\,j}\subset\mathcal{U}_{2}$.
Since $\nabla^{2}F_{\beta}({\bf u}_{2})$'s have only one negative
eigenvalue, each element of $\mathcal{U}_{2}$ connects only two wells,
i.e., $\Sigma_{i,\,j}\cap\Sigma_{k,\,l}=\emptyset$ if $\{i,j\}\ne\{k,l\}$.
Therefore, $\Sigma_{i,\,j}$ has at most one point and from the above
two paragraphs, we obtain $|\Sigma_{i,\,j}|=1$.
\end{proof}
We can now prove Theorem \ref{t: metasets}.
\begin{proof}[Proof of Theorem \ref{t: metasets}]
The first assertion follows from the definition of critical temperatures
\eqref{e: Def of cri temp} and Lemma \ref{l: beta_m}.

By Lemma \ref{l: metasets2}, to prove $\Sigma_{i,\,j}=\{{\bf u}_{2}^{i,\,j}\}$
when $\beta>q$, without loss of generality, it is sufficient to show
that $\Sigma_{1,\,2}\ne\{{\bf u}_{2}^{1,\,4}\}$ and $\Sigma_{1,\,2}\ne\{{\bf u}_{2}^{3,\,4}\}$.
First, suppose $\Sigma_{1,\,2}=\{{\bf u}_{2}^{1,\,4}\}$. Then, by
symmetry, we obtain ${\bf u}_{2}^{1,\,4}\in\Sigma_{1,\,3}$, which
contradicts to $\Sigma_{1,\,2}\cap\Sigma_{1,\,3}=\emptyset$. Second,
suppose $\Sigma_{1,\,2}=\{{\bf u}_{2}^{3,\,4}\}$ so that by symmetry,
we have $\Sigma_{1,\,5}=\{{\bf u}_{2}^{3,\,4}\}$ which is also contradiction.
Hence, we obtain $\Sigma_{1,\,2}=\{{\bf u}_{2}^{1,\,2}\}$.

Since $F_{\beta}$ is continuous in $\beta$, the values $H_{\beta}$
and $\frak{H}({\bf u}_{1}^{i}(\beta),{\bf u}_{1}^{j}(\beta))$, $i,j\in S$,
are continuous in $\beta$. Note that $\frak{H}({\bf u}_{1}^{i}(\beta),{\bf u}_{1}^{j}(\beta))=F_{\beta}({\bf u}_{2})=H_{\beta}$
for $\beta\ge q$ since there is no saddle point other than $\mathcal{U}_{2}$.
Since $F_{\beta}({\bf v}_{1})>H_{\beta}$ if $\beta>\beta_{m}=\beta_{3}$
and there is no saddle point other than the elements of $\mathcal{U}_{2}\cup\mathcal{V}_{1}$,
by continuity, we obtain
\[
\frak{H}({\bf u}_{1}^{i}(\beta),{\bf u}_{1}^{j}(\beta))\,=\,H_{\beta}\ \text{if}\ \beta\ge\beta_{3}\ .
\]
Hence, ${\bf u}_{2}^{i,\,j}$ is a saddle point between ${\bf u}_{1}^{i}$
and ${\bf u}_{1}^{j}$ and $\Sigma_{i,\,j}=\{{\bf u}_{2}^{i,\,j}\}$
if $\beta>\beta_{3}$. Coupled with Lemma \ref{l: metasets1}, the
fourth assertion holds except that $\Sigma_{0,\,i}=\emptyset$.

Without loss of generality, suppose that $\Sigma_{0,\,1}=\overline{\mathcal{W}_{0}}\cap\overline{\mathcal{W}_{1}}\neq\emptyset$.
We, therefore, obtain $\frak{H}({\bf p},{\bf u}_{1}^{1}(\beta))<F_{\beta}({\bf v}_{1})$
so that $\frak{H}({\bf p},{\bf u}_{1}^{1}(\beta))=H_{\beta}$. By
continuity, we get
\[
\frak{H}({\bf p},{\bf u}_{1}^{1}(\beta))=H_{\beta}\ \text{for}\ \beta_{3}\le\beta<q\ ,
\]
so that $F_{\beta}({\bf p})\le H_{\beta}$ for $\beta_{3}\le\beta<q$.
However, it is in contradiction to $F_{q}({\bf p})=F_{q}({\bf v}_{1})>H_{q}$.
Hence, we obtain $\Sigma_{0,\,i}=\emptyset$ for $i\in S$.

By the same argument and symmetry, the second assertion can be proven
for $\beta\in(\beta_{s,\,1},\beta_{s,\,2})=(\beta_{1},\beta_{s,\,2})$.
By continuity argument, we can extend these to $\beta\in(\beta_{1},\beta_{3})$.
The third assertion holds because of the first and fourth assertions,
symmetry, and continuity. Finally, the fifth assertion can be proven
by the same argument.
\end{proof}

\subsection{\label{subsec: Pf p:metasets2}Proof of Theorem \ref{t: metasets q4}}

If $q=4$, $\Sigma_{1,\,2}\ne\{{\bf u}_{2}^{3,\,4}\}$ cannot be proven
by symmetry argument.  Hence, we directly prove the Theorem \ref{t: metasets q4}.
\begin{proof}[Proof of Theorem \ref{t: metasets q4}]
By Lemma \ref{l: beta_m} and \eqref{e: Def of cri temp}, we obtain
the first assertion.

Consider $\mathcal{K}_{i,\,j}=\{\,\bm{x}\in\Xi\,:\,x_{i}=x_{j}=\max\{x_{1,}\dots,x_{4}\}\,\}$.
It can be observed that these six planes divide $\Xi$ into four pieces,
and each plain contains one element of $\mathcal{U}_{2}$ and ${\bf u}_{2}^{i,\,j}\in\mathcal{K}_{i,\,j}$.
We claim that $H_{\beta}=F_{\beta}({\bf u}_{2}^{i,\,j})<F_{\beta}(\bm{x})$
for all $\bm{x}\in\mathcal{K}_{i,\,j}$ if $\beta>q$. Note that ${\bf p}$
is not local minimum if $\beta\ge q$.

Let $\widetilde{F}_{\beta}(\bm{x})$ be a restriction of $F_{\beta}$
to $\mathcal{K}_{3,\,4}$ and let $\mathcal{K}_{3,\,4}^{o}=\{\bm{x}\in\mathcal{K}_{3,\,4}\,:\,x_{3}=x_{4}>x_{1},x_{2}\}$.
Since $x_{3}=x_{4}=\frac{1}{2}(1-x_{1}-x_{2})$, 
\[
\frac{\partial}{\partial x_{i}}\widetilde{F}_{\beta}(\bm{x})=-x_{i}+\frac{1}{\beta}\log x_{i}+x_{3}-\frac{1}{\beta}\log x_{3}\ ,
\]
so that if $\bm{x}\in\mathcal{K}_{3,\,4}$ is a critical point, we
have
\[
-x_{1}+\frac{1}{\beta}\log x_{1}=-x_{2}+\frac{1}{\beta}\log x_{2}=-x_{3}+\frac{1}{\beta}\log x_{3}\ .
\]
Since $x_{3}=x_{4}>x_{1},\,x_{2}$, if $\beta\ge q$, the critical
points in $\mathcal{K}_{3,\,4}^{o}$ are ${\bf u}_{2}^{3,\,4}$, ${\bf v}_{2}^{1,\,2}$.
From the proof Lemma \ref{l: V2 when q=00003D4}, we obtain ${\bf u}_{2}^{3,\:4}={\bf v}_{2}^{1,\,2}=(u_{2},u_{2},v_{2},v_{2})$.

Let $a=-1+\frac{1}{\beta u_{2}}$ and $b=-1+\frac{1}{\beta v_{2}}$.
We therefore obtain
\[
\nabla^{2}\widetilde{F}_{\beta}({\bf u}_{2}^{3,\,4})\ =\ \left(\begin{array}{cc}
a+\frac{1}{2}b & \frac{1}{2}b\\
\frac{1}{2}b & a+\frac{1}{2}b
\end{array}\right)\ .
\]
The eigenvalues of $\nabla^{2}\widetilde{F}_{\beta}({\bf u}_{2}^{3,\,4})$
are $a$ and $a+b$. By Lemma \ref{l:sign ab}, $a,\,b>0$ so that
${\bf u}_{2}^{3,\,4}$ is a local minimum in $\mathcal{K}_{3,\,4}^{o}$.
Since this is the unique critical point, ${\bf u}_{2}^{3,\,4}$ is
the unique minimum in $\mathcal{K}_{3,\,4}^{o}$. Since $\mathcal{K}_{3,\,4}$
is a closure of $\mathcal{K}_{3,\,4}^{o}$ and there is no critical
point in $\mathcal{K}_{3,\,4}^{o}\setminus\{{\bf u}_{2}^{3,\,4}\}$,
${\bf u}_{2}^{3,\,4}$ is the unique minimum in $\mathcal{K}_{3,\,4}$.
Hence, $\mathcal{W}_{i}$'s are different if $\beta>q$.

Let $\beta>q$. By the definition of $\mathcal{K}_{i,\,j}$, we obtain
$\overline{\mathcal{W}}_{k}\cap\mathcal{K}_{i,\,j}=\emptyset$ if
$k\ne i,j$ so that $\Sigma_{i,\,j}\subset\mathcal{K}_{i,\,j}$. By
Lemma \ref{l: metasets2}, $\Sigma_{i,\,j}$ are not empty. It can
be observed $F_{\beta}(\bm{x})=H_{\beta}$ and $\nabla F_{\beta}(\bm{x})=0$
if $\bm{x}\in\Sigma_{i,\,j}$. Since $\Sigma_{i,\,j}\subset\mathcal{K}_{i,\,j}$,
we have $\Sigma_{i,\,j}=\{{\bf u}_{2}^{i,\,j}\}$, thus the fourth
assertion is proved.

For the third assertion, note that $F_{q}(\bm{x})=H_{q}$ for all
$\bm{x}\in\Sigma_{i,\,j}$ and ${\bf p}$ is the only point in $\mathcal{K}_{i,\,j}$,
such that $F_{q}(\bm{x})=H_{q}$. Moreover, we obtain $F_{q}(\bm{x})>H_{q}=F_{q}({\bf p})$
if $\bm{x}\in\mathcal{K}_{i,\,j}^{o}$, and finally we can deduce
$F_{q}(\bm{x})>H_{q}=F_{q}({\bf p})$ if $\bm{x}\in\mathcal{K}_{i,\,j}\setminus\{{\bf p}\}$
using elementary calculus. Hence, $\mathcal{W}_{i}$'s are different
if $\beta=q$.

For the second assertion, we can use the symmetry argument and the
proofs are the same as the proof of Theorem \ref{t: metasets}.
\end{proof}

\section{\label{sec: Pf of Last}Proof of Lemma \ref{l: Last claim1}}

This section is devoted to the proof of Lemma \ref{l: Last claim1}.
In Section \ref{subsec: Last1}, we provide an auxiliary lemma to
prove Lemma \ref{l: deriv q}. In section \ref{subsec: Last2}, we
prove this auxiliary lemma. So far, we have fixed an integer $q\ge3$;
however, in this section, we consider $q$ as a real number and several
variables as functions of $q$. For example, $m_{2}=m_{2}(q)$, $j(q)=q-2$,
and $\beta_{s,\,2}=\beta_{s,\,2}(q)$.

\subsection{\label{subsec: Last1}Proof of Lemma \ref{l: Last claim1}}
\begin{lem}
\label{l: deriv q}The function $f_{\star}$ of $q$ is defined as
\begin{equation}
f_{\star}(q)=\frac{1}{\beta_{s,\,2}}\Big(\log qm_{2}-\frac{1}{2}\Big)-\frac{1}{8}(qm_{2})^{2}+\frac{1}{4}m_{2}+\frac{251}{2002}\ .\label{e: Def of f_star}
\end{equation}
Then, if $q>e^{8}$, $f_{\star}'(q)=\frac{d}{dq}f_{\star}(q)>0$.
\end{lem}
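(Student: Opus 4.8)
The plan is to reduce $f_{\star}'(q)>0$ to a single inequality between explicit elementary functions, by eliminating $\beta_{s,2}$ and passing to the variable $x:=qm_{2}$. From \eqref{e: m_i-beta_si} with $i=2$, $j=q-2$ one has $qm_{2}(1-(q-2)m_{2})=2/\beta_{s,2}$, so $\frac{1}{\beta_{s,2}}=\frac{x(1-x)}{2}+\frac{x^{2}}{q}$. Substituting this and $m_{2}=x/q$ into \eqref{e: Def of f_star} and collecting the terms carrying a factor $q^{-1}$ gives
\[
f_{\star}(q)\,=\,g(x)+\frac{r(x)}{q}\ ,\qquad g(x)=\frac{x(1-x)}{2}\Big(\log x-\frac12\Big)-\frac{x^{2}}{8}+\frac{251}{2002}\ ,\quad r(x)=x^{2}\log x-\frac{x^{2}}{2}+\frac{x}{4}\ ,
\]
with $x=x(q)$, hence $f_{\star}'(q)=x'(q)\big(g'(x)+q^{-1}r'(x)\big)-q^{-2}r(x)$, where $g'(x)=\frac{1-2x}{2}\big(\log x-\frac12\big)+\frac{1-x}{2}-\frac{x}{4}$ and $r'(x)=2x\log x+\frac14$. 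Everything then rests on sign and size information for $x(q)$ and $x'(q)$.

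To obtain it, rewrite the defining equation $h_{2}(q,m_{2})=0$ (cf.\ \eqref{e: Def of h_i}) in the variable $x$: a short computation turns it into the relation $\log\frac{W}{2x}=\frac1x-\frac2W$ with $W:=q(1-x)+2x$ (note $W-2=(q-2)(1-x)$ and $W-2x=q(1-x)$), which is an equation between $x$ and $q$ alone. Implicit differentiation yields the closed form
\[
x'(q)\,=\,-\,\frac{(q-2)(1-x)\,x^{2}}{q\,\big[\,q(1-2x)+4x\,\big]}\ ,
\]
strictly negative once $x<\tfrac12$. That $x<\tfrac12$ for $q>e^{8}$ (equivalently $m_{2}<1/(2q)$) follows at once from $\beta_{s,2}>\beta_{c}\ge 2\log(q-1)$ (Lemma \ref{l: beta_s1<beta_c}, \eqref{e: Def of beta_c}) together with the formula for $1/\beta_{s,2}$; bootstrapping the same two inputs gives the sharper a priori bound $x(q)\le c/\log q$ for an explicit $c<1$, plus a matching lower bound $x(q)\ge c'/q$. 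I would package this two-sided estimate as the auxiliary lemma, whose short but careful proof (an analysis of $\log\frac{W}{2x}=\frac1x-\frac2W$) is deferred to Section \ref{subsec: Last2}.

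Substituting $x'(q)$ into $f_{\star}'(q)$ and clearing the positive denominator $q^{2}\big[q(1-2x)+4x\big]$, the claim becomes
\[
q(q-2)(1-x)\,x^{2}\,\big(-g'(x)\big)\;>\;(q-2)(1-x)\,x^{2}\,r'(x)\;+\;\big[q(1-2x)+4x\big]\,r(x)\ .
\]
Here $-g'(x)=\frac{1-2x}{2}\big(\frac12-\log x\big)-\frac{2-3x}{4}$ is positive for $x\le\tfrac14$ (so for $q>e^{8}$ by the a priori bound) and satisfies $-g'(x)\ge\frac12\log(1/x)-1$ on $(0,\tfrac12]$, which by the a priori bound is of order $\log\log q$. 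Thus the left side is of order $q^{2}x^{2}\log\log q$ and the right side is $O(qx)$; since $qx=q^{2}m_{2}$ grows like $q/\log q$, the left side dominates for $q$ large, and inserting the explicit bounds $x\le c/\log q$, $|r(x)|\le x$, $|r'(x)|\le 1$ reduces the display to a concrete polynomial inequality in $q$ that holds whenever $\log q>8$. If needed, the sub-range of $q$ where $x(q)$ is not yet small can be handled separately using that $r(x),r'(x)<0$ there, which makes the right side negative.

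The structural reduction above is routine; the substance lies in the estimates. First, $x(q)$ must be pinned down from both sides accurately — $-g'(x)$ grows only like $\log(1/x)\asymp\log\log q$, so a lossy upper bound on $x$ would push the threshold far beyond $e^{8}$ — and this is exactly the content of the auxiliary lemma. Second, the signs of $r(x)$ and $r'(x)$ flip near $x\approx 0.07$ and $x\approx 0.04$, so making the final polynomial inequality hold cleanly all the way down to $\log q=8$ may force splitting the range of $q$ into two pieces. Threading these estimates so that the constant $e^{8}$ (rather than something much larger) appears is the delicate part of the argument.
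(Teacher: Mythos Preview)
Your reduction is correct and is a genuinely different route from the paper's. The paper does \emph{not} eliminate $\beta_{s,2}$: it keeps $\beta_{s,2}$ and $\beta_{s,2}'$ as separate quantities, computes $m_{2}'$, $s_{2}'=(qm_{2})'$, and $\beta_{s,2}'$ explicitly (Lemma~\ref{l: deriv q-1}), and then splits
\[
f_{\star}'(q)=\frac{1}{\beta_{s,2}^{2}}\Big[\big(\tfrac12-\log s_{2}\big)\beta_{s,2}'+\beta_{s,2}\,\frac{s_{2}'}{s_{2}}\Big]+\frac14\big(m_{2}'-s_{2}s_{2}'\big)
\]
into two pieces, proving each positive separately (Lemmas~\ref{l: deriv q-3} and~\ref{l: deriv q-4}). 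Your approach instead uses \eqref{e: m_i-beta_si} to write $1/\beta_{s,2}=\tfrac{x(1-x)}{2}+x^{2}/q$ and collapse $f_{\star}$ to $g(x)+r(x)/q$; this is cleaner algebraically and yields the same $x'(q)$ as the paper's $s_{2}'$. The paper's two-term split has the advantage that each piece is manifestly positive once the two-sided bound on $x$ is in hand, so no delicate balancing at the threshold $q=e^{8}$ is needed; your single-inequality route trades that structural cleanliness for a more transparent dependence on $x$ alone.

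One concrete issue: the lower bound you state, $x(q)\ge c'/q$, is far too weak for your own argument. When you write ``$qx$ grows like $q/\log q$'' you are implicitly using $x\gtrsim 1/\log q$; with only $x\ge c'/q$ the left side $q(q-2)(1-x)x^{2}(-g'(x))$ could be as small as $O(\log\log q)$, which does not dominate the right side. The paper proves exactly the two-sided estimate you need, $\tfrac{1}{2\log q}<x<\tfrac{1}{\log q}$ for $q>e^{8}$ (Lemma~\ref{l: deriv q-2}), by evaluating $h_{2}$ at the two endpoints; your auxiliary lemma must do the same, and once it does the rest of your outline goes through.
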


\begin{proof}[Proof of Lemma \ref{l: Last claim1}]
By Proposition \ref{p: numerical}, we obtain $f_{\star}(6500)>0$.
We observe that $\frac{(q-1)}{8q}<\frac{1}{8}$ and $\frac{-q^{2}+4q+1}{8q(q-1)}<-\frac{251}{2002}$
if $q>1000$. Hence, Lemma \ref{l: deriv q} proves Lemma \ref{l: Last claim1}.
\end{proof}

\subsection{\label{subsec: Last2}Proof of Lemma \ref{l: deriv q}}

Let $s_{2}=s_{2}(q)=qm_{2}$. In the first lemma, we compute $m_{2}'=(d/dq)m_{2}$,
$s_{2}'=(d/dq)s_{2}$, and $\beta_{s,\,2}'=(d/dq)\beta_{s,\,2}$.
\begin{lem}
\label{l: deriv q-1}We have
\begin{align*}
m_{2}'=\frac{d}{dq}m_{2} & =\,-\frac{m_{2}(1-jm_{2}-qjm_{2}^{2})}{q(1-2jm_{2})}\ ,\\
s_{2}'=\frac{d}{dq}s_{2} & =\,=-\frac{js_{2}^{2}(1-s_{2})}{q(q-2js_{2})}\ ,\\
\beta_{s,\,2}'=\frac{d}{dq}\beta_{s,\,2} & =\,\frac{1}{1-s_{2}}\bigg(\beta_{s,\,2}s_{2}'-2\frac{-s_{2}+s_{2}^{2}+qs_{2}'}{(q-js_{2})s_{2}}\bigg)\ .
\end{align*}
\end{lem}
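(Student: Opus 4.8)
The plan is to obtain all three derivatives by implicit differentiation, keeping in mind throughout that $j=q-2$ is itself a function of $q$, so $dj/dq=1$; this bookkeeping is the only subtle point, since it contributes extra terms that a careless computation would miss.

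\emph{The identity for $m_2'$.} By \eqref{e: deriv of g,h} we have $g_2'(t)=\frac{2q}{(1-qt)^2}h_2(t)$, so the minimizer $m_2=m_2(q)$ is characterized by $h_2(m_2,q)=0$, where by \eqref{e: Def of h_i}
\[
h_2(t,q)\,=\,\log\frac{1-jt}{2t}+\frac{qt-1}{qt(1-jt)}\,,\qquad j=q-2\,.
\]
Since $\partial_t h_2(m_2,q)=h_2'(m_2)=\frac{(qm_2-1)(2jm_2-1)}{q(1-jm_2)^2m_2^2}$ is nonzero ($m_2$ being a nondegenerate minimum of $g_2$; cf.\ Figure \ref{fig: g2h2h2'}), the implicit function theorem gives $m_2'=-\,\partial_q h_2(m_2,q)\big/\partial_t h_2(m_2,q)$. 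Differentiating $h_2$ in $q$ — the $dj/dq=1$ contribution enters inside $\log(1-jt)$ as a term $-t/(1-jt)$ and inside $qt(1-jt)$ — and collecting over the common denominator $q^2t^2(1-jt)^2$, one simplifies to
\[
\partial_q h_2(t,q)\,=\,\frac{t\big(q^2 j t^3-(2q-2)t+1\big)}{q^2 t^2(1-jt)^2}\,.
\]
Dividing by $\partial_t h_2(m_2,q)$ and reducing the resulting rational expression, using $q+j=2q-2$, yields exactly the claimed formula for $m_2'$. Note that the relation $h_2(m_2,q)=0$ is used only to set up the implicit differentiation, not in the final algebra.

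\emph{The identities for $s_2'$ and $\beta_{s,\,2}'$.} Since $s_2=qm_2$, the product rule gives $s_2'=m_2+q\,m_2'$; substituting the formula for $m_2'$, factoring, and then replacing $m_2=s_2/q$ and $jm_2=js_2/q$ throughout, the numerator collapses (again using $j+2=q$) to $j s_2^2(s_2-1)$ and the denominator to $q(q-2js_2)$, which is the asserted expression for $s_2'$. For $\beta_{s,\,2}'$, write $\beta_{s,\,2}=g_2(m_2)=\frac{2L}{1-s_2}$ with $L:=\log\frac{1-jm_2}{2m_2}$. The quotient rule gives
\[
\beta_{s,\,2}'\,=\,\frac{2L'(1-s_2)+2L\,s_2'}{(1-s_2)^2}\,=\,\frac{1}{1-s_2}\big(2L'+\beta_{s,\,2}\,s_2'\big)\,,
\]
so it remains to compute $L'=\frac{d}{dq}\big[\log(1-jm_2)-\log(2m_2)\big]$. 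Here the $dj/dq=1$ term produces an extra $-m_2/(1-jm_2)$ beyond $-jm_2'/(1-jm_2)$; writing $m_2'=(qs_2'-s_2)/q^2$ and combining everything over $q\,s_2(q-js_2)$, the numerator simplifies (using $j+2=q$) to $q\big(s_2(1-s_2)-q s_2'\big)$, so that $2L'=-2\,\frac{-s_2+s_2^2+q s_2'}{(q-js_2)s_2}$. Plugging this into the displayed identity gives the claimed formula for $\beta_{s,\,2}'$.

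The computations are elementary throughout; the main obstacle, such as it is, is simply keeping the $j=q-2$ dependence straight — i.e.\ not treating $j$ as a constant when differentiating in $q$ — and organizing the algebra so that the rational expressions collapse to the stated closed forms.
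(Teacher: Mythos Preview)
Your proof is correct and follows essentially the same route as the paper: implicit differentiation of $h_2(m_2,q)=0$ for $m_2'$, the product rule $s_2'=m_2+qm_2'$, and differentiation of $\beta_{s,2}=\frac{2}{1-s_2}\log\frac{q-js_2}{2s_2}$ for $\beta_{s,2}'$. The only cosmetic difference is that the paper rewrites $h_2(m_2)=0$ as $\log\frac{1-jm_2}{2m_2}=\frac{2}{q}\big(\frac{1}{2m_2}-\frac{1}{1-jm_2}\big)$ before differentiating, whereas you compute $\partial_q h_2$ and $\partial_t h_2$ separately and then divide; the key algebraic step in your version is the factorization $q^2 j t^3-(2q-2)t+1=(1-qt)(1-jt-qjt^2)$, which you correctly note is a pure identity.
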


\begin{proof}
We observe that
\[
\beta_{s,\,2}=g_{2}(m_{2})=\frac{2}{1-qm_{2}}\log\frac{1-jm_{2}}{2m_{2}}=\frac{2}{qm_{2}(1-jm_{2})}\ ,
\]
so that
\[
\log(1-jm_{2})-\log2m_{2}=\frac{2}{q}\Big(\frac{1}{2m_{2}}-\frac{1}{1-jm_{2}}\Big)\ .
\]
By differentiating this equation in $q$, we get
\[
\frac{-m_{2}-jm_{2}'}{1-jm_{2}}-\frac{m_{2}'}{m_{2}}=-\frac{2}{q^{2}}\Big(\frac{1}{2m_{2}}-\frac{1}{1-jm_{2}}\Big)+\frac{2}{q}\Big(-\frac{m_{2}'}{2m_{2}^{2}}+\frac{-m_{2}-jm_{2}'}{(1-jm_{2})^{2}}\Big)\ .
\]
By elementary computation, we can write
\begin{equation}
m_{2}'=-\frac{m_{2}(1-jm_{2}-qjm_{2}^{2})}{q(1-2jm_{2})}\ .\label{e: deriv of m_2}
\end{equation}
Let $s_{2}=qm_{2}$. Then,
\begin{equation}
s_{2}'=m_{2}+qm_{2}'=-\frac{js_{2}^{2}(1-s_{2})}{q(q-2js_{2})}\ .\label{e: deriv of s_2}
\end{equation}

Next, we compute $\beta_{s,\,2}'$. Note that
\[
\beta_{s,\,2}=\frac{2}{1-s_{2}}\log\frac{q-js_{2}}{2s_{2}}\ ,
\]
so that
\begin{align}
\beta_{s,\,2}'\, & =\,-\frac{2s_{2}'}{(1-s_{2})^{2}}\log\frac{q-js_{2}}{2s_{2}}+\frac{2}{1-s_{2}}(\frac{1-s_{2}-js_{2}'}{q-js_{2}}-\frac{s_{2}'}{s_{2}})\nonumber \\
 & =\,\frac{1}{1-s_{2}}\bigg(s_{2}'\frac{2}{1-s_{2}}\log\frac{q-js_{2}}{2s_{2}}+2\frac{s_{2}-s_{2}^{2}-js_{2}s_{2}'-qs_{2}'+js_{2}s_{2}'}{(q-js_{2})s_{2}}\bigg)\label{e: deriv of beta_s2}\\
 & =\,\frac{1}{1-s_{2}}\bigg(\beta_{s,\,2}s_{2}'-2\frac{-s_{2}+s_{2}^{2}+qs_{2}'}{(q-js_{2})s_{2}}\bigg)\ .\nonumber 
\end{align}
\end{proof}
The next lemma provides the bound of $m_{2}(q)$.
\begin{lem}
\label{l: deriv q-2}Let $q>e^{8}$. We have
\[
\frac{1}{2q\log q}<m_{2}(q)<\frac{1}{q\log q}\ .
\]
\end{lem}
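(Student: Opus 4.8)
The plan is to reduce the two-sided bound on $m_2(q)$ to a sign computation for $h_2$ at the two candidate endpoints. Recall from the proof of Lemma \ref{l: g_i} (see Figure \ref{fig: g2h2h2'}) that $g_2'(t)=\frac{2q}{(1-qt)^{2}}h_2(t)$, that $g_2$ has a unique minimum at $m_2$, and --- since $2<q/2$ for $q>e^{8}>4$ --- that $m_2<1/q$; moreover $h_2$ increases on $\bigl(0,\tfrac{1}{2(q-2)}\bigr)$, peaks there, decreases on $\bigl(\tfrac{1}{2(q-2)},1/q\bigr)$, and $h_2(1/q)=0$. It follows that $m_2$ is the unique zero of $h_2$ in the open interval $(0,1/q)$, with $h_2<0$ on $(0,m_2)$ and $h_2>0$ on $(m_2,1/q)$. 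Hence it suffices to prove, for $q>e^{8}$,
\[
h_2\!\left(\tfrac{1}{2q\log q}\right)<0\qquad\text{and}\qquad h_2\!\left(\tfrac{1}{q\log q}\right)>0;
\]
the first inequality then puts $\tfrac{1}{2q\log q}$ into $(0,m_2)$ and the second puts $\tfrac{1}{q\log q}$ into $(m_2,1/q)$. Both points lie in $\bigl(0,\tfrac{1}{2(q-2)}\bigr)\subset(0,1/q)$ since $2(q-2)<q\log q$ whenever $\log q>2$, so in particular $1-(q-2)t>0$ there and $h_2$ is well defined.

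For the estimates, write $L=\log q$ and substitute $t=\tfrac{c}{qL}$, $c\in\{\tfrac12,1\}$, into $h_2(t)=\log\frac{1-(q-2)t}{2t}+\frac{qt-1}{qt(1-(q-2)t)}$. A direct computation gives
\[
\frac{1-(q-2)t}{2t}=\frac{qL}{2c}-\frac{q-2}{2}\qquad\text{and}\qquad\frac{qt-1}{qt(1-(q-2)t)}=\frac{c-L}{c\bigl(1-\tfrac{(q-2)c}{qL}\bigr)}.
\]
Using $0<1-\tfrac{(q-2)c}{qL}\le1$ and $1-\tfrac{(q-2)c}{qL}\ge1-\tfrac{c}{L}$, together with $c-L<0$, we obtain $-\tfrac{L}{c}\le\frac{qt-1}{qt(1-(q-2)t)}\le1-\tfrac{L}{c}$; moreover $\frac{1-(q-2)t}{2t}<qL$ when $c=\tfrac12$ and $\frac{1-(q-2)t}{2t}>\frac{q(L-1)}{2}$ when $c=1$. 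Therefore
\[
h_2\!\left(\tfrac{1}{2qL}\right)<\log(qL)+(1-2L)=\log L-(L-1)<0
\]
by the inequality $\log x<x-1$ (used, e.g., in Lemma \ref{l: beta_s1<beta_s2}), and
\[
h_2\!\left(\tfrac{1}{qL}\right)>\log\!\Bigl(\tfrac{q(L-1)}{2}\Bigr)-L=\log\tfrac{L-1}{2}>0
\]
since $L>8$. This establishes the two displayed inequalities and hence the lemma.

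The only point requiring care is that the rational term $\frac{qt-1}{qt(1-(q-2)t)}$ is of order $-L/c$, the same order as the leading logarithmic term $\log\frac{1-(q-2)t}{2t}\sim L$; for $c=1$ these two cancel to leading order and what survives is of size only $\log L$, so the bound on the rational term must be a genuine two-sided estimate rather than a leading-order one. Beyond this bookkeeping every step is elementary --- the only inputs are $0<1-\tfrac{(q-2)c}{qL}\le1$, $1-\tfrac{(q-2)c}{qL}\ge1-\tfrac{c}{L}$, and $\log x<x-1$ --- so no real obstacle arises. This lemma then feeds, through Lemma \ref{l: deriv q}, into the proof of Lemma \ref{l: Last claim1}.
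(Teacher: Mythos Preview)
Your proof is correct and follows essentially the same approach as the paper's: both arguments reduce the two-sided bound on $m_2$ to checking the sign of $h_2$ at the two candidate endpoints $\tfrac{1}{2q\log q}$ and $\tfrac{1}{q\log q}$, using that $m_2$ is the unique zero of $h_2$ in $(0,1/q)$ with $h_2<0$ to its left and $h_2>0$ to its right. Your bounding of the rational term via $-L/c$ and $1-L/c$ is a slightly cleaner packaging than the paper's direct substitution, but the argument is the same in substance.
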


\begin{proof}
It can be observed that $h_{2}(m_{2})=0$ and $h_{2}(t)>0$ if $m_{2}<t<1/q$.
We claim that
\[
h_{2}(a)\,=\,\log\frac{1-ja}{2a}+\frac{qa-1}{qa(1-ja)}\,>0\ ,
\]
where $a=1/q\log q$. The above inequality can be written as
\[
\log\frac{q\log q-j}{2}>\Big(\frac{q\log q-q}{q\log q-j}\Big)\log q\ .
\]
Since the right-hand side is smaller than $\log q$, it suffices to
show that
\[
\log q+\log\frac{\log q-1+2/q}{2}>\log q\ ,
\]
which is true if $q>e^{3}$. Hence, $m_{2}<1/q\log q$. Next, we have
$m_{2}>(1/2q)\log q$ since
\[
\log\Big(q\log q-\frac{j}{2}\Big)-2\Big\{\frac{q\log q-q/2}{(q\log q-j/2)}\Big\}\log q<0\ ,
\]
which is true if $q>e^{8}$. 
\end{proof}
In the next two lemmas, we prove that some quantities are positive.
\begin{lem}
\label{l: deriv q-3}Let $q>e^{8}$. We have
\[
m_{2}'-s_{2}s_{2}'>0\ .
\]
\end{lem}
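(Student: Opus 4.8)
The plan is to eliminate $m_{2}'$ in favour of $s_{2}'$ via the relation $s_{2}=qm_{2}$, and then reduce the claim to a single explicit polynomial inequality in $s_{2}$ and $q$ which can be settled using only the crude bounds on $s_{2}=qm_{2}$ from Lemma \ref{l: deriv q-2}. Differentiating $s_{2}=qm_{2}$ gives $s_{2}'=m_{2}+qm_{2}'$, hence $m_{2}'=s_{2}'/q-s_{2}/q^{2}$ and
\[
m_{2}'-s_{2}s_{2}'\,=\,s_{2}'\Big(\tfrac1q-s_{2}\Big)-\frac{s_{2}}{q^{2}}\ .
\]

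First I would record the signs of the relevant quantities. By Lemma \ref{l: deriv q-2}, for $q>e^{8}$ we have $\tfrac{1}{2\log q}<s_{2}<\tfrac{1}{\log q}$; in particular $0<s_{2}<1$, $qs_{2}>q/(2\log q)>1$, and $q-2js_{2}=q-2(q-2)s_{2}>q(1-2s_{2})>0$. Plugging these into the explicit formula $s_{2}'=-\,js_{2}^{2}(1-s_{2})/[q(q-2js_{2})]$ from Lemma \ref{l: deriv q-1} shows $s_{2}'<0$; since also $\tfrac1q-s_{2}<0$, the first summand above is positive while the second is negative, so the statement $m_{2}'-s_{2}s_{2}'>0$ is equivalent to $|s_{2}'|\,(s_{2}-\tfrac1q)>s_{2}/q^{2}$.

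Multiplying through by $q^{2}$, using $s_{2}-\tfrac1q=(qs_{2}-1)/q$, and substituting the formula for $s_{2}'$, this last inequality reduces (after cancelling the positive factor $s_{2}/q$ and cross-multiplying by the positive quantities $q-2js_{2}$ and $qs_{2}-1$) to
\[
j\,s_{2}(1-s_{2})(qs_{2}-1)\,>\,q-2js_{2}\ ,\qquad j=q-2\ .
\]
To close this I would bound the two sides separately: the right side is at most $q$, while for $q>e^{8}$ the estimates $1-s_{2}>7/8$, $s_{2}>\tfrac{1}{2\log q}$, and $qs_{2}-1>\tfrac{q}{4\log q}$ yield a lower bound of the form $\tfrac{7(q-2)q}{64(\log q)^{2}}$ for the left side. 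Hence it suffices that $q-2>\tfrac{64}{7}(\log q)^{2}$, which holds for all $q>e^{8}$ (it holds at $q=e^{8}$, and $q-2-\tfrac{64}{7}(\log q)^{2}$ is increasing there).

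The only genuine obstacle is bookkeeping: one must verify that after all the cancellations the crude two-sided bound on $s_{2}$ is still strong enough, and in particular that $qs_{2}-1$ and $q-2js_{2}$ stay bounded away from $0$ so that the cross-multiplication is legitimate. This is exactly where the hypothesis $q>e^{8}$ is used; the complementary range is handled numerically in Lemma \ref{l: Last claim1} and Proposition \ref{p: numerical}.
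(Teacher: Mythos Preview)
Your argument is correct and follows essentially the same strategy as the paper: reduce the sign question to a single polynomial inequality in $s_{2}$ and $q$, then close it using only the crude two--sided bound $\tfrac{1}{2\log q}<s_{2}<\tfrac{1}{\log q}$ from Lemma~\ref{l: deriv q-2}. The only difference is organizational: the paper substitutes the explicit formulae for $m_{2}'$ and $s_{2}'$ and simplifies $m_{2}'-s_{2}s_{2}'$ directly to the fraction $\tfrac{s_{2}(-1+jm_{2}+jq(q+1)m_{2}^{2}-jq^{3}m_{2}^{3})}{q(q-2js_{2})}$, whereas you first eliminate $m_{2}'$ via $m_{2}'=s_{2}'/q-s_{2}/q^{2}$ and arrive at the factored form $js_{2}(1-s_{2})(qs_{2}-1)>q-2js_{2}$; expanding your inequality and dividing by $q$ gives exactly the paper's numerator condition, so the two reductions are algebraically equivalent. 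Your final sufficient condition $q-2>\tfrac{64}{7}(\log q)^{2}$ is a bit sharper than the paper's $q^{2}-q(\log q)^{3}-4>0$, but both hold comfortably for $q>e^{8}$. The closing remark about the complementary range is slightly misplaced: Lemma~\ref{l: Last claim1} is what this lemma feeds into, and the numerical verification in Proposition~\ref{p: numerical} covers $5\le q\le 6500$ for other inequalities, not for this one.
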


\begin{proof}
We have
\begin{align*}
m_{2}'-s_{2}s_{2}' & \,=\,-\frac{m_{2}(1-jm_{2}-jqm_{2}^{2})}{q(1-2jm_{2})}+\frac{js_{2}^{3}(1-s_{2})}{q(q-2js_{2})}\\
 & \,=\,\frac{s_{2}(-1+jm_{2}+jq(q+1)m_{2}^{2}-jq^{3}m_{2}^{3})}{q(q-2js_{2})}\ .
\end{align*}
It suffices to show that
\[
jq(q+1)m_{2}^{2}-jq^{3}m_{2}^{3}-1>0\ .
\]
Since $\frac{1}{2q\log q}<m_{2}<\frac{1}{q\log q}$, we obtain
\begin{align*}
jq(q+1)m_{2}^{2}-jq^{3}m_{2}^{3}-1 & >\frac{jq(q+1)}{4q^{2}(\log q)^{2}}-\frac{jq^{3}}{q^{3}(\log q)^{3}}-1\\
 & =\frac{1}{q(\log q)^{3}}\Big[\frac{(q+1)(q-2)}{4}\log q-q(q-2)-q(\log q)^{3}\Big]\\
 & >\frac{1}{q(\log q)^{3}}[2(q+1)(q-2)-q(q-2)-q(\log q)^{3}]\\
 & =\frac{1}{q(\log q)^{3}}[q^{2}-q(\log q)^{3}-4]>0\ .
\end{align*}
In the second and third inequalities, we use $q>e^{8}$. Hence, $m_{2}'-s_{2}s_{2}'>0$. 
\end{proof}
\begin{lem}
\label{l: deriv q-4}Let $q>e^{8}$. We have
\begin{equation}
\Big(\frac{1}{2}-\log s_{2}\Big)\beta_{s,\,2}'+\beta_{s,\,2}\frac{s_{2}'}{s_{2}}>0\ .\label{e: last ineq2}
\end{equation}
\end{lem}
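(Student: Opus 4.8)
The plan is to recast \eqref{e: last ineq2} as a single scalar inequality in the variable $s_2 = s_2(q) = qm_2$, produce a cancellation using two algebraic identities, and then close the resulting inequality with the estimate $m_2 < 1/(q\log q)$ from Lemma~\ref{l: deriv q-2}. Throughout I would write $L = L(q) := \tfrac12 - \log s_2$. From $s_2 < 1/\log q < 1$ one gets $L > \tfrac12 + \log\log q > 1$; and since $j = q-2$ and $s_2 < 1/\log q$, for $q > e^8$ one checks $q - js_2 > 0$, $q - 2js_2 > 0$ and $q - 3js_2 > q(1 - 3/\log q) > 0$, so (by Lemma~\ref{l: deriv q-1}) $s_2' < 0$ and the quantity $\rho := -s_2'/s_2 = \tfrac{js_2(1-s_2)}{q(q-2js_2)}$ is positive.

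First I would record the two identities driving the cancellation. From $h_2(m_2)=0$ one has $\beta_{s,2} = \tfrac{2}{qm_2(1-jm_2)} = \tfrac{2q}{s_2(q-js_2)}$ (this rewriting already appears in the proof of Lemma~\ref{l: deriv q-1}); combining it with the formula for $s_2'$ from Lemma~\ref{l: deriv q-1} yields the key identity
\[
\beta_{s,2}\,\rho \;=\; \frac{2j(1-s_2)}{(q-js_2)(q-2js_2)}\ .
\]
Next, inserting the expression for $\beta_{s,2}'$ from Lemma~\ref{l: deriv q-1} into the left-hand side of \eqref{e: last ineq2}, multiplying by $1-s_2 > 0$, and using $s_2' = -\rho s_2$, a short computation (the numerator $-s_2 + s_2^2 + qs_2'$ equals $-s_2(1-s_2+q\rho)$) rewrites the left-hand side of \eqref{e: last ineq2} times $(1-s_2)$ as
\[
\frac{2L\big((1-s_2) + q\rho\big)}{q-js_2}\;-\;\beta_{s,2}\,\rho\,\big(Ls_2 + 1 - s_2\big)\ .
\]
Dropping the nonnegative term $q\rho$ and substituting the identity for $\beta_{s,2}\rho$ turns this into $\tfrac{2(1-s_2)}{q-js_2}\big[L - \tfrac{j(Ls_2+1-s_2)}{q-2js_2}\big]$, so it suffices to prove $L(q - 2js_2) > j(Ls_2 + 1 - s_2)$, equivalently $L(q - 3js_2) > j(1-s_2)$.

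To finish: since $j(1-s_2) < q$ and $q - 3js_2 > q(1 - 3/\log q) > 0$, it is enough that $L \ge \tfrac{q}{q - 3js_2}$, hence enough that $L \ge \tfrac{\log q}{\log q - 3}$; and since $L > \tfrac12 + \log\log q$ by Lemma~\ref{l: deriv q-2}, this reduces to verifying $\tfrac12 + \log\log q \ge \tfrac{\log q}{\log q - 3}$. As the left side is increasing and the right side decreasing in $q$, it suffices to check this at $q = e^8$, where it reads $\tfrac12 + \log 8 \ge \tfrac85$, which holds.

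The main obstacle is keeping the estimates sharp rather than the computation itself: the raw expression is a sum of one positive and two negative terms (the negatives coming from $\beta_{s,2}s_2' < 0$ and $\beta_{s,2}s_2'/s_2 < 0$), and a crude bound such as $Ls_2 + 1 - s_2 \le L$ loses a factor close to $2$, which is fatal near $q = e^8$ where $\log\log q \approx 2.08$. The resolution is to keep the combination $Ls_2 + (1-s_2)$ intact and exploit the exact cancellation in $\beta_{s,2}\rho$, which collapses everything to the clean inequality $L(q - 3js_2) > j(1-s_2)$. One must also be careful to invoke the upper bound $m_2 < 1/(q\log q)$ (not the lower one) both for the lower bound on $L$ and for positivity of $q - 3js_2$.
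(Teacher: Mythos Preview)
Your proof is correct, and it takes a genuinely different route from the paper's. The paper rewrites $\beta_{s,2}'$ in the form $\beta_{s,2}'=\frac{s_2'}{1-s_2}\bigl(\beta_{s,2}-\tfrac{2q}{js_2^2}\bigr)$, reduces \eqref{e: last ineq2} to an upper bound on $\beta_{s,2}$, and then estimates $\beta_{s,2}$ from above via $\beta_{s,2}<g_2(1/(q\log q))$. You instead insert the exact identity $\beta_{s,2}=2q/\bigl(s_2(q-js_2)\bigr)$ (equivalently $\beta_{s,2}\rho=\tfrac{2j(1-s_2)}{(q-js_2)(q-2js_2)}$) coming from $h_2(m_2)=0$, which collapses everything algebraically to the single scalar inequality $L(q-3js_2)>j(1-s_2)$ and requires only the bound $s_2<1/\log q$ from Lemma~\ref{l: deriv q-2} to close. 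This is arguably cleaner, since no separate estimate on $\beta_{s,2}$ is needed. One small remark: the term $q\rho$ you drop in fact cancels \emph{exactly} against the piece $\beta_{s,2}\rho\cdot L s_2$ on the other side (both equal $\tfrac{2Ljs_2(1-s_2)}{(q-js_2)(q-2js_2)}$), so the sharp reduction is $L(q-2js_2)>j(1-s_2)$ rather than $L(q-3js_2)>j(1-s_2)$; your slightly weaker inequality still goes through comfortably for $q>e^{8}$, so this does not affect correctness.
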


\begin{proof}
Let $A(q)=\frac{1}{2}-\log s_{2}$. From Lemma \ref{l: deriv q-2},
we obtain
\[
\frac{5}{2}<\frac{1}{2}+\log8<\frac{1}{2}+\log\log q<A(q)<\frac{1}{2}+\log(2\log q)\ ,
\]
and
\begin{align*}
A(q)\beta_{s,\,2}'+\beta_{s,\,2}\frac{s_{2}'}{s_{2}}\, & =\,\frac{s_{2}'}{1-s_{2}}\Big[A(q)\beta_{s,\,2}-\frac{2q}{q-2}\frac{A(q)}{s_{2}^{2}}\Big]+\frac{s_{2}'}{1-s_{2}}\Big[\frac{1-s_{2}}{s_{2}}\beta_{s,\,2}\Big]\\
 & =\,\frac{s_{2}'}{1-s_{2}}\Big[\beta_{s,\,2}\Big(\frac{1}{s_{2}}+A(q)-1\Big)-\frac{2q}{q-2}\frac{A(q)}{s_{2}^{2}}\Big]\ .
\end{align*}
Hence, since $s_{2}'<0$, it suffices to show that
\[
\Big(\frac{2q}{q-2}\Big)\frac{A(q)}{s_{2}^{2}}>\beta_{s,\,2}\Big(\frac{1}{s_{2}}+A(q)-1\Big)=\frac{\beta_{s,\,2}}{s_{2}}[1+(A(q)-1)s_{2}]\ ,
\]
i.e.,
\[
\beta_{s,\,2}<\frac{1}{1+(A(q)-1)s_{2}}\cdot\frac{2q}{q-2}\cdot\frac{A(q)}{s_{2}}\ .
\]

Since, $s_{2}<1/\log q$, the right-hand side is greater than
\begin{align*}
\frac{1}{1+(A(q)-1)s_{2}}\cdot\frac{2qA(q)}{q-2}\log q & >\frac{1}{1+(A(q)-1)s_{2}}\Big(\frac{5q}{q-2}\Big)\log q\\
 & >\frac{5}{1+(A(q)-1)s_{2}}\log q\ ,
\end{align*}
and
\[
\beta_{s,\,2}<g_{2}(1/q\log q)\,=\,\frac{2\log q}{\log q-1}\log\frac{q\log q-(q-2)}{2}<\,\frac{5}{2}\log(q\log q)<\,\frac{15}{4}\log q\ ,
\]
where the last inequality is equivalent to $1/2>\log(\log q)/\log q$
which is true for $q>e^{8}$.

Hence, it is enough to show that
\[
\frac{1}{1+(A(q)-1)s_{2}}>\frac{3}{4}\ ,\ \text{i.e.,}\ \frac{1}{3}>(A(q)-1)s_{2}\ .
\]
Since $0<A(q)<1/2+\log(2\log q)$ and $s_{2}<1/\log q$, we obtain,
for $q>e^{8}$, 
\[
(A(q)-1)s_{2}<\frac{\log(2\log q)-1/2}{\log q}<\frac{1}{3}\ .
\]
\end{proof}
Now, we derive the proof of Lemma \ref{l: deriv q}.
\begin{proof}[Proof of Lemma \ref{l: deriv q}]
 By Lemma \ref{l: deriv q-1},
\[
-s_{2}+s_{2}^{2}+qs_{2}'\,=\,-\frac{js_{2}^{2}(1-s_{2})}{q-2js_{2}}-s_{2}(1-s_{2})\,=\,(q-js_{2})\frac{q}{js_{2}}s_{2}'\ ,
\]
so that
\[
\beta_{s,\,2}'=\frac{1}{1-s_{2}}\Big(\beta_{s,\,2}s_{2}'-2\frac{q}{js_{2}^{2}}s_{2}'\Big)=\frac{1}{1-s_{2}}\Big(\beta_{s,\,2}-\frac{2q}{js_{2}^{2}}\Big)s_{2}'\ .
\]

Now, we return to $f_{\star}(q)$. We have
\begin{align*}
f_{\star}(q)\, & =\,\frac{1}{\beta_{s,\,2}}(\log qm_{2}-\frac{1}{2})-\frac{1}{8}(qm_{2})^{2}+\frac{1}{4}m_{2}+\frac{251}{2002}\\
 & =\,\frac{1}{\beta_{s,\,2}}(\log s_{2}-\frac{1}{2})-\frac{1}{8}(s_{2})^{2}+\frac{1}{4}m_{2}+\frac{251}{2002}\ ,
\end{align*}
so that
\begin{align*}
f_{\star}'(q) & =-\frac{\beta_{s,\,2}'}{\beta_{s,\,2}^{2}}\Big(\log s_{2}-\frac{1}{2}\Big)+\frac{1}{\beta_{s,\,2}}(\frac{s_{2}'}{s_{2}})+\frac{1}{4}(m_{2}'-s_{2}s_{2}')\\
 & =\frac{1}{\beta_{s,\,2}^{2}}\Big[\beta_{s,\,2}'(\frac{1}{2}-\log s_{2})+\beta_{s,\,2}(\frac{s_{2}'}{s_{2}})\Big]+\frac{1}{4}(m_{2}'-s_{2}s_{2}')\ .
\end{align*}

Finally, Lemmas \ref{l: deriv q-3} and \ref{l: deriv q-4} prove
Lemma \ref{l: deriv q}.
\end{proof}

\appendix

\section{\label{sec: Numerical}Some Numerical Computations}

Recall the definition \eqref{e: Def of f_star} of $f_{\star}(\cdot)$.
In this section, we verify several inequalities numerically. Our purpose
is the following proposition. The proof is presented at the end of
this section.
\begin{prop}
\label{p: numerical}The following hold.
\begin{enumerate}
\item For $5\le q\le6500$, we have $F_{\beta_{s,\,2}}({\bf u}_{2})>F_{\beta_{s,\,2}}({\bf v}_{1})$.
\item For $6\le q\le54$, we have $\frac{d}{d\beta}[f_{c}(\beta)-F_{\beta}({\bf u}_{2})]\Big|_{\beta=\beta_{s,\,2}}>0$.
\item $f_{\star}(6500)>0$.
\end{enumerate}
\end{prop}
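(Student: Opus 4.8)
The plan is to prove all three statements by certified numerical computation. Each of (1)--(3) amounts to checking, for every integer $q$ in an explicit finite range (or for the single value $q=6500$ in (3)), a strict inequality between quantities built from $m_{2}(q)$ and $\beta_{s,\,2}(q)$ by closed-form elementary expressions. I would first produce, for each relevant $q$, a certified rational two-sided enclosure of the implicitly defined numbers $m_{2}(q)$, $\beta_{s,\,2}(q)$ and $v_{1}(\beta_{s,\,2})$, and then propagate these enclosures through the closed-form formulas recorded earlier by interval arithmetic, reading off the sign of each difference.

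The enclosures are rigorous because each of these numbers is a root of an explicit function that changes sign exactly once on a known interval, so a bisection whose function values are bounded by interval arithmetic produces a certified bracket. Concretely: $m_{2}(q)$ is the unique minimum of $g_{2}$ on $(0,1/(q-2))$ provided by Lemma \ref{l: g_i}, equivalently the unique zero of $h_{2}$ on the subinterval where $h_{2}$ is strictly increasing (read off from the sign of $h_{2}'$ in \eqref{e: deriv of g,h}); exhibiting rationals $a<b$ there with $h_{2}(a)<0<h_{2}(b)$ certifies $a<m_{2}(q)<b$. Once $m_{2}$ is bracketed, no further root finding is needed for $\beta_{s,\,2}$, since $\beta_{s,\,2}=g_{2}(m_{2})=\frac{2}{qm_{2}(1-(q-2)m_{2})}$ (equivalently \eqref{e: m_i-beta_si}). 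Finally $v_{1}=v_{1}(\beta_{s,\,2})$ is the larger root of $g_{1}(t)=\beta_{s,\,2}$, which exists because $\beta_{s,\,2}>\beta_{s,\,1}$ by Lemma \ref{l: beta_s1<beta_s2}; after bracketing $m_{1}(q)$ the same way, one brackets $v_{1}$ by bisecting $g_{1}-\beta_{s,\,2}$ on $[m_{1},1/(q-1))$, an interval on which $g_{1}$ is strictly increasing by Lemma \ref{l: g_i} and \eqref{e: deriv of g,h}.

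With these brackets the three statements follow by evaluation. For (1): for each integer $q$ with $5\le q\le6500$ one has $u_{2}=v_{2}=m_{2}$ at $\beta=\beta_{s,\,2}$, so $F_{\beta_{s,\,2}}({\bf u}_{2})=-\frac{1}{2\beta_{s,\,2}}\log(qe\beta_{s,\,2})$ by \eqref{e: F_beta_si (ui)2}, while $F_{\beta_{s,\,2}}({\bf v}_{1})$ is given by \eqref{e: F(c(t))} with $i=1$ and $t=v_{1}$; subtracting the two enclosures shows the difference is strictly positive in each case. For (2): one has $\frac{d}{d\beta}f_{c}(\beta)=\frac{1}{2\beta^{2}}\log(q\beta)$ (computed in the proof of Lemma \ref{l: Last claim2}) and, by \eqref{e: F_deriv(c(t))}, $\frac{d}{d\beta}F_{\beta}({\bf u}_{2})=-\frac{1}{\beta^{2}}k_{2}(u_{2})$; since $u_{2}=m_{2}$ at $\beta=\beta_{s,\,2}$, one gets $\beta_{s,\,2}^{2}\,\frac{d}{d\beta}[f_{c}(\beta)-F_{\beta}({\bf u}_{2})]\big|_{\beta=\beta_{s,\,2}}=\frac{1}{2}[\log(q\beta_{s,\,2})+2k_{2}(m_{2})]$, whose positivity is checked for each integer $q$ with $6\le q\le54$ from the enclosures of $m_{2}$ and $\beta_{s,\,2}$. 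For (3): one evaluates \eqref{e: Def of f_star} at $q=6500$ on the enclosures of $m_{2}(6500)$ and $\beta_{s,\,2}(6500)$ to obtain a positive lower bound for $f_{\star}(6500)$.

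I expect the only real difficulty to be bookkeeping: the enclosure widths must be small enough that the interval produced for each difference stays on one side of $0$. A crude asymptotic check (using $\beta_{s,\,2}\sim2\log q$ and $m_{2}\sim1/(q\log q)$) indicates that in (1) the gap $F_{\beta_{s,\,2}}({\bf u}_{2})-F_{\beta_{s,\,2}}({\bf v}_{1})$ is of order $\frac{1}{8}$ for large $q$, and in (2) the relevant quantity behaves like $\log(q/4)$, so near the ends $q=6500$ and $q=54$ of the two ranges, where the complementary analytic arguments of Lemmas \ref{l: M-U2-V1} and \ref{l: Last claim2} take over, the margins are comfortably wide; for moderate $q$ they are wider still. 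Because every function involved is a composition of $\log$ with rational functions on compact intervals bounded away from their poles and from $0$, the interval evaluations are routine, and the whole verification reduces to a finite, machine-checkable table.
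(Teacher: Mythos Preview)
Your proposal is correct and follows essentially the same approach as the paper: obtain certified two-sided bounds on $m_{2}$, $\beta_{s,2}$, and $v_{1}(\beta_{s,2})$ for each $q$ in the relevant finite range, then propagate these bounds through the closed-form expressions \eqref{e: F(c(t))}, \eqref{e: F_beta_si (ui)2}, \eqref{e: Def of k_i}, and \eqref{e: Def of f_star} using monotonicity (equivalently, interval arithmetic) to certify the sign of each difference. The only cosmetic difference is in how the brackets are produced: the paper uses a gradient-descent approximation of $m_{2}$ together with a Taylor-remainder bound on $g_{2}$ (Algorithms~\ref{alg: beta}--\ref{alg: m2}) and a Newton step followed by linear stepping for $v_{1}$ (Algorithm~\ref{alg: v1}), whereas you use bisection on $h_{2}$ and on $g_{1}-\beta_{s,2}$; both yield rigorous enclosures for the same reasons (strict monotonicity of $h_{2}$ near $m_{2}$ and of $g_{1}$ on $(m_{1},1/(q-1))$). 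One small point to make explicit: since $\beta_{s,2}$ is itself only known as an interval $[\beta_{s,2}^{l},\beta_{s,2}^{u}]$, the bisection for $v_{1}$ must be carried out against both endpoints (as the paper does in Algorithm~\ref{alg: v1}) to obtain a genuine enclosure $v_{1}^{l}<v_{1}<v_{1}^{u}$.
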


\subsection*{Bounds of $\beta_{s,\,2}$ , $m_{2}$ and $v_{1}$.}

We will obtain the bounds of $\beta_{s,\,2}$, $m_{2}$, and $v_{1}$.
Fix $q\ge5$ and let $j=q-2$. By gradient descent method, we obtain
the following.
\begin{lyxalgorithm}
\label{alg: beta}We define $\beta_{s,\,2}^{u}$ and $\beta_{s,\,2}^{l}$
in the following way.
\begin{enumerate}
\item $t_{0}\leftarrow1\,/\,(2q-4)$.
\item While $g_{2}'(t_{i})>10^{-6}$ , let $t_{i+1}\,\leftarrow\,t_{i}-g_{2}'(t_{i})/(300q^{2})$
.
\item If $g_{2}'(t_{i})\le10^{-6}$ , let $m_{2}^{*}\leftarrow t_{i}$.
\end{enumerate}
Let $\beta_{s,\,2}^{u}\coloneqq g_{2}(m_{2}^{*})+(36/q)|g_{2}'(m_{2}^{*})|$
and $\beta_{s,\,2}^{l}\coloneqq g_{2}(m_{2}^{*})-(36/q)|g_{2}'(m_{2}^{*})|$
.
\end{lyxalgorithm}

We record $m_{2}^{*}$ in the above algorithm and let
\[
\rho_{m}\,\coloneqq\,g_{2}'(m_{2}^{*})/q\ .
\]

\begin{lyxalgorithm}
\label{alg: m2}We define $m_{2}^{u}$ and $m_{2}^{l}$ in the following
way.
\begin{enumerate}
\item If $h_{2}(m_{2}^{*})\ge0$, let $m_{2}^{u}\coloneqq m_{2}^{*}+\rho_{m}$.
\begin{enumerate}
\item $t_{0}\leftarrow m_{2}^{*}$.
\item While $h_{2}(t_{i})\ge0$, let $t_{i+1}\leftarrow t_{i}-\rho_{m}$.
\item If $h_{2}(t_{i})<0$, let $m_{2}^{l}\coloneqq t_{i}-\rho_{m}$.
\end{enumerate}
\item If $h_{2}(m_{2}^{*})<0$, let $m_{2}^{l}\coloneqq m_{2}^{*}-\rho_{m}$.
\begin{enumerate}
\item $t_{0}\leftarrow m_{2}^{*}$.
\item While $h_{2}(t_{i})\le0$, let $t_{i+1}\leftarrow t_{i}+\rho_{m}$.
\item If $h_{2}(t_{i})>0$, let $m_{2}^{u}\coloneqq t_{i}+\rho_{m}$.
\end{enumerate}
\end{enumerate}
\end{lyxalgorithm}

By Newton method, we approximate $v_{1}$ which satisfies $g_{1}(v_{1})=\beta_{s,\,2}$.
\begin{lyxalgorithm}
\label{alg: v1}We define $v_{1}^{u}$ and $v_{1}^{l}$ in the following
way.
\begin{enumerate}
\item Let $t_{0}\,=\,0.8/q$ and $t_{-1}=0$.
\begin{enumerate}
\item While $|t_{i}-t_{i-1}|>10^{-5}/q$, let $t_{i+1}\leftarrow t_{i}-(g_{1}(t_{i})-\beta_{s,\,2}^{u})/g_{1}'(t_{i})$.
\item If $|t_{i}-t_{i-1}|\le10^{-5}/q$, let $v_{1}^{*}\coloneqq t_{i}$
and $\rho_{v}\coloneqq|t_{i}-t_{i-1}|$.
\end{enumerate}
\item If $g_{1}(v_{1}^{*})>\beta_{s,\,2}^{u}$, let $v_{1}^{u}\coloneqq v_{1}^{*}+\rho_{v}$.
\item If $g_{1}(v_{1}^{*})\le\beta_{s,\,2}^{u}$, let
\begin{enumerate}
\item $a_{0}\leftarrow v_{1}^{*}$.
\item While $g_{1}(a_{i})\le\beta_{s,\,2}^{u}$, let $a_{i+1}\leftarrow a_{i}+\rho_{v}$.
\item If $g_{1}(a_{i})>\beta_{s,\,2}^{u}$, let $v_{1}^{u}\coloneqq a_{i}+\rho_{v}$.
\end{enumerate}
\item If $g_{1}(v_{1}^{*})<\beta_{s,\,2}^{l}$, let $v_{1}^{l}\coloneqq v_{1}^{*}-\rho_{v}$.
\item If $g_{1}(v_{1}^{*})\ge\beta_{s,\,2}^{l}$, let
\begin{enumerate}
\item $b_{0}\leftarrow v_{1}^{*}$.
\item While $g_{1}(b_{i})\ge\beta_{s,\,2}^{l}$, let $b_{i+1}\leftarrow b_{i}-\rho_{v}$.
\item If $g_{1}(b_{i})<\beta_{s,\,2}^{l}$, let $v_{1}^{l}\coloneqq b_{i}-\rho_{v}$.
\end{enumerate}
\end{enumerate}
\end{lyxalgorithm}

\begin{lem}
\label{l: bounds}We have $\beta_{s,\,2}^{l}<\beta_{s,\,2}<\beta_{s,\,2}^{u}$
, $m_{2}^{l}<m_{2}<m_{2}^{u}$ , and $v_{1}^{l}<v_{1}<v_{1}^{u}$
.
\end{lem}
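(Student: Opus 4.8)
The plan is to verify, one quantity at a time, that each routine in Algorithms~\ref{alg: beta}--\ref{alg: v1} actually brackets the true value, using only the qualitative shapes of $g_1,g_2,h_2$ established earlier together with a few explicit quantitative estimates. The facts I will invoke are: (i) $g_2$ (with $j=q-2$) is strictly unimodal on $(0,1/j)$ with unique minimiser $m_2$, and $m_2<1/(2j)$ because $h_2(1/(2j))=\log\tfrac{q-2}{2}-\tfrac{2(q-4)}{q}>0$ for all $q\ge 5$; (ii) by \eqref{e: deriv of g,h} the sign of $g_2'$ equals that of $h_2$, so $g_2'<0$ on $(0,m_2)$ and $g_2',h_2>0$ on $(m_2,1/(2j)]$ (here $1/(2j)<1/q$ since $q\ge5$); (iii) $g_1$ (with $j=q-1$) is strictly increasing on $(m_1,1/(q-1))$, so the branch $\beta\mapsto v_1(\beta)$ determined by $g_1(v_1(\beta))=\beta$ is strictly increasing; and (iv) $\beta_{s,1}=g_1(m_1)<\beta_c<\beta_{s,2}$ (Lemma~\ref{l: beta_s1<beta_c}), so all the inverse temperatures produced stay in $(\beta_{s,1},q)$ where these branches are defined. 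I will also need $M_q:=\sup_{[m_2,1/(2j)]}|g_2''|$ and a lower bound for $g_2''$ near $m_2$, obtained by differentiating \eqref{e: deriv of g,h} and using $m_2\sim 1/(q\log q)$ from Lemma~\ref{l: deriv q-2} (with direct estimates for the small $q$).

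\emph{Bounds on $\beta_{s,2}$ (Algorithm~\ref{alg: beta}).} First I show the descent sequence $(t_i)$ is well defined and converges to $m_2$. Since $t_0=1/(2j)>m_2$ and $g_2'>0$ on $(m_2,1/(2j)]$, an induction gives $t_i\in(m_2,1/(2j)]$ for all $i$: one has $t_{i+1}=t_i-\eta g_2'(t_i)<t_i$ with $\eta=1/(300q^2)$, and writing $g_2'(t_i)=g_2''(\zeta_i)(t_i-m_2)$ by the mean value theorem (using $g_2'(m_2)=0$) yields $t_{i+1}-m_2=(t_i-m_2)(1-\eta g_2''(\zeta_i))>0$ provided $\eta M_q<1$. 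Granting this, $g_2(t_i)$ strictly decreases and is bounded below by $g_2(m_2)$, so $g_2'(t_i)\to 0$ and the loop terminates at some $m_2^*>m_2$ with $0<g_2'(m_2^*)\le 10^{-6}$. The upper bound is then immediate: $\beta_{s,2}=g_2(m_2)\le g_2(m_2^*)<g_2(m_2^*)+\tfrac{36}{q}|g_2'(m_2^*)|=\beta_{s,2}^{u}$. For the lower bound, monotonicity of $g_2'$ between $m_2$ and $m_2^*$ gives $g_2(m_2^*)-g_2(m_2)\le |g_2'(m_2^*)|\,(m_2^*-m_2)$, while $g_2'(m_2^*)\ge(\inf_{[m_2,m_2^*]}g_2'')(m_2^*-m_2)$ gives $m_2^*-m_2\le|g_2'(m_2^*)|/\inf g_2''$; since $\inf_{[m_2,m_2^*]}g_2''$ is bounded below by a positive constant of order at least $1$, this is $\le 10^{-6}\ll 36/q$ for $q\le6500$. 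Hence $\beta_{s,2}=g_2(m_2)\ge g_2(m_2^*)-\tfrac{36}{q}|g_2'(m_2^*)|=\beta_{s,2}^{l}$, and both inequalities are strict because $g_2'(m_2^*)>0$.

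\emph{Bounds on $m_2$ and $v_1$ (Algorithms~\ref{alg: m2} and \ref{alg: v1}).} Since $m_2^*>m_2$ we have $h_2(m_2^*)>0$, so Algorithm~\ref{alg: m2} enters Case~1 with $\rho_m=g_2'(m_2^*)/q>0$; thus $m_2^{u}=m_2^*+\rho_m>m_2$, and the down-stepping terminates at the first $t_i$ with $h_2(t_i)<0$, which by (ii) forces $t_i<m_2$, so $m_2^{l}=t_i-\rho_m<m_2$ (the loop cannot step past $0$ because $\rho_m\le 10^{-6}/q\ll m_2$). For $v_1$, Newton's method applied to $g_1(\cdot)-\beta_{s,2}^{u}$ from $t_0=0.8/q$ stays on the increasing branch $(m_1,1/(q-1))$, where $g_1$ is smooth with $g_1'>0$, and converges; the subsequent stepping then yields $v_1^{u}$ with $g_1(v_1^{u})>\beta_{s,2}^{u}$ and $v_1^{l}$ with $g_1(v_1^{l})<\beta_{s,2}^{l}$ (both loops terminate: $g_1\to\infty$ as $t\to 1/(q-1)^-$, and $\beta_{s,2}^{l}>\beta_{s,2}-10^{-6}>\beta_c>\beta_{s,1}=g_1(m_1)$ prevents descent below $m_1$). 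Since $g_1$ is strictly increasing on this branch, $\beta\mapsto v_1(\beta)$ is strictly increasing, so combining with $\beta_{s,2}^{l}<\beta_{s,2}<\beta_{s,2}^{u}$ from the previous step gives $v_1^{l}<v_1(\beta_{s,2}^{l})<v_1(\beta_{s,2})=v_1<v_1(\beta_{s,2}^{u})<v_1^{u}$.

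\emph{Main obstacle.} The one genuinely delicate point is checking $\eta M_q<1$, i.e. that the step $1/(300q^2)$ is small relative to the curvature of $g_2$ on $[m_2,1/(2j)]$. Differentiating \eqref{e: deriv of g,h}, $g_2''$ can be as large as order $q^2(\log q)^2$ near $m_2$ — this is exactly where $m_2\sim 1/(q\log q)$ from Lemma~\ref{l: deriv q-2}, supplemented by explicit bounds on $m_2$ for the finitely many small $q$, is needed — and one then verifies $\tfrac{1}{300q^2}\cdot O\!\big(q^2(\log q)^2\big)=O\!\big((\log q)^2\big)/300<1$ for all $q\le 6500$. All remaining points — that $t_0=1/(2j)$ lies strictly above $m_2$, that $\beta_{s,2}^{l}$ still exceeds $\beta_{s,1}$, that no stepping loop overshoots an endpoint of the domain, and the lower bound on $g_2''$ used for $\beta_{s,2}^{l}$ — reduce to elementary inequalities read off from \eqref{e: Def of g_i}, \eqref{e: Def of h_i}, \eqref{e: deriv of g,h}.
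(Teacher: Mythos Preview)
Your treatment of the $m_2$ and $v_1$ brackets agrees with the paper's: both rest only on the sign change of $h_2$ at $m_2$ and the monotonicity of $g_1$ on its increasing branch. The difference is in the $\beta_{s,2}$ bracket, where you work much harder than necessary and leave two quantitative steps unverified.

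The paper never analyzes the gradient descent. It shows directly that for \emph{any} point $m_2^*\in(0,1/(2j))$ one has $|\beta_{s,2}-g_2(m_2^*)|\le\frac{36}{q}|g_2'(m_2^*)|$, so the bracket is valid regardless of whether the descent overshoots, stays above $m_2$, or even converges. The key is the factorization $g_2'(t)=\frac{2q}{(1-qt)^2}h_2(t)$ from \eqref{e: deriv of g,h}. Taylor gives $g_2(m_2^*)-\beta_{s,2}=g_2'(\xi)(m_2^*-m_2)$ with $\xi$ between $m_2$ and $m_2^*$; since $h_2$ is monotone on $(0,1/(2j))$ with $h_2(m_2)=0$ one has $|h_2(\xi)|\le|h_2(m_2^*)|$, hence
\[
|g_2'(\xi)|\;\le\;\frac{(1-qm_2^*)^2}{(1-q\xi)^2}\,|g_2'(m_2^*)|\;\le\;36\,|g_2'(m_2^*)|\,,
\]
because $1-q\xi\ge 1-\frac{q}{2j}=\frac{q-4}{2q-4}\ge\frac16$ for $q\ge5$. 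Together with $|m_2^*-m_2|<1/(2j)<1/q$ this yields the stated bound and explains the otherwise mysterious constant $36/q$ in Algorithm~\ref{alg: beta}.

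Your route instead goes through $g_2''$: you need (a) $\eta M_q<1$ so the descent stays to the right of $m_2$, and (b) a lower bound on $\inf_{[m_2,m_2^*]}g_2''$ to convert $|g_2'(m_2^*)|\le10^{-6}$ into $m_2^*-m_2\le 36/q$. Both are only sketched. For (a), you assert $M_q=O(q^2(\log q)^2)$ without pinning down the constant; since $(\log 6500)^2/300\approx0.26$, a hidden factor of $4$ would already break $\eta M_q<1$. For (b), ``bounded below by a positive constant of order at least $1$'' is stated but not proved, and $g_2''$ is not uniformly bounded below on the whole descent interval (it involves $h_2'$, which vanishes at $1/(2j)$). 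These gaps are fillable with further work, but the paper's factorization trick avoids $g_2''$ entirely and makes all of this moot.
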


\begin{proof}
From the Taylor's theorem, we obtain
\[
g_{2}(m_{2}+t)=g_{2}(m_{2})+g_{2}'(m_{2}+t^{*})t
\]
for some $t^{*}\in(0,t)$ if $t>0$ or $t^{*}\in(t,0)$ if $t<0$.
Since $h_{2}$ is increasing in the neighborhood of $m_{2}$, we obtain
\begin{align*}
|g_{2}'(m_{2}+t^{*})|\, & =\,\bigg|\frac{2q}{[1-q(m_{2}+t^{*})]^{2}}h_{2}(m_{2}+t^{*})\bigg|\\
 & \le\,\bigg|\frac{2q}{[1-q(m_{2}+t^{*})]^{2}}h_{2}(m_{2}+|t|)\bigg|\,=\,\bigg(\frac{1-q(m_{2}+|t|)}{1-q(m_{2}+t^{*})}\bigg)^{2}|g_{2}'(m_{2}+|t|)|\ .
\end{align*}
Since $m_{2}+t^{*},\,m_{2}<1/(2j)$, we obtain
\[
\frac{1-q(m_{2}+|t|)}{1-q(m_{2}+t^{*})}\,\le\,\frac{1}{1-q(m_{2}+t^{*})}\,\le\,\frac{1}{1-q/(2j)}\,=\,\frac{2q-4}{q-4}\le6\ ,
\]
where the last inequality is from $q\ge5$. Hence, we have
\[
|g_{2}'(m_{2}+t^{*})|\le36|g_{2}'(m_{2}+|t|)|\ ,
\]
so that we have
\[
|\beta_{s,\,2}-g_{2}(m_{2}+t)|\,=\,|g_{2}(m_{2})-g_{2}(m_{2}+t)|\,\le\,|g_{2}'(m_{2}+t^{*})||t|\,\,\le\,\frac{36}{q}|g_{2}'(m_{2}+|t|)|\ ,
\]
which proves the first claim. In the last inequality, we use the fact
that $|t|<1/q$.

Since $h_{2}(t)>0$ if $t>m_{2}$ and $h_{2}(t)<0$ if $t<m_{2}$,
the second claim is true. Finally, since $g_{1}$ is increasing in
the neighborhood of $v_{1}$, the third claim holds.
\end{proof}
We finally prove Proposition \ref{p: numerical}.
\begin{proof}[Proof of Proposition \ref{p: numerical}]
From Lemma \ref{l: bounds}, we obtain
\[
\beta_{s,\,2}^{l}<\beta_{s,\,2}<\beta_{s,\,2}^{u}\ ,\ m_{2}^{l}<m_{2}<m_{2}^{u}\ ,\ \text{and}\ v_{1}^{l}<v_{1}<v_{1}^{u}\ .
\]
 By elementary computation, we have
\begin{align*}
F_{\beta_{s,\,2}}({\bf u}_{2})-F_{\beta_{s,\,2}}({\bf v}_{1}) & \ge\,\frac{1}{4}[q(q-2)\Big(m_{2}^{u}-\frac{1}{q-2}\Big)^{2}-\frac{2}{q-2}]+\frac{1}{\beta_{s,\,2}^{l}}\log m_{2}^{l}\\
 & -\,\frac{1}{2}\Big[q(q-1)\Big(v_{1}^{l}-\frac{1}{q-1}\Big)^{2}-\frac{1}{q-1}\Big]-\frac{1}{\beta_{s,\,2}^{u}}\log v_{1}^{u}\ ,\\
\log q\beta_{s,\,2}+2k_{2}(m_{2}) & \ge\,\log(q\beta_{s,\,2}^{l})+2k_{2}(m_{2}^{u})\ ,\\
f_{\star}(6500) & \ge\,\frac{1}{\beta_{s,\,2}^{l}}\Big(\log qm_{2}^{l}-\frac{1}{2}\Big)-\frac{1}{8}(qm_{2}^{u})^{2}+\frac{1}{4}m_{2}^{l}+\frac{251}{2002}\ .
\end{align*}
The second inequality holds since $k_{2}(\cdot)$ is decreasing according
to \eqref{e: k decrea}. From the numerical computations, we find
that the right-hand sides of the displayed equations are positive
for $5\le q\le6500$, and this completes the proof.
\end{proof}

\section{Proof of \eqref{e_mfe}}
\begin{proof}[Proof of \eqref{e_mfe}]
Since we have
\[
Z_{N}(\beta)=\sum_{\bm{x}\in\Xi}\frac{N!}{(Nx_{1})!\cdots(Nx_{q})!}\exp\{-\beta NH(\bm{x})\}\ ,
\]
we can use the elementary bound
\[
k\log k-k\le\log k!\le(k+1)\log(k+1)-k\ ,
\]
to obtain
\begin{align*}
 & \sum_{\bm{x}\in\Xi}\exp\Big\{-\beta N\Big[-\frac{1}{2}(x_{1}^{2}+\cdots+x_{q}^{2})+\frac{1}{\beta}\sum_{i=1}^{q}\big(x_{i}+\frac{1}{N}\big)\log\big(x_{i}+\frac{1}{N}\big)\Big]-q\log N\Big\}\\
\le & Z_{N}(\beta)\le\sum_{\bm{x}\in\Xi}\exp\Big\{-\beta NF_{\beta}(\bm{x})+\log(N+1)+N\log\big(1+\frac{1}{N}\big)\Big\}\ .
\end{align*}
Hence, by the definition of $F_{\beta}$ \eqref{e: def of F_beta},
we can obtain
\[
\sup_{\bm{x}\in\Xi}\{-F_{\beta}(\bm{x})\}+O\Big(\frac{\log N}{N}\Big)\le\frac{1}{\beta N}\log Z_{N}(\beta)\le\sup_{\bm{x}\in\Xi}\{-F_{\beta}(\bm{x})\}+O\Big(\frac{\log N}{N}\Big)
\]
and the proof is completed. 
\end{proof}
\begin{acknowledgement*}
This work was supported by Samsung Science and Technology Foundation
(Project Number SSTF-BA1901-03).
\end{acknowledgement*}


\begin{thebibliography}{10}
\bibitem{Alonso Cerf} L. Alonso, R. Cerf: The three dimensional polyominoes
of minimal area. Electron. J. Comb. \textbf{3}, R27 (1996)

\bibitem{Beltran Landim} J. Beltr\'{a}n, C. Landim: Tunneling and
metastability of continuous time Markov chains. J. Stat. Phys. \textbf{140},
1065--1114 (2010)

\bibitem{Beltran Landim2} J. Beltr\'{a}n, C. Landim: Tunneling and
metastability of continuous time Markov chains II. J. Stat. Phys.
\textbf{149}, 598--618 (2012)

\bibitem{BenArous Cerf} G. Ben Arous, R. Cerf : Metastability of
the three dimensional Ising model on a torus at very low temperatures.
Electron. J. Probab. \textbf{1}, 1--55 (1996)

\bibitem{Binachi Bovier} A. Binachi, A. Bovier, D. Ioffe: Sharp asymptotics
for metastability in the random field Curie-Weiss model. Electron.
J. Probab. \textbf{14}, 1541--1603 (2009)

\bibitem{Bovier EGK} A. Bovier, M. Eckhoff,V. Gayrard, M. Klein:
Metastability in stochastic dynamics of disordered mean-field models.
Probab. Theory Relat. Fields \textbf{119}, 99--161 (2001).

\bibitem{Bovier H Metastability} A. Bovier, F. den Hollander: \textit{Metastabillity:
A Potential-theoretic approach.} Grundlehren der mathematischen Wissenschaften.
Springer. (2015)

\bibitem{Bovier H Nardi} A. Bovier, F. den Hollander, F.R. Nardi:
Sharp asymptotics for Kawasaki dynamics on a finite box with open
boundary. Probab. Theory Relat. Fields \textbf{135}, 265-310 (2006)

\bibitem{Bovier H Spitoni} A. Bovier, F. den Hollander, C. Spitoni:
Homogeneous nucleation for Glauber and Kawasaki dynamics in large
volumes at low temperatures. Ann. Probab. \textbf{38}, 661--713 (2010)

\bibitem{Bovier Manzo} A. Bovier, F. Manzo: Metastability in Glauber
dynamics in the low-temperature limit: beyond exponential asymptotics.
J. Stat. Phys. \textbf{107}, 757--779 (2002)

\bibitem{Bovier MP-RDCW} A. Bovier, S. Marelo, E. Pulvirenti: Metastability
for the dilute Curie--Weiss model with Glauber dynamics. arXiv:1912.10699
(2019)

\bibitem{Cassandro GOV} M. Cassandro, A. Galves, E. Olivieri, M.E.
Vares : Metastable behavior of stochastic dynamics: A pathwise approach.
J. Stat. Phys. \textbf{35}, 603--634 (1984)

\bibitem{Costeniuc Ellis T} M. Costeniuc, R. S. Ellis, H. Touchette:
Complete analysis of phase transitions and ensemble equivalence for
the Curie--Weiss--Potts model. J. Math. Phys. \textbf{46}, 063301
(2005)

\bibitem{Cuff DLLPS} P. Cuff, J. Ding, O. Louidor, E. Lubetzky, Y.
Peres, A. Sly: Glauber dynamics for the mean-field Potts model. J.
Stat. Phys. \textbf{149}, 432--477 (2012)

\bibitem{Eichelsbacher M} P. Eichelsbacher, B. Martschink: On rates
of convergence in the Curie--Weiss--Potts model with an external
field. Ann. Henri Poincaré: Probab. Statist. \textbf{51}, 252-282
(2015)

\bibitem{Ellis Wang} R. S. Ellis, K. Wang: Limit theorems for the
empirical vector of the Curie--Weiss--Potts model. Stoch. Process.
Their Appl. \textbf{35}, 59--79 (1990)

\bibitem{Griiths Pearce} R. B. Griffiths, P. A. Pearce: Potts model
in the many-component limit. J. Phys. A. \textbf{13, }2143--2148
(1980)

\bibitem{Hollander J-ERgraph} F. den Hollander, O. Jovanovski: Glauber
dynamics on the Erd\H{o}s-Rényi random graph. arXiv:1912.10591 (2020)

\bibitem{Kesten Schonmann} H. Kesten, R. H. Schonmann: Behavior in
large dimensions of the Potts and Heisenberg models. Rev. Math. Phys.
\textbf{1,} 147--182 (1982)

\bibitem{Kim Seo} S. Kim, I. Seo: Metastability of stochastic Ising
and Potts models on lattices without external fields. arXiv:2102.05565
(2021)

\bibitem{Kulske Meibner} C. Külske, D. Meißner. Stable and metastable
phases for the Curie--Weiss--Potts model in vector-valued fields
via singularity theory. J. Stat. Phys. \textbf{181}, 968--989 (2020)

\bibitem{Landim MMC} C. Landim: Metastable Markov chains. Probab.
Surv. \textbf{16}, 143--227 (2019)

\bibitem{Landim Seo-nonrev RW} C. Landim, I. Seo: Metastability of
non-reversible random walks in a potential field, the Eyring-- Kramers
transition rate formula. Commun. Pure Appl. Math. \textbf{71}, 203--266
(2018)

\bibitem{Landim Seo-3spin} C. Landim, I. Seo: Metastability of non-reversible,
mean-field Potts model with three spins. J. Stat. Phys. \textbf{165},
693--726 (2016)

\bibitem{Levin LP} D.A. Levin, M.J. Luczak, Y. Peres: Glauber dynamics
for the mean-field Ising model: cut-off, critical power law, and metastability.
Probab. Theory Relat. Fields \textbf{146}, 223 (2010)

\bibitem{Lubetzky Sly} E. Lubetzky, A. Sly: Cutoff for the Ising
model on the lattice. Invent. Math. \textbf{191}, 719--755 (2013)

\bibitem{Lubetzky Sly2} E. Lubetzky, A. Sly: Information percolation
and cutoff for the stochastic ising model. J. Am. Math. Soc. \textbf{29},
729--774 (2016) 

\bibitem{Lubetzky Sly3} E. Lubetzky, A. Sly: Universality of cutoff
for the Ising model. Ann. Probab. \textbf{45}, 3664--3696 (2017)

\bibitem{Nardi Zocca} F.R. Nardi, A. Zocca: Tunneling behavior of
Ising and Potts models in the low-temperature regime. Stoch. Process.
Their Appl. \textbf{129}, 4556--4575 (2019)

\bibitem{Neves} E.J. Neves: A discrete variational problem related
droplets at low temperatures. J. Stat. Phys. \textbf{80}, 103--123
(1995)

\bibitem{Neves Schonmann} E.J. Neves, R.H. Schonmann: Critical droplets
and metastability for a glauber dynamics at very low temperatures.
Commun. Math. Phys. \textbf{137}, 209--230 (1991)

\bibitem{Olivieri Vares} E. Olivieri, M.E. Vares : \textit{Large
deviations and metastability}. Encyclopedia of Mathematics and Its
Applications. \textbf{100}. Cambridge University Press, Cambridge.
(2005)

\bibitem{Ostilli Mukhamedov}M. Ostilli, F. Mukhamedov: Continuous-
and discrete-time Glauber dynamics. First- and second order phase
transitions in mean-field Potts models. EPL. \textbf{101}(6) (2013)

\bibitem{Rassoul Seppalainen} F. Rassoul-Agha, T. Seppäläinen: \textit{A
Course on Large Deviations with an Introduction to Gibbs Measures}.
Graduate Studies in Mathematics. \textbf{162}. American Mathematical
Society. (2015)

\bibitem{Wang} K. Wang: Solutions of the variational problem in the
Curie--Weiss--Potts model. Stoch. Process. Their Appl. \textbf{50},
245--252 (1994).

\bibitem{Wu-The Potts model} F.Y. Wu: The Potts model. Rev. Mod.
Phys. \textbf{54}, 235--268 (1982)
\end{thebibliography}
\end{document}